\newtheorem{theorem}{Theorem}[section]
\newtheorem{corollary}[theorem]{Corollary}
\newtheorem{lemma}[theorem]{Lemma}
\newtheorem{proposition}[theorem]{Proposition}
\newtheorem{remark}[theorem]{Remark}
\newenvironment{proof}[1][Proof]
  { \begin{description}
      \item\textbf{Proof.} }
   {\hfill\rule{2.1mm}{2.1mm}
     \end{description} }
\title{On connection coefficients,
zeros and interception points 
of some perturbed of arbitrary order of the Chebyshev polynomials of second kind}
\author{Z\'elia da {\sc Rocha}\\
Departamento de Matem{\'a}tica - CMUP \\
Faculdade de Ci{\^e}ncias da Universidade do Porto\\
Rua do Campo Alegre n.687, 4169 - 007 Porto, Portugal\\
Email: mrdioh@fc.up.pt
}
\date{October 2017}                                           % Activate to display a given date or no date
\begin{document}

\maketitle

\begin{abstract}
 
Orthogonal polynomials satisfy a recurrence relation of order two, where appear two coefficients. If we modify one of these coefficients at a certain order, we obtain a perturbed orthogonal sequence. In this work we consider in this way some perturbed of Chebyshev polynomials of second kind and we deal with the problem of finding the connection coefficients that allow to write the perturbed sequence in terms of the original one and in terms of the canonical basis. From the connection relations obtained and from two other relations, we deduce some results about zeros and interception points of these perturbed polynomials.  All the work is valid for arbitrary order of perturbation. 
%Other properties of these polynomials were obtained in, for $r\leq 3$. The integral representation of the perturbed forms remain an open problem for $r\geq 2$.

\end{abstract}

{\bf Key words:} Chebyshev polynomials; perturbed orthogonal polynomials; 
connection coefficients; zeros.
%, interception points.
 
{\it {\bf 2010 Mathematics Subject Classification:}}  
33C45, 33D45, 42C05.

%\newpage

\tableofcontents

%%%%%%%%%%%%%%%%%%%%%%%%%%%%%%%%%%%%%%%%%%%%%%%
\section{Introduction}
%%%%%%%%%%%%%%%%%%%%%%%%%%%%%%%%%%%%%%%%%%%%%%%

Perturbed orthogonal polynomials are obtained by modifying, in a certain way, a finite number of elements of the two sequences of coefficients presented in the linear recurrence relation of order two satisfied by these polynomials. In this work, we focus our attention on elementary modifications of only one of these two coefficients at a unique specified order $r$, the so-called, in literature, generalized co-recursive or co-dilated polynomials. 
During the last decades, this subject have interested several authors. We would like to cite,
with respect to the co-recursive case ($r=0$), the references \cite{Chihara_0,Chihara,Ifantis_1995,Letessier_1994,Ronveaux_1995,Slim_1988}; concerning the co-dilated situation, we consider \cite{Erb_2015,MAROMEJRI}; for the co-modified sequences refer to \cite{Dini_1988,Dini_1989,Ronveaux_1990}; for the generalized co-polynomials see \cite{Castillo_2015,WKoepf_2004,Marcellan_1990}. 
Also, we call the attention to the general articles \cite{Peherstorfer_1992,Zhedanov_1997}.
We notice that perturbed orthogonal polynomials have some applications, which motivate further their study \cite{Erb_2015,Ismail_1990,Ismail_2005,Letessier_1994,Marcellan_1990,Slim_1988}. 
This type of perturbation is a transformation that can promote a significative change of properties of the original sequence, but the second degree character is preserved by it \cite{MARO 91, MARO 95}. The {\it PSDF\/} algorithm (see \cite{ZeliadaRocha2015_1}) allows to explicit some semi-classical properties of perturbed second degree forms leading, in special, to the second order linear differential equation.
%, namely the Stieltjes function, the Stieltjes equation, the functional equation, the class, a structure relation and the second order linear differential equation. 
It is well known that the four Chebyshev sequences \cite{Gautschi_1992,Gautschi_2004,Mason_2003} are the most important cases of second degree forms \cite{MARO 95_1}.  In particular, for the purposes of perturbation, the family of second kind is the most simple among them, because it is self-associated \cite{MaroniTounsi2004}, therefore it is often taken as study case in the mentioned literature. There are some specific works about perturbed Chebyshev families, namely \cite{MAROMEJRI} about the co-dilated case of the second kind form, and \cite{MARO 95_1,GSansigre} concerning all the four forms.  Also, in \cite{ZeliadaRocha2016_1},
we present some semi-classical properties obtained with {\it PSDF\/} \cite{ZeliadaRocha2015_1} for the second kind sequence corresponding to the complete perturbation of order 1 and the perturbation of 2 by dilatation, generalizing most known results. In \cite{ZeliadaRocha2015_1}, we give similar properties for the co-recursive and co-dilated sequences of order 3. 
%It is natural to consider the families of first, third and fourth Ckebyshev kinds as perturbed of the sequence of second kind, which allow to deduce the mentioned properties, in a trivial way, from the results corresponding to the sequence of second kind \cite{ZeliadaRocha2016_1}. 

In the present work, we focus our attention on connection coefficients and some consequences related to zeros and interception points valid for any order $r$ of perturbation of the second kind Chebyshev sequence. 

About connection coefficients for orthogonal polynomials the reader can find an extensive  bibliography in \cite{Tcheutia_2014}.
In fact, the literature on this subject is vast and a wide variety of methods have been developed using several techniques. Here, we refer mainly to the following references
\cite{Abd-Elhameed_2015_1,Askey2,Ronveaux_2015_1,WKoepf2,Comptet, Ronveaux4,Lewanowicz1,WKoepf_1,Riordan1,Riordan2,Tcheutia_2013b}.
% using recursive or symbolic approaches. 
%RonveauxB0,Ronveaux01,SanchesDehesa1}.
%\cite{Ronveaux2,Ronveaux3,Comptet, Riordan1,Riordan2,Lewanowicz1, Lewanowicz2, Lewanowicz3,Ronveaux_1,Ronveaux0, RonveauxB0,Ronveaux01,SanchesDehesa1}. 
Zeros of orthogonal polynomials is another widely discussed subject due to its applications in several problems of applied sciences \cite{Szego_1939} and their crucial role in quadrature formulas \cite{Gautschi_2004}. In particular, several authors have dedicated attention to the relationship between the zeros of perturbed polynomials and the zeros of the original sequences finding results about interlacing and monotonicity behaviors and distribution functions for co-recursive and co-dilated cases, see \cite{Chihara_0,Ifantis_1995,Leopold_1998,Marcellan_1990,Ronveaux_1995, Szego_1939,Slim_1988}, and more recently \cite{Castillo_2015, Castillo_2015_1,Erb_2015}. 

Let us summarize the contents of this article.
In Section 2, we present the theoretical background concerning  orthogonal polynomials \cite{Chihara,MARO 91}, perturbation \cite{Chihara,Marcellan_1990,MARO 91}, connection coefficients \cite{ZeliaPascal3,ZeliaPascal2013_0} and properties of the four families of Chebyshev polynomials \cite{Mason_2003}.  It is natural to consider the first, third and four kinds  Chebyshev families as perturbed of the sequence of second kind \cite{ZeliadaRocha2016_1}. From this point on, we shall deal only with the second kind Chebyshev family and its elementary perturbations of order $r$ by translation and by dilatation \cite{ZeliadaRocha2015_1}.
% more known in literature as co-recursive and co-dilated of order $r$ \cite{Marcellan_1990}. 
The contribution of this article is focused on finding explicit results about connection coefficients and some of their consequences valid for any order of perturbation. We think that this point is important, because thus one can choose the parameter of perturbation and its order in such a way a prescribed behavior occurs, for example, the positiveness of connection coefficients or the location of certain zeros or interception points.
Section 3 is dedicated to the connection coefficients that allow to write the perturbed family in terms of the original one (Section 3.1) and in terms of the canonical basis (Section 3.2). 
We started this study from some tables of the first connection coefficients, for fixed values of the order $r$ of perturbation, recursively computed by the symbolic software 
{\it CCOP - Connection Coefficients for Orthogonal Polynomials} \cite{ZeliaPascal2013_0,ZeliaPascal2013_1,ZeliaPascal2013_2}.\footnote{{\it CCOP} is written in the {\it Mathematica$^{\circledR}$} language and is available in the library {\sc Numeralgo} of {\sc Netlib} (\url{http://www.netlib.org/numeralgo/}) as na34 package.} 
We realize that the connection coefficients are constant by diagonal and have very simple expressions, therefore it was easy to infer closed formulas valid for any $r$. Demonstrations of these formulas will be done by induction.  
From the connection relations deduced in Section 3.1 and from the connection coefficients of the second kind Chebyshev family in terms of the canonical basis, we deduce in Section 3.2, the connection coefficients of perturbed polynomials in terms of the canonical basis. 
Section 4 concerns some results about zeros and interception points of perturbed polynomials.
We begin by presenting the Hadamard--Gershg\"orin location of zeros. We determine the values of the parameters of perturbation for which there are zeros outside the interval $[-1,1]$. After that, results are deduced from the connection relations previously obtained and from two other relations. 
We point out the fact that the behavior of perturbed polynomials at the origin are related to the parity of $r$. 
Depending on the values of the parameters of perturbation, the smallest and the greatest zeros of perturbed polynomials have a special location with respect to the extremal zeros of Chebyshev polynomials of the same degree.  
%Depending on the values of the parameters of perturbation, the smallest and the greatest zeros of perturbed polynomials are located on the left and on the right of the corresponding zeros of Chebyshev polynomials of the same degree.  
%These results are in part already known (see \cite{Leopold_1998}), nevertheless the novelty here is the method employed for deriving them.
Perturbed polynomials with fixed degree
and different values of parameters intercept each other at the zeros of two other Chebyshev polynomials with prescribed degrees: these points are stable, they do not depend on perturbation.  Interceptions points can be simple or double depending on the degrees of polynomials and their relationship with $r$. We distinguish the interceptions points that are common zeros. In the last section, we present some graphical representations in order to illustrate results given herein about zeros and interception points. In fact, these graphs were the source of inspiration to the development of this study. The reasonings and arguments employed in the proofs are similar for the translation and the dilatation cases, but the dilatation one is more simple, because perturbed sequences are symmetric. 
%For the seek of shortness, in this work we shall present only the proofs for perturbed by translation.
We remark that zeros of perturbed Chebyshev polynomials satisfy some interlacing and monotonicity properties, not studied here, that can be the subject for a forthcoming article.
%that are particular cases of results given for co-polynomials in \cite{Castillo_2015}. 

%%%%%%%%%%%%%%%%%%%%%%%%%%%%%%%%%%%%%%%%%%%
\section{Theoretical background} \label{section2}%
%%%%%%%%%%%%%%%%%%%%%%%%%%%%%%%%%%%%%%%%%%%

%%%%%%%%%%%%%%%%%%%%%%%%%%%%%%%%%%%%%%%%%%%
\subsection{Connection coefficients for perturbed orthogonal polynomials} \label{subsection2_1}%
%%%%%%%%%%%%%%%%%%%%%%%%%%%%%%%%%%%%%%%%%%%

Let $\mathcal{P}$ be the vector space of polynomials with coefficients in $\mathbb{C}$ and let
$\mathcal{P}^{\prime}$ be its topological dual space. We denote by $\langle u,p\rangle$ the effect of $u\in\mathcal{P}^{\prime }$ on $p\in\mathcal{P}$. In particular,
$\langle u,x^{n}\rangle:=\nolinebreak\left( u\right) _{n},n\geq 0$, represent the {\it moments} of $u$.
A form $u$ is {\it normalized} if its first moment is unit, e.g., $(u)_0=1$.
Let $\{P_{n}\}_{n\geq 0}$ be a {\it monic polynomial sequence} (MPS) with $\deg P_n=n,\ n\geq 0$, e.g., $P_n(x)=x^n+\ldots$. 
Then there exists a unique sequence $\{u_n\}_{n\geq0}$, $u_n\in\mathcal{P}^{\prime}$, called the {\it dual sequence} of $\{P_{n}\}_{n\geq 0}$, such that  $<u_{n},P_{m}>=\delta_{n,m}$, $n,m\ge 0$. The form $u_0$ is called the {\it canonical form} of  $\{P_{n}\}_{n\geq 0}$, it is normalized, e.g., $(u_0)_0=1$. 

A form $u$ is said {\it regular} \cite{MARO 93, MARO 94} if and only if there exists a polynomial sequence $\{P_{n}\}_{n\geq 0}$, such that:
\begin{eqnarray}
\left\langle u,P_{n}P_{m}\right\rangle &=& 0\ ,\ n\neq m\ ,\ n,m\geq 0\ ,
\label{cond.Ort.} \\
\left\langle u,P^2_{n}\right\rangle=k_n &\neq &0\ ,\ n\geq 0\ .
\label{cond.Ort.Reg.}
\end{eqnarray}

Consequently $\deg P_n=n$, $n\geq 0$, and any $P_n$ can be taken monic, then $\{P_{n}\}_{n\geq 0}$ is called a {\it monic orthogonal polynomial sequence} (MOPS) with respect to $u$.
Necessarily $u=(u)_0 u_{0}$, $(u)_0 \not=0$. If we take $u$ normalized, then $u=u_0$.  In this work, we will always consider MOPS and normalized forms.
The identities (\ref{cond.Ort.}) are called the {\it orthogonality conditions} and (\ref{cond.Ort.Reg.}) are the {\it regularity conditions}.

The sequence $\{P_{n}\}_{n\geq 0}$ is regularly orthogonal with respect to $u$
if and only if  \cite{ MARO 93, MARO 94} there are two sequences of coefficients
$\{\beta_{n}\}_{n\geq 0}$ and $\{\gamma_{n+1}\}_{n\geq 0}$, with $\gamma_{n+1}\neq 0,\;n\geq0$, such that, $\{P_{n}\}_{n\geq 0}$ satisfies
the following initial conditions and linear recurrence relation of order 2
\begin{eqnarray}
&&P_{0}(x)=1,\quad P_{1}(x)=x-\beta _{0},  \label{icrecOrto}\\
&&P_{n+2}(x) =(x-\beta _{n+1})P_{n+1}(x)-\gamma
_{n+1}P_{n}(x),\quad n\geq 2.\label{recOrto}
\end{eqnarray}

Furthermore, the recurrence coefficients $\{\beta_{n}\}_{n\geq 0}$ and $\{\gamma_{n+1}\}_{n\geq 0}$ satisfy:
\begin{eqnarray}
\beta _{n} &=&\frac{\left\langle u,xP_{n}^{2}(x)\right\rangle }{k_n},\quad n\geq 0, \notag \\%\label{betas}\\
 \notag\\
\gamma _{n+1} &=&\frac{k_{n+1}}{k_n},\quad n\geq 0.  \label{gamas}
\end{eqnarray}
%\begin{eqnarray}
%\beta _{n} &=&\frac{\left\langle u,xP_{n}^{2}(x)\right\rangle }{%
%\left\langle u,P_{n}^{2}(x)\right\rangle },\quad n\geq 0, \notag\\%\label{beta}\\
% \notag\\
%\gamma _{n+1} &=&\frac{\left\langle u,P_{n+1}^{2}(x)\right\rangle }{%
%\left\langle u,P_{n}^{2}(x)\right\rangle },\quad
%n\geq 0.  \label{gamas}
%\end{eqnarray}
We remark that, from (\ref{icrecOrto}) and (\ref{gamas}), the regularity conditions (\ref{cond.Ort.Reg.}) are equivalent to the conditions $\gamma_{n+1}\neq 0,\ n\geq0$.
As usual, we suppose that, $\beta_{n}=0$, $\gamma_{n+1}=0$ and $P_{n}(x)=0$,  for all$n<0$.
%$%\begin{equation}
%\beta_{n}=0, \  \gamma_{n+1}=0,\  P_{n}(x)=0, \  n< 0. %\label{Pnneg}\\
%$%\end{equation}

The MOPS $\{P_n\}_{n\geq 0}$ is real if and only if $\beta_{n}\in \mathbb{R}$ and $\gamma_{n+1}\in \mathbb{R}-\{0\}$. These conditions are equivalent to $(u)_n\in \mathbb{R},\ n\geq 0$ and $u$ is real. If, in addition, we suppose that $\gamma_{n+1}>0$, $n\geq 0$, then $u$ is {\it positive definite}, because this condition is equivalent to $<u,p>>0$, $\forall p\in {\cal P}$:  $p\not\equiv 0$, $p(x)\geq0$, $x\in \mathbb{R}$ \cite{Chihara,MARO 91}.

A form $u$ is {\it symmetric} if and only if
$(u)_{2n+1}=0,\ n\geq 0.$
A polynomial sequence, $\{P_{n}\}_{n\geq 0}$, is symmetric if and only if
$P_n(-x)=(-1)^nP_n(x), \ n\geq 0.$
If   $\{P_{n}\}_{n\geq 0}$ is a MOPS with respect to $u$, then these conditions are equivalent to $\beta_n=0$, $n\geq 0$ \cite{Chihara,MARO 91}.

%then the following statements are equivalent \cite{Chihara}: a) $u$ is symmetric. b) $\{P_{n}\}_{n\geq 0}$ is symmetric. c)  $\beta_n=0$, $n\geq 0$.

The {\em $r$th-associated sequence of a MOPS $\{P_{n}\}_{n\ge 0}$} satisfying (\ref{icrecOrto})-(\ref{recOrto}) is a MOPS $\{P^{(r)}_{n}\}_{n\ge 0}$ whose recurrence coefficients are given by \cite{Chihara,MARO 91}
\begin{equation}\label{rcoefass}
\beta_{n}^{(r)}=\beta_{n+r}\quad,\quad\gamma_{n+1}^{(r)}=\gamma_{n+1+r}\quad,\quad n\;,\;r\ge 0\;.
\end{equation}

Let us consider two {\em perturbed sequences} of a MOPS  $\{P_n\}_{n\geq0}$ satisfying (\ref{icrecOrto})-(\ref{recOrto}), obtained by modifying only one of its recurrence coefficients at order $r$. We shall modify $\beta_{r}$ {\em by translation} and $\gamma_{r}$ {\em by dilatation} by means of two parameters $\mu_r$ and $\lambda_r$. Thus, the {\em $r$th-perturbed sequence by translation}, for $r\geq 0$, noted by 
%$\widetilde P_{n}(x)= P^{t}_{n}(\mu_{r};r)(x)$
$\{P^{t}_{n}(\mu_{r};r)(x)\}_{n\geq 0}$,
is a MOPS with the following recurrence coefficients
\begin{equation} \label{rcoefftra}
\beta^{t}_{r}= \beta_{r}+\mu_{r}\ ,\ \beta^{t}_{n}= \beta_{n}\ ,\ n\neq r\ ,\ n\geq 0\quad ;\quad \gamma^{t}_{n+1}=\gamma_{n+1}\ ,\  n\geq 0\ .
\end{equation}
When $r=0$, we recover the so-called {\em co--recursive} sequence \cite{Chihara_0,Chihara}, for which only the initial polynomial $P_1(x)$ is perturbed becoming $P^{t}_1(\mu_{0};0)(x)=x-\beta_0-\mu_0$. 
The {\em $r$th-perturbed sequence by dilatation}, for $r\geq 1$, noted by
%$\widetilde P_{n}(x)= P^{d}_{n}(\lambda_{r};r)(x)$
$\{P^{d}_{n}(\lambda_{r};r)(x)\}_{n\geq 0}$,
is a MOPS with the following recurrence coefficients
\begin{equation} \label{rcoeffdil}
\beta^{d}_{n}= \beta_{n}\ ,\ n\geq 0\quad ;\quad  \gamma^{d}_{r}= \lambda_{r}\gamma_{r}\ ,\ \lambda_{r}\neq 0,1\ , \ \gamma^{d}_{n+1}=\gamma_{n+1}\ ,\ n\neq r-1,\ n\geq 0\ .
\end{equation}
We note by $\tilde u:=\displaystyle u^t\left(\mu_r;r\right)$ and $\tilde u:=\displaystyle u^d\left(\lambda_r;r\right)$, the forms with respect to which these families are orthogonal.
In literature, these sequences are often designated as {\em $r$th-generalized co-recursive} and {\em $r$th-generalized co-dilated} polynomials and both situations as {\it $r$th-generalized co-polynomials} \cite{Marcellan_1990}. Also, they are particular cases of a general perturbation of order $r$ defined in \cite{MARO 91}.
Perturbation by translation destroys the symmetry, but does not change the positive definiteness character of the original sequence. Perturbation by dilatation does not change the symmetry; moreover if $\lambda_r>0$, it preserves also the positive definiteness character.

The canonical sequence $\{X_{n}\}_{n\geq 0}$,  $X_{n}(x)= x^{n}$, is orthogonal with respect to the Dirac measure $\delta_0$,
$\left\langle \delta_0, f\right\rangle=f(0),\ {\rm defined\ by\ the\ moments} \left(\delta_0\right)_n={\delta}_{0,n},\ n\geq 0$. %, where $\delta$ is the Dirac symbol.
This sequence is not regularly orthogonal, since it satisfies (\ref{icrecOrto})-(\ref{recOrto}) with recurrence coefficients %given by
\begin{equation}\label{canonicalrr}
\beta_{n}=0\ ,\ \gamma_{n+1}=0\ ,\ n\geq 0\ .
\end{equation}

Given any two MPS $\{P_{n}\}_{n\geq 0}$ and $\{{\tilde P}_{n}\}_{n\geq 0}$, not necessarily orthogonal, the coefficients that satisfy the {\em connection relation} (CR)
%equality
\begin{equation}\label{PntildePnu}
{\tilde P}_n (x)=\sum_{m=0}^{n}\lambda_{n,m}^{{\tilde P} P}{P}_m(x),
\ n\geq 0,
\end{equation}
are called the {\em connection coefficients} (CC) $\lambda_{n,m}:=\lambda_{n,m}^{{\tilde P} P}:=\lambda_{n,m}({\tilde P}\leftarrow P)$.
It is obvious that these coefficients exist and are unique, because the polynomial sequences are linearly independent. 
In the case $P_m(x)= X_m(x)=x^m$, $m\geq 0$, we have 
\begin{eqnarray}
{\tilde P}_n(x)=\sum_{m=0}^{n} \lambda^{{\tilde P}X}_{n,m}x^m, \label{PntildeXn}
\end{eqnarray}
with $\lambda^{{\tilde P}X}_{n,m}:=\lambda_{n,m}({\tilde P}\leftarrow X)$.
We shall consider also the Vi\`ete's formulas \cite{Weisstein_2015} that establish a relationship between $\lambda^{{\tilde P}X}_{n,m}$  and the zeros 
$\{\xi_{m}^{(n)}\}_{m=1(1)n}$ of ${\tilde P}_n$, in particular
\begin{eqnarray}
\lambda^{{\tilde P}X}_{n,n-1}=-\sum_{m=1}^{n}\xi_{m}^{(n)}\quad, \quad 
\lambda^{{\tilde P}X}_{n,0}=(-1)^n\prod_{m=1}^{n}\xi_{m}^{(n)}\ , \ n\geq 1\ . \label{CC_Can_Zeros_Pn}
\end{eqnarray}

Let us suppose that the MPS $\{P_{n}\}_{n\geq 0}$ is orthogonal with respect to $u$, then multiplying both members of (\ref{PntildePnu}) by $P_k(x)$, applying $u$ and taking into account 
(\ref{cond.Ort.}) and (\ref{cond.Ort.Reg.}), we obtain \cite[p.295]{ZeliaPascal3} 
\begin{eqnarray}
\lambda_{n,m}^{{\tilde P} P} & = & \frac{\left\langle u,{\tilde P}_n P_m\right\rangle}{k_n}\  ,
\ 0\leq m\leq n\ , \ n\geq 0\ . \label{linkcoeffnm}
\end{eqnarray}
In addition, let us suppose that the MPS $\{{\tilde P}_{n}\}_{n\geq 0}$ is orthogonal with respect to $\tilde u$, and that $\{P_{n}\}_{n\geq 0}$ and $\{{\tilde P}_{n}\}_{n\geq 0}$ are given by  their recurrence coefficients
$\{\beta_{n}\}_{n\geq 0}$, $\{\gamma_{n+1}\}_{n\geq 0}$ and $\{{\tilde \beta}_{n}\}_{n\geq 0}$, $\{{\tilde\gamma}_{n+1}\}_{n\geq 0}$, respectively. In (\ref{linkcoeffnm}), using
(\ref{recOrto}) and (\ref{gamas}) for both sequences, it is possible 
to demonstrate that the CC fulfill the following {\em boundary} and {\em initial conditions} and {\em general recurrence relation} \cite[pp.295-296]{ZeliaPascal3} (see, also, \cite{ZeliaPascal2013_0}) 
\begin{eqnarray}
&& \lambda_{n,m}^{{\tilde P} P} = 0\ , \ n< 0 \text{\ or\ } m<0 \text{\ or\ } m >  n\ ; \
 \lambda_{n,n}^{{\tilde P} P} =1\ , \ n\geq 0\ ; \
 \lambda^{{\tilde P} P}_{1,0}=\beta_0-{\tilde\beta}_0\ , \label{linkcoeffnms} \\
&& \lambda^{{\tilde P} P}_{n,m} = \left({\beta}_m- {\tilde\beta}_{n-1}\right) \lambda^{{\tilde P} P}_{n-1,m}-
{\tilde\gamma}_{n-1}\lambda^{{\tilde P} P}_{n-2,m} +{\gamma}_{m+1}\lambda^{{\tilde P} P}_{n-1,m+1}+\lambda^{{\tilde P} P}_{n-1,m-1}\ ,\label{GenRecRel}\\
&&\hspace{1,5cm}\ 0\leq m \leq n-1\ ,\ n \geq 2\ . \notag
\end{eqnarray}

If $\{ P_n\} _{n\geq 0}$ and $\{ \tilde P_n\} _{n\geq 0}$ are symmetric, then from (\ref{linkcoeffnm}) and the symmetry of $u$, we have
$$%\begin{equation}\label{Symlinkcoeffinitialc}
\lambda_{2n-1,2m}^{{\tilde P} P} =0\ ,\ \lambda_{2n,2m+1}^{{\tilde P} P}=0 \ , \ 0\leq m \leq n-1\ ,\ n\geq 1\ .
%%%%%%%%%%%%%%%%%%%%%%%%%%%%%%%%%%%%%%%%%%%%%
% POSSO RETIRAR ESTA PARTE - COME'A AQUI
%%%%%%%%%%%%%%%%%%%%%%%%%%%%%%%%%%%%%%%%%%%%%
$$% \end{equation}
Moreover, as $\beta_n=0=\tilde \beta_n$, $n\geq0$, the relation (\ref{GenRecRel}) is equivalent to
\begin{eqnarray}
\lambda_{2n,2m}^{{\tilde P} P} & = &
{\gamma}_{2m+1}\lambda_{2n-1,2m+1}^{{\tilde P} P}+\lambda_{2n-1,2m-1}^{{\tilde P} P}-{\tilde\gamma}_{2n-1}\lambda_{2n-2,2m}^{{\tilde P} P}\ , \notag\\%\label{OtherSymGenRecRel11}\\
\lambda_{2n+1,2m+1}^{{\tilde P} P}  & = &
{\gamma}_{2m+2}\lambda_{2n,2m+2}^{{\tilde P} P}+\lambda_{2n,2m}^{{\tilde P} P}-{\tilde\gamma}_{2n}\lambda_{2n-1,2m+1}^{{\tilde P} P}\ ,
 \notag%\label{OtherSymGenRecRel2} \\
\end{eqnarray}
for $0\leq m \leq n-1\ ,\ n\geq 1$.
Furthermore, in the case $P_n\equiv X_n$, due to (\ref{canonicalrr}), these relations become
\begin{eqnarray}
\lambda^{{\tilde P}X}_{2n,2m} & = &
\lambda^{{\tilde P}X}_{2n-1,2m-1}-{\tilde\gamma}_{2n-1}\lambda^{{\tilde P}X}_{2n-2,2m}\ , \notag\\
\lambda^{{\tilde P}X}_{2n+1,2m+1}  & = &
\lambda^{{\tilde P}X}_{2n,2m}-{\tilde\gamma}_{2n}\lambda^{{\tilde P}X}_{2n-1,2m+1}\ .
 \notag%\label{OtherSymGenRecRel2} \\
\end{eqnarray}
%%%%%%%%%%%%%%%%%%%%%%%%%%%%%%%%%%%%%%%%%%%%%
% POSSO RETIRAR ESTA PARTE - ACABA AQUI
%%%%%%%%%%%%%%%%%%%%%%%%%%%%%%%%%%%%%%%%%%%%%

%%%%%%%%%%%%%%%%%%%%%%%%%%%%%%%%%%%%%%%%%%
%\subsection{Perturbed polynomials}\label{PerPol}
%%%%%%%%%%%%%%%%%%%%%%%%%%%%%%%%%%%%%%%%%%

%%%%%%%%%%%%%%%%%%%%%%%%%%%%%%%%%%%%%%%%%%
\subsection{The four families of Chebyshev polynomials}\label{4ChePol}
%%%%%%%%%%%%%%%%%%%%%%%%%%%%%%%%%%%%%%%%%%

There are four sequences of Chebyshev polynomials, they are called Chebyshev polynomials of first ($T_n$), second ($P_n$), third ($V_n$) and fourth ($W_n$) kinds. W. Gaustchi \cite{Gautschi_1992} named these last two sequences in this way, before they had been designated as airfoil polynomials (see, e.g., \cite{FrommeGolberg_1981}).
Their trigonometric definitions are
\begin{eqnarray}
T_n(x)=\frac{1}{2^{n-1}}\cos nt\quad & , & \quad
P_n(x)=\frac{1}{2^{n}}\frac{\sin (n+1)t}{\sin t}\ , \notag\\ %\label{tridefTU}\\
V_n(x)=\frac{1}{2^{n}}\frac{\cos (n+\frac{1}{2})t}{\cos\frac{1}{2}t}\quad & , &\quad
W_n(x)=\frac{1}{2^{n}}\frac{\sin (n+\frac{1}{2})t}{\sin\frac{1}{2}t}\; , \notag %\label{tridefVW}
\end{eqnarray}
where $x=\cos t$, $t\in[0,\pi]$, $n\geq0$.
Notice that, as in this work we always consider monic polynomials, 
%e.g., with unit leading coefficient, 
thus some normalization constants must appear in the preceding definitions.
%From those identities,
%(\ref{tridefTU}) and (\ref{tridefVW})
From them, it is trivial  to obtain explicit formulas for the zeros
\cite[pp.20-21]{Mason_2003} 
\begin{eqnarray}
T_n: &  \rho^{(n)}_k=\cos\left(\frac{(k-\frac{1}{2})\pi}{n}\right), \ k=1(1)n\  ;\  
P_n: &  \xi^{(n)}_k=\cos\left(\frac{k\pi}{n+1}\right), \ k=1(1)n, \label{zerosTnPn}\\
V_n: &  \rho^{(n)}_k=\cos\left(\frac{(k-\frac{1}{2})\pi}{n+\frac{1}{2}}\right), \ k=1(1)n\  ;\    
W_n: &  \rho^{(n)}_k=\cos\left(\frac{k\pi}{n+\frac{1}{2}}\right), \ k=1(1)n. \label{zerosVnWn} %\label{zerosVnWn}
\end{eqnarray}
Using some trigonometric identities, it can be shown that these families satisfy (\ref{icrecOrto})-(\ref{recOrto}) with the following recurrence coefficients  \cite{Mason_2003}
\begin{eqnarray}
T_n: &  \beta_{n}=0\;, \, n\geq 0\; &,\quad \gamma_{1}=\frac{1}{2}\;,\;\gamma_{n+1}={\frac{1}{4}}\; ,\; n\ge 1\;,  \notag\\%\label{TnRR}\\
P_n: &  \beta_{n}=0\;, \, n\geq 0\; &,\quad \gamma_{n+1}={\frac{1}{4}}\; ,\; n\ge 1\;,  \label{rcoefTT2}\\
V_n: & \beta_0=\frac{1}{2}\;,\;\beta_{n}=0\;, n\geq 1\;  &,\quad  \gamma_{n+1}={\frac{1}{4}}\; ,\; n\ge 0\;,  \notag\\%\label{VnRR}\\
W_n: &  \beta_0=-\frac{1}{2}\;,\;\beta_{n}=0\;, n\geq 1\; &,\quad \gamma_{n+1}={\frac{1}{4}}\; ,\; n\ge 0\;.  \notag %\label{WnRR}
\end{eqnarray}
Therefore, $\{T_n\}_{n\geq 0}$ and $\{P_n\}_{n\geq 0}$ are symmetric, $\{V_n\}_{n\geq 0}$ and $\{W_n\}_{n\geq 0}$ are not; they are all positive definite. As $\{P_n\}_{n\geq 0}$ has the most simple recurrence coefficients, then it is natural to consider $\{T_n\}_{n\geq 0}$, $\{V_n\}_{n\geq 0}$ and $\{W_n\}_{n\geq 0}$ as perturbed of $\{P_n\}_{n\geq 0}$ as follows \cite{ZeliadaRocha2016_1}
%${\cal T}_1$, ${\cal T}_3$ and ${\cal T}_4$ as perturbed of ${\cal U}$ as follows
\begin{equation}\label{TnVnWn}
T_n(x)=P^{d}_n\left(2;1\right)(x)\  , \ 
 V_n(x)=P^{t}_n\left(\frac{1}{2};0\right)(x)\ , \ W_n(x)=P^{t}_n\left(-\frac{1}{2};0\right)(x)\ .
 \end{equation}
With respect to the association, we have that 
$T^{(1)}_n\equiv V^{(1)}_n\equiv W^{(1)}_n\equiv P_n;$ moreover $P^{(r)}_n\equiv P_n$, 
$\forall r\geq 0$, e.g., $P_n$ is a {\em self-associated} sequence \cite{MaroniTounsi2004}.
%%%%%%%%%%%%%%%%%%%%%%%%%%%%

The following CR that allow to express $\{ T_n\} _{n\geq 0}$, $\{ V_n\} _{n\geq 0}$ and $\{ W_n\} _{n\geq 0}$ in terms of $\{ P_n\} _{n\geq 0}$ are well known \cite[pp.4,8]{Mason_2003}
\begin{eqnarray}
&& T_{n+2}(x)=P_{n+2}(x)-\frac{1}{4}P_n(x)\ ,\ n\geq 0\ ,\label{CR_TP}\\
&& V_{n+1}(x)=P_{n+1}(x)-\frac{1}{2}P_n(x)\ , \ n\geq 0\ ,\label{CR_VP}\\
&& W_{n+1}(x)=P_{n+1}(x)+\frac{1}{2}P_n(x)\ , \ n\geq 0\ . \label{CR_WP}
\end{eqnarray}
One goal of this article is to generalize these CR putting in the first member of them any perturbed polynomial $P^{t}_n(\mu_r;r)(x)$ and $P^{d}_n(\lambda_r;r)(x)$ of the sequence of second kind. 

The CC $C_{n,m}=\lambda_{n,m}^{{P} X}$ and the CR of $\{P_n\}_{n\geq 0}$ in the canonical basis are \cite[p.223]{Riordan_1}
\begin{eqnarray}
&& P_{2n}(x)=\sum_{\nu=0}^nC_{2n,2\nu} x^{2\nu}\ ,  \label{P2n}\\
&& C_{2n,2\nu}=\frac{(-1)^{n-\nu}}{2^{2(n-\nu)}}\binom{n+\nu}{n-\nu},\ \nu=0(1)n, \ C_{2n,2\nu+1}=0,\  \nu=0(1)n-1\ ;\label{C2n}\\
&&P_{2n+1}(x)=\sum_{\nu=0}^nC_{2n+1,2\nu+1} x^{2\nu+1}\  , \label{P2n1}\\
&& \   C_{2n+1,2\nu+1}=\frac{(-1)^{n-\nu}}{2^{2(n-\nu)}}\binom{n+\nu+1}{n-\nu},\ \nu=0(1)n ,\
C_{2n+1,2\nu}=0,\ \nu=0(1)n\ . \label{C2n1}
\end{eqnarray}
Another goal of this article is to generalize these well known CC and CR putting in the first member of them any perturbed polynomial $P^{t}_n(\mu_r;r)(x)$ and $P^{d}_n(\lambda_r;r)(x)$ of the sequence of second kind. 
From the above identities, we obtain, in particular,  that
\begin{eqnarray}
&&  C_{n,n-1}=0\ , \ C_{2n,0}=\frac{(-1)^{n}}{2^{2n}}\ , \  C_{2n+1,0}=0\ ,\ n\geq 0\ ,\label{CCTT2_0n1}
\end{eqnarray}
which implies, by (\ref{CC_Can_Zeros_Pn}), that
\begin{eqnarray}
&& \sum_{k=1}^{n} {\xi}_k^{(n)}=0\ ,\label{S_ZerosTT2}\\
&&\prod_{k=1}^{2n} {\xi}_k^{(2n)}=\frac{(-1)^{n}}{2^{2n}}\Longrightarrow P_{2n}(0)\neq0\ ,
\prod_{k=1}^{2n+1} {\xi}_k^{(2n+1)}=0\Longrightarrow P_{2n+1}(0)=0\ . \label{P_ZerosTT2}
\end{eqnarray}
We remark that 
$C_{2n,2\nu+1}=0$, $\nu=0(1)n-1$, $C_{2n+1,2\nu}=0$ and $P_{2n+1}(0)=0$, $n\geq 0$ are  assured by symmetry.

%Furthermore, from symmetry $C_{2n,2\nu+1}=0$, $\nu=0(1)n-1$, $C_{2n+1,2\nu}=0$.
%%%%%%%%%%%%%%%%%%%%%%%%%%%%
%\begin{eqnarray}
%&&P_{2n}(x)=\frac{1}{2^{2n}}\sum_{\nu=0}^n(-1)^{n-\nu}\binom{n+\nu}{n-\nu}2^{2\nu}x^{2\nu}\ ,\label{P2n}\\
%&&P_{2n+1}(x)=\frac{1}{2^{2n}}\sum_{\nu=0}^n(-1)^{n-\nu}\binom{n+\nu+1}{n-\nu}2^{2\nu}x^{2\nu+1}\ .\label{P2n1}
%\end{eqnarray}
The four kinds of Chebyshev polynomials \cite{Gautschi_1992,Mason_2003}  belong to the Jacobi class of classical polynomials $\{P_{n}^{(\alpha,\beta)}(x)\}_{n\geq 0}$ \cite{MARO 93,MARO 94} orthogonal with respect to the form $\mathcal{J}(\alpha,\beta)$. A Jacobi form is regular if and only if $\alpha\neq -n$, $\beta\neq -n$, $\alpha+\beta\neq -(n+1)$, $n\geq1$; 
 it is symmetric for $\alpha=\beta$; it is positive definite if and only if $\alpha+1>0$ and $\beta+1>0$ and has the following integral representation for $Re(1+\alpha)>0$ and $Re(1+\beta)>0$ \cite{Chihara,MARO 93, MARO 94}
$$
<\mathcal{J}(\alpha,\beta),f>=\frac{1}{2^{\alpha+\beta+1}}\frac{\Gamma(\alpha+\beta+2)}{\Gamma(\alpha+1)\Gamma(\beta+1)}\int_{-1}^{1}(1+x)^\alpha(1-x)^\beta f(x)dx\;.
$$
Let us note the Chebyshev normalized forms by ${\cal T}_1$ $(\alpha=\beta=-\frac{1}{2})$, ${\cal U}$ $(\alpha=\beta=\frac{1}{2})$, ${\cal T}_3$ $(\alpha=\frac{1}{2},\ \beta=-\frac{1}{2})$ and ${\cal T}_4$ $(\alpha=-\frac{1}{2},\ \beta=\frac{1}{2})$ ; 
%and the corresponding monic polynomial sequences by $\{T_n\}_{n\geq 0}$, $\{P_n\}_{n\geq 0}$, $\{V_n\}_{n\geq 0}$ and $\{W_n\}_{n\geq 0}$. So, from now on, $\{P_n\}_{n\geq 0}$ will note the monic Chebyshev polynomials of second kind.
their integral representations are given by \cite{Mason_2003}
\begin{eqnarray}
<{\cal T}_1,f>=\frac{1}{\pi}\int_{-1}^{+1}\frac{f(x)}{\sqrt{1-x^2}}dx 
&, &  <{\cal U},f>=\frac{2}{\pi}\int_{-1}^{1}f(x)\sqrt{1-x^2}dx\ , \label{intrepTT12}\\
 <{\cal T}_3,f>=\frac{1}{\pi}\int_{-1}^{1}f(x)\sqrt{\frac{1+x}{1-x}}dx 
& , & <{\cal T}_4,f>=\frac{1}{\pi}\int_{-1}^{1}f(x)\sqrt{\frac{1-x}{1+x}}dx\ . \label{intrepTT34}
\end{eqnarray}
%It is well known that the normalized derivative of Jacobi polynomials satisfy \cite{MARO 94}
%$$%\begin{equation}\label{dernorjacpol}
%\frac{1}{n+1}{P'}_{n+1}^{(\alpha,\beta)}(x)=P_n^{(\alpha+1,\beta+1)}(x)\ ,\ n\geq 0\ .
%$$%\end{equation}
%In particular, we have% for the normalized derivative of the Chebyshev polynomials of second kind
%\begin{equation}\label{dernorTT2}
%{P'}_{n+1}(x)={P'}_{n+1}^{(\frac{1}{2},\frac{1}{2})}(x)=(n+1)P_n^{(\frac{3}{2},\frac{3}{2})}(x)\ ,\ n\geq 0\ .
%\end{equation}

Chebyshev forms are of second degree \cite{DBegMARO 97}. Perturbed Chebyshev polynomials are also of second degree and consequently they are semi-classical \cite{MARO 95,MARO 95_1}; the {\it PSDF - Perturbed Second Degree Forms\/} symbolic algorithm \cite{ZeliadaRocha2015_1} allowed to explicit some of their main properties, namely the second order linear differential equation \cite{ZeliadaRocha2016_1,ZeliadaRocha2015_1}. 
Integral representations of perturbed Chebyshev forms are given in the co-recursive case in \cite{Chihara_0} and in the co-dilated situation of order 1 in \cite{MAROMEJRI}. 
To the best of my knowledge, these integral representations remain an open problem for orders of perturbation greater than or equal to two.

From now on, $\{P_n\}_{n\geq 0}$ will note the sequence of monic Chebyshev polynomials of second kind.

%%%%%%%%%%%%%%%%%%%%%%%%%%%%%%%%%%%%%%%%%%
%%%%%%%%%%%%%%%%%%%%%%%%%%%%%%%%%%%%%%%%%%
%%%%%%%%%%%%%%%%%%%%%%%%%%%%%%%%%%%%%%%%%%
%%%%%%%%%%%%%%%%%%%%%%%%%%%%%%%%%%%%%%%%%%
\section{Connection coefficients and connection relations in terms of the Chebyshev polynomials of second kind}\label{CCChe}
%\section{Connection coefficients and connection relations for some perturbed of the Chebyshev polynomials of second kind in terms of the Chebyshev polynomials of second kind}\label{CCChe}
%$r$-th generalized co-recursive and co-dilated 
%%%%%%%%%%%%%%%%%%%%%%%%%%%%%%%%%%%%%%%%%%
%%%%%%%%%%%%%%%%%%%%%%%%%%%%%%%%%%%%%%%%%%
%%%%%%%%%%%%%%%%%%%%%%%%%%%%%%%%%%%%%%%%%%

In this section, we shall give the CC and the CR that allow to express $\{P^{t}_n(\mu_r;r)(x)\}_{n\geq 0}$ and $\{P^{d}_n(\mu_r;r)(x)\}_{n\geq 0}$ in terms of  $\{P_n\}_{n\geq 0}$, for any order $r$ of perturbation. That is, we shall deal with 
$$\lambda^{t}_{n,m}:=\lambda^{P^tP}_{n,m}=\lambda_{n,m}(P^{t}(\mu_r;r)\leftarrow P)\  {\rm and} \ \lambda^{d}_{n,m}:=\lambda^{P^dP}_{n,m}=\lambda_{n,m}(P^{d}(\lambda_r;r)\leftarrow P)\ .$$
%%%%%%%%%%%%%%%%%%%%%%%%%%%%%%%%%%%%%%%%%%%%%
%%%%%%%%%%%%%%%%%%%%%%%%%%%%%%%%%%%%%%%%%%%%%
%\subsection{Connection coefficients and connection relations in terms of the Chebyshev polynomials of second kind in terms of the Chebyshev polynomials of second kind}\label{CCChe1}
%%%%%%%%%%%%%%%%%%%%%%%%%%%%%%%%%%%%%%%%%%%%%
%%%%%%%%%%%%%%%%%%%%%%%%%%%%%%%%%%%%%%%%%%%%%
We begin by two lemmas, where we rewrite the general recurrence relation (\ref{linkcoeffnms})-(\ref{GenRecRel}) in the two particular cases considered, replacing $\{\beta_n\}_{n\geq 0}$, $\{\gamma_{n+1}\}_{n\geq 0}$ and $\{{\tilde\beta}_n\}_{n\geq 0}$, $\{\tilde\gamma_{n+1}\}_{n\geq 0}$  by their particular values obtained from (\ref{rcoefftra}), (\ref{rcoeffdil}) and (\ref{rcoefTT2}) and given by 
\begin{eqnarray}
\beta_r^{t}=\mu_r\ ,\ \beta_n^{t}=0,\ n\neq r\ ;\ \gamma_{n+1}^t=\frac{1}{4}\ ,\ n\geq 0\ ,\ r\geq 0\ ,\ {\rm or}\label{rrc_tra}\\
\beta_n^{d}=0\ ;\ \gamma_{r}^d=\lambda_r\frac{1}{4}\ ,\ \gamma_{n+1}^d=\frac{1}{4},\ n\neq r-1\ ,\ n\geq 0\label{rrc_dil}\ ,\ r>0\ .
\end{eqnarray}
%%%%%%%%%%%%%%%%%%%%%%%%%%%%%%
\begin{lemma}\label{lemma1}
For the {\it co-recursive} case $(r=0)$, 
\begin{eqnarray}
&&\lambda^{t}_{n,n}=1,\ n\geq 0\ ;\ \lambda^{t}_{1,0}=-\mu_0\ ;\label{GRRCoRecCondi}\\
&&\lambda^{t}_{n,m}=\frac{1}{4}\left(-\lambda^{t}_{n-2,m}+\lambda^{t}_{n-1,m+1}\right)+\lambda^{t}_{n-1,m-1}\ ,\ 0\leq m<n\ ,\ n\geq2\ . \label{GRRCoRec}
\end{eqnarray}
For the {\em $r$th-perturbed  by translation} case $(r\geq 1)$, 
\begin{eqnarray}
&&\lambda^{t}_{n,n}=1,\ n\geq 0\ ;\ \lambda^{t}_{1,0}=0\ ;\label{GRRGenCoRecCondi}\\
&&\lambda^{t}_{r+1,m}=-\mu_r\lambda^{t}_{r,m}+\frac{1}{4}\left(-\lambda^{t}_{r-1,m}+\lambda^{t}_{r,m+1}\right)+\lambda^{t}_{r,m-1}\ ,\ 0\leq m<r+1\ ;\label{GRRGenCoRecR}\\
&&\lambda^{t}_{n,m}=\frac{1}{4}\left(-\lambda^{t}_{n-2,m}+\lambda^{t}_{n-1,m+1}\right)+\lambda^{t}_{n-1,m-1},\ 0\leq m<n, n\neq r+1, n\geq2. \label{GRRGenCoRecN}
\end{eqnarray}
\end{lemma}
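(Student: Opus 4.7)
The proof is essentially a direct verification: I would substitute the specific recurrence coefficients of the Chebyshev sequence of second kind and of its $r$th-perturbed sequence by translation into the general formulas (\ref{linkcoeffnms})--(\ref{GenRecRel}), and check that the six stated equations come out. Since nothing is being invented, my plan is to organize the substitution carefully and to separate the indices at which the perturbed $\tilde\beta$ is nonzero from those at which it vanishes.

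First I would use (\ref{rrc_tra}) together with (\ref{rcoefTT2}) to note that the only nonzero ``perturbed" recurrence coefficient appearing in (\ref{GenRecRel}) is $\tilde\beta_r=\mu_r$, while $\beta_m=0$, $\gamma_{m+1}=\tfrac14$, and $\tilde\gamma_{n-1}=\tfrac14$ for every admissible index. Substituting these values into the general recurrence yields
\begin{equation*}
\lambda^{t}_{n,m}= -\tilde\beta_{n-1}\,\lambda^{t}_{n-1,m}-\tfrac14\lambda^{t}_{n-2,m}+\tfrac14\lambda^{t}_{n-1,m+1}+\lambda^{t}_{n-1,m-1}\ ,
\end{equation*}
valid for $0\leq m\leq n-1$, $n\geq 2$. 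The recurrence then splits naturally according to whether $n-1=r$ (so that $\tilde\beta_{n-1}=\mu_r$ survives) or $n-1\neq r$ (so that $\tilde\beta_{n-1}=0$ and the $\mu_r$-term disappears).

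Next I would treat the initial data. From (\ref{linkcoeffnms}) we have $\lambda^{t}_{n,n}=1$ for all $n\geq 0$ without change, and $\lambda^{t}_{1,0}=\beta_0-\tilde\beta_0$. In the co-recursive case $r=0$ this gives $\lambda^{t}_{1,0}=-\mu_0$, which is (\ref{GRRCoRecCondi}); for $r\geq 1$ we have $\tilde\beta_0=0$ and so $\lambda^{t}_{1,0}=0$, which is (\ref{GRRGenCoRecCondi}). Combining the split above with the initial data, for $r=0$ the condition $n-1=r$ would force $n=1$, which is excluded from the range $n\geq 2$ of the general recurrence, so the $\mu_0$-term never appears beyond the initial step; this yields exactly (\ref{GRRCoRec}). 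For $r\geq 1$, the split produces (\ref{GRRGenCoRecR}) at the single index $n=r+1$ and (\ref{GRRGenCoRecN}) at all other $n\geq 2$.

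There is no real obstacle here — the argument is bookkeeping — so the only point requiring a little care is making sure the index shifts are correct (in particular, that $\tilde\beta_{n-1}$ becomes $\mu_r$ precisely when $n=r+1$, not when $n=r$) and that the excluded case $n=1$ in the co-recursive situation is properly absorbed into the initial condition rather than into the three-term recurrence.
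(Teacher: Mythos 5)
Your proposal is correct and follows exactly the paper's route: the paper obtains Lemma \ref{lemma1} precisely by substituting the particular recurrence coefficients (\ref{rrc_tra}) (i.e.\ $\tilde\beta_r=\mu_r$, all other $\tilde\beta_n=0$, all $\gamma_{m+1}=\tilde\gamma_{n-1}=\tfrac14$) into the general relations (\ref{linkcoeffnms})--(\ref{GenRecRel}), with the same split according to whether $n-1=r$ and the same handling of $\lambda^{t}_{1,0}=\beta_0-\tilde\beta_0$.
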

%%%%%%%%%%%%%%%%%%%%%%%%%%%%%%
%%%%%%%%%%%%%%%%%%%%%%%%%%%%%%
\begin{figure}[htbp]
\caption{Representation of recurrence relations (\ref{GRRGenCoRecR}) and  (\ref{GRRGenCoRecN}) for the {\em $r$th-perturbed by translation case for $r\geq1$.}}
\label{GRR_Tra_r_n}
\begin{center}
\begin{tabular}{|c|c|}\hline
\includegraphics[width=6.5cm]{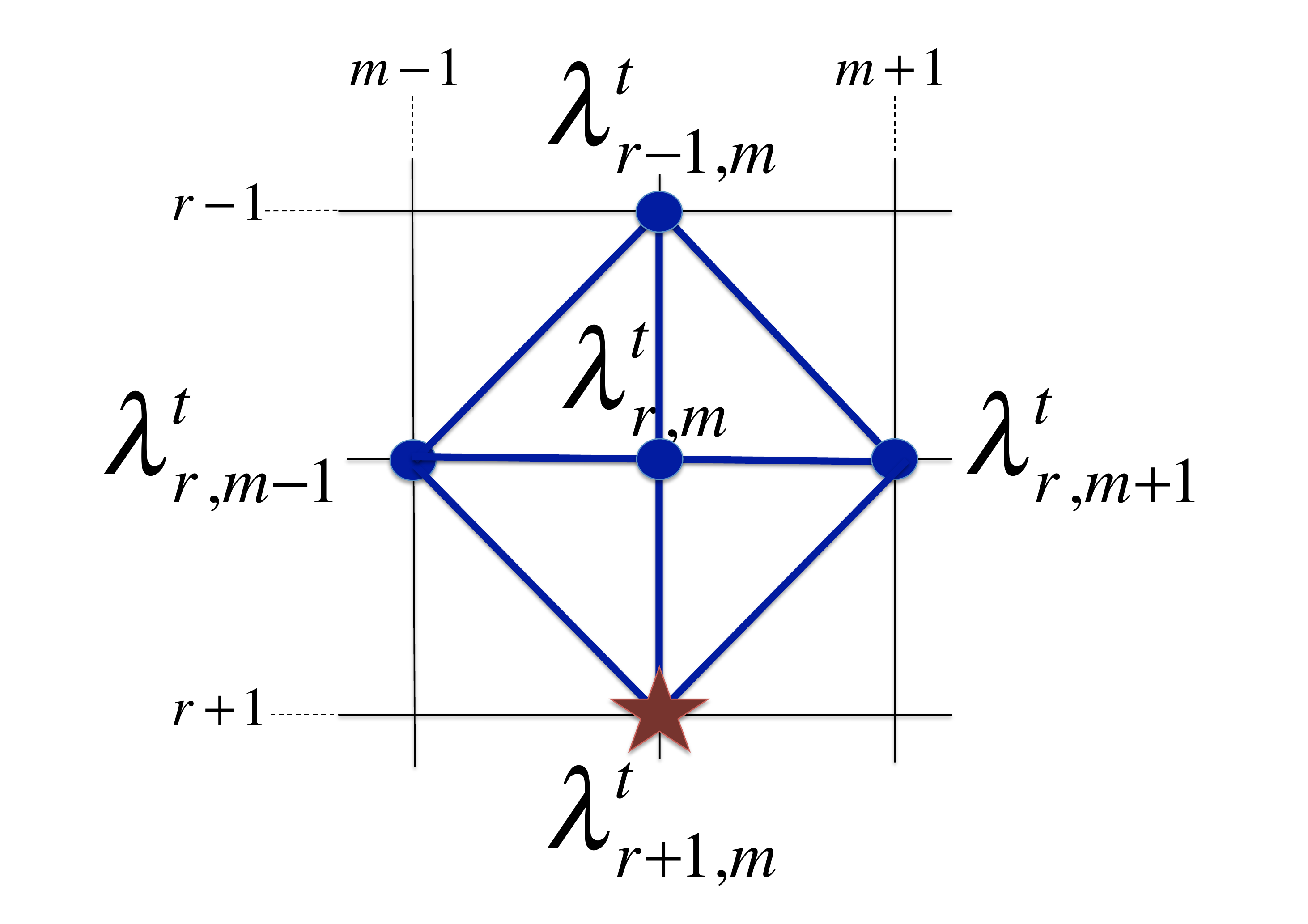} & 
\includegraphics[width=6.5cm]{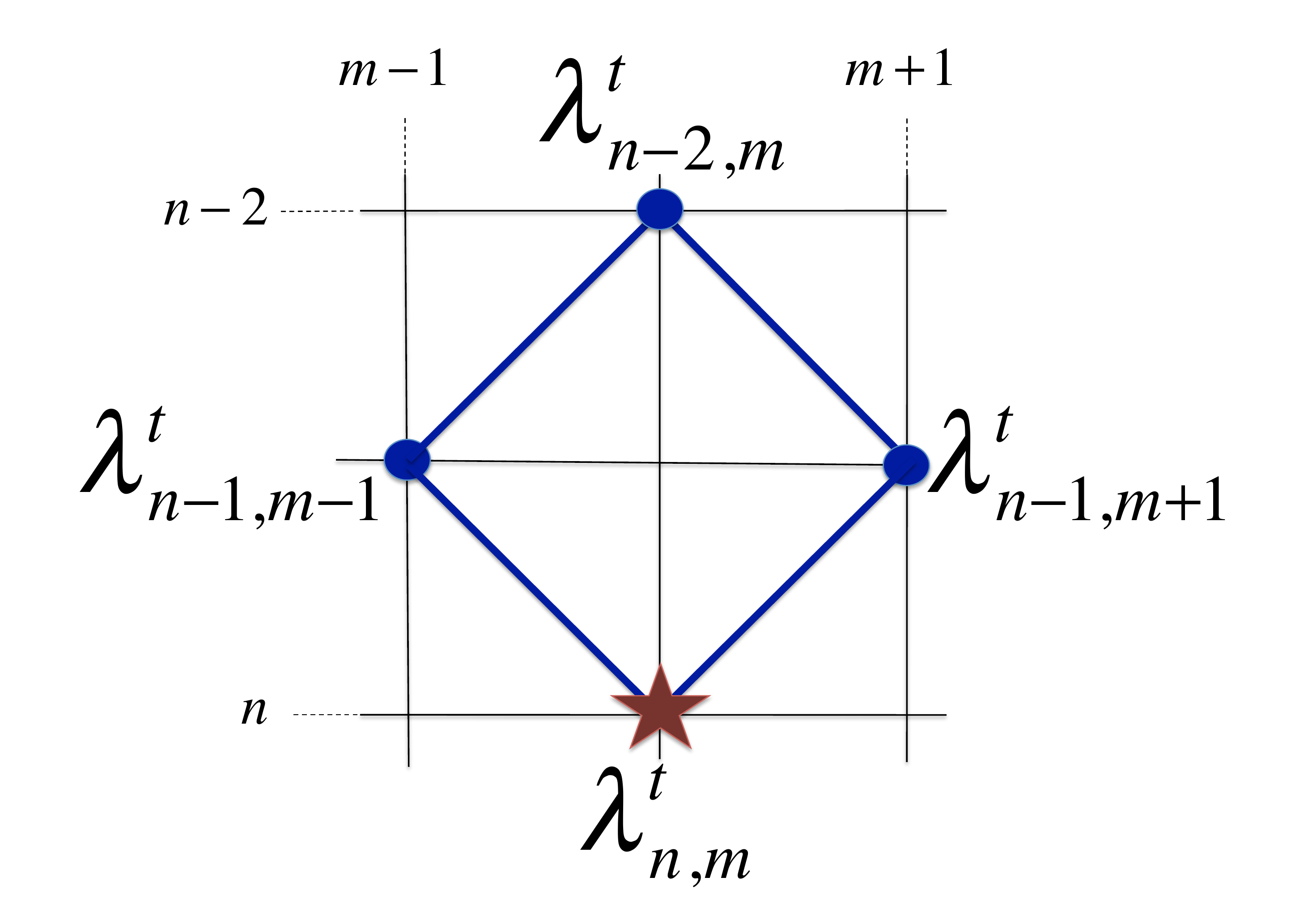} \\ \hline 
\end{tabular}
\end{center}
\end{figure}
%%%%%%%%%%%%%%%%%%%%%%%%%%%%%%
\begin{lemma}\label{lemma2}
For the {\em $r$th-perturbed by dilatation} case $(r\geq 1)$, 
\begin{eqnarray}
&&\lambda^{d}_{n,n}=1,\ n\geq 0\ ;\ \lambda^{d}_{1,0}=0\ ;\label{GRRGenCoDilCondi}\\
&&\lambda^{d}_{r+1,m}=\frac{1}{4}\left(-\lambda_r\lambda^{d}_{r-1,m}+\lambda^{d}_{r,m+1}\right)+\lambda^{d}_{r,m-1}\ ,\ 0\leq m<r+1\ ;\label{GRRGenCoDilR}\\
&&\lambda^{d}_{n,m}=\frac{1}{4}\left(-\lambda^{d}_{n-2,m}+\lambda^{d}_{n-1,m+1}\right)+\lambda^{d}_{n-1,m-1},\ 0\leq m<n, n\neq r+1,n\geq2. \label{GRRGenCoDilN}
\end{eqnarray}
\end{lemma}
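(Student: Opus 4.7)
The plan is to obtain Lemma \ref{lemma2} as a direct specialization of the general boundary/initial conditions and recurrence (\ref{linkcoeffnms})--(\ref{GenRecRel}) by plugging in the explicit values of the recurrence coefficients for $\{P_n\}_{n\geq 0}$ and $\{P^{d}_n(\lambda_r;r)\}_{n\geq 0}$ recorded in (\ref{rcoefTT2}) and (\ref{rrc_dil}). Since both sequences are monic, the conditions $\lambda^{d}_{n,n}=1$ for $n\geq0$ follow from (\ref{linkcoeffnms}). For $\lambda^{d}_{1,0}$, formula (\ref{linkcoeffnms}) yields $\lambda^{d}_{1,0}=\beta_0-\tilde\beta_0$; here $\beta_0=0$ and also $\tilde\beta_0=\beta_0^{d}=0$ since perturbation by dilatation preserves the symmetry of the original sequence (cf.\ (\ref{rrc_dil})), so $\lambda^{d}_{1,0}=0$.

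For the main recurrence, I would first exploit symmetry: $\beta_m=0$ and $\tilde\beta_{n-1}=\beta_{n-1}^d=0$ for all indices, so the term $(\beta_m-\tilde\beta_{n-1})\lambda^{\tilde P P}_{n-1,m}$ in (\ref{GenRecRel}) vanishes, leaving
\begin{equation*}
\lambda^{d}_{n,m} = -\tilde\gamma_{n-1}\,\lambda^{d}_{n-2,m} + \gamma_{m+1}\,\lambda^{d}_{n-1,m+1} + \lambda^{d}_{n-1,m-1},\quad 0\le m\le n-1,\ n\ge 2.
\end{equation*}
Next I would substitute $\gamma_{m+1}=\tfrac{1}{4}$ (valid for every $m\geq 0$ in the Chebyshev second-kind case) and split the coefficient $\tilde\gamma_{n-1}$ into two subcases according to (\ref{rrc_dil}): the perturbed index $n-1=r$, and the unperturbed indices $n-1\neq r$.

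In the subcase $n=r+1$, we have $\tilde\gamma_{r}=\lambda_r/4$, which transforms the simplified recurrence into exactly (\ref{GRRGenCoDilR}). In every other subcase $n\neq r+1$ (with $n\geq 2$), we have $\tilde\gamma_{n-1}=1/4$, producing (\ref{GRRGenCoDilN}). Assembling the two subcases together with the initial and boundary values gives the lemma.

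I do not expect any real obstacle: this is a routine substitution rather than an inductive or analytical argument. The only point that requires a small amount of care is the bookkeeping of the index of $\tilde\gamma$, ensuring that the dilatation of $\gamma_r^d=\lambda_r\gamma_r$ in (\ref{rrc_dil}) aligns with the shift $\tilde\gamma_{n-1}$ appearing in (\ref{GenRecRel}), which is why the single anomalous line of the recurrence occurs at $n=r+1$ and not at $n=r$.
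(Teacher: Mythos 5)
Your proposal is correct and matches the paper's (implicit) argument: the paper presents Lemma \ref{lemma2} precisely as the specialization of the boundary conditions and general recurrence (\ref{linkcoeffnms})--(\ref{GenRecRel}) obtained by substituting the coefficients (\ref{rrc_dil}) and $\beta_n=0$, $\gamma_{m+1}=\tfrac14$ of the second-kind Chebyshev sequence, with the dilated coefficient $\tilde\gamma_{r}=\lambda_r/4$ producing the single anomalous relation at $n=r+1$. Your index bookkeeping for $\tilde\gamma_{n-1}$ and the treatment of $\lambda^{d}_{1,0}=\beta_0-\tilde\beta_0=0$ are exactly as intended.
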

%%%%%%%%%%%%%%%%%%%%%%%%%%%%%%
\begin{figure}[htbp]
\caption{Representation of recurrence relations (\ref{GRRGenCoDilR}) and (\ref{GRRGenCoDilN}) for the {\it $r$th-perturbed by dilatation case for $r\geq1$.}}
\label{GRR_Dil_r_n}
\begin{center}
\begin{tabular}{|c|c|}\hline
\includegraphics[width=6.5cm]{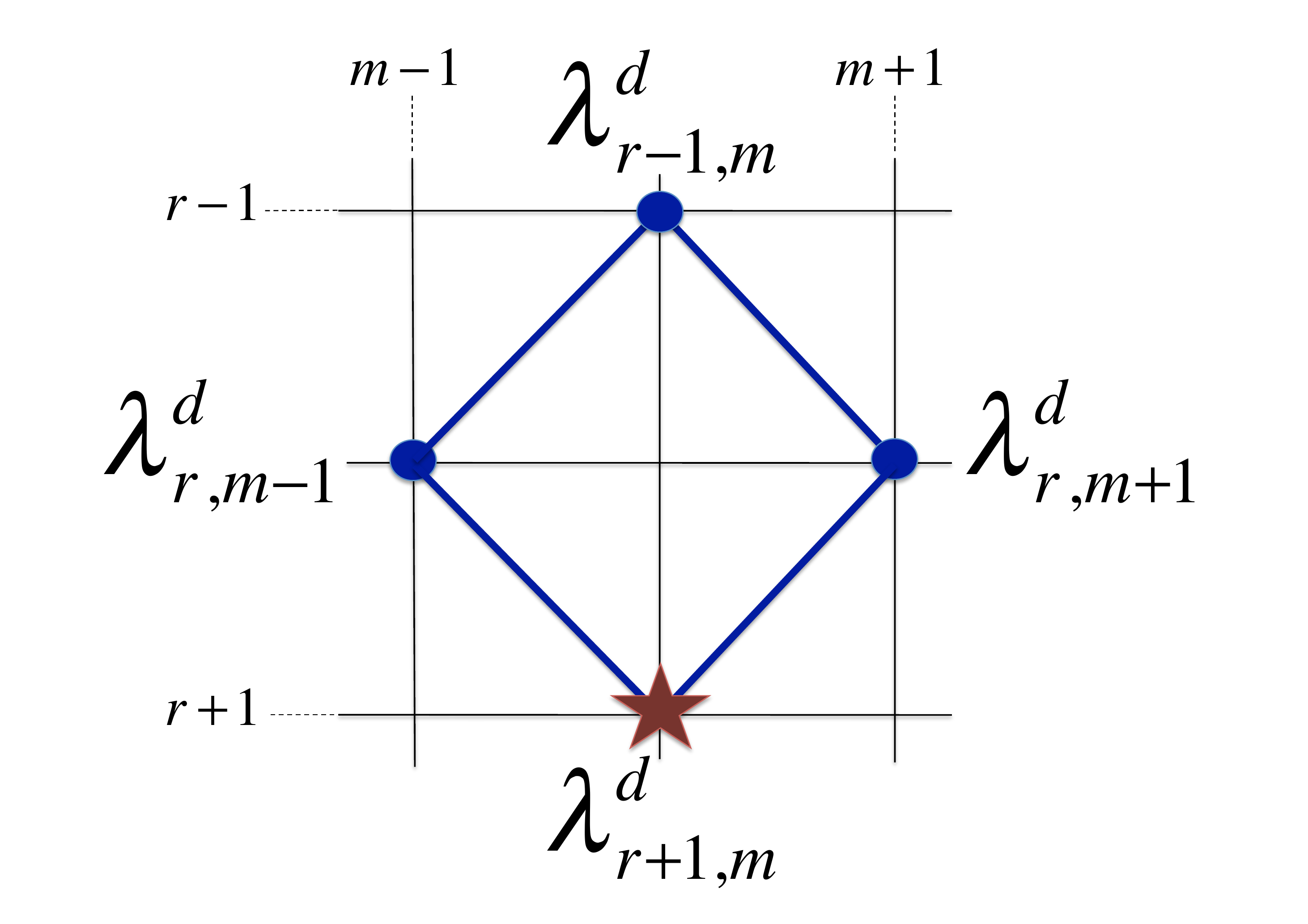} & 
\includegraphics[width=6.5cm]{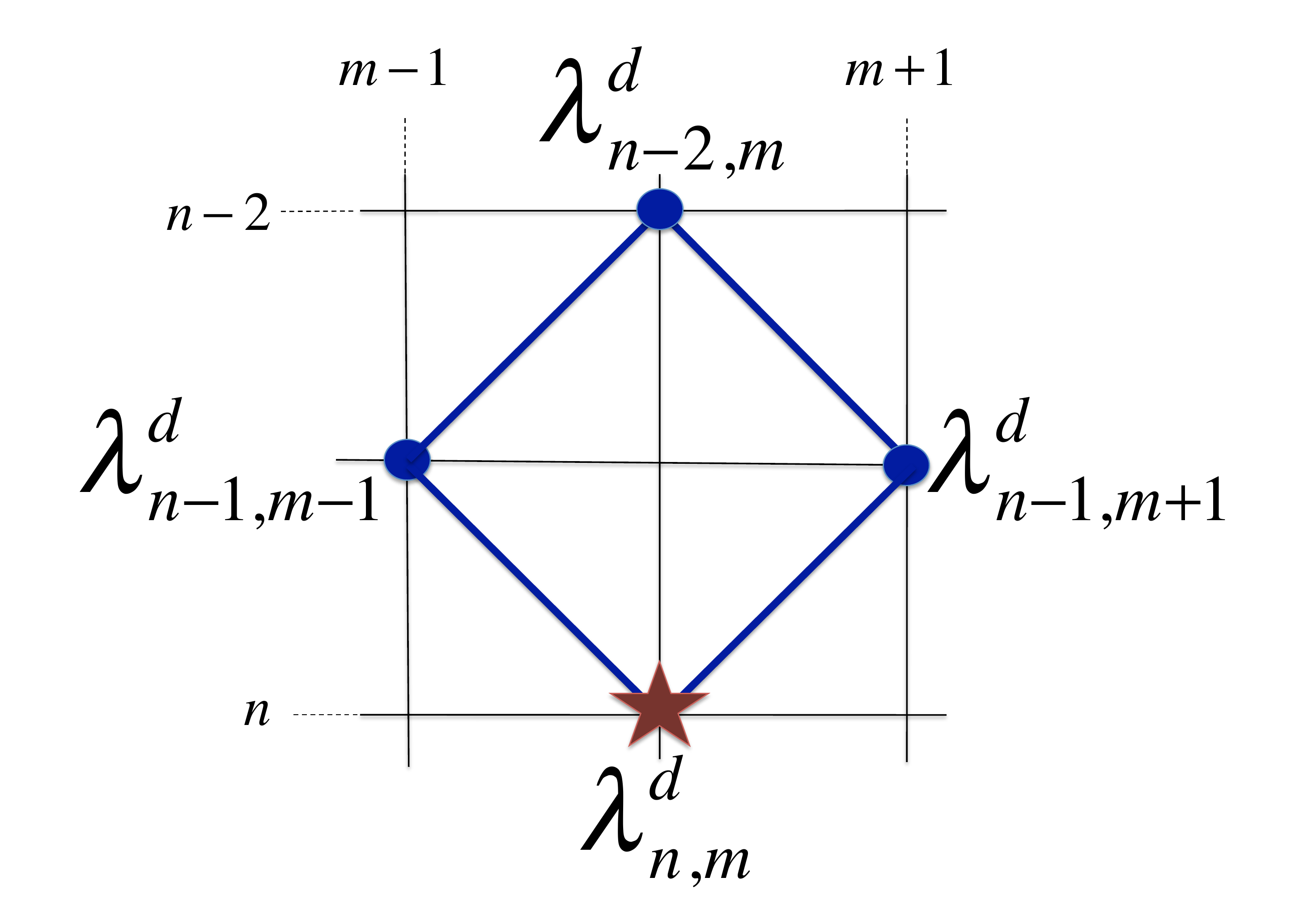} \\ \hline 
\end{tabular}
\end{center}
\end{figure}
%%%%%%%%%%%%%%%%%%%%%%%%%%%%%%
\newpage
The software {\it CCOP - Connection Coefficients for Orthogonal Polynomials\/} \cite{ZeliaPascal2013_1,ZeliaPascal2013_2} (see, also, \cite{ZeliaPascal3}) written in the {\it Mathematica$^{\circledR}$} language, includes an implementation of the recurrence relations (\ref{linkcoeffnms})-(\ref{GenRecRel}) that allows the symbolic recursive computation of the first CC from the recurrence coefficients of the two polynomial sequences involved. In the cases under study, {\it CCOP\/} produced the results given in Tables \ref{TabTra3} and \ref{TabDil3} for the  perturbation of order $r=3$. From these results and others, we discover that CC are constant by downward diagonal with very simple expressions and it was easy to infer the closed formulas corresponding to an arbitrary order $r$ of perturbation and any nonnegative integers $n$ and $m$. It was by this procedure that we formulate Theorems \ref{CCTraCheDig} and \ref{CCDilCheDig}, and Propositions \ref{CCTraChe} and \ref{CCDilChe} presented next. Tables \ref{TabTra_odd} and \ref{TabTra_even} generalize Table \ref{TabTra3} for any odd or even orders ($r\geq 0$) in the translation case; Tables \ref{TabDil_odd} and \ref{TabDil_even} generalize Table \ref{TabDil3} for any odd or even orders ($r\geq 1$) in the dilatation case. In fact, there are slight differences depending on the parity of $r$.
%similar tables can also be written for the dilatation case.
%%%%%%%%%%%%%%%%%%%%%%%%%%%%%%%%%%%%
\begin{table}[htbp] 
\caption{CC for the 3rd-perturbed {\it by translation} case with parameter $\mu_3$.}
\label{TabTra3}
\begin{center}
$\begin{array}{| l | l l l l | l ll  l |lllll}\hline
m  & 0& 1& 2& 3& 4& 5& 6&7 & 8&9 &10 & 11 & \hdots \\ 
n  & & & & & & & & & & & & &   \\ \hline
0 & 1 & \text{} & \text{} & \text{} & \text{} & \text{} & \text{} & \text{} & \text{} & \text{} & \text{} & \text{} & \\
1 & 0 & 1 & \text{} & \text{} & \text{} & \text{} & \text{} & \text{} & \text{} & \text{} & \text{} & \text{} &  \\
2 & 0 & 0 & 1 & \text{} & \text{} & \text{} & \text{} & \text{} & \text{} & \text{} & \text{} & \text{} &  \\
3 & 0 & 0 & 0 & 1 & \text{} & \text{} & \text{} & \text{} & \text{} & \text{} & \text{} & \text{} &  
\\ \hline
4 & 0 & 0 & 0 & -\mu _3 & 1 & \text{} & \text{} & \text{} & \text{} & \text{} & \text{} & \text{} &  \\
5 & 0 & 0 & -\frac{\mu _3}{4} & 0 & -\mu _3 & 1 & \text{} & \text{} & \text{} & \text{} & \text{} & \text{} &  \\
6 & 0 & -\frac{\mu _3}{16} & 0 & -\frac{\mu _3}{4} & 0 & -\mu _3 & 1 & \text{} & \text{} & \text{} & \text{} & \text{} &  \\
7 &  -\frac{\mu _3}{64} & 0 & -\frac{\mu _3}{16} & 0 & -\frac{\mu _3}{4} & 0 & -\mu _3 & 1 & \text{} & \text{} & \text{} & \text{} &  \\ \hline
8 &  0 & -\frac{\mu _3}{64} & 0 & -\frac{\mu _3}{16} & 0 & -\frac{\mu _3}{4} & 0 & -\mu _3 & 1 & \text{} & \text{} & \text{} &  \\
 9 & 0 & 0 & -\frac{\mu _3}{64} & 0 & -\frac{\mu _3}{16} & 0 & -\frac{\mu _3}{4} & 0 & -\mu _3 & 1 & \text{} & \text{} &  \\
10 &  0 & 0 & 0 & -\frac{\mu _3}{64} & 0 & -\frac{\mu _3}{16} & 0 & -\frac{\mu _3}{4} & 0 & -\mu _3 & 1 & \text{} &  \\
11 &  0 & 0 & 0 & 0 & -\frac{\mu _3}{64} & 0 & -\frac{\mu _3}{16} & 0 & -\frac{\mu _3}{4} & 0 & -\mu _3 & 1 &  \\ \hline
\vdots &  \vdots & \vdots & \vdots & \vdots & \vdots & \vdots & \vdots & \vdots & \vdots & 
\vdots & \vdots & \vdots &  \ddots\\ 
\end{array}$
\end{center}
\label{default}
\end{table}%
%%%%%%%%%%%%%%%%%%%%%%%%%
%%%%%%%%%%%%%%%%%%%%%%%%%
%%%%%%%%%%%%%%%%%%%%%%%%%%%%%%%%%%%%
%%%%%%%%%%%%%%%%%%%%%%%%%
\begin{table}[htbp]
\caption{CC for the 3rd-perturbed {\it by dilatation} case with parameter $\lambda_3$.}
\label{TabDil3}
\begin{center}
$\begin{array}{|l| l l l l | l l l | llll | l  l}\hline
m  & 0 & 1& 2& 3& 4& 5& 6&7 & 8 &9 &10 &  \hdots \\ 
n  & & & & & & & & & & &  & \\ \hline
0 & 1 & \text{} & \text{} & \text{} & \text{} & \text{} & \text{} & \text{} & \text{} & \text{} & \text{} & \text{} \\
1 & 0 & 1 & \text{} & \text{} & \text{} & \text{} & \text{} & \text{} & \text{} & \text{} & \text{} & \text{} \\
2 &  0 & 0 & 1 & \text{} & \text{} & \text{} & \text{} & \text{} & \text{} & \text{} & \text{} & \text{} \\
3 &  0 & 0 & 0 & 1 & \text{} & \text{} & \text{} & \text{} & \text{} & \text{} & \text{} & \text{} \\ \hline
4 & 0 & 0 & \frac{\left(1-\lambda _3\right)}{4} & 0 & 1 & \text{} & \text{} & \text{} & \text{} & \text{} & \text{} & \text{} \\
5 &  0 & \frac{\left(1-\lambda _3\right)}{16} & 0 & \frac{\left(1-\lambda _3\right)}{4} & 0 & 1 & \text{} & \text{} & \text{} & \text{} & \text{} & \text{} \\
6 &  \frac{\left(1-\lambda _3\right)}{64}  & 0 & \frac{\left(1-\lambda _3\right)}{16} & 0 & \frac{\left(1-\lambda _3\right)}{4} & 0 & 1 & \text{} & \text{} & \text{} & \text{} & \text{} \\ \hline
7 &  0 & \frac{\left(1-\lambda _3\right)}{64}& 0 & \frac{\left(1-\lambda _3\right)}{16}  & 0 & \frac{\left(1-\lambda _3\right)}{4}  & 0 & 1 & \text{} & \text{} & \text{} & \text{} \\
8 &  0 & 0 & \frac{\left(1-\lambda _3\right)}{64} & 0 & \frac{\left(1-\lambda _3\right)}{16}  & 0 & \frac{\left(1-\lambda _3\right)}{4}  & 0 & 1 & \text{} & \text{} & \text{} \\
9 &  0 & 0 & 0 & \frac{\left(1-\lambda _3\right)}{64}  & 0 & \frac{\left(1-\lambda _3\right)}{16}  & 0 & \frac{\left(1-\lambda _3\right)}{4}  & 0 & 1 & \text{} & \text{} \\
10 &  0 & 0 & 0 & 0 & \frac{\left(1-\lambda _3\right)}{64}  & 0 & \frac{\left(1-\lambda _3\right)}{16} &  0 & \frac{\left(1-\lambda _3\right)}{4}  & 0 & 1 & \text{} \\ \hline
\vdots &  \vdots & \vdots & \vdots & \vdots & \vdots & \vdots & \vdots & \vdots & \vdots & 
\vdots & \vdots & \ddots\\ 
\end{array}$
\end{center}
\label{default}
\end{table}%

%%%%%%%%%%%%%%%%%%%%%%%%%
%%%%%%%%%%%%%%%%%%%%%%%%%
\begin{landscape}
\begin{table}[htbp] 
\caption{CC for the $r$th-perturbed {\it by translation} case with parameter $\mu_r$, for $r=2r'+1$, for $r'\geq0$.}
\label{TabTra_odd}
\begin{center}
$\begin{array}{|l|lllll l | l l   ll   l | l l l l | l}\hline
m  & 0& 1& 2& \ldots & r-1 & r & r+1 & r+2 & \ldots & 2r & 2r+1 & 2r+2 & 2r+3 &  \ldots  & 3r+2 & \ldots  \\ 
n  & & & & & & & & & & & & &  & & &  \\ \hline
0 & 1 & \text{} & \text{} & \text{} & \text{} & \text{} & \text{} & \text{} & \text{} & \text{} & \text{} & \text{} & & & &\\
1 & 0 & 1 & \text{} & \text{} & \text{} & \text{} & \text{} & \text{} & \text{} & \text{} & \text{} & \text{} &  & & &\\
2 & 0 & 0 & 1 & \text{} & \text{} & \text{} & \text{} & \text{} & \text{} & \text{} & \text{} & \text{} &  & & &\\
\vdots & \vdots & \vdots & \vdots & \ddots & \text{} & \text{} & \text{} & \text{} & \text{} & \text{} & \text{} & \text{} &  & & &\\
r-1 & 0 & 0 & 0 & \ldots & 1 & \text{} & \text{} & \text{} & \text{} & \text{} & \text{} & \text{} & & & &\\ 
r & 0 & 0 & 0 & \ldots & 0 &1 &  \text{} & \text{} & \text{} & \text{} & \text{} & \text{} &  & & & \\ \hline
r+1 & 0 & 0 & 0 & \ldots & 0 & -\mu _r & 1 & \text{}  & \text{} & \text{} & \text{} & \text{} &  & & & \\ 
r+2 & 0 & 0 & 0 & \ldots & -\frac{\mu _r}{4} & 0  & -\mu _r & 1 &  \text{} & \text{} & \text{} & & & & &\\ 
\vdots & \vdots & \vdots & \vdots & \vdots & \vdots & \ddots & \ddots & \ddots  &  \text{}  & \text{} &  \text{} & \text{} &  & & &\\
2r & 0 & -\frac{\mu _r}{4^{r-1}} & 0 & \ldots &0 & -\frac{\mu _r}{4^{r'} } &0 & -\frac{\mu _r}{4^{r'-1}}  & \ddots & 1 & \text{} & \text{} & & & \\
2r+1 &  -\frac{\mu _r}{4^r} & 0 & -\frac{\mu _r}{4^{r-1}} & \ldots & -\frac{\mu _r}{4^{r'+1}} & 0& -\frac{\mu _r}{4^{r'}} &0 & \ldots & -\mu _r & 1 & \text{} &  &  & & \\ \hline
2r+2  &  0 & -\frac{\mu _r}{4^r} & 0 & \ldots &0 &  -\frac{\mu _r}{4^{r'+1}} & 0 & -\frac{\mu _r}{4^{r'} }  & \ldots &  0 & -\mu _r & 1 & \text{} &  & &  \\
2r+3 & 0 &0 & -\frac{\mu _r}{4^r} & \ldots & -\frac{\mu _r}{4^{r'+2} } & 0 & -\frac{\mu _r}{4^{r'+1}} & 0 &  \ldots & \ldots & 0 & -\mu _r & 1 & & \text{}  &   \\
\vdots &  \vdots & \vdots & \vdots &\vdots & \vdots & \vdots & \vdots & \vdots  & \vdots & \vdots & \vdots & \vdots & \vdots & \ddots & &  \\
3r+2 &  0 & 0 & 0 &0 & 0 & 0 & -\frac{\mu _r}{4^r} & 0 & \ldots & -\frac{\mu _r}{4^{r'+1} } & 0& -\frac{\mu _r}{4^{r'} }  & 0 & \ldots  &  1 & \ldots  \\ \hline
\vdots &  \vdots & \vdots & \vdots & \vdots & \vdots & \vdots & \vdots & \vdots & \vdots & 
\vdots & \vdots & \vdots &  \vdots & \vdots & \vdots & \vdots \\ 
\end{array}$
\end{center}
\label{default}
\end{table}%
\end{landscape}
%%%%%%%%%%%%%%%%%%%%%%%%%%%%%%%%%%%%%%%%%%%%%%%%%
\begin{landscape}
\begin{table}[htbp] 
\caption{CC for the $r$th-perturbed {\it by translation} case with parameter $\mu_r$, for $r=2r'$, for $r'\geq1$.}
\label{TabTra_even}
\begin{center}
$\begin{array}{|l|lllll l | l l   ll   l | l l l l | l}\hline
m  & 0& 1& 2& \ldots & r-1 & r & r+1 & r+2 & \ldots & 2r & 2r+1 & 2r+2 & 2r+3 &  \ldots  & 3r+2 & \ldots  \\ 
n  & & & & & & & & & & & & &  & & &  \\ \hline
0 & 1 & \text{} & \text{} & \text{} & \text{} & \text{} & \text{} & \text{} & \text{} & \text{} & \text{} & \text{} & & & &\\
1 & 0 & 1 & \text{} & \text{} & \text{} & \text{} & \text{} & \text{} & \text{} & \text{} & \text{} & \text{} &  & & &\\
2 & 0 & 0 & 1 & \text{} & \text{} & \text{} & \text{} & \text{} & \text{} & \text{} & \text{} & \text{} &  & & &\\
\vdots & \vdots & \vdots & \vdots & \ddots & \text{} & \text{} & \text{} & \text{} & \text{} & \text{} & \text{} & \text{} &  & & &\\
r-1 & 0 & 0 & 0 & \ldots & 1 & \text{} & \text{} & \text{} & \text{} & \text{} & \text{} & \text{} & & & &\\ 
r & 0 & 0 & 0 & \ldots & 0 &1 &  \text{} & \text{} & \text{} & \text{} & \text{} & \text{} &  & & & \\ \hline
r+1 & 0 & 0 & 0 & \ldots & 0 & -\mu _r & 1 & \text{}  & \text{} & \text{} & \text{} & \text{} &  & & & \\ 
r+2 & 0 & 0 & 0 & \ldots & -\frac{\mu _r}{4} & 0  & -\mu _r & 1 &  \text{} & \text{} & \text{} & & & & &\\ 
\vdots & \vdots & \vdots & \vdots & \vdots & \vdots & \ddots & \ddots & \ddots  &  \text{}  & \text{} &  \text{} & \text{} &  & & &\\
2r & 0 & -\frac{\mu _r}{4^{r-1}} & 0 & \ldots & -\frac{\mu _r}{4^{r'} } &0 & -\frac{\mu _r}{4^{r'-1}} & 0 & \ddots & 1 & \text{} & \text{} & & & \\
2r+1 &  -\frac{\mu _r}{4^r} & 0 & -\frac{\mu _r}{4^{r-1}} & \ldots &0& -\frac{\mu _r}{4^{r'}} & 0   & -\frac{\mu _r}{4^{r'-1}}  & \ldots & -\mu _r & 1 & \text{} &  &  & & \\ \hline
2r+2  &  0 & -\frac{\mu _r}{4^r} & 0 & \ldots &  -\frac{\mu _r}{4^{r'+1}} & 0 &  -\frac{\mu _r}{4^{r'}} & 0 & \ldots &  0 & -\mu _r & 1 & \text{} &  & &  \\
2r+3 & 0 &0 & -\frac{\mu _r}{4^r} & \ldots & 0  & -\frac{\mu _r}{4^{r'+1}} & 0 &  -\frac{\mu _r}{4^{r'}} &  \ldots & -\frac{\mu _r}{4}   & 0 & -\mu _r & 1 & & \text{}  &   \\
\vdots &  \vdots & \vdots & \vdots &\vdots & \vdots & \vdots & \vdots & \vdots  & \vdots & \vdots & \vdots & \vdots & \vdots & \ddots & &  \\
3r+2 &  0 & 0 & 0 &0 & 0 & 0 & -\frac{\mu _r}{4^r} & 0 & \ldots &  0& -\frac{\mu _r}{4^{r'} }  &  0 &  -\frac{\mu _r}{4^{r'-1} }  & \ldots  &  1 & \ldots  \\ \hline
\vdots &  \vdots & \vdots & \vdots & \vdots & \vdots & \vdots & \vdots & \vdots & \vdots & 
\vdots & \vdots & \vdots &  \vdots & \vdots & \vdots & \vdots \\ 
\end{array}$
\end{center}
\label{default}
\end{table}%
\end{landscape}
%%%%%%%%%%%%%%%%%%%%%%%%%
%%%%%%%%%%%%%%%%%%%%%%%%%
\begin{landscape}
\begin{table}[htbp] 
\caption{CC for the $r$th-perturbed {\it by dilatation} case with parameter $\lambda_r$, for $r=2r'+1$, for $r'\geq0$.}
\label{TabDil_odd}
\begin{center}
$\begin{array}{|l|lllll l | l l   ll |  l  l l l l | l}\hline
m  & 0& 1& 2& \ldots & r-1 & r & r+1 & r+2 & \ldots & 2r & 2r+1 & 2r+2 & 2r+3 &  \ldots  & 3r+1 & \ldots  \\ 
n  & & & & & & & & & & & & &  & & &  \\ \hline
0 & 1 & \text{} & \text{} & \text{} & \text{} & \text{} & \text{} & \text{} & \text{} & \text{} & \text{} & \text{} & & & &\\
1 & 0 & 1 & \text{} & \text{} & \text{} & \text{} & \text{} & \text{} & \text{} & \text{} & \text{} & \text{} &  & & &\\
2 & 0 & 0 & 1 & \text{} & \text{} & \text{} & \text{} & \text{} & \text{} & \text{} & \text{} & \text{} &  & & &\\
\vdots & \vdots & \vdots & \vdots & \ddots & \text{} & \text{} & \text{} & \text{} & \text{} & \text{} & \text{} & \text{} &  & & &\\
r-1 & 0 & 0 & 0 & \ldots & 1 & \text{} & \text{} & \text{} & \text{} & \text{} & \text{} & \text{} & & & &\\ 
r & 0 & 0 & 0 & \ldots & 0 &1 &  \text{} & \text{} & \text{} & \text{} & \text{} & \text{} &  & & & \\ \hline
r+1 & 0 & 0& 0 & \ldots & \frac{(1-\lambda_r)}{4} & 0 & 1 & \text{}  & \text{} & \text{} & \text{} & \text{} &  & & & \\ 
r+2 & 0 & 0 & 0 & \ldots & 0 &  \frac{(1-\lambda_r)}{4} & 0 & 1 &  \text{} & \text{} & \text{} & & & & &\\ 
\vdots & \vdots & \vdots & \vdots & \vdots & \vdots & \ddots & \ddots & \ddots  &  \text{}  & \text{} &  \text{} & \text{} &  & & &\\
2r & \frac{(1-\lambda_r)}{4^r} & 0 & \frac{(1-\lambda_r)}{4^{r-1}} & \ldots &\frac{(1-\lambda_r)}{4^{r'+1}} &0 &\frac{(1-\lambda_r)}{4^{r'}} & 0 & \ddots & 1 & \text{} & \text{} & & & \\ \hline
2r+1 &  0 & \frac{(1-\lambda_r)}{4^r}  & 0 & \ldots &0 &  \frac{(1-\lambda _r)}{4^{r'+1}}& 0 &\frac{(1-\lambda _r)}{4^{r'}} & \ldots & 0 & 1 & \text{} &  &  & & \\ 
2r+2  &  0 & 0 & \frac{(1-\lambda_r)}{4^r} & \ldots &\frac{(1-\lambda _r)}{4^{r'+2}}  &  0& \frac{(1-\lambda _r)}{4^{r'+1} } & 0  & \ldots &   \frac{(1-\lambda _r)}{4} & 0 & 1 & \text{} &  & &  \\
%2r+3 & 0 &0 & 0 & \ldots & 0& \frac{(1-\lambda _r)}{4^{r'+2}}  & 0& \frac{(1-\lambda _r)}{4^{r'+1}} &  \ldots & \ldots & 0 & -\mu _r & 1 & & \text{}  &   \\
\vdots &  \vdots & \vdots & \vdots &\vdots & \vdots & \vdots & \vdots & \vdots  & \vdots & \vdots & \vdots & \vdots & \vdots & \ddots & &  \\
3r+1 &  0 & 0 & 0 &0 & 0 & 0 & \frac{(1-\lambda_r)}{4^r} & 0 & \ldots &\frac{(1-\lambda _r)}{4^{r'+1} } &   0& \frac{(1-\lambda _r)}{4^{r'} }  & 0 & \ldots  &  1 & \ldots  \\ \hline
\vdots &  \vdots & \vdots & \vdots & \vdots & \vdots & \vdots & \vdots & \vdots & \vdots & 
\vdots & \vdots & \vdots &  \vdots & \vdots & \vdots & \vdots \\ 
\end{array}$
\end{center}
\label{default}
\end{table}%
\end{landscape}
%%%%%%%%%%%%%%%%%%%%%%%%%
%%%%%%%%%%%%%%%%%%%%%%%%%
\begin{landscape}
\begin{table}[htbp] 
\caption{CC for the $r$th-perturbed {\it by dilatation} case with parameter $\lambda_r$, for $r=2r'$, for $r'\geq1$.}
\label{TabDil_even}
\begin{center}
$\begin{array}{|l|lllll l | l l   ll |  l  l l l l | l}\hline
m  & 0& 1& 2& \ldots & r-1 & r & r+1 & r+2 & \ldots & 2r & 2r+1 & 2r+2 & 2r+3 &  \ldots  & 3r+1 & \ldots  \\ 
n  & & & & & & & & & & & & &  & & &  \\ \hline
0 & 1 & \text{} & \text{} & \text{} & \text{} & \text{} & \text{} & \text{} & \text{} & \text{} & \text{} & \text{} & & & &\\
1 & 0 & 1 & \text{} & \text{} & \text{} & \text{} & \text{} & \text{} & \text{} & \text{} & \text{} & \text{} &  & & &\\
2 & 0 & 0 & 1 & \text{} & \text{} & \text{} & \text{} & \text{} & \text{} & \text{} & \text{} & \text{} &  & & &\\
\vdots & \vdots & \vdots & \vdots & \ddots & \text{} & \text{} & \text{} & \text{} & \text{} & \text{} & \text{} & \text{} &  & & &\\
r-1 & 0 & 0 & 0 & \ldots & 1 & \text{} & \text{} & \text{} & \text{} & \text{} & \text{} & \text{} & & & &\\ 
r & 0 & 0 & 0 & \ldots & 0 &1 &  \text{} & \text{} & \text{} & \text{} & \text{} & \text{} &  & & & \\ \hline
r+1 & 0 & 0& 0 & \ldots & \frac{(1-\lambda_r)}{4} & 0 & 1 & \text{}  & \text{} & \text{} & \text{} & \text{} &  & & & \\ 
r+2 & 0 & 0 & 0 & \ldots & 0 &  \frac{(1-\lambda_r)}{4} & 0 & 1 &  \text{} & \text{} & \text{} & & & & &\\ 
\vdots & \vdots & \vdots & \vdots & \vdots & \vdots & \ddots & \ddots & \ddots  &  \text{}  & \text{} &  \text{} & \text{} &  & & &\\
2r & \frac{(1-\lambda_r)}{4^r} & 0 & \frac{(1-\lambda_r)}{4^{r-1}} & \ldots & 0 &\frac{(1-\lambda_r)}{4^{r'+1}}  & 0 & \frac{(1-\lambda_r)}{4^{r'}}  & \ddots & 1 & \text{} & \text{} & & & \\ \hline
2r+1 &  0 & \frac{(1-\lambda_r)}{4^r}  & 0 & \ldots & \frac{(1-\lambda _r)}{4^{r'+2}} & 0 & \frac{(1-\lambda _r)}{4^{r'+1}} & 0 & \ldots & 0 & 1 & \text{} &  &  & & \\ 
2r+2  &  0 & 0 & \frac{(1-\lambda_r)}{4^r} & \ldots &0  &  \frac{(1-\lambda _r)}{4^{r'+2}} & 0 & \frac{(1-\lambda _r)}{4^{r'+1} }   & \ldots &   \frac{(1-\lambda _r)}{4} & 0 & 1 & \text{} &  & &  \\
%2r+3 & 0 &0 & 0 & \ldots & 0& \frac{(1-\lambda _r)}{4^{r'+2}}  & 0& \frac{(1-\lambda _r)}{4^{r'+1}} &  \ldots & \ldots & 0 & -\mu _r & 1 & & \text{}  &   \\
\vdots &  \vdots & \vdots & \vdots &\vdots & \vdots & \vdots & \vdots & \vdots  & \vdots & \vdots & \vdots & \vdots & \vdots & \ddots & &  \\
3r+1 &  0 & 0 & 0 &0 & 0 & 0 & \frac{(1-\lambda_r)}{4^r} & 0 & \ldots &0&\frac{(1-\lambda _r)}{4^{r'+1} }& 0 & \frac{(1-\lambda _r)}{4^{r'} }  & \ldots  &  1 & \ldots  \\ \hline
\vdots &  \vdots & \vdots & \vdots & \vdots & \vdots & \vdots & \vdots & \vdots & \vdots & 
\vdots & \vdots & \vdots &  \vdots & \vdots & \vdots & \vdots \\ 
\end{array}$
\end{center}
\label{default}
\end{table}%
\end{landscape}
%%%%%%%%%%%%%%%%%%%%%%%%%
%%%%%%%%%%%%%%%%%%%%%%%%%
\begin{theorem}\label{CCTraCheDig}
CC for the $r$th-perturbed {\it by translation} case $(r\geq 0)$ written by diagonal
\begin{eqnarray}
&& \textrm{Diagonal\ 0:}  \ \ \lambda^{t}_{n,n}=1\ ,\ n\geq 0\ ;\label{Diag0CoRec}\\
&& \textrm{Diagonal\ $2i-1$:} \ \  \lambda^{t}_{n,n-2i+1}=0\ ,\ 2i-1\leq n<r+i\ ; \label{Diag2im11CoRec}\\
&& \ \ \ \ \ \ \ \ \ \ \ \ \ \ \ \ \ \ \ \  \ \  \lambda^{t}_{n,n-2i+1}=-\frac{\mu_r}{4^{i-1}}\ ,\ n\geq r+i\ ;\ i=1(1)r+1\ ;\label{Diag2im12CoRec}\\
&& \textrm{Diagonal\ $2i$:} \ \ \lambda^{t}_{n,n-2i}=0\ ,\ n\geq 2i\ ,\ i=1(1)r\ ; \label{Diag2iCoRec}\\
&& \textrm{Diagonal\ $m$:} \ \  \lambda^{t}_{n,n-m}=0\ ,\ n\geq m\ ,\ m\geq 2r+2\ .\label{DiagmCoRec}
\end{eqnarray}
\end{theorem}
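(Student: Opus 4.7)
The plan is to prove all four claims by strong induction on the row index~$n$, relying on the recurrence relations of Lemma~\ref{lemma1}. The base case handles rows $0\leq n\leq r+1$ by direct inspection: for $n\leq r$, since the translation modifies only $\beta_r$ while the polynomials $P^t_0,\ldots,P^t_r$ depend only on $\beta^t_0,\ldots,\beta^t_{r-1}$ and $\gamma^t_1,\ldots,\gamma^t_{r-1}$, one has $P^t_n(\mu_r;r)=P_n$, hence $\lambda^t_{n,m}=\delta_{n,m}$; this places every relevant entry in the ``zero regime'' required by (\ref{Diag2im11CoRec}), (\ref{Diag2iCoRec}), (\ref{DiagmCoRec}) and verifies (\ref{Diag0CoRec}) along the main diagonal. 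For $n=r+1$ I will evaluate the three-term recurrence of $\{P^t_n\}$ to obtain $P^t_{r+1}(x)=(x-\mu_r)P_r(x)-\gamma_rP_{r-1}(x)=P_{r+1}(x)-\mu_rP_r(x)$, producing $\lambda^t_{r+1,r+1}=1$, $\lambda^t_{r+1,r}=-\mu_r$ and zeros elsewhere, which matches the $i=1$ instance of (\ref{Diag2im12CoRec}).

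For the inductive step ($n\geq r+2$) I will apply (\ref{GRRGenCoRecN}). A useful reformulation is to introduce $\ell_d(n):=\lambda^t_{n,n-d}$, in terms of which the recurrence reads
$$\ell_d(n)=\ell_d(n-1)+\frac{1}{4}\bigl(\ell_{d-2}(n-1)-\ell_{d-2}(n-2)\bigr),$$
since $(n-2,m)$ and $(n-1,m+1)$ both lie on diagonal $d-2$ while $(n-1,m-1)$ shares diagonal $d$ with $(n,m)$. Thus diagonal~$d$ stays constant in~$n$ whenever diagonal $d-2$ has no ``jump'' between consecutive rows, and otherwise accumulates $\tfrac14$ times that jump. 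From this I will cascade the inductive hypothesis diagonal by diagonal, starting from the main diagonal (constantly~$1$) and working outward.

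Reading off the four cases: for odd $d=2i-1$ with $i\geq 2$, the inductive hypothesis says that diagonal $d-2$ jumps from $0$ to $-\mu_r/4^{i-2}$ precisely between rows $r+i-2$ and $r+i-1$, so the injection into diagonal~$d$ happens at row $n=r+i$ with magnitude $-\mu_r/4^{i-1}$ and stays at this value for all $n>r+i$, yielding (\ref{Diag2im11CoRec})--(\ref{Diag2im12CoRec}). For even $d=2i$ with $1\leq i\leq r$, diagonal $d-2$ is either the main diagonal (constantly~$1$) or a previously handled even diagonal (constantly~$0$); in either case no jump occurs, so diagonal~$d$ stays at its starting value~$0$. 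For $d\geq 2r+2$, either $d-2$ is identically zero, or $d-2=2r+1$ is an odd diagonal that has already completed its injection before the first valid row $n=d$ (so $\ell_{d-2}(n-1)=\ell_{d-2}(n-2)$ for every such~$n$); in both subcases the jump term cancels and diagonal~$d$ is zero throughout, yielding (\ref{DiagmCoRec}).

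The hard part will be the bookkeeping at the injection rows $n=r+i$ and at the boundary values $m\in\{0,1\}$: one must check that the jump of the preceding odd diagonal lands at exactly the prescribed row---so that the factor $1/4$ accumulates to exactly $-\mu_r/4^{i-1}$, avoiding any off-by-one---and must handle those terms of (\ref{GRRGenCoRecN}) where $m-1<0$ via the convention $\lambda^t_{n,m}=0$ for $m<0$. Once this accounting is in place, each of the four diagonal cases reduces to a one-line substitution into the reformulated recurrence, closing the induction.
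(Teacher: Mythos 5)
Your proposal is correct and takes essentially the same route as the paper: an induction driven by the recurrences of Lemma \ref{lemma1}, in which each diagonal is constant in $n$ except for a single injection of size $\tfrac{1}{4}\cdot\bigl(-\mu_r/4^{i-2}\bigr)=-\mu_r/4^{i-1}$ at row $n=r+i$ caused by the jump of diagonal $2i-3$, all other contributions cancelling. The only (harmless) variations are that you obtain row $r+1$ directly from $P^{t}_{r+1}(\mu_r;r)(x)=(x-\mu_r)P_r(x)-\gamma_rP_{r-1}(x)=P_{r+1}(x)-\mu_rP_r(x)$ instead of invoking (\ref{GRRGenCoRecR}), and that for $r=0$, $d=2$ the inner diagonal is the constant main diagonal rather than one of the two subcases you list, a case your no-jump mechanism covers anyway.
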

%%%%%%%%%%%%%%%%%%%%%%%%%
%%%%%%%%%%%%%%%%%%%%%%%%%
\begin{proof}
We shall do a demonstration by diagonal proving that the elements given by these formulas are solutions of recurrence relations of Lemma \ref{lemma1}. We remark that (\ref{GRRGenCoRecR}) involves five CC and (\ref{GRRGenCoRecN}) four. We shall reason by induction: first we treat the initial diagonals of orders 0, 1 and 2, after that, we deal with the diagonals $2i-1$ and $2i$ using as induction assumption the diagonals $2i-3$ and $2i-2$. In each one, we begin by proving an initial element and thereafter we show that a generic subsequent element coincides with the preceding one in the same diagonal. 

{\it Diagonal 0} : As both polynomials sequences are monic, (\ref{Diag0CoRec}) is verified.
Since the perturbation occurs at order $r$ of the recurrence coefficients, it will only affect polynomials of degrees greater than or equal to $r+1$ (see (\ref{recOrto})), in such a way that $P_n^t(\mu_r;r)\equiv P_n$, $n=0(1)r$, which is equivalent to 
\begin{equation}\label{zerotrian}
\lambda^{t}_{n,m}=0\ ,\ 0\leq m\leq n-1\ , \ \lambda^{t}_{n,n}=1\ ,\ n=0(1)r\ ,
\end{equation}
which correspond to the first $r+1$ rows in all tables.% \ref{TabTra3} and \ref{TabDil3} ($r=3$).
%\begin{figure}[htbp]
%$\caption{{\it $r$-perturbed by translation case for $r\geq1$.}}
%$\label{GRR_Dem_Tra_Diag12}
\begin{center}
\begin{tabular}{|c|c|}\hline
\includegraphics[width=6.5cm]{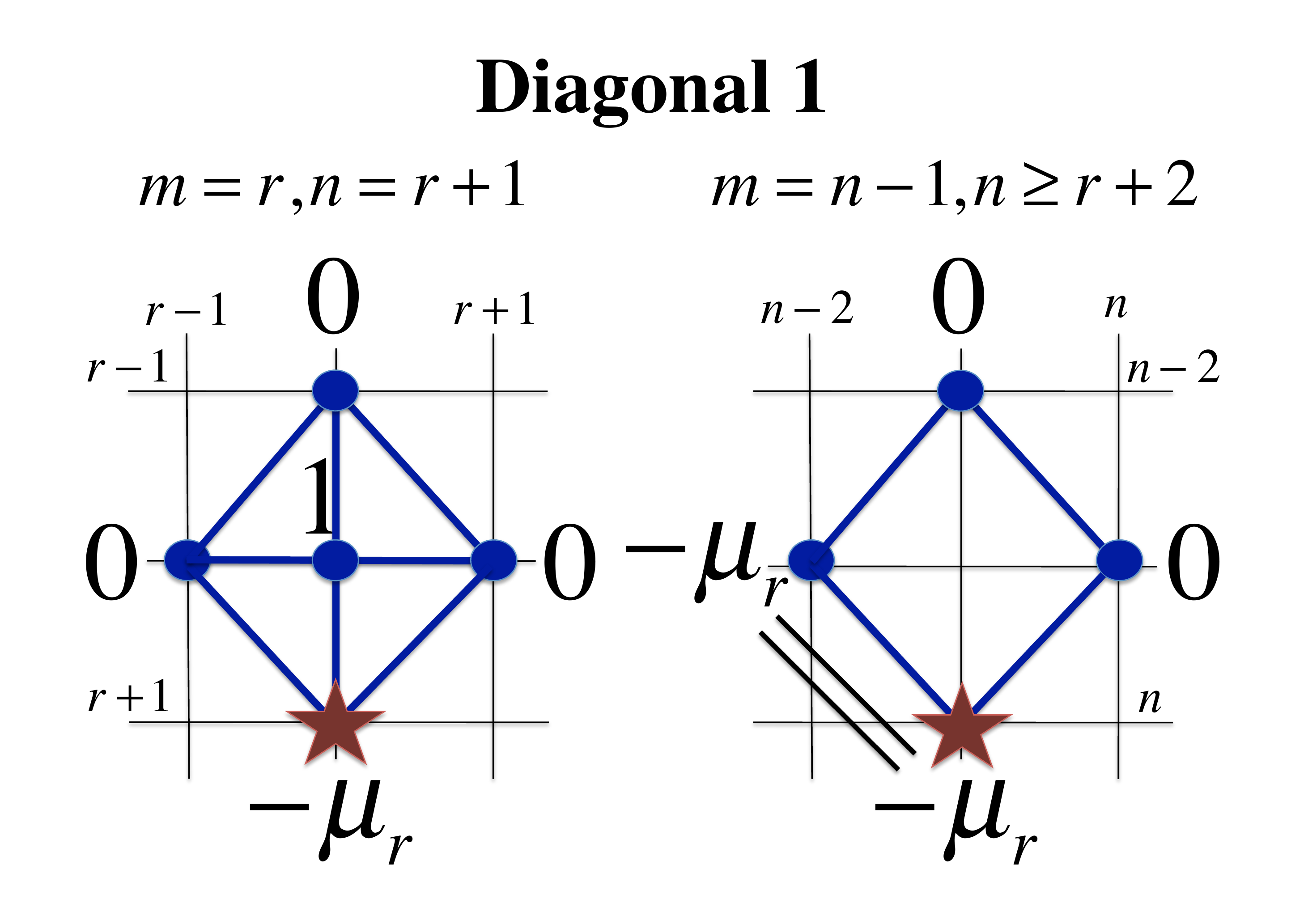} & 
\includegraphics[width=6.5cm]{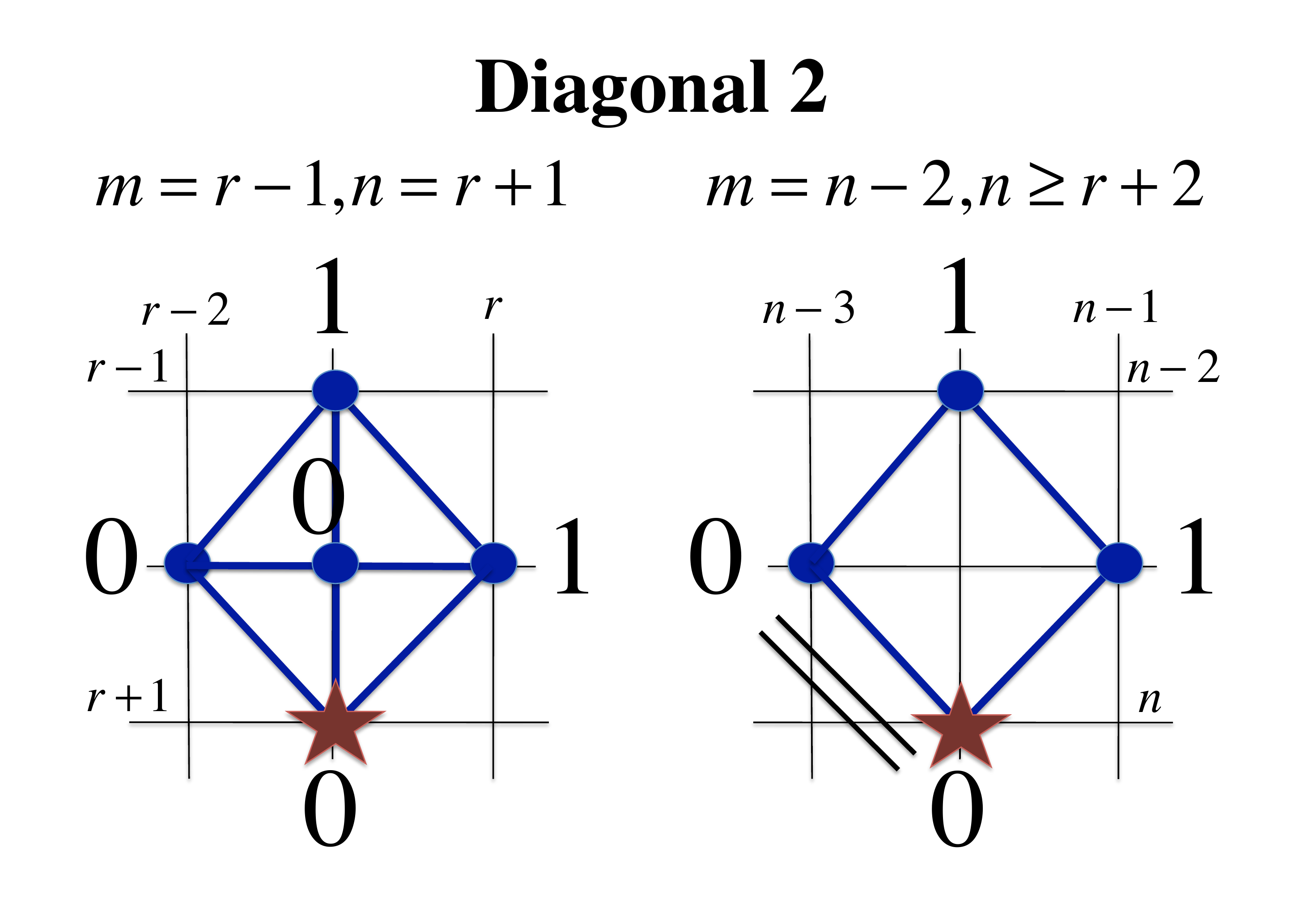} \\ \hline 
\end{tabular}
\end{center}
%\end{figure}
{\bf Diagonal 1}: Taking $i=1$ in (\ref{Diag2im11CoRec}) and (\ref{Diag2im12CoRec}), we get:
$$\lambda^{t}_{n,n-1}=0\ ,\ 1\leq n\leq r\ ;\ \lambda^{t}_{n,n-1}=-\mu_r\ ,\ n\geq r+1\ .$$
The first part is a consequence of (\ref{zerotrian}). With respect to the second part, for $n=r+1$, we obtain $\lambda^{t}_{r+1,r}=-\mu_r$ that satisfy the recurrence (\ref{GRRGenCoRecR}) for $m=r$ as follow
$$\lambda^{t}_{r+1,r}=-\mu_r\lambda^{t}_{r,r}+\frac{1}{4}\left(-\lambda^{t}_{r-1,r}+\lambda^{t}_{r,r+1}\right)+\lambda^{t}_{r,r-1}\Leftrightarrow  -\mu_r=-\mu_r\times 1+\frac{1}{4}\left(-0+0\right)+0\ ,$$
taking into account  (\ref{linkcoeffnms}), and (\ref{zerotrian}) for $n=r$ and $m=r-1$.
For $n\geq r+2$, we use the other recurrence (\ref{GRRGenCoRecN}) for $m=n-1$ and we get
$$\lambda^{t}_{n,n-1}=\frac{1}{4}\left(-\lambda^{t}_{n-2,n-1}+\lambda^{t}_{n-1,n}\right)+\lambda^{t}_{n-1,n-2}
=\frac{1}{4}\left(-0+0\right)+\lambda^{t}_{n-1,n-2}=\ldots= \lambda^{t}_{r+1,r},$$
considering (\ref{linkcoeffnms}).

{\bf Diagonal 2}: Supposing $r\geq 1$, taking $i=1$ in (\ref{Diag2iCoRec}), we obtain $\lambda^{t}_{n,n-2}=0,\ n\geq2$. For $n=2(1)r$, this is assured by (\ref{zerotrian}). In order to prove it for $n=r+1$, we should use the relation (\ref{GRRGenCoRecR}) for $m=r-1$
$$\lambda^{t}_{r+1,r-1}=-\mu_r\lambda^{t}_{r,r-1}+\frac{1}{4}\left(-\lambda^{t}_{r-1,r-1}+\lambda^{t}_{r,r}\right)+\lambda^{t}_{r,r-2}\Leftrightarrow 0=-\mu_r\times0+\frac{1}{4}\left(-1+1\right)+0\ ,$$
using (\ref{zerotrian}) for $n=r$ and $m=r-2,r-1$, and (\ref{Diag0CoRec}).
For $n\geq r+2$, we use the relation (\ref{GRRGenCoRecN}) for $m=n-2$
\begin{eqnarray}
\lambda^{t}_{n,n-2} & = &\frac{1}{4}\left(-\lambda^{t}_{n-2,n-2}+\lambda^{t}_{n-1,n-1}\right)+\lambda^{t}_{n-1,n-3}=\frac{1}{4}\left(-1+1\right)+\lambda^{t}_{n-1,n-3}=\notag\\
&&\lambda^{t}_{n-1,n-3}=\ldots =\lambda^{t}_{r+1,r-1}\ ,\notag
\end{eqnarray}
considering (\ref{Diag0CoRec}).
%\begin{figure}[htbp]
%$\caption{{\it $r$-perturbed by translation case for $r\geq1$.}}
%$\label{GRR_Dem_Tra_Diag12}
\begin{center}
\begin{tabular}{|c|c|}\hline
\includegraphics[width=6.5cm]{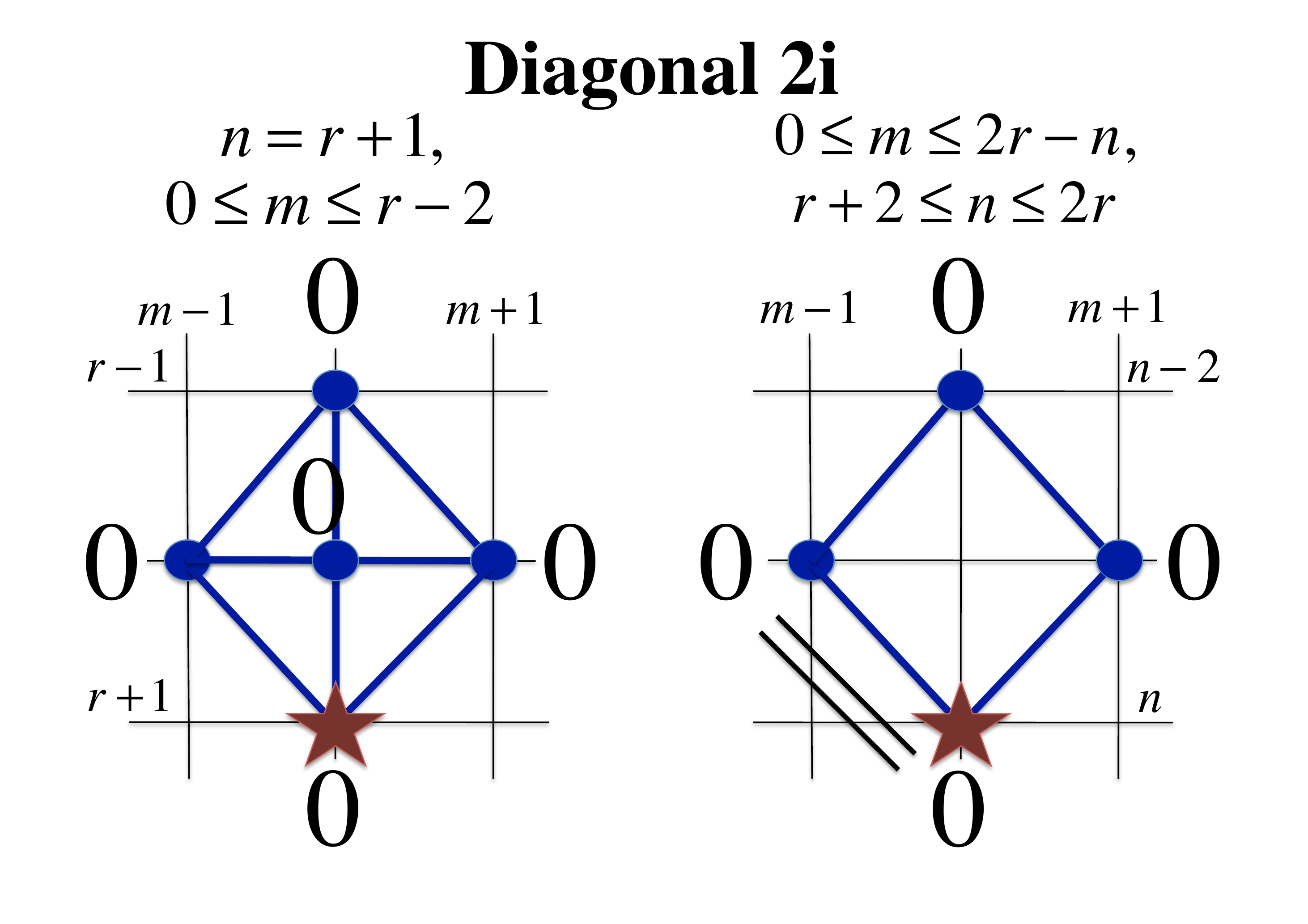} & 
\includegraphics[width=6.5cm]{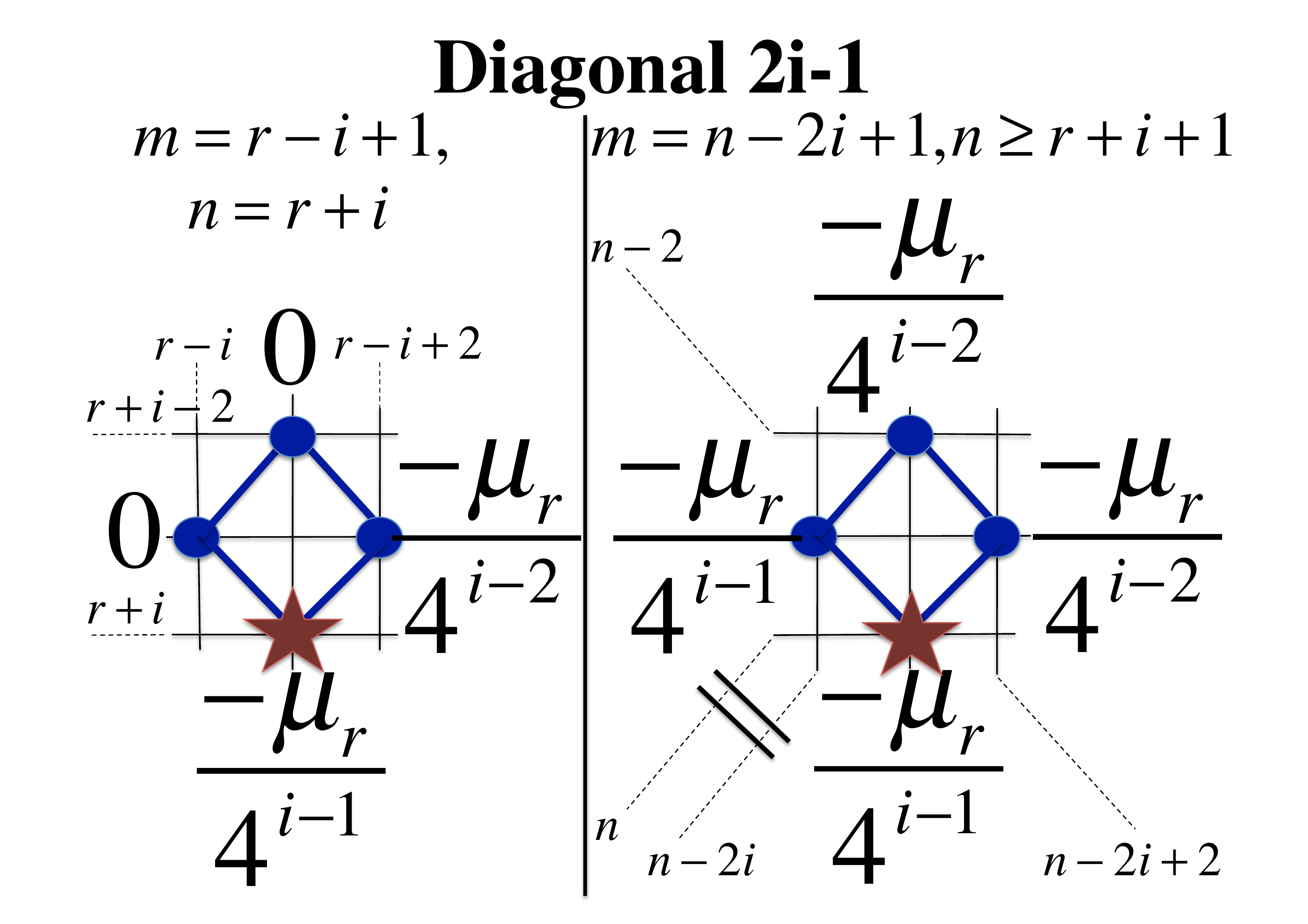} \\ \hline 
\end{tabular}
\end{center}
%\end{figure}

{\bf Diagonal $2i$, $i\geq 2$}:
At this point, we can easily conclude that $\lambda_{r+1,m}=0$, $0\leq m\leq r-2$ by application of the relation (\ref{GRRGenCoRecR}), because all elements involved are zero (from this point on, (\ref{GRRGenCoRecR}) will not be needed anymore).
The same situation occurs for $\lambda^{t}_{n,m}=0$, $r+2\leq n\leq 2r$, $0\leq m\leq 2r-n$ using this time (\ref{GRRGenCoRecN}). We have just proved the finite triangle of zeros appearing after row of order $r$.  Now, using (\ref{GRRGenCoRecN}), it is trivial to realise that all diagonals of even order $2i$ $(i\geq2)$ are null, due to the reason already invoked, therefore we have showed (\ref{Diag2iCoRec}) and a part of (\ref{DiagmCoRec}).

{\bf Diagonal $2i-1$, $1\leq i\leq r+1$}: The first part given by (\ref{Diag2im11CoRec}) belongs to the triangle of zeros. Let us work on the second part (\ref{Diag2im12CoRec}). We begin by proving the first nonzero element
for $n=r+i$ and $m=n-2i+1=r-i+1$, $\lambda^{t}_{r+i,r-i+1}=-\frac{\mu_r}{4^{i-1}}$. We use the relation 
(\ref{GRRGenCoRecN}) and we obtain
\begin{eqnarray}
&&\lambda^{t}_{r+i,r-i+1} = \frac{1}{4}\left(-\lambda^{t}_{r+i-2,r-i+1}+\lambda^{t}_{r+i-1,r-i+2}\right)+\lambda^{t}_{r+i-1,r-i}\Leftrightarrow\notag\\
&& -\frac{\mu_r}{4^{i-1}}=\frac{1}{4}\left(0-\frac{\mu_r}{4^{i-2}} \right)+0\ ,\notag
\end{eqnarray}
on accounting of (\ref{zerotrian}), the finite triangle of zeros and (\ref{linkcoeffnms}), and by the induction hypothesis for $i-1$, e.g., the preceding odd diagonal of order $2i-3$. For the rest of the diagonal, we write
 (\ref{GRRGenCoRecN}) for $m=n-2i+1$ and applying the induction assumption, we get
\begin{eqnarray}
&&\lambda^{t}_{n,n-2i+1} =\frac{1}{4}\left(-\lambda^{t}_{n-2,n-2i+1}+\lambda^{t}_{n-1,n-2i+2}\right)+\lambda^{t}_{n-1,n-2i}=\notag\\
&& \ \ \ \ \ \ \ \ \ \ \  \frac{1}{4}\left( \frac{\mu_r}{4^{i-2}} -\frac{\mu_r}{4^{i-2}}   \right)+\lambda^{t}_{n-1,n-2i}=
\lambda^{t}_{n-1,n-2i}=\ldots=\lambda^{t}_{r+i,r-i+1}=-\frac{\mu_r}{4^{i-1}}. \notag
\end{eqnarray}
%\begin{figure}[htbp]
%$\caption{{\it $r$-perturbed by translation case for $r\geq1$.}}
%$\label{GRR_Dem_Tra_Diag12}
%\begin{center}
%\begin{tabular}{|c|}\hline
%\includegraphics[width=6.5cm]{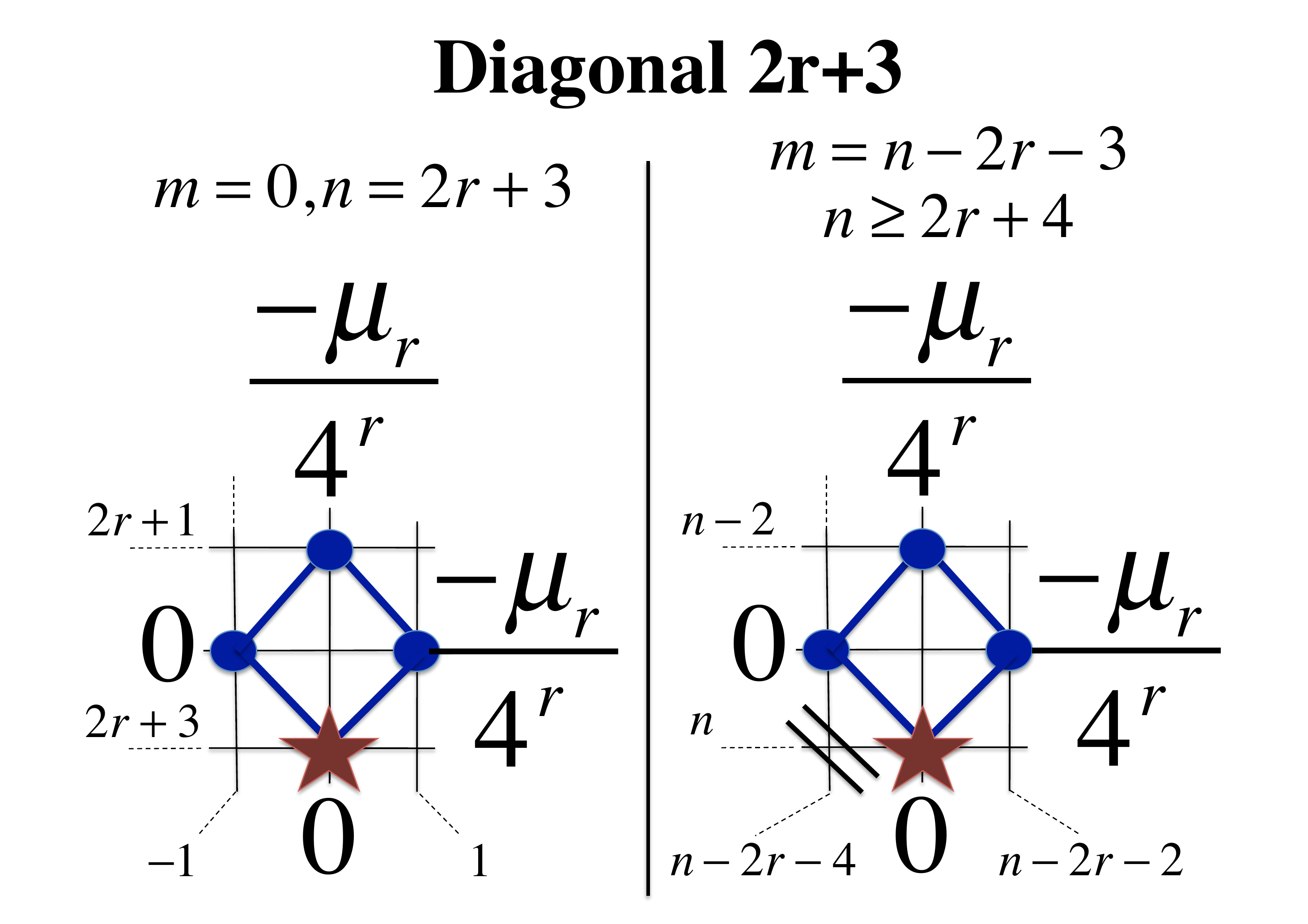} 
% \\ \hline 
%\end{tabular}
%\end{center}
%\end{figure}
\begin{center}
\begin{tabular}{|c|c|}\hline
\includegraphics[width=6.5cm]{GRR_Dem_Tra_Diag2r3} &
\includegraphics[width=6.5cm]{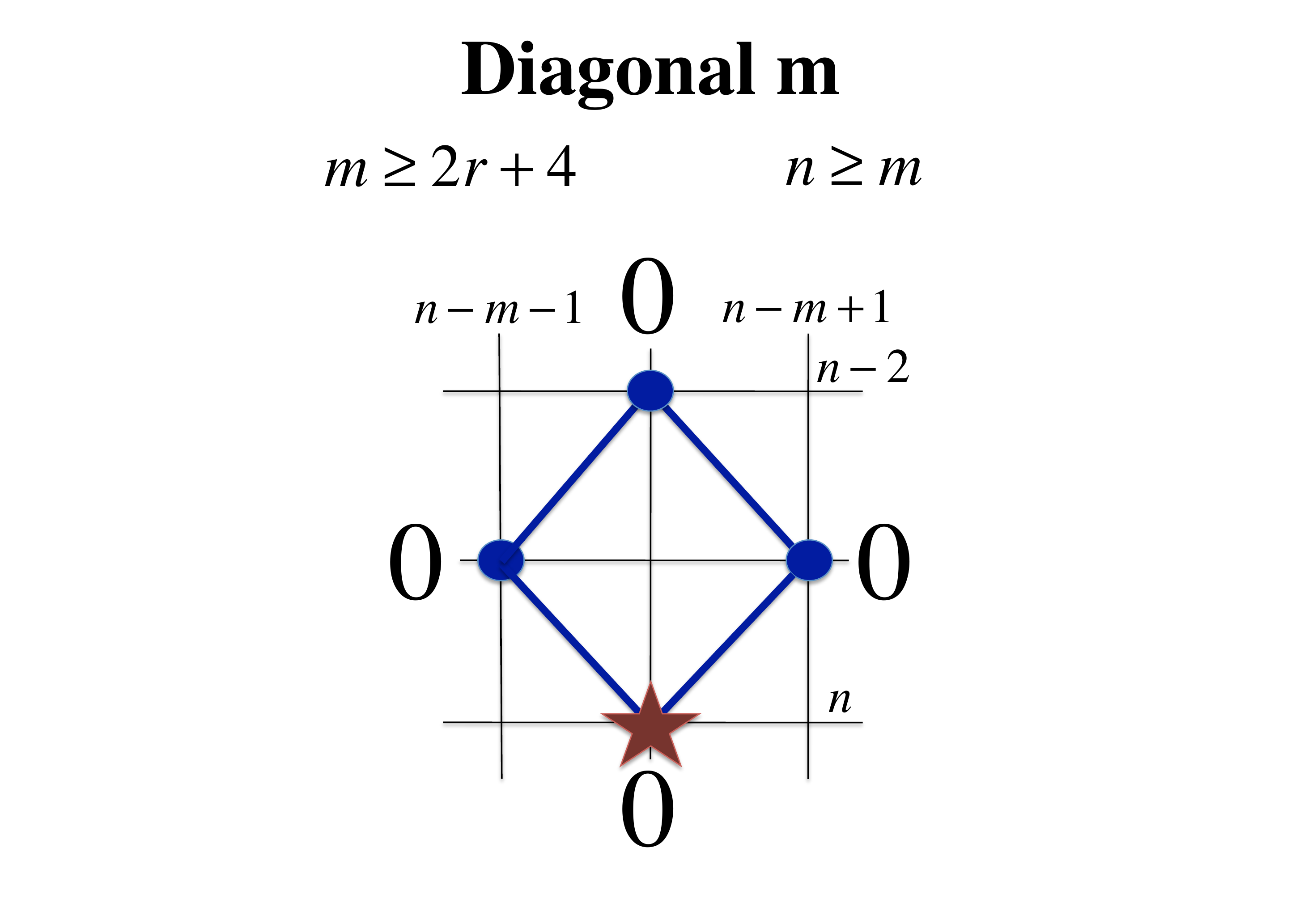}
 \\ \hline 
\end{tabular}
\end{center}

{\bf Diagonal $2r+3$}: 
We have to do a final effort to show that this diagonal is null unlike the odd preceding one of order $2r+1$; after that, applying (\ref{GRRGenCoRecN}), it will be trivial to realize that all odd diagonals of greater order are null and (\ref{DiagmCoRec}) will be entirely verified. Essentially this occurs because $\lambda^{t}_{2r+2,-1}=0$ from (\ref{linkcoeffnms}). In fact, using (\ref{GRRGenCoRecN}) for $n=2r+3$ and applying (\ref{Diag2im12CoRec}) for $i=r+1$, two elements of the diagonal $2r+1$ appear and we obtain
$$\lambda^{t}_{2r+3,0}=\frac{1}{4}\left(-\lambda^{t}_{2r+1,0}+\lambda^{t}_{2r+2,1}\right)+\lambda^{t}_{2r+2,-1}=\frac{1}{4}\left(\frac{\mu_r}{4^r}-\frac{\mu_r}{4^r}\right)+0=0.$$
For the other elements of this diagonal $2r+3$, from (\ref{GRRGenCoRecN}), for $n\geq2r+4$, follows
\begin{eqnarray}
&& \lambda^{t}_{n,n-2r-3}=\frac{1}{4}\left(-\lambda^{t}_{n-2,n-2r-3}+\lambda^{t}_{n-1,n-2r-2}\right)+\lambda^{t}_{n-1,n-2r-4}=\notag\\
&&
\ \ \ \ \ \ \ \ \ \ \  \frac{1}{4}\left(\frac{\mu_r}{4^r}-\frac{\mu_r}{4^r}\right)+\lambda^{t}_{n-1,n-2r-4}=\lambda^{t}_{n-1,n-2r-4}=\ldots=\lambda^{t}_{2r+3,0}=0\ .\notag
\end{eqnarray}
\end{proof}
%%%%%%%%%%%%%%%%%%%%%%%%%
%%%%%%%%%%%%%%%%%%%%%%%%%
%%%%%%%%%%%%%%%%%%%%%%%%%
\begin{theorem}\label{CCDilCheDig}
CC for the $r$th-perturbed {\it by dilatation} case $(r\geq 1)$ written by diagonal
\begin{eqnarray}
&&\textrm{Diagonal\ 0:}\ \lambda^{d}_{n,n}=1\ ,\ n\geq 0\ ;\notag\\
&& \textrm{Diagonal\ $2i-1$:}\ \lambda^{d}_{n,n-2i+1}=0\ ,\ n\geq 2i-1\ ,\ i=1(1)r\ ; \label{Diag2im10DilRec}\\
&& \textrm{Diagonal\ $2i$:}\ \lambda^{d}_{n,n-2i}=0\ ,\ 2i\leq n<r+i\ ;\label{Diag2im11DilRec}\\
&&\ \ \ \ \ \ \ \ \ \ \ \ \ \ \ \ \lambda^{d}_{n,n-2i}=\frac{(1-\lambda_r)}{4^{i}}\ ,\ n\geq r+i,\ i=1(1)r\ ;\label{Diag2im12DilRec}\\
&& \textrm{Diagonal\ $m$:}\ \lambda^{d}_{n,n-m}=0\ ,\ n\geq m\ ,\ m\geq 2r+1\ .\label{DiagmDilRec}
\end{eqnarray}

\end{theorem}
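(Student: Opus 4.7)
The plan is to mirror the diagonal induction of Theorem \ref{CCTraCheDig}, exploiting two simplifications proper to the dilatation case. First, since $\beta_n=0$ for the Chebyshev polynomials of second kind and the dilatation (\ref{rcoeffdil}) does not affect the $\beta$-sequence, the perturbed family $\{P^d_n(\lambda_r;r)\}_{n\ge 0}$ remains symmetric. The parity identities for symmetric sequences recalled in Section \ref{subsection2_1} then force $\lambda^d_{n,m}=0$ whenever $n-m$ is odd, which disposes of all odd diagonals in one step and is exactly (\ref{Diag2im10DilRec}). Second, because (\ref{rcoeffdil}) only modifies $\gamma_r$, one has $P^d_n(\lambda_r;r)\equiv P_n$ for $n=0(1)r$, so $\lambda^d_{n,m}=\delta_{n,m}$ on the initial $(r+1)\times(r+1)$ identity block; this automatically supplies the $n\le r$ part of the zero region in (\ref{Diag2im11DilRec}).

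I would then induct on the even diagonal index $i=1,\ldots,r$, using Lemma \ref{lemma2}, treating each diagonal in three stages. Stage~1 (completion of the zero region $2i\le n<r+i$): for $n=r+1$, which is present only when $i\ge 2$, the special recurrence (\ref{GRRGenCoDilR}) at $m=r+1-2i$ involves three entries on rows $r-1$ and $r$ that all lie strictly off the main diagonal of the identity block, hence vanish, giving $\lambda^d_{r+1,r+1-2i}=0$; the values at $r+2\le n<r+i$ then propagate via (\ref{GRRGenCoDilN}) using the induction hypothesis that the homologous positions on diagonal $2(i-1)$ are already zero. Stage~2 (first nonzero entry at $n=r+i$): the case $i=1$ is the direct computation $\lambda^d_{r+1,r-1}=\tfrac14(-\lambda_r\cdot 1+1)=\tfrac{1-\lambda_r}{4}$ from (\ref{GRRGenCoDilR}); for $i\ge 2$, formula (\ref{GRRGenCoDilN}) at $(n,m)=(r+i,r-i)$ is fed by $\lambda^d_{r+i-2,r-i}=0$ and $\lambda^d_{r+i-1,r-i+1}=\tfrac{1-\lambda_r}{4^{i-1}}$ from diagonal $2(i-1)$, together with $\lambda^d_{r+i-1,r-i-1}=0$ from Stage~1 of the current diagonal, yielding $\tfrac{1-\lambda_r}{4^i}$. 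Stage~3 (propagation for $n\ge r+i+1$): the two diagonal-$2(i-1)$ entries appearing in (\ref{GRRGenCoDilN}) are both $\tfrac{1-\lambda_r}{4^{i-1}}$ and cancel, leaving $\lambda^d_{n,n-2i}=\lambda^d_{n-1,n-2i-1}$, which equals $\tfrac{1-\lambda_r}{4^i}$ by induction on $n$.

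Only (\ref{DiagmDilRec}) for $m\ge 2r+1$ remains; odd $m$ is covered by symmetry, so only the even diagonals $m=2r+2,2r+4,\ldots$ need attention. At $m=2r+2$ and $(n,m')=(2r+2,0)$, relation (\ref{GRRGenCoDilN}) combines $\lambda^d_{2r+1,-1}=0$ (boundary condition in (\ref{linkcoeffnms})) with two entries on diagonal $2r$ that are both equal to $\tfrac{1-\lambda_r}{4^r}$ by the $i=r$ case of the previous paragraph, so they cancel; propagation along the row then carries this zero through the whole diagonal, and a further trivial induction on the even diagonal index closes the argument, since at every subsequent step both of the contributing entries on the preceding even diagonal are zero by the induction hypothesis. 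The main bookkeeping difficulty is keeping straight, at each inductive step, which entries of diagonal $2(i-1)$ lie in its zero region, which is the transition entry $n=r+(i-1)$, and which belong to the constant region; once this is fixed, every single verification reduces to a one-line arithmetic identity, and the dilatation case is in fact simpler than the translation one because (\ref{GRRGenCoDilR}) has no $-\mu_r$ term to carry through.
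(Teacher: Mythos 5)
Your proposal is correct and follows essentially the same diagonal-by-diagonal induction as the paper's own proof: symmetry disposes of the odd diagonals, the initial identity block together with relation (\ref{GRRGenCoDilR}) handles rows $n\le r+1$, the first nonzero entry is computed at $n=r+i$, telescoping via (\ref{GRRGenCoDilN}) gives the constant value $\frac{1-\lambda_r}{4^i}$ along each even diagonal, and the cancellation at diagonal $2r+2$ (using $\lambda^{d}_{2r+1,-1}=0$) propagates zeros to all higher diagonals. The only cosmetic slip is in your Stage 3 when $i=1$, where the two cancelling entries lie on diagonal $0$ and equal $1$ rather than $(1-\lambda_r)/4^{i-1}$; since they are still equal, the cancellation argument is unaffected.
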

%%%%%%%%%%%%%%%%%%%%%%%%%
\begin{proof}
%We omit this proof, because it is analogous to
This proof is analogous to the demonstration of the preceding theorem, but
this case is more simple, because both sequences are symmetric and the two relations of Lemma \ref{lemma2} involve the same four CC. In this case, it is trivial to realize that all odd diagonals are null, because all CC involved in computations are zero; thus, (\ref{Diag2im10DilRec}) and (\ref{DiagmDilRec}) for $m$ odd are proved.
We omit the details concerning the initial triangle of zeros, which corresponds to a part of 
(\ref{Diag2im10DilRec}) and to (\ref{Diag2im11DilRec}), because they are trivial. With respect to even diagonals, we have to distinguish three situations corresponding to diagonals of orders 2, $2i$ and $2r+2$. As before, we shall apply a reasoning by induction.
%\begin{figure}[htbp]
%$\caption{{\it $r$-perturbed by translation case for $r\geq1$.}}
%$\label{GRR_Dem_Tra_Diag12}
\begin{center}
\begin{tabular}{|c|c|}\hline
\includegraphics[width=6.5cm]{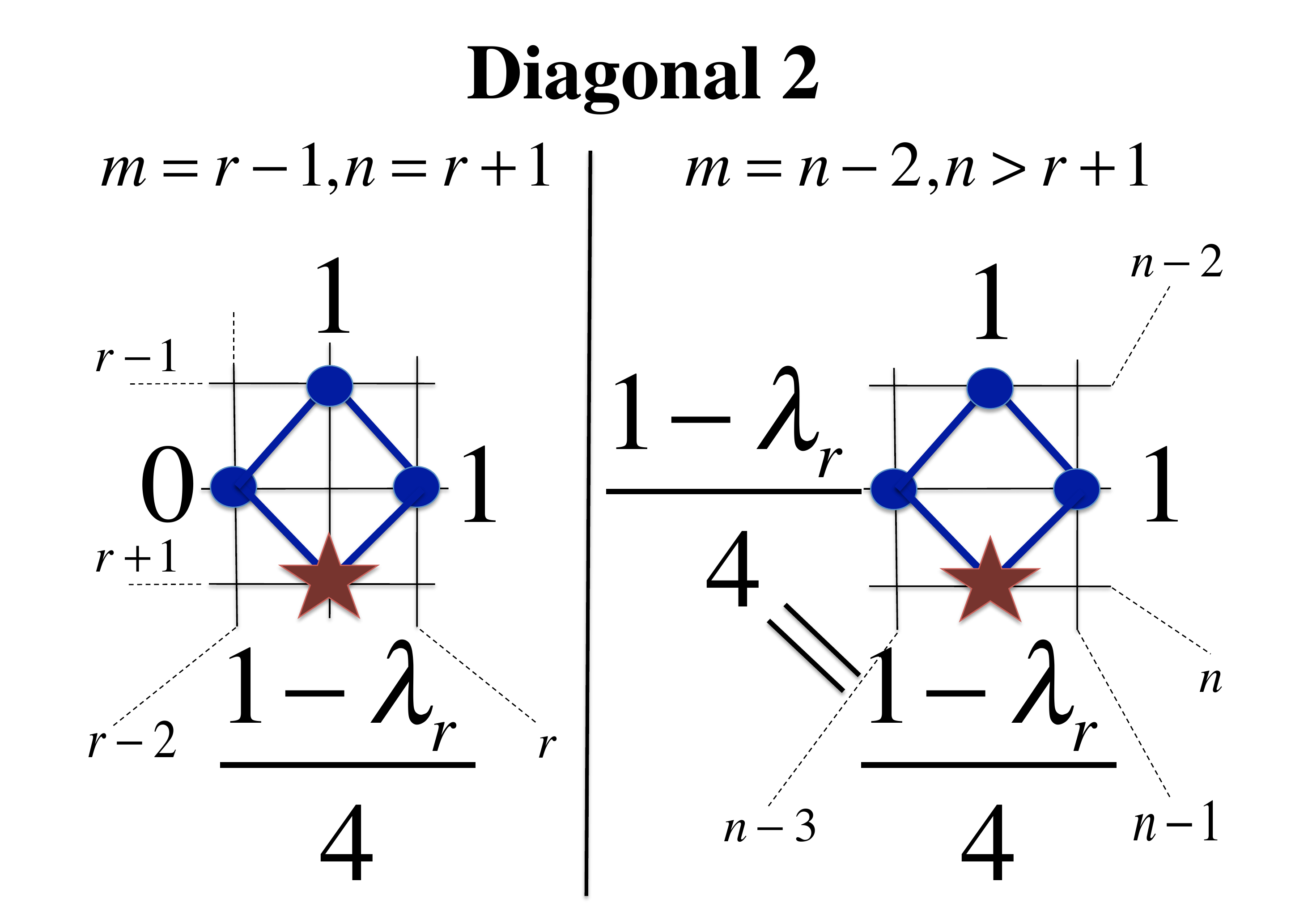}  \\ \hline 
\end{tabular}
\end{center}
%\end{figure}
{\bf Diagonal $2$}: Taking $i=1$ in (\ref{Diag2im12DilRec}), we get $\lambda^{d}_{n,n-2}=\frac{(1-\lambda_r)}{4}$, $n\geq r+1$. Let us prove the first element for $n=r+1$ and $m=r-1$ by the relation (\ref{GRRGenCoDilR})
$$\lambda^{d}_{r+1,r-1}=\frac{1}{4}\left( -\lambda_r\lambda^{d}_{r-1,r-1}+\lambda^{d}_{r,r}\right)+\lambda^{d}_{r,r-2}\Leftrightarrow \frac{(1-\lambda_r)}{4}=\frac{1}{4}\left(-\lambda_r\times1+1\right)+0,$$
on accounting of (\ref{linkcoeffnms}) and (\ref{Diag2im11DilRec}) already proved. 
For the other elements, we apply the relation (\ref{GRRGenCoDilN}) for $m=n-2$ and 
using (\ref{linkcoeffnms}), we get
\begin{eqnarray}
&&\lambda^{d}_{n,n-2}=\frac{1}{4}\left(-\lambda^{d}_{n-2,n-2}+\lambda^{d}_{n-1,n-1}\right)+\lambda^{d}_{n-1,n-3}
=\frac{1}{4}\left(-1+1\right)+\lambda^{d}_{n-1,n-3}=\notag\\
&& \ \ \ \ \ \ \ \ \ \ \  \lambda^{d}_{n-1,n-3}=\ldots=\lambda^{d}_{r+1,r-1}=\frac{(1-\lambda_r)}{4}\ . \notag
\end{eqnarray}
%\begin{figure}[htbp]
%$\caption{{\it $r$-perturbed by translation case for $r\geq1$.}}
%$\label{GRR_Dem_Tra_Diag12}
\begin{center}
\begin{tabular}{|c|c|}\hline
\includegraphics[width=6.5cm]{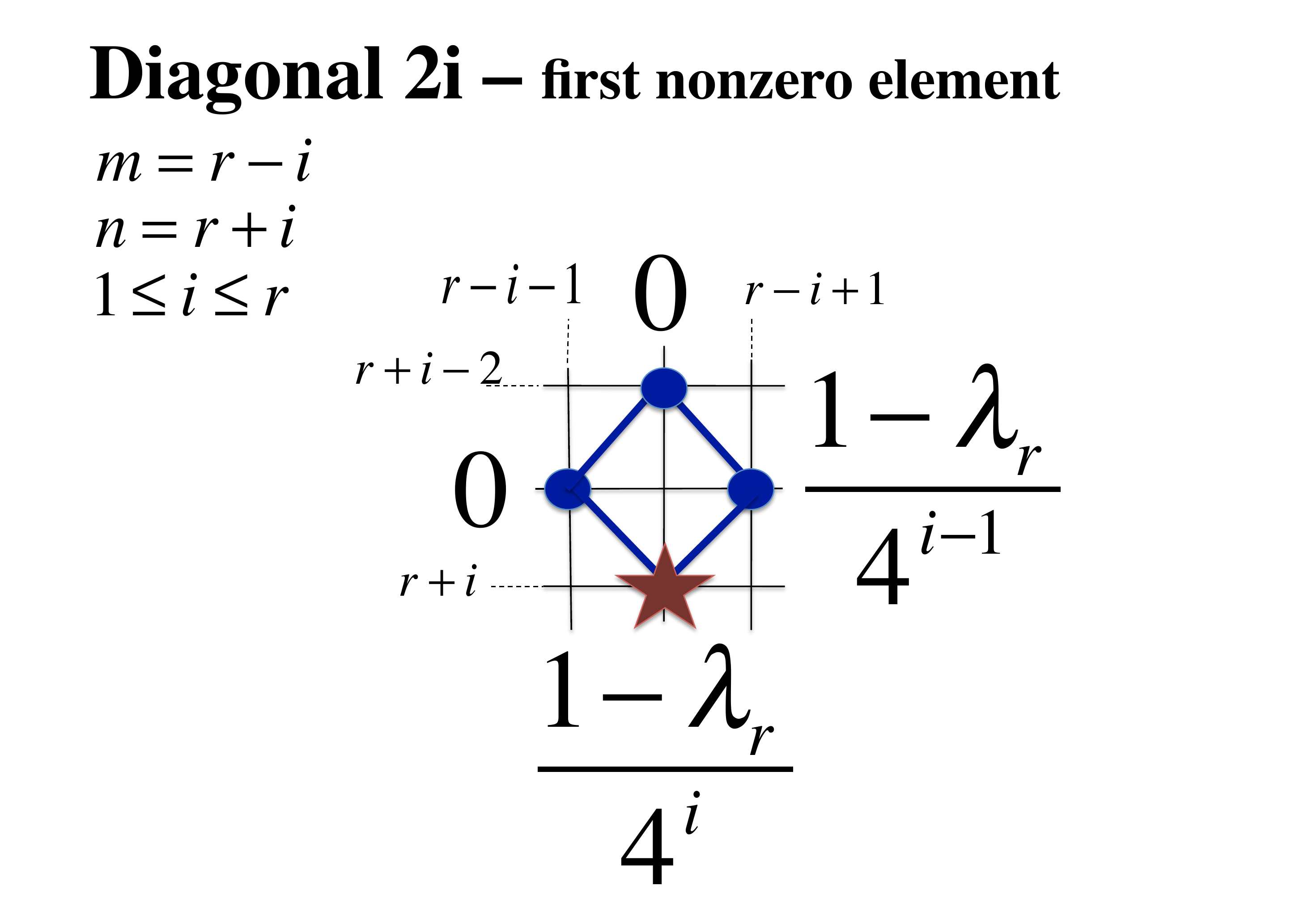} & 
\includegraphics[width=6.5cm]{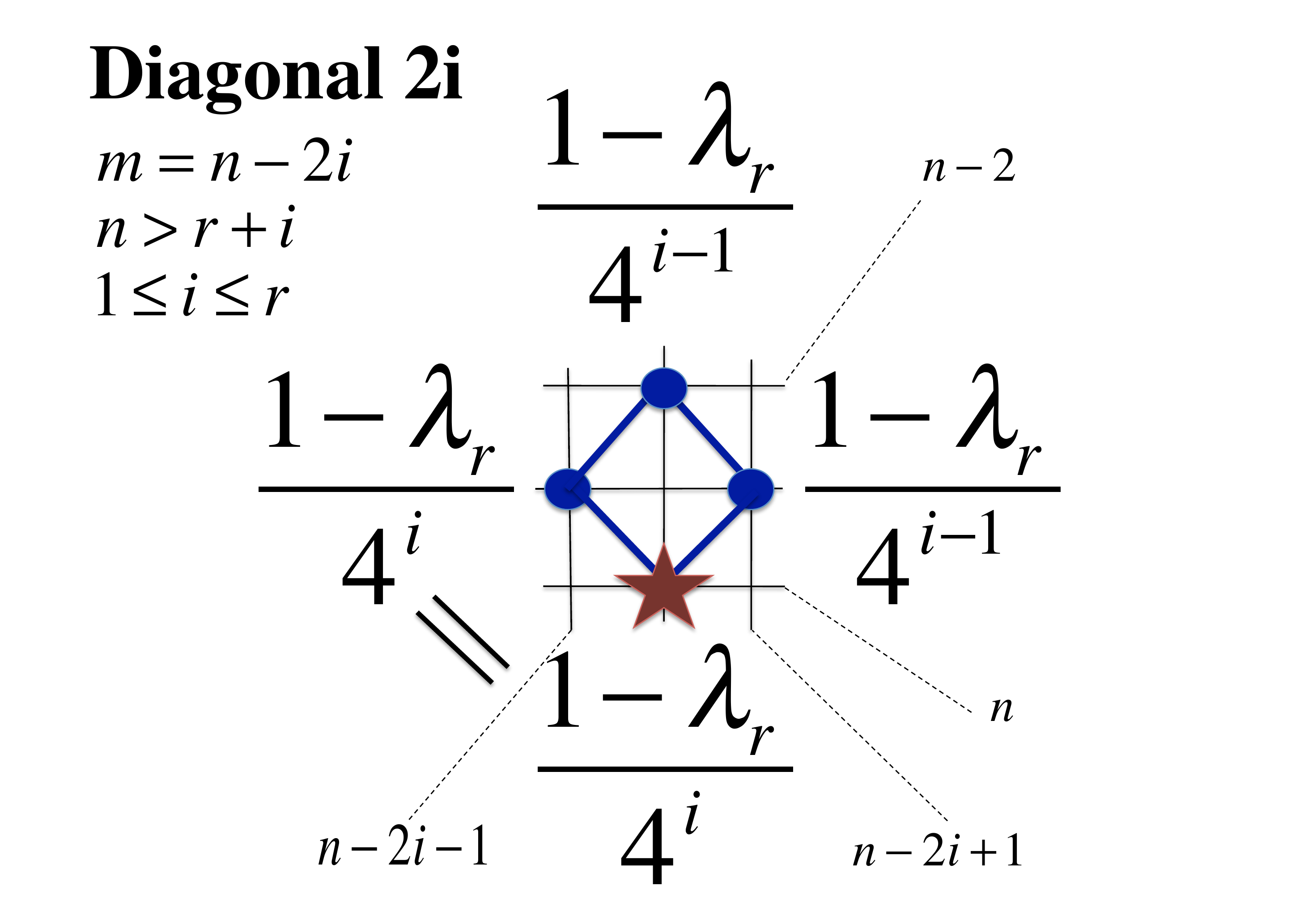} \\ \hline 
\end{tabular}
\end{center}
%\end{figure}
{\bf Diagonal $2i$, $1\leq i\leq r$}:  We begin by proving the first nonzero element: taking $n=r+i$ in (\ref{Diag2im12DilRec}), we get $\lambda^{d}_{r+i,r-i}=\frac{(1-\lambda_r)}{4^{i}}$.  Using (\ref{GRRGenCoDilN}) for $n=r+i$ and $m=r-i$, we obtain
\begin{eqnarray}
\lambda^{d}_{r+i,r-i} & = & \frac{1}{4}\left(-\lambda^{d}_{r+i-2,r-i}+\lambda^{d}_{r+i-1,r-i+1}\right)+\lambda^{d}_{r+i-1,r-i-1}\Leftrightarrow\notag\\
 \frac{(1-\lambda_r)}{4^{i}} & = & \frac{1}{4}\left(0+\frac{(1-\lambda_r)}{4^{i-1}}\right)+0\ , \notag
\end{eqnarray}
taking into account the triangle of zeros and the hypothesis of induction for $i-1$. For the rest of the diagonal ($n>r+i$), we write (\ref{GRRGenCoDilN}) for $m=n-2i$ and we apply the induction assumption two times getting
\begin{eqnarray}
\lambda^{d}_{n,n-2i}& =& \frac{1}{4}\left(-\lambda^{d}_{n-2,n-2i}+\lambda^{d}_{n-1,n-2i+1}\right)+\lambda^{d}_{n-1,n-2i-1}=\notag\\
&&\frac{1}{4}\left(-\frac{(1-\lambda_r)}{4^{i-1}}+\frac{(1-\lambda_r)}{4^{i}}\right)+\lambda^{d}_{n-1,n-2i-1}=\notag\\
&&\lambda^{d}_{n-1,n-2i-1}=\ldots=\lambda^{d}_{r+i,r-i}=\frac{(1-\lambda_r)}{4^{i}}\ .\notag
\end{eqnarray}
%\begin{figure}[htbp]
%$\caption{{\it $r$-perturbed by translation case for $r\geq1$.}}
%$\label{GRR_Dem_Tra_Diag12}
\begin{center}
\begin{tabular}{|c|c|}\hline
\includegraphics[width=6.5cm]{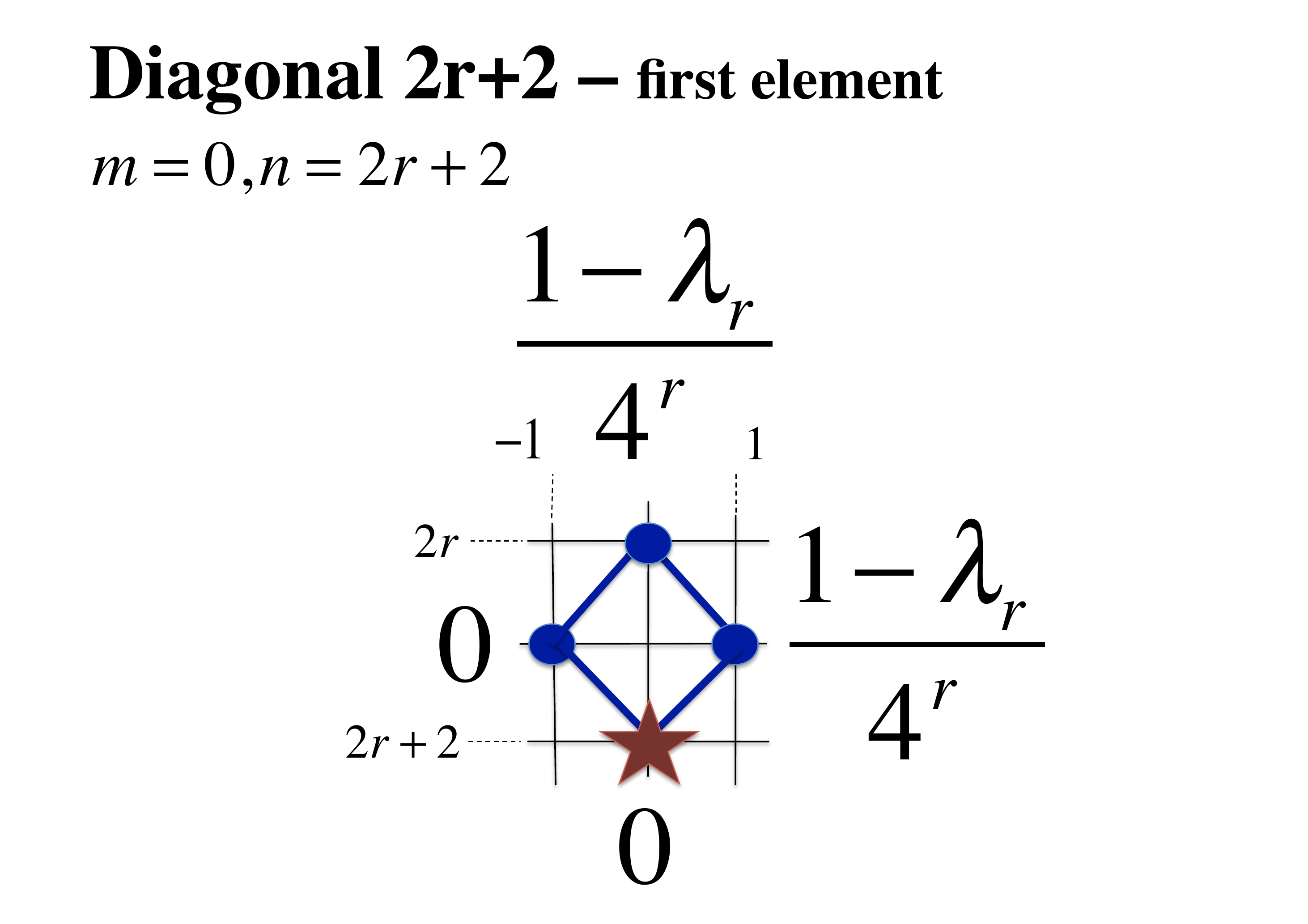} & 
\includegraphics[width=6.5cm]{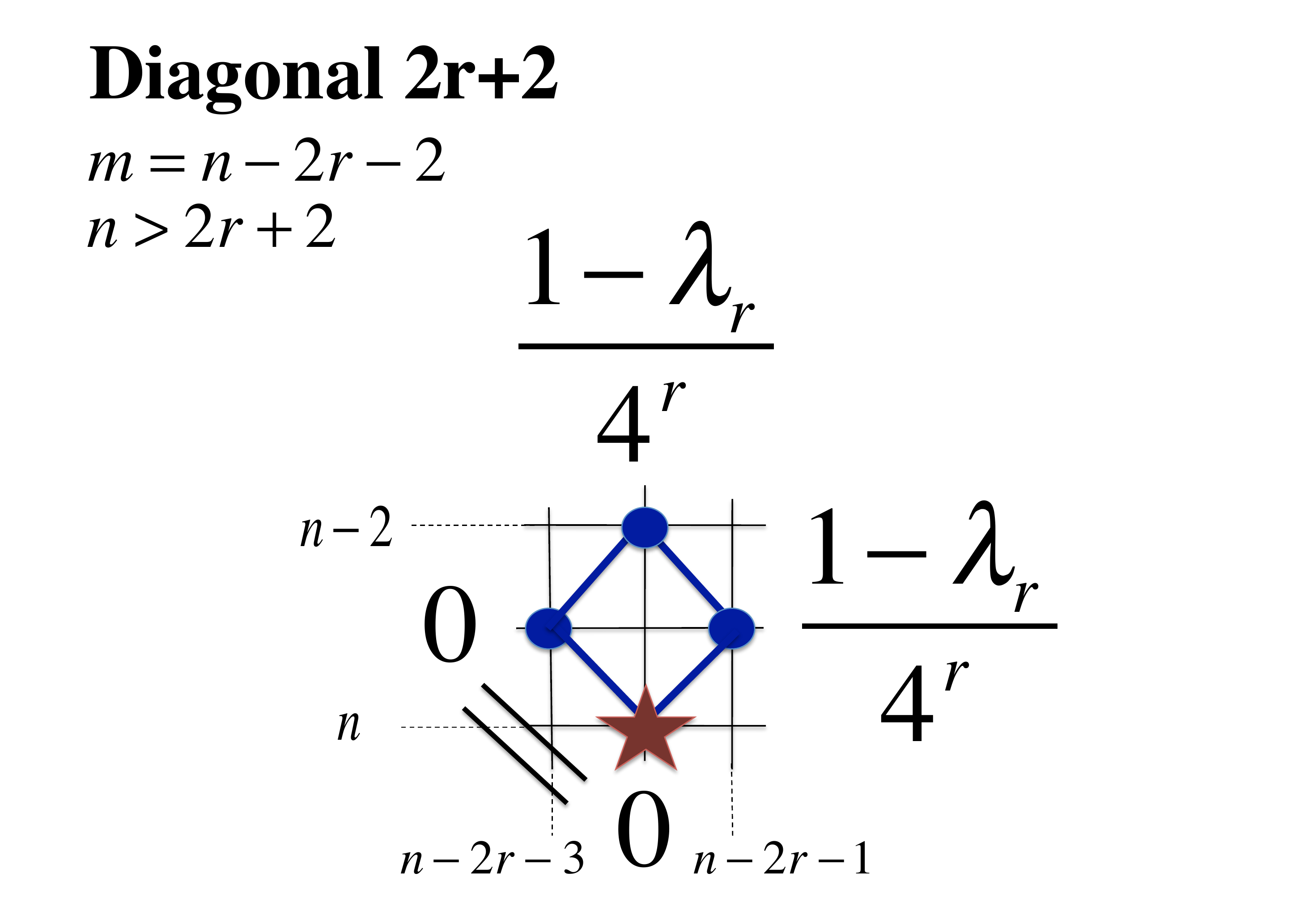} \\ \hline 
\end{tabular}
\end{center}
%\end{figure}
{\bf Diagonal $2r+2$}: As before, we begin by proving the first element. We write (\ref{GRRGenCoDilN}) for $n=2r+2$ and $m=0$, obtaining
$$\lambda^{d}_{2r+2,0}=\frac{1}{4}\left(-\lambda^{d}_{2r,0}+\lambda^{d}_{2r+1,1}\right)+\lambda^{d}_{2r+1,-1}=
\frac{1}{4}\left(-\frac{(1-\lambda_r)}{4^{r}}+\frac{(1-\lambda_r)}{4^{r}}\right)+0=0,$$
from (\ref{Diag2im12DilRec}) for $i=r$, $n=2r$ and $n=2r+1$; and (\ref{linkcoeffnms}). For the other elements, we write 
(\ref{GRRGenCoDilN}) for $m=n-2r-2$ and $n> 2r+2$, and we use (\ref{Diag2im12DilRec}) for $i=r$ getting
\begin{eqnarray}
\lambda^{d}_{n,n-2r-2}&=&\frac{1}{4}\left(-\lambda^{d}_{n-2,n-2r-2}+\lambda^{d}_{n-1,n-2r-1}\right)+\lambda^{d}_{n-1,n-2r-3}=\notag\\
&&
\frac{1}{4}\left(-\frac{(1-\lambda_r)}{4^{r}}+\frac{(1-\lambda_r)}{4^{r}}\right)+\lambda^{d}_{n-1,n-2r-3}=\notag\\
&&\lambda^{d}_{n-1,n-2r-3}=\ldots=\lambda^{d}_{2r+2,0}=0\ .\notag
\end{eqnarray}
After this, it is evident that all even diagonals of greater order are null, because all CC involved  in computations are zero; thus we have proved the remainder part of (\ref{DiagmDilRec}).
\end{proof}
%%%%%%%%%%%%%%%%%%%%%%%%%
In order to obtain the CR (\ref{PntildePnu}), we have to consider in the preceding tables CC $\lambda_{n,m}$ by row, e.g., with $n$ fixed and $m=0(1)n$. Doing this, we easily obtain next two propositions. In both cases, we remark that there is one set of $r+1$ trivial initial CR ((\ref{CR_Recursion_inic_cond_r_Tra}) and 
(\ref{CR_Dilatation_inic_cond_r_Dil})) and another set of initial CR ((\ref{CR_Recursion_k}) and (\ref{CR_Dilatation_k})) corresponding to the above mentioned triangle of zeros. 
%\ref{TabTra3} and \ref{TabDil3}
%for the translation case %for the dilatation case
Main CR (\ref{CR_Recursion_n2r1}) and (\ref{CR_Dilatation_n2r}) give a perturbed polynomial in terms of $r+2$ and $r+1$ Chebyshev polynomials, respectively.
From the degrees of polynomials involved in these relations, considering $n$ even or $n$ odd, and taking into account the symmetry of $\{P_n\}_{n\geq0}$, it is easy to realize a fact that we already know, that perturbed by dilatation are symmetric, but the same is not true for perturbed by translation. 
%Changing the indexation of the CC given in the preceding theorems, we easily obtain the following propositions.
%%%%%%%%%%%%%%%%%%%%%%%%% 
\begin{proposition}\label{CCTraChe}
CR and CC for the $r$th-perturbed {\it by translation} case $(r\geq 0)$
\begin{eqnarray}
&& P^{t}_{k}(\mu_r;r)(x)=P_k(x)\ , \ k=0(1)r\ ,\ \label{CR_Recursion_inic_cond_r_Tra} \\
&&  \ \ \ \ \ \  \lambda^{t}_{k,k}=1\ ; \ \lambda^{t}_{k,m}=0\ ,\ 0\leq m\leq k-1\ . \notag \\ 
&& P^{t}_{k}(\mu_r;r)(x)=P_k(x)-\mu_r\sum_{i=0}^{k-r-1}\frac{1}{4^i}P_{k-2i-1}(x)\ , \ k=r+1(1)2r\ ,\label{CR_Recursion_k} \\
&& \ \ \ \ \ \ \lambda^{t}_{k,k}=1\ ;\  
\lambda^{t}_{k,k-2i-1}=-\mu_r/4^i\ ,\ i=0(1)k-r-1\ ; \notag\\
&&\ \ \ \ \ \ \lambda^{t}_{k,k-2i-2}=0\ ,\ i=0(1)k-r-1\ ; \ \lambda^{t}_{k,m}=0\ ,\ 0\leq m \leq 2r-k-1\ .\notag \\ 
&& P^{t}_{n+2r+1}(\mu_r;r)(x)=P_{n+2r+1}(x)-\mu_r\sum_{i=0}^{r}\frac{1}{4^i}P_{n+2(r-i)}(x)\ , \ n\geq 0\ ,\label{CR_Recursion_n2r1} \\
&& \ \ \ \ \ \ \lambda^{t}_{n+2r+1,n+2r+1}=1\ ; \  \lambda^{t}_{n+2r+1,n+2(r-i)}=-\mu_r/4^i\ ,\ i=0(1)r\ ; \notag\\
&&\ \ \ \ \ \ \lambda^{t}_{n+2r+1,n+2(r-i)-1}=0\ ,\ i=0(1)r-1\ ;
  \ \lambda^{t}_{n+2r+1,m}=0\ ,\ 0\leq m\leq n-1\ . \notag 
\end{eqnarray}
\end{proposition}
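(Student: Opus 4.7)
The plan is to derive this proposition as a direct row-by-row transcription of Theorem \ref{CCTraCheDig}. Each formula in the statement amounts to collecting all nonzero entries $\lambda^{t}_{k,m}$ on a single row $n=k$ of the triangle of connection coefficients and substituting them into the generic connection relation (\ref{PntildePnu}); no new use of the recurrences in Lemma \ref{lemma1} is required, because the inductive work has already been carried out diagonal by diagonal in the previous theorem.

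First I would dispatch (\ref{CR_Recursion_inic_cond_r_Tra}). For $0\le k\le r$, the entry $\lambda^{t}_{k,k}=1$ is (\ref{Diag0CoRec}); every odd diagonal $2i-1$ with $1\le i\le r+1$ contributes $0$ on this row by (\ref{Diag2im11CoRec}), because the condition $k<r+i$ holds throughout this range; every even diagonal is identically zero by (\ref{Diag2iCoRec}), and diagonals of order $\ge 2r+2$ vanish by (\ref{DiagmCoRec}). Thus the row collapses to its single diagonal entry, so $P^{t}_{k}\equiv P_{k}$.

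Next, for $r+1\le k\le 2r$, I would single out those odd diagonals that have already entered their nonzero regime at row $k$: these are the diagonals $2i-1$ for which $k\ge r+i$, i.e.\ $1\le i\le k-r$. On such a diagonal, (\ref{Diag2im12CoRec}) gives $\lambda^{t}_{k,k-2i+1}=-\mu_{r}/4^{i-1}$. After reindexing by $j=i-1$, and noting that every even diagonal and every odd diagonal of order $\ge 2r+3$ remains zero on this row by (\ref{Diag2iCoRec}) and (\ref{DiagmCoRec}), substitution into (\ref{PntildePnu}) produces exactly (\ref{CR_Recursion_k}). For the generic range $k=n+2r+1$ with $n\ge 0$, the same reading applies, but now all $r+1$ odd diagonals $2i-1$, $1\le i\le r+1$, are simultaneously in their nonzero regime, which yields the full sum of $r+1$ terms in (\ref{CR_Recursion_n2r1}); no further odd diagonal survives, by (\ref{DiagmCoRec}).

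The principal obstacle is purely clerical: keeping the two indexings aligned (the diagonal parameter $i$ against the summation parameter $j=i-1$) and checking that the absent terms $P_{k-2j-2}$ in the proposed CR indeed correspond to zero entries on even diagonals, which is (\ref{Diag2iCoRec}). Since every substantive claim has already been proved in Theorem \ref{CCTraCheDig}, no fresh induction, no new algebraic identity, and no additional appeal to orthogonality of the perturbed sequence is needed.
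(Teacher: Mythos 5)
Your proposal is correct and coincides with the paper's own (very brief) justification: the paper obtains Proposition \ref{CCTraChe} precisely by reading the connection coefficients of Theorem \ref{CCTraCheDig} (equivalently, the tables) row by row, with $n$ fixed and $m=0(1)n$, and substituting into (\ref{PntildePnu}). Your bookkeeping of the three ranges $k\le r$, $r+1\le k\le 2r$, $k\ge 2r+1$ and the reindexing $j=i-1$ is exactly the intended argument, so no further work is needed.
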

%%%%%%%%%%%%%%%%%%%%%%%%%
%\begin{proof}
%\end{proof}
%%%%%%%%%%%%%%%%%%%%%%%%%
%%%%%%%%%%%%%%%%%%%%%%%%%
\begin{proposition}\label{CCDilChe}
CR and CC for the $r$th-perturbed {\it by dilatation} case $(r\geq 1)$
\begin{eqnarray}
&& P^{d}_{k}(\lambda_r;r)(x)=P_k(x)\ , \ k=0(1)r\ ,\label{CR_Dilatation_inic_cond_r_Dil}\\
&&\ \ \ \ \ \  \lambda^{d}_{k,k}=1\ ; \ \lambda^{d}_{k,m}=0\ ,\ 0\leq m\leq k-1\ . \notag 
\end{eqnarray}
\begin{eqnarray} 
&& P^{d}_{k}(\lambda_r;r)(x)=P_k(x)+\frac{1-\lambda_r}{4}\sum_{i=1}^{k-r}\frac{1}{4^{i-1}}P_{k-2i}(x)\ , \ k=r+1(1)2r-1\ ,\label{CR_Dilatation_k}\\  
&& \ \ \ \ \ \ \lambda^{d}_{k,k}=1\ ; \
 \lambda^{d}_{k,k-2i+1}=0,\ i=1(1)k-r\ ; \notag \\ 
 && \ \ \ \ \ \ \lambda^{d}_{k,k-2i}=(1-\lambda_r)/4^i,\ i=1(1)k-r \ ; \ \lambda^{d}_{k,m}=0\ , \ 0\leq m \leq 2r-k-1\ .\notag\\
&& P^{d}_{n+2r}(\lambda_r;r)(x)=P_{n+2r}(x)+\frac{1-\lambda_r}{4}\sum_{i=1}^{r}\frac{1}{4^{i-1}}P_{n+2(r-i)}(x)\ , \ n\geq 0\ ,\label{CR_Dilatation_n2r} \\
&& \ \ \ \ \ \ \lambda^{d}_{n+2r,n+2r}=1\ ; \ \lambda^{d}_{n+2r,n+2(r-i)+1}=0\ ,\ i=1(1)r
; \notag \\ 
&& \ \ \ \ \ \  \lambda^{d}_{n+2r,n+2(r-i)}=(1-\lambda_r)/4^i\ , \ i=1(1)r\ ; \
 \lambda^{d}_{n+2r,m}=0\ ,\ 0\leq m\leq n-1\ . \notag 
\end{eqnarray}
\end{proposition}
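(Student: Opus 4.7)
The plan is to obtain Proposition \ref{CCDilChe} directly as a transcription of Theorem \ref{CCDilCheDig}: the theorem tabulates the nonzero entries $\lambda^{d}_{n,m}$ by downward diagonal $m = n-p$, whereas the proposition reads the same array row by row. So the proof amounts to fixing $n$, collecting the contributing diagonals, and substituting the resulting CC into the defining identity \eqref{PntildePnu}.

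First I would dispose of the trivial initial rows $k = 0,1,\ldots,r$ giving \eqref{CR_Dilatation_inic_cond_r_Dil}. Since the only perturbed recurrence coefficient is $\gamma^d_r = \lambda_r \gamma_r$, and the recurrence \eqref{recOrto} for $P^d_{n+2}$ only invokes $\gamma^d_{n+1}$ with $n+1 \leq n+1$, the polynomials $P^d_k(\lambda_r;r)$ with $k \leq r$ are computed entirely from unperturbed data, hence $P^d_k = P_k$ and $\lambda^d_{k,k}=1$, $\lambda^d_{k,m}=0$ for $m<k$. This is also reflected in the initial triangle of zeros in Tables \ref{TabDil_odd} and \ref{TabDil_even}, which is encoded by the vanishing parts of \eqref{Diag2im10DilRec} and \eqref{Diag2im11DilRec}.

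Next I would treat the intermediate rows $k = r+1, \ldots, 2r-1$, yielding \eqref{CR_Dilatation_k}. For a fixed such $k$, I scan the columns $m = 0, 1, \ldots, k$. Odd diagonals $2i-1$ contribute nothing by \eqref{Diag2im10DilRec}, so $\lambda^d_{k, k-2i+1}=0$. For each even diagonal $2i$, Theorem \ref{CCDilCheDig} gives $\lambda^d_{k, k-2i} = (1-\lambda_r)/4^i$ precisely when $k \geq r+i$, i.e.\ when $1 \leq i \leq k-r$; for $i > k-r$ we are in the triangle of zeros \eqref{Diag2im11DilRec} and $\lambda^d_{k,k-2i}=0$ (which forces the $r+1+2r-k-1 = 2r-k$ trailing zeros $\lambda^d_{k,m}=0$ for $0 \leq m \leq 2r-k-1$). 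Inserting these values into \eqref{PntildePnu} produces exactly the sum $\sum_{i=1}^{k-r}\frac{1}{4^{i-1}}P_{k-2i}$ with the factor $(1-\lambda_r)/4$ pulled outside.

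Finally, for the stabilized rows $n+2r$ with $n \geq 0$, I would prove \eqref{CR_Dilatation_n2r}. Here every even diagonal $2i$ with $1 \leq i \leq r$ is in its plateau regime $n+2r \geq r+i$, contributing $\lambda^d_{n+2r, n+2(r-i)} = (1-\lambda_r)/4^i$, and all diagonals of order $\geq 2r+1$ vanish by \eqref{DiagmDilRec}, which accounts for the $n$ trailing zeros $\lambda^d_{n+2r, m}=0$ for $0 \leq m \leq n-1$. The CR therefore has exactly $r+1$ nonzero terms. I do not expect a genuine obstacle: the main care required is bookkeeping the index translation $p \mapsto (n,m)=(n,n-p)$ between the diagonal and row viewpoints, and splitting into the three regimes $k \leq r$, $r+1 \leq k \leq 2r-1$, and $k \geq 2r$ so that the correct upper limit on $i$ appears in each sum.
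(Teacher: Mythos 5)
Your proposal is correct and follows essentially the same route as the paper: Proposition \ref{CCDilChe} is obtained there precisely by reading the CC of Theorem \ref{CCDilCheDig} row by row (with $n$ fixed and $m=0(1)n$) and inserting them into \eqref{PntildePnu}, with the three regimes $k\leq r$, $r+1\leq k\leq 2r-1$ and $k\geq 2r$ handled exactly as in your bookkeeping. The only blemishes are cosmetic slips (the number of trailing zeros in the intermediate rows is $2r-k$, not $r+1+2r-k-1$, and the sentence justifying $P^{d}_{k}\equiv P_{k}$ for $k\leq r$ should say that only $\gamma^{d}_{j}$ with $j\leq k-1\leq r-1$ are used), which do not affect the argument.
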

%%%%%%%%%%%%%%%%%%%%%%%%%
%\begin{proof}
%\end{proof}

\vspace{0.25cm}

The following two corollaries give CR for perturbations by translation of first orders.

%%%%%%%%%%%%%%%%%%%%%%%%%
\begin{corollary} \label{corollary1}

CR for the {\it co-recursive case}
\begin{eqnarray} 
&& P^{t}_0\left(\mu_0;0\right)\equiv P_0\ ,\  P^{t}_{n+1}\left(\mu_0;0\right)(x) =  P_{n+1}(x)-\mu_0P_{n}(x)\ ,\ n \geq 0\ . \label{rrcPer0Tra}%\\ \notag\\
\end{eqnarray}
CR for the  perturbed of order 1 {\it by translation} case
\begin{eqnarray} 
&& P^{t}_0\left(\mu_1;1\right)\equiv P_0\ ,\ P^{t}_1\left(\mu_1;1\right)\equiv P_1\ ;\ 
P^{t}_{2}\left(\mu_1;1\right)(x) = P_2(x)-\mu_1P_1(x)\ ;\label{rrcPer1Tra1} \\
&& P^{t}_{n+3}\left(\mu_1;1\right)(x)  = 
 P_{n+3}(x)-\mu_1P_{n+2}(x)-\frac{\mu_1}{4}P_{n}(x)\ ,\ n\geq 0\ .\label{rrcPer1Tra2}
\end{eqnarray}
CR for the perturbed of order 2 {\it by translation} case
\begin{eqnarray} 
&& P^{t}_0\left(\mu_1;1\right)\equiv P_0\ ,\ P^{t}_1\left(\mu_1;1\right)\equiv P_1\ ,\ P^{t}_2\left(\mu_1;1\right)\equiv P_2\ ;\notag\\
&& P^{t}_{3}\left(\mu_1;1\right)(x) = P_3(x)-\mu_1P_2(x)\ ,\ P^{t}_{4}\left(\mu_1;1\right)(x) = P_4(x)-\mu_1P_3(x)-\frac{\mu_1}{4}P_{1}(x)\ ;\notag \\ 
&& P^{t}_{n+5}\left(\mu_1;1\right)(x)  = 
 P_{n+5}(x)-\mu_1P_{n+4}(x)-\frac{\mu_1}{4}P_{n+2}(x)-\frac{\mu_1}{4^2}P_{n}(x)\ ,\ n\geq 0\ .\notag
\end{eqnarray}
\end{corollary}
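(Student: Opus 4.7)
The plan is to derive the three sets of formulas as direct specializations of Proposition~\ref{CCTraChe} applied successively with $r=0$, $r=1$, and $r=2$.

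First, for the co-recursive case ($r=0$), I would observe that the range $k=r+1(1)2r$ in the intermediate block (\ref{CR_Recursion_k}) is empty, so only (\ref{CR_Recursion_inic_cond_r_Tra}) (giving $P^{t}_0\equiv P_0$) and the main formula (\ref{CR_Recursion_n2r1}) survive. In the latter, the sum $\sum_{i=0}^{0}4^{-i}P_{n+2(0-i)}(x)$ collapses to the single term $P_{n}(x)$, producing (\ref{rrcPer0Tra}).

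For the first-order perturbation ($r=1$), the trivial block (\ref{CR_Recursion_inic_cond_r_Tra}) immediately gives $P^{t}_0\equiv P_0$ and $P^{t}_1\equiv P_1$. The intermediate block (\ref{CR_Recursion_k}) survives only for $k=2$, where its sum contains one term ($i=0$) and yields (\ref{rrcPer1Tra1}). Finally, the main formula (\ref{CR_Recursion_n2r1}) carries a two-term sum ($i=0,1$) which evaluates directly to (\ref{rrcPer1Tra2}).

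For the second-order perturbation ($r=2$), the same procedure applies. The initial identities $P^{t}_k\equiv P_k$, $k=0,1,2$, come from (\ref{CR_Recursion_inic_cond_r_Tra}). The intermediate relation (\ref{CR_Recursion_k}) is evaluated at $k=3$ (one-term sum, giving $P^{t}_{3}=P_3-\mu_2 P_2$) and at $k=4$ (two-term sum, giving $P^{t}_{4}=P_4-\mu_2 P_3-\frac{\mu_2}{4}P_1$). The main formula applied for $n\geq 0$ produces the last identity with a three-term sum ($i=0,1,2$).

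There is essentially no substantive obstacle: the only work is careful index bookkeeping, checking in each case that the upper bound $k-r-1$ of the intermediate sum and the upper bound $r$ of the main sum agree with the terms enumerated in the statement, and that the parities of the indices $k-2i-1$ (odd stride) and $n+2(r-i)$ (even stride) match the subscripts of the Chebyshev polynomials that actually appear.
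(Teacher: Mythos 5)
Your proof is correct and follows exactly the route the paper intends: the corollary is a direct specialization of Proposition~\ref{CCTraChe} at $r=0,1,2$, and your index bookkeeping for the empty intermediate block at $r=0$, the single value $k=2$ at $r=1$, and the values $k=3,4$ at $r=2$ all check out (note only that the paper's statement of the order-$2$ case contains a typographical slip, writing $\mu_1$ for what should be $\mu_2$, which your computation correctly uses).
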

%%%%%%%%%%%%%%%%%%%%%%%%%
%Note that (\ref{rrcPer0Tra}) corresponds to the known relation $P_{n+1}(\mu_0)(x)=P_{n+1}(x)-\mu_0 P_n^{(1)}(x)$ \cite{Chihara,MARO 91}, because the Chebyshev sequence of second kind is self-associated, that is, $P_n^{(1)}(x)=P_n(x)$. 
Notice that (\ref{rrcPer0Tra}) corresponds to a well known relation \cite{Chihara,MARO 91},
furthermore, it gives as particular cases (\ref{CR_VP}) and (\ref{CR_WP}).
% the well known relations (see e.g. \cite{Mason_2003})
%$$V_{n+1}(x)=P_{n+1}(x)-\frac{1}{2}P_n(x)\quad,\quad W_{n+1}(x)=P_{n+1}(x)+\frac{1}{2}P_n(x)\ .$$
%%%%%%%%%%%%%%%%%%%%%%%%%
\begin{corollary} \label{corollary2}
CR for the perturbed of order 1 {\it by dilatation} case
\begin{eqnarray} 
&& P^{d}_{0}\left(\lambda_1;1\right)\equiv P_0\ ,\ P^{d}_{1}\left(\lambda_1;1\right) \equiv P_1\ ,\notag\\%\label{irrcPer1Dil}\\
&& P^{d}_{n+2}\left(\lambda_1;1\right)(x)=P_{n+2}(x)+\frac{1}{4}(1-\lambda_1)P_{n}(x)\ ,\ n \geq 0. \label{rrcPer1Dil}
\end{eqnarray}
CR for the perturbed of order 2 {\it by dilatation} case
\begin{eqnarray} 
&& P^{d}_0\left(\lambda_2;2\right)\equiv P_0\ ,\ P^{d}_1\left(\lambda_2;2\right)\equiv P_1\ ,\
P^{d}_{2}\left(\lambda_2;2\right) \equiv P_2\ ,\notag\\%\label{rrcPer2Dil} \\
&& P^{d}_{3}\left(\lambda_2;2\right)(x)  = P_3(x)+\frac{1}{4}(1-\lambda_2)P_1(x)\ , 
\label{rrcPer2Dil1}  \\
&&P^{d}_{n+4}\left(\lambda_2;2\right)(x) =  
P_{n+4}(x)+\frac{1}{4}(1-\lambda_2)P_{n+2}(x)+\frac{1}{4^2}(1-\lambda_2)P_{n}(x)\ ,\ n\geq 0.
\label{rrcPer2Dil2}
\end{eqnarray}
\end{corollary}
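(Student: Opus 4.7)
The plan is to obtain Corollary \ref{corollary2} as a direct specialization of Proposition \ref{CCDilChe} to the two cases $r=1$ and $r=2$, so no independent argument is needed beyond careful substitution into the three formulas (\ref{CR_Dilatation_inic_cond_r_Dil}), (\ref{CR_Dilatation_k}) and (\ref{CR_Dilatation_n2r}).

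First I would handle $r=1$. The ``trivial initial block'' (\ref{CR_Dilatation_inic_cond_r_Dil}) with $k=0,1$ immediately gives the two identifications $P^{d}_{0}(\lambda_1;1)\equiv P_0$ and $P^{d}_{1}(\lambda_1;1)\equiv P_1$. The range $k=r+1(1)2r-1$ in (\ref{CR_Dilatation_k}) becomes $2(1)1$ and is empty, so this intermediate block contributes nothing when $r=1$. It remains to substitute $r=1$ in the main CR (\ref{CR_Dilatation_n2r}), where the sum reduces to the single term $i=1$, producing
\begin{equation*}
P^{d}_{n+2}(\lambda_1;1)(x)=P_{n+2}(x)+\tfrac{1-\lambda_1}{4}\,P_{n}(x),\quad n\geq 0,
\end{equation*}
which is (\ref{rrcPer1Dil}).

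Next I would treat $r=2$ in the same way. The initial block (\ref{CR_Dilatation_inic_cond_r_Dil}) with $k=0,1,2$ yields the three identifications $P^{d}_{k}(\lambda_2;2)\equiv P_k$ for $k=0,1,2$. Now (\ref{CR_Dilatation_k}) becomes non-empty: for $r=2$ the range $k=r+1(1)2r-1$ collapses to the single value $k=3$, and the sum has one term ($i=1$), giving (\ref{rrcPer2Dil1}). Finally, specializing (\ref{CR_Dilatation_n2r}) to $r=2$ turns the sum into $i=1,2$, and reading off the coefficients $\tfrac{1-\lambda_2}{4}$ and $\tfrac{1-\lambda_2}{4^2}$ produces exactly (\ref{rrcPer2Dil2}).

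Since the argument is pure substitution into an already-proved proposition, there is no real obstacle; the only thing to be careful about is the bookkeeping of the three disjoint ranges of $k$ in Proposition \ref{CCDilChe} (initial trivial range $0\leq k\leq r$, intermediate range $r+1\leq k\leq 2r-1$, and the tail $k\geq 2r$), and in particular noticing that the intermediate range is empty for $r=1$ but contributes the single relation at $k=3$ when $r=2$.
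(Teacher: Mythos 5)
Your proposal is correct and is exactly how the paper obtains this corollary: it is a direct specialization of Proposition \ref{CCDilChe} to $r=1$ and $r=2$, with the intermediate block (\ref{CR_Dilatation_k}) empty for $r=1$ and reducing to the single relation at $k=3$ for $r=2$. The bookkeeping of the ranges and the evaluation of the sums in (\ref{CR_Dilatation_n2r}) are all carried out correctly.
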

%%%%%%%%%%%%%%%%%%%%%%%%%
Notice that (\ref{rrcPer1Dil}) admits as particular case (\ref{CR_TP}).
%the well known relation (see e.g. \cite{Mason_2003})
%$$T_{n+2}(x)=P_{n+2}(x)-\frac{1}{4}P_n(x),\ n\geq 0.$$

%%%%%%%%%%%%%%%%%%%%%%%%%%%%%%%%%%%%%%%%%%
%%%%%%%%%%%%%%%%%%%%%%%%%%%%%%%%%%%%%%%%%%
%%%%%%%%%%%%%%%%%%%%%%%%%%%%%%%%%%%%%%%%%%
%%%%%%%%%%%%%%%%%%%%%%%%%%%%%%%%%%%%%%%%%%%%%
%%%%%%%%%%%%%%%%%%%%%%%%%%%%%%%%%%%%%%%%%%%%%
\section{Connection coefficients and connection relations in terms of the canonical basis}\label{CCChe2}
%\section{Connection coefficients and connection relations for some perturbed of the Chebyshev polynomials of second kind in terms of the canonical basis}\label{CCChe2}
%%%%%%%%%%%%%%%%%%%%%%%%%%%%%%%%%%%%%%%%%%%%%
%%%%%%%%%%%%%%%%%%%%%%%%%%%%%%%%%%%%%%%%%%%%%
%%%%%%%%%%%%%%%%%%%%%%%%%%%%%%%%%%%%%%%%%%
%%%%%%%%%%%%%%%%%%%%%%%%%%%%%%%%%%%%%%%%%%
%%%%%%%%%%%%%%%%%%%%%%%%%%%%%%%%%%%%%%%%%%

In this section, our goal is to explicit the CC for $rth$-perturbed by translation and by dilatation of the Chebyshev polynomials of second kind in terms of the canonical basis: %, e.g., the 
$$\lambda^{tX}_{n,m}:=\lambda^{P^tX}_{n,m}=\lambda_{n,m}(P^{t}(\mu_r;r)\leftarrow X)\  {\rm and} \ \lambda^{dX}_{n,m}:=\lambda^{P^dX}_{n,m}=\lambda_{n,m}(P^{d}(\lambda_r;r)\leftarrow X).$$

Our starting point are the CR given by  Propositions \ref{CCTraChe} and \ref{CCDilChe}, and the CR and the CC $C_{n,m}:=C_{n,m}(P\leftarrow X)$ stated by (\ref{P2n})-(\ref{C2n1}). First, we need a lemma.
%%%%%%%%%%%%%%%%%%%%%%%%%
%%%%%%%%%%%%%%%%%%%%%%%%%
\begin{lemma}\label{lemmaCCCRCan} 
\begin{equation}\label{lemmaCan}
\Lambda_{n}^{m}=\sum_{\mu=n}^{m}a_\mu\sum_{\nu=0}^{\mu}b_{\nu}=
\sum_{\nu=0}^{n-1}b_\nu\sum_{\mu=n}^{m}a_{\mu}+
\sum_{\nu=n}^{m}b_\nu\sum_{\mu=\nu}^{m}a_{\mu}\ ,\ m\geq n\geq 0\ .
\end{equation}
\end{lemma}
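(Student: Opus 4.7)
The plan is to view $\Lambda_n^m$ as a single sum over the two-dimensional index set
$$S = \{(\mu,\nu) : n \leq \mu \leq m,\ 0 \leq \nu \leq \mu\}$$
of the product $a_\mu b_\nu$, and then to re-partition $S$ according to where $\nu$ sits with respect to the threshold $n$. This is a standard Fubini-type exchange of order of summation, so I do not expect any real obstacle; the only care needed is to describe the two resulting regions correctly and check that their union is exactly $S$ and that they are disjoint.

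More concretely, I would first rewrite the left-hand side as
$$\Lambda_n^m = \sum_{(\mu,\nu)\in S} a_\mu b_\nu,$$
and then split $S = S_1 \sqcup S_2$, where
$$S_1 = \{(\mu,\nu) \in S : 0 \leq \nu \leq n-1\}, \qquad S_2 = \{(\mu,\nu) \in S : n \leq \nu \leq \mu\}.$$
On $S_1$, the condition $\nu \leq \mu$ is automatic because $\nu \leq n-1 < n \leq \mu$, so the constraints decouple into $0 \leq \nu \leq n-1$ and $n \leq \mu \leq m$. Summing first in $\mu$ for each fixed $\nu$ gives
$$\sum_{(\mu,\nu)\in S_1} a_\mu b_\nu = \sum_{\nu=0}^{n-1} b_\nu \sum_{\mu=n}^{m} a_\mu,$$
which is exactly the first term on the right-hand side of \eqref{lemmaCan}.

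On $S_2$, the condition $n \leq \mu$ is implied by $n \leq \nu \leq \mu$, so the constraints reduce to $n \leq \nu \leq m$ and $\nu \leq \mu \leq m$. Exchanging the order of summation (summing over $\mu$ for each fixed $\nu$) yields
$$\sum_{(\mu,\nu)\in S_2} a_\mu b_\nu = \sum_{\nu=n}^{m} b_\nu \sum_{\mu=\nu}^{m} a_\mu,$$
which is the second term on the right-hand side. Adding the two contributions gives the identity, and the hypothesis $m \geq n \geq 0$ ensures that the index ranges in $S_1$ (i.e. $0 \leq \nu \leq n-1$ and $n \leq \mu \leq m$) are either nonempty or produce the empty sum $0$, consistently with the usual convention. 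The proof is thereby complete after essentially one line of bookkeeping.
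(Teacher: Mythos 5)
Your argument is correct. You verify the identity by writing $\Lambda_n^m$ as a sum of $a_\mu b_\nu$ over the triangular index set $S=\{(\mu,\nu): n\leq\mu\leq m,\ 0\leq\nu\leq\mu\}$, splitting $S$ into the disjoint pieces with $\nu\leq n-1$ and $\nu\geq n$, and interchanging the order of summation in each piece; the bookkeeping (the constraint $\nu\leq\mu$ being automatic on the first piece, and $n\leq\mu$ being implied by $n\leq\nu\leq\mu$ on the second) is handled correctly, and the degenerate case $n=0$ is covered by the empty-sum convention. This is a genuinely different, and more direct, route than the paper's: the paper instead reduces to the classical $n=0$ interchange formula $\sum_{\mu=0}^{m}a_\mu\sum_{\nu=0}^{\mu}b_\nu=\sum_{\nu=0}^{m}b_\nu\sum_{\mu=\nu}^{m}a_\mu$ by adding and subtracting the partial sum $\sum_{\mu=0}^{n-1}a_\mu\sum_{\nu=0}^{\mu}b_\nu$, applying that formula twice, and then splitting the resulting inner sums to recombine the terms. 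Your direct partition of the index set is self-contained and arguably cleaner, since it avoids the add-and-subtract step and the double application of the $n=0$ identity; the paper's version has the mild advantage of leaning explicitly on the standard well-known interchange formula and treating the general case as a perturbation of it, but both proofs are equally rigorous and of comparable length.
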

%%%%%%%%%%%%%%%%%%%%%%%%%
\begin{proof}
The case $n=0$ corresponds to a well known situation
\begin{equation}\label{lemmaCan1}
\Lambda_{0}^{m}=\sum_{\mu=0}^{m}a_\mu\sum_{\nu=0}^{\mu}b_{\nu}=
\sum_{\nu=0}^{m}b_\nu\sum_{\mu=\nu}^{m}a_{\mu}\ ,\ m\geq 0\ .
\end{equation}
For $n\geq1$, adding and subtracting a same quantity, we can write
\begin{eqnarray}
&&\Lambda_{n}^{m}=\sum_{\mu=0}^{n-1}a_\mu\sum_{\nu=0}^{\mu}b_{\nu}-
\sum_{\mu=0}^{n-1}a_\mu\sum_{\nu=0}^{\mu}b_{\nu}+\sum_{\mu=n}^{m}a_\mu\sum_{\nu=0}^{\mu}b_{\nu}=\notag \\
&&\ \ \ \ \ \ \ \  \sum_{\mu=0}^{m}a_\mu\sum_{\nu=0}^{\mu}b_{\nu}-
\sum_{\mu=0}^{n-1}a_\mu\sum_{\nu=0}^{\mu}b_{\nu}\ .   \notag
\end{eqnarray}
Now, we apply (\ref{lemmaCan1}) two times to the last member of the preceding equality, and then, in the first term obtained, we separate each of the two sums into two ones considering that $m\geq n$, and we get the desired result
\begin{eqnarray}
&& \Lambda_{n}^{m}=\sum_{\nu=0}^{m}b_{\nu}\sum_{\mu=\nu}^{m}a_\mu-
\sum_{\nu=0}^{n-1}b_{\nu}\sum_{\mu=\nu}^{n-1}a_\mu=\notag\\
&& \ \ \ \ \ \ \ \ \sum_{\nu=0}^{n-1}b_{\nu}\Big\{    \sum_{\mu=\nu}^{n-1}a_\mu+\sum_{\mu=n}^{m}a_\mu \Big\}+\sum_{\nu=n}^{m}b_{\nu}\sum_{\mu=\nu}^{m}a_\mu-
\sum_{\nu=0}^{n-1}b_{\nu}\sum_{\mu=\nu}^{n-1}a_\mu\notag\ .
\end{eqnarray}
\end{proof}

%%%%%%%%%%%%%%%%%%%%%%%%%%%%%%%%%%%%%%%%%%%%
%%%%%%%%%%%%%%%%%%%%%%%%%%%%%%%%%%%%%%%%%%%%
%%%%%%%%%%%%%%%%%%%%%%%%%%%%%%%%%%%%%%%%%%%%
\subsection{Translation case}
%%%%%%%%%%%%%%%%%%%%%%%%%%%%%%%%%%%%%%%%%%%%
%%%%%%%%%%%%%%%%%%%%%%%%%%%%%%%%%%%%%%%%%%%%
%%%%%%%%%%%%%%%%%%%%%%%%%%%%%%%%%%%%%%%%%%%%

%%%%%%%%%%%%%%%%%%%%%%%%%
%%%%%%%%%%%%%%%%%%%%%%%%%
\begin{theorem}\label{CCTraCheCanonical}
%%%%%%%%%%%%%%%%%%%%%%%%%
CC $\lambda_{n,m}^{tX}:=\lambda_{n,m}(P^t\leftarrow X)$ for the $r$th-perturbed {\it by translation}  case $(r\geq 0)$ in terms of the canonical basis. 
%%%%%%%%%%%%%%%%%%%%%%%%%%%%%%%%%%%%%%%%
For $k=0(1)r$, it holds
\begin{eqnarray}
&& \lambda^{tX}_{k,\nu}=C_{k,\nu}\ ,\ \nu=0(1)k\ . \label{condiniciais_tra1} 
\end{eqnarray}

%%%%%%%%%%%%%%%%%%%%%%%%%%%%%%%%%%%%%%%%
If  $k=2k'$, then for $k'=r'+1(1)r$, being $r=2r'$ or $r=2r'+1$, it holds
\begin{eqnarray}
&& \lambda^{tX}_{2k',2\nu}  = C_{2k',2\nu}\ ,\ \nu=0(1)k'\ ,\label{CCTraCheCanonical_condinic11}\\ 
&&  \lambda^{tX}_{2k',2\nu+1} = -\frac{\mu_{r}}{4^{(k'-1)}}\sum_{\mu=r-k'}^{k'-1} 4^{\mu}C_{2\mu+1,2\nu+1}\  ,\ \nu=0(1)r-k'\ ,\label{CCTraCheCanonical_condinic12}\\
&& \lambda^{tX}_{2k',2\nu+1} = -\frac{\mu_{r}}{4^{(k'-1)}}\sum_{\mu=\nu}^{k'-1} 4^{\mu}C_{2\mu+1,2\nu+1}\ ,\ \nu=r-k'+1(1)k'-1\ .\label{CCTraCheCanonical_condinic13}
\end{eqnarray}
%In the case $r=2r'$, $k'=r'+1(1)r$; in the case $r=2r'+1$, $k'=r'+1(1)r$.
%%%%%%%%%%%%%%%%%%%%%%%%%%%%%%%%%%%%%%%%

If $k=2k'+1$, then for $k'=r'(1)r-1$, if $r=2r'$; or for $k'=r'+1(1)r-1$, if $r=2r'+1$, it holds
\begin{eqnarray}
&&  \lambda^{tX}_{2k'+1,2\nu+1}  = C_{2k'+1,2\nu+1}\ ,\ \nu=0(1)k'\ ,\label{CCTraCheCanonical_condinic21}\\ 
&& \lambda^{tX}_{2k'+1,2\nu} = -\frac{\mu_{r}}{4^{k'}}\sum_{\mu=r-k'}^{k'} 4^{\mu}C_{2\mu,2\nu} \  , \ \nu=0(1)r-k'\ ,\label{CCTraCheCanonical_condinic22}\\
&&  \lambda^{tX}_{2k'+1,2\nu} = -\frac{\mu_{r}}{4^{k'}}\sum_{\mu=\nu}^{k'} 4^{\mu}C_{2\mu,2\nu}\ ,\ \nu=r-k'+1(1)k'\ .\label{CCTraCheCanonical_condinic23}
\end{eqnarray}
%In the case $r=2r'$, $k'=r'(1)r-1$; in the case $r=2r'+1$, $k'=r'+1(1)r-1$.
%%%%%%%%%%%%%%%%%%%%%%%%%%%%%%%%%%%%%%%%
%%%%%%%%%%%%%%%%%%%%%%%%%%%%%%%%%%%%%%%%

For $n\geq 0$, it holds
\begin{eqnarray}
&&\lambda^{tX}_{2(n+r)+1,2\nu+1}  = C_{2(n+r)+1,2\nu+1}\ ,\ \nu=0(1)n+r\ ,\label{CCTraCheCanonical_eq1}\\ 
&&\lambda^{tX}_{2(n+r)+1,2\nu}  =  -\frac{\mu_r}{4^{(n+r)}}\sum_{\mu=n}^{n+r}4^{\mu}C_{2\mu,2\nu}\ ,\ \nu=0(1)n\ ,\label{CCTraCheCanonical_eq2}\\
&&\lambda^{tX}_{2(n+r)+1,2\nu}  =  
-\frac{\mu_r}{4^{(n+r)}}
\sum_{\mu=\nu}^{n+r}4^{\mu}C_{2\mu,2\nu}\ ,\ \nu=n+1(1)n+r\ .\label{CCTraCheCanonical_eq3} \\ \notag \\
%\end{eqnarray}
%\begin{eqnarray}
&&\lambda^{tX}_{2(n+r+1),2\nu}  = C_{2(n+r+1),2\nu}\ ,\ \nu=0(1)n+r+1\ ,\label{CCTraCheCanonical_eq4}\\ 
&&\lambda^{tX}_{2(n+r+1),2\nu+1}  =  -\frac{\mu_r}{4^{(n+r)}}\sum_{\mu=n}^{n+r}4^{\mu}C_{2\mu+1,2\nu+1}\ ,\ \nu=0(1)n\ ,\label{CCTraCheCanonical_eq5}\\
&&\lambda^{tX}_{2(n+r+1),2\nu+1}  =  
-\frac{\mu_r}{4^{(n+r)}}
\sum_{\mu=\nu}^{n+r}4^{\mu}C_{2\mu+1,2\nu+1}\ ,\ \nu=n+1(1)n+r\ .\label{CCTraCheCanonical_eq6} 
\end{eqnarray}
\end{theorem}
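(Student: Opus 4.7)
The plan is to start from the connection relations in terms of $\{P_n\}_{n\geq 0}$ given in Proposition \ref{CCTraChe}, substitute the canonical expansions (\ref{P2n})--(\ref{C2n1}), and swap the order of summation via Lemma \ref{lemmaCCCRCan}. The three cases of Proposition \ref{CCTraChe} correspond exactly to the three blocks of formulas in the theorem. For $k = 0(1)r$, the trivial identity $P^t_k \equiv P_k$ in (\ref{CR_Recursion_inic_cond_r_Tra}) yields (\ref{condiniciais_tra1}) immediately.

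For the intermediate range $k = r+1(1)2r$, I start from (\ref{CR_Recursion_k}) and reindex. In the even case $k = 2k'$, substituting $\mu = k'-1-i$ sends $i \in \{0,\ldots,2k'-r-1\}$ to $\mu \in \{r-k',\ldots,k'-1\}$, turns $\frac{1}{4^i} P_{2k'-2i-1}(x)$ into $\frac{4^\mu}{4^{k'-1}} P_{2\mu+1}(x)$, and rewrites (\ref{CR_Recursion_k}) as $P^t_{2k'}(x) = P_{2k'}(x) - \frac{\mu_r}{4^{k'-1}}\sum_{\mu=r-k'}^{k'-1} 4^\mu P_{2\mu+1}(x)$. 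By (\ref{P2n})--(\ref{C2n1}), $P_{2k'}$ supplies only even-power coefficients, which gives (\ref{CCTraCheCanonical_condinic11}); the sum contributes only odd-power ones. Expanding $P_{2\mu+1}(x) = \sum_{\nu=0}^\mu C_{2\mu+1,2\nu+1} x^{2\nu+1}$ and applying Lemma \ref{lemmaCCCRCan} with lower bound $n = r-k'$ and upper bound $m = k'-1$ splits the resulting double sum at $\nu = r-k'$, producing (\ref{CCTraCheCanonical_condinic12}) for $\nu \leq r-k'$ and (\ref{CCTraCheCanonical_condinic13}) for $\nu \geq r-k'+1$; the two expressions coincide at the boundary $\nu = r-k'$. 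The odd case $k = 2k'+1$ is analogous, with reindexing $\mu = k'-i$ leading to (\ref{CCTraCheCanonical_condinic21})--(\ref{CCTraCheCanonical_condinic23}).

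The main range $n \geq 0$ is handled by the same procedure applied to (\ref{CR_Recursion_n2r1}), taking the parameter there to be $2n$ (yielding odd degree $2(n+r)+1$) or $2n+1$ (yielding even degree $2(n+r+1)$). The reindexing $\mu = n+r-i$ transforms the sum into $-\frac{\mu_r}{4^{n+r}}\sum_{\mu=n}^{n+r} 4^\mu P_{2\mu}(x)$ or $-\frac{\mu_r}{4^{n+r}}\sum_{\mu=n}^{n+r} 4^\mu P_{2\mu+1}(x)$, respectively. The leading polynomial provides the same-parity coefficients (\ref{CCTraCheCanonical_eq1}) or (\ref{CCTraCheCanonical_eq4}), and Lemma \ref{lemmaCCCRCan} applied with bounds $n$ and $m = n+r$ to the remaining sum delivers (\ref{CCTraCheCanonical_eq2})/(\ref{CCTraCheCanonical_eq3}) or (\ref{CCTraCheCanonical_eq5})/(\ref{CCTraCheCanonical_eq6}).

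The main obstacle is purely combinatorial bookkeeping: matching the split point $\nu = n-1 \mid n$ produced by Lemma \ref{lemmaCCCRCan} to the index ranges in the theorem, and tracking parities through each reindexing. A minor technical point is that Lemma \ref{lemmaCCCRCan} is stated for a separable product $a_\mu b_\nu$, whereas here the summand $4^\mu C_{2\mu,2\nu}$ depends on both indices; however, the lemma's argument is merely a reordering of the triangular region $\{(\mu,\nu)\colon n\leq \mu \leq m,\ 0\leq \nu\leq \mu\}$ into the pieces $\nu < n$ and $\nu \geq n$, so it extends verbatim to any summand $c_{\mu,\nu}$.
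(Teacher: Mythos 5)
Your proposal is correct and follows essentially the same route as the paper: substitute the canonical expansions (\ref{P2n})--(\ref{C2n1}) into the connection relations of Proposition \ref{CCTraChe}, reindex the sums, and interchange the order of summation via Lemma \ref{lemmaCCCRCan}, identifying coefficients by parity at the end. Your remarks on the boundary index $\nu=r-k'$ (resp.\ $\nu=n$) where the two formulas coincide, and on extending the lemma to a non-separable summand $c_{\mu,\nu}$, are accurate refinements of exactly the argument the paper gives.
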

%%%%%%%%%%%%%%%%%%%%%%%%%
%%%%%%%%%%%%%%%%%%%%%%%%% 
\begin{proof}
%%%%%%%%%%%%%%%%%%%%%%%%%
%%%%%%%%%%%%%%%%%%%%%%%%%
From (\ref{CR_Recursion_inic_cond_r_Tra}),  
(\ref{P2n}) and (\ref{P2n1}) immediately follow the initial conditions (\ref{condiniciais_tra1}).
For deducing the remainder initial conditions (\ref{CCTraCheCanonical_condinic11})-(\ref{CCTraCheCanonical_condinic23}), we must consider two cases corresponding to
$k=2k'$ ($r=2r'$ and $r=2r'+1$) and to $k=2k'+1$ ($r=2r'$ and $r=2r'+1$).
We are going to demonstrate the first case, the other is similar. 
Taking $k=2k'$ in (\ref{CR_Recursion_k}), we get
\begin{eqnarray}
&& P^{t}_{2k'}(\mu_{r};r)(x)=P_{2k'}(x)-\mu_{r}\sum_{i=0}^{2k'-r-1}\frac{1}{4^{i}}P_{2(k'-i)-1}\ , k=r+1(1)2r \ .\label{eqPtP_01} 
\end{eqnarray}
If $r=2r'$, then $2k'=2r'+2(2)2r\Leftrightarrow k'=r'+1(1)r$. If $r=2r'+1$, then $2k'=2r'+2(2)2r\Leftrightarrow k'=r'+1(1)r$.
In (\ref{eqPtP_01}), we do the change of variable $\mu=k'-i$, $i=0(1)2k'-r-1$ and we use (\ref{P2n1}); after that we apply Lemma \ref{lemmaCCCRCan}, thus the preceding sum becomes
\begin{eqnarray}
&& \frac{1}{4^{(k'-1)}}\sum_{\mu=r-k'}^{k'-1}4^{\mu}P_{2\mu+1}(x)=
\frac{1}{4^{(k'-1)}}\sum_{\mu=r-k'}^{k'-1}4^{\mu}\sum_{\nu=0}^{\mu}C_{2\mu+1,2\nu+1}x^{2\nu+1}=\notag\\
&& \frac{1}{4^{(k'-1)}} \Big\{      
\sum_{\nu=0}^{r-k'-1}  \sum_{\mu=r-k'}^{k'-1} \!\!4^{\mu}C_{2\mu+1,2\nu+1}x^{2\nu+1}+ 
\sum_{\nu=r-k'}^{k'-1}    \sum_{\mu=\nu}^{k'-1} 4^{\mu}C_{2\mu+1,2\nu+1}x^{2\nu+1}
\Big\}.\notag
\end{eqnarray}
Writing the first two polynomials of (\ref{eqPtP_01}) in the canonical basis by means of (\ref{PntildePnu}) and (\ref{P2n}), we obtain
\begin{eqnarray}
&&\sum_{\nu=0}^{2k'}\lambda^{tX}_{2k',\nu}x^{\nu}  =  \sum_{\nu=0}^{2k'}C_{2k',2\nu}x^{2\nu}-\notag\\%\label{eqPtP_1}\\
&&\frac{\mu_{r}}{4^{(k'-1)}}\Big\{      
\sum_{\nu=0}^{r-k'-1}  \sum_{\mu=r-k'}^{k'-1} \!\!4^{\mu}C_{2\mu+1,2\nu+1}x^{2\nu+1}+
\sum_{\nu=r-k'}^{k'-1}   \sum_{\mu=\nu}^{k'-1} 4^{\mu}C_{2\mu+1,2\nu+1}x^{2\nu+1}
\Big\}.\notag
\end{eqnarray}
By identifying the CC in both sides of this equation, we achieve to identities (\ref{CCTraCheCanonical_condinic11}) and (\ref{CCTraCheCanonical_condinic13}).
%%%%%%%%%%%%%%%%%%%%%%%%%%%%%%%%%%%%%%%%%%%%%% 
%%%%%%%%%%%%%%%%%%%%%%%%%%%%%%%%%%%%%%%%%%%%%% 
%%%%%%%%%%%%%%%%%%%%%%%%%%%%%%%%%%%%%%%%%%%%%

Now, we apply the same technique for deducing (\ref{CCTraCheCanonical_eq1})-(\ref{CCTraCheCanonical_eq3}).
Taking $n\leftarrow2n$ in (\ref{CR_Recursion_n2r1}), we obtain
\begin{equation}\label{eqPtP_1}
P^{t}_{2(n+r)+1}(\mu_r;r)(x)=P_{2(n+r)+1}(x)-\mu_r\sum_{i=0}^{r}\frac{1}{4^{i}}P_{2(n+r-i)}(x)\ .
\end{equation}
In the preceding sum, we do the change of variable $\mu=n+r-i$, $i=0(1)r$ and we use (\ref{P2n}); after that we apply Lemma \ref{lemmaCCCRCan}, thus we get
\begin{eqnarray}
&&\frac{1}{4^{(n+r)}}\sum_{\mu=n}^{n+r}4^{\mu}P_{2\mu}(x)=\frac{1}{4^{(n+r)}}\sum_{\mu=n}^{n+r}4^{\mu}\sum_{\nu=0}^{\mu}C_{2\mu,2\nu}x^{2\nu}=\notag \\
&&\frac{1}{4^{(n+r)}}\Big\{\sum_{\nu=0}^{n-1}\Big(\sum_{\mu=n}^{n+r}4^{\mu}C_{2\mu,2\nu}\Big)x^{2\nu} + 
\sum_{\nu=n}^{n+r}\Big(\sum_{\mu=\nu}^{n+r}4^{\mu}C_{2\mu,2\nu}\Big)x^{2\nu} \Big\}
\ .\notag
\end{eqnarray}
Writing the first two polynomials of (\ref{eqPtP_1}) in the canonical basis by means of (\ref{PntildeXn}) and (\ref{P2n1}), we obtain
\begin{eqnarray}
&& \sum_{\nu=0}^{2(n+r)+1}\lambda^{tX}_{2(n+r)+1,\nu}x^\nu  =  \sum_{\nu=0}^{n+r}C_{2(n+r)+1,2\nu+1}x^{2\nu+1}-\notag\\%\label{eqPtP_1}\\
&&\frac{\mu_r}{4^{(n+r)}}\Big\{\sum_{\nu=0}^{n-1}\Big(\sum_{\mu=n}^{n+r}4^{\mu}C_{2\mu,2\nu}\Big)x^{2\nu}+ \sum_{\nu=n}^{n+r}\Big(\sum_{\mu=\nu}^{n+r}4^{\mu}C_{2\mu,2\nu}\Big)x^{2\nu} \Big\} \ .\notag
\end{eqnarray}
By identifying the CC in both sides, we achieve to identities (\ref{CCTraCheCanonical_eq1})-(\ref{CCTraCheCanonical_eq3}).
%%%%%%%%%%%%%%%%%%%%%%%%%%%%%%%%%%%%%%%%%%%%%%
%%%%%%%%%%%%%%%%%%%%%%%%%%%%%%%%%%%%%%%%%%%%%%
In order to demonstrate the identities (\ref{CCTraCheCanonical_eq4})-(\ref{CCTraCheCanonical_eq6}), we must take $n\leftarrow2n+1$ in (\ref{CR_Recursion_n2r1}) and apply the same technique as before. 
%Now, we are going to make a similar proof for demonstrating the identities (\ref{CCTraCheCanonical_eq4})-(\ref{CCTraCheCanonical_eq6}).
We obtain
\begin{equation}\label{eqPtP_2}
P^{t}_{2(n+r+1)}(\mu_r;r)(x)=P_{2(n+r+1)}(x)-\mu_r\sum_{i=0}^{r}\frac{1}{4^{i}}P_{2(n+r-i)+1}(x)\ .
\end{equation}
Doing $\mu=n+r-i$, $i=0(1)r$ and using (\ref{P2n1}), the preceding sum can be written as 
\begin{eqnarray}
&&\frac{1}{4^{(n+r)}}\sum_{\mu=n}^{n+r}4^{\mu}P_{2\mu+1}(x)=\frac{1}{4^{(n+r)}}\sum_{\mu=n}^{n+r}4^{\mu}\sum_{\nu=0}^{\mu}C_{2\mu+1,2\nu+1}x^{2\nu+1}\ .\notag 
\end{eqnarray}
We write the first two polynomials of (\ref{eqPtP_2}) in the canonical basis by means of (\ref{PntildeXn}) and (\ref{P2n}), then we apply Lemma \ref{lemmaCCCRCan}; thus we get
\begin{eqnarray}
&&\sum_{\nu=0}^{2(n+r+1)}\lambda^{tX}_{2(n+r+1),\nu}x^\nu = \sum_{\nu=0}^{n+r+1}C_{2(n+r+1),2\nu}x^{2\nu}-\notag\\%\label{eqPtP_2}\\
&&\frac{\mu_r}{4^{(n+r)}}\Big\{\sum_{\nu=0}^{n-1}\Big(\sum_{\mu=n}^{n+r}4^{\mu}C_{2\mu+1,2\nu+1}\Big)x^{2\nu+1}+ \sum_{\nu=n}^{n+r}\Big(\sum_{\mu=\nu}^{n+r}4^{\mu}C_{2\mu+1,2\nu+1}\Big)x^{2\nu+1} \Big\}\ . \notag
\end{eqnarray}
By identifying the CC in both sides of this equation, we achieve to identities (\ref{CCTraCheCanonical_eq4})-(\ref{CCTraCheCanonical_eq6}).
%%%%%%%%%%%%%%%%%%%%%%%%%
\end{proof}
%%%%%%%%%%%%%%%%%%%%%%%%%
This theorem allows us to conclude that $\lambda^{tX}_{n,m}$ coincide with $C_{n,m}$, when $n$ and $m$ have same parity. Otherwise, if $n$ and $m$ have opposite parity, then $\lambda^{tX}_{n,m}$ depend on the parameter $\mu_r$ of perturbation. Replacing CC of Chebyshev $C_{2n,2\nu}$ and $C_{2n+1,2\nu+1}$ by their expressions given by (\ref{C2n}) and (\ref{C2n1}) and doing some simple combinatorial simplifications \cite{Riordan_1, Riordan1, Riordan2}, we derive next result that furnish explicit formulas for $\lambda^{tX}_{n,m}$ in terms of binomial coefficients.

%%%%%%%%%%%%%%%%%%%%%%%%%%%%%%%%%%%%%%%%
\begin{theorem}\label{CCTraCheCanonical_binomial}
%%%%%%%%%%%%%%%%%%%%%%%%%
CC $\lambda_{n,m}^{tX}:=\lambda_{n,m}(P^t\leftarrow X)$ for the $r$-perturbed {\it by translation} case $(r\geq 0)$ in terms of the canonical basis.
%%%%%%%%%%%%%%%%%%%%%%%%%%%%%%%%%%%%%%%%

If $k=2k'$, then for $k'=0(1)r'$, being $r=2r'$ or $r=2r'+1$, it holds
\begin{eqnarray}
&&\lambda^{tX}_{2k',2\nu}=\frac{(-1)^{k'-\nu}}{4^{(k'-\nu)}}\binom{k'+\nu}{k'-\nu}\ ,\ \nu=0(1)k'\ ,\label{CCTraCheCanonical_condinic101_b}\\ 
&& \lambda^{tX}_{2k',2\nu+1}=0\ ,\  \nu=0(1)k'-1\ .\label{CCTraCheCanonical_condinic102_b}
\end{eqnarray}

If $k=2k'+1$, then for $k'=0(1)r'$, being $r=2r'$ or $r=2r'+1$, it holds
\begin{eqnarray}
&&\lambda^{tX}_{2k'+1,2\nu+1}=\frac{(-1)^{k'-\nu}}{4^{(k'-\nu)}}\binom{k'+\nu+1}{k'-\nu}\ ,\ \nu=0(1)k'\ ,\label{CCTraCheCanonical_condinic103_b}\\
&& \lambda^{tX}_{2k'+1,2\nu}=0\ ,\ \nu=0(1)k'\ . \label{CCTraCheCanonical_condinic104_b}
\end{eqnarray}

%%%%%%%%%%%%%%%%%%%%%%%%%%%%%%%%%%%%%%%%
If  $k=2k'$, then for $k'=r'+1(1)r$, being $r=2r'$ or $r=2r'+1$, it holds
\begin{eqnarray}
&& \lambda^{tX}_{2k',2\nu}  = \frac{(-1)^{k'-\nu}}{4^{(k'-\nu)}}\binom{k'+\nu}{k'-\nu}\ ,\ \nu=0(1)k'\ ,\label{CCTraCheCanonical_condinic11_a}\\ 
&&  \lambda^{tX}_{2k',2\nu+1} = -\frac{\mu_{r}}{4^{(k'-\nu-1)}}\sum_{\mu=r-k'}^{k'-1} 
(-1)^{\mu-\nu}\binom{\mu+\nu+1}{\mu-\nu}\  ,
\ \nu=0(1)r-k'-1\ ,\label{CCTraCheCanonical_condinic12_a}\\
&& \lambda^{tX}_{2k',2\nu+1} = -\frac{\mu_{r}}{4^{(k'-\nu-1)}}\sum_{\mu=\nu}^{k'-1} 
(-1)^{\mu-\nu}\binom{\mu+\nu+1}{\mu-\nu}\ ,\ \nu=r-k'(1)k'-1\ .\label{CCTraCheCanonical_condinic13_a}
\end{eqnarray}
%In the case $r=2r'$, $k'=r'+1(1)r$; in the case $r=2r'+1$, $k'=r'+1(1)r$.
%%%%%%%%%%%%%%%%%%%%%%%%%%%%%%%%%%%%%%%%

If $k=2k'+1$, then for $k'=r'(1)r-1$, if $r=2r'$; or for $k'=r'+1(1)r-1$, if $r=2r'+1$, it holds
\begin{eqnarray}
&&  \lambda^{tX}_{2k'+1,2\nu+1}  = \frac{(-1)^{k'-\nu}}{4^{(k'-\nu)}}\binom{k'+\nu+1}{k'-\nu}\ ,\ \nu=0(1)k'\ ,\label{CCTraCheCanonical_condinic21_a}\\ 
&& \lambda^{tX}_{2k'+1,2\nu} = -\frac{\mu_{r}}{4^{(k'-\nu)}}\sum_{\mu=r-k'}^{k'} 
(-1)^{\mu-\nu}\binom{\mu+\nu}{\mu-\nu} \  , \ \nu=0(1)r-k'-1\ ,\label{CCTraCheCanonical_condinic22_a}\\
&&  \lambda^{tX}_{2k'+1,2\nu} = -\frac{\mu_{r}}{4^{(k'-\nu)}}\sum_{\mu=\nu}^{k'} 
(-1)^{\mu-\nu}\binom{\mu+\nu}{\mu-\nu}\ ,\ \nu=r-k'(1)k'\ .\label{CCTraCheCanonical_condinic23_a}
\end{eqnarray}
%%%%%%%%%%%%%%%%%%%%%%%%%%%%%%%%%%%%%%%

For $n\geq 0$, it holds
\begin{eqnarray}
&&\lambda^{tX}_{2(n+r)+1,2\nu+1}  = \frac{(-1)^{n+r-\nu}}{4^{(n+r-\nu)}}\binom{n+r+\nu+1}{n+r-\nu}\ ,\ \nu=0(1)n+r\ ;\label{CCTraCheCanonical_n1_a}\\ 
&&\lambda^{tX}_{2(n+r)+1,2\nu}  =  -\frac{\mu_r}{4^{(n+r-\nu)}}\sum_{\mu=n}^{n+r}
(-1)^{\mu-\nu}\binom{\mu+\nu}{\mu-\nu}\ ,\ \nu=0(1)n-1\ ;\label{CCTraCheCanonical_n2_a}\\
&&\lambda^{tX}_{2(n+r)+1,2\nu}  =  
-\frac{\mu_r}{4^{(n+r-\nu)}}
\sum_{\mu=\nu}^{n+r}(-1)^{\mu-\nu}\binom{\mu+\nu}{\mu-\nu}\ ,\ \nu=n(1)n+r\ .\label{CCTraCheCanonical_n3_a} \\ \notag \\
%\end{eqnarray}
%\begin{eqnarray}
&&\lambda^{tX}_{2(n+r+1),2\nu}  = \frac{(-1)^{n+r+1-\nu}}{4^{(n+r+1-\nu)}}\binom{n+r+1+\nu}{n+r+1-\nu}\ ,\ \nu=0(1)n+r+1\ ;\label{CCTraCheCanonical_n4_a}\\ 
&&\lambda^{tX}_{2(n+r+1),2\nu+1}  =  -\frac{\mu_r}{4^{(n+r-\nu)}}\sum_{\mu=n}^{n+r}(-1)^{\mu-\nu}\binom{\mu+\nu+1}{\mu-\nu}\ ,\ \nu=0(1)n-1\ ;\label{CCTraCheCanonical_n5_a}\\
&&\lambda^{tX}_{2(n+r+1),2\nu+1}  =  
-\frac{\mu_r}{4^{(n+r-\nu)}}
\sum_{\mu=\nu}^{n+r}(-1)^{\mu-\nu}\binom{\mu+\nu+1}{\mu-\nu}\ ,\ \nu=n(1)n+r\ .\label{CCTraCheCanonical_n6_a} 
\end{eqnarray}
%%%%%%%%%%%%%%%%%%%%%%%%%%%%%%%%%%%%%%%%
\end{theorem}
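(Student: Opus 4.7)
The plan is to deduce Theorem \ref{CCTraCheCanonical_binomial} directly from Theorem \ref{CCTraCheCanonical} by substituting the explicit formulas (\ref{C2n}) and (\ref{C2n1}) for $C_{2n,2\nu}$ and $C_{2n+1,2\nu+1}$ (together with the vanishing relations $C_{2n,2\nu+1}=0$ and $C_{2n+1,2\nu}=0$) into every occurrence of $C_{n,m}$ in Theorem \ref{CCTraCheCanonical}, followed by a purely algebraic regrouping of the powers of $4$. No new recurrence argument is needed, and the parity of $r$ plays no substantive role.

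First I would dispose of the ``diagonal'' identities, i.e.\ those formulas in which $\lambda^{tX}_{n,m}$ is equal to a single $C_{n,m}$ with matching parity: namely (\ref{CCTraCheCanonical_condinic101_b}), (\ref{CCTraCheCanonical_condinic103_b}), (\ref{CCTraCheCanonical_condinic11_a}), (\ref{CCTraCheCanonical_condinic21_a}), (\ref{CCTraCheCanonical_n1_a}), and (\ref{CCTraCheCanonical_n4_a}). Each is obtained by directly reading off (\ref{C2n}) or (\ref{C2n1}) at the appropriate index. Simultaneously, the two vanishing relations (\ref{CCTraCheCanonical_condinic102_b}) and (\ref{CCTraCheCanonical_condinic104_b}) follow from the initial conditions (\ref{condiniciais_tra1}) of Theorem \ref{CCTraCheCanonical} combined with $C_{2k',2\nu+1}=0$ and $C_{2k'+1,2\nu}=0$.

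The remaining non-trivial formulas are of the form $\lambda^{tX}_{n,m}=-\frac{\mu_r}{4^{\alpha}}\sum_{\mu}4^{\mu}C_{2\mu+\epsilon,2\nu+\epsilon}$ for some $\epsilon\in\{0,1\}$. The key algebraic observation is
\begin{eqnarray*}
4^{\mu}C_{2\mu+1,2\nu+1}&=&4^{\nu}(-1)^{\mu-\nu}\binom{\mu+\nu+1}{\mu-\nu},\\
4^{\mu}C_{2\mu,2\nu}&=&4^{\nu}(-1)^{\mu-\nu}\binom{\mu+\nu}{\mu-\nu},
\end{eqnarray*}
which follow at once from (\ref{C2n})-(\ref{C2n1}). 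Since the factor $4^{\nu}$ is independent of the summation index $\mu$, it comes out of the sum and converts the prefactors $\mu_r/4^{k'-1}$, $\mu_r/4^{k'}$, $\mu_r/4^{n+r}$ into $\mu_r/4^{k'-\nu-1}$, $\mu_r/4^{k'-\nu}$, $\mu_r/4^{n+r-\nu}$, respectively, matching the stated forms.

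The only bookkeeping subtlety concerns the ranges of $\nu$: in Theorem \ref{CCTraCheCanonical_binomial} the boundary index between the ``short'' and ``long'' sums has been shifted by one (for instance $\nu=n$ now falls under (\ref{CCTraCheCanonical_n3_a}) rather than (\ref{CCTraCheCanonical_eq2})). I would verify that at such a boundary the two summation forms literally coincide, since $\max(n,\nu)=n$ makes both sums run from $\mu=n$ to $\mu=n+r$. This is the only step that demands any care; it is not a real obstacle, and the proof is otherwise entirely mechanical.
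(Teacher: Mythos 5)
Your proposal is correct and follows essentially the same route as the paper: the paper's own proof simply substitutes (\ref{C2n}) and (\ref{C2n1}) into the formulas of Theorem \ref{CCTraCheCanonical} and simplifies, which is exactly what you do, only spelled out via the identity $4^{\mu}C_{2\mu+\epsilon,2\nu+\epsilon}=4^{\nu}(-1)^{\mu-\nu}\binom{\mu+\nu+\epsilon}{\mu-\nu}$ and the check that the shifted boundary index $\nu$ gives the same sum in both summation forms.
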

%%%%%%%%%%%%%%%%%%%%%%%%%
\begin{proof}
%%%%%%%%%%%%%%%%%%%%%%%%%
%In the identities of Proposition \ref{CCTraCheCanonical},  replace $C_{2n,2\nu}$ and $C_{2n+1,2\nu+1}$ by their expressions given by (\ref{C2n}) and (\ref{C2n1}).
Identities  (\ref{CCTraCheCanonical_condinic101_b})-(\ref{CCTraCheCanonical_condinic104_b}) follow from (\ref{condiniciais_tra1}), (\ref{C2n}) and (\ref{C2n1}). 
Identities (\ref{CCTraCheCanonical_condinic11_a})-(\ref{CCTraCheCanonical_n6_a}) are derived  from
(\ref{CCTraCheCanonical_condinic11})-(\ref{CCTraCheCanonical_eq6}) replacing $C_{2n,2\nu}$ and $C_{2n+1,2\nu+1}$ by their expressions given by (\ref{C2n}) and (\ref{C2n1}) and doing some simple simplifications.
%(\ref{CCTraCheCanonical_condinic21_a})-(\ref{CCTraCheCanonical_condinic23_a});
%(\ref{{CCTraCheCanonical_n1_a})-(\ref{CCTraCheCanonical_n3_a}); 
%%%%%%%%%%%%%%%%%%%%%%%%%
\end{proof}
%%%%%%%%%%%%%%%%%%%%%%%%%

From the last two theorems, we easily obtain the following corollary that gives CC for perturbations of first orders in terms $C_{n,m}$ and in terms of binomial coefficients. In simplifications, we use systematically the combinatorial identity \cite[p.11]{Riordan1}
$$\binom{n}{m+1}=\binom{n+1}{m+1}-\binom{n}{m}.$$
%%%%%%%%%%%%%%%%%%%%%%%%% 
\begin{corollary} CC $\lambda_{n,m}^{tX}:=\lambda_{n,m}(P^t\leftarrow X)$ for perturbed  by translation. Recall that $\lambda_{n,n}^{tX}=1$, $\forall n\geq 0$.
\noindent For order $0$ (co-recursive case) and $n\geq 0$
%%%%%%%%%%%%%%%%%%%%%%%%%%%%%%%%%%%%%%%%%%%%%%%
\begin{eqnarray}
&& \lambda_{0,0}^{tX}=1\ .\notag\\ 
&& \lambda_{2n+1,2\nu+1}^{tX}=C_{2n+1,2\nu+1}\  ,\ 
\lambda_{2n+1,2\nu}^{tX}=-\mu_0C_{2n,2\nu}\ ,\ \nu=0(1)n\ .\notag\\
&& \lambda_{2n+2,2\nu}^{tX}=C_{2n+2,2\nu}=\frac{(-1)^{n+1-\nu}}{4^{(n-\nu+1)}} \binom{n+\nu+1}{n-\nu+1}\ ,\notag\\
&& \lambda_{2n+2,2\nu+1}^{tX}=-\mu_0C_{2n+1,2\nu+1}\ ,\ \nu=0(1)n\ .\notag
\end{eqnarray}
%%%%%%%%%%%%%%%%%%%%
%\begin{eqnarray}
%&& \lambda_{0,0}^{tX}=1\ .\notag\\ 
%&& \lambda_{2n+1,2\nu+1}^{tX}=C_{2n+1,2\nu+1}=\frac{(-1)^{n-\nu}}{2^{2(n-\nu)}} \binom{n+\nu+1}{n-\nu}\  ,\ 
%\lambda_{2n+1,2\nu}^{tX}=-\mu_0C_{2n,2\nu}\ ,\ \nu=0(1)n\ .\notag\\
%&& \lambda_{2n+2,2\nu}^{tX}=C_{2n+2,2\nu}=\frac{(-1)^{n+1-\nu}}{2^{2(n-\nu)+2}} \binom{n+\nu+1}{n-\nu+1}\ ,\ 
%\lambda_{2n+2,2\nu+1}^{tX}=-\mu_0C_{2n+1,2\nu+1}\ ,\ \nu=0(1)n\ .\notag
%\end{eqnarray}
%%%%%%%%%%%%%%%%%%%%%%%%%%%%%%%%%%%%%%%%%%%%%%%
For order $1$ and $n\geq 0$
%%%%%%%%%%%%%%%%%%%%%%%%%%%%%%%%%%%%%%%%%%%%%%%
\begin{eqnarray}
&& \lambda_{k,\nu}^{tX}=C_{k,\nu}\ ,\ \nu=0(1)k\ ,\ k=0,1\ .\notag \\ 
&& \lambda_{2,2\nu}^{tX}=C_{2,2\nu}\ ,\ \nu=0,1 \Leftrightarrow \lambda_{2,0}^{tX}= -\frac{1}{4}\quad ;\quad  \lambda_{2,1}^{tX}=-\mu_1 \ .\notag\\ 
&& \lambda_{2n+3,2\nu+1}^{tX}=C_{2n+3,2\nu+1}=\frac{(-1)^{n-\nu+1}}{4^{(n-\nu+1)}} \binom{n+\nu+2}{n-\nu+1}\ ,\ \nu=0(1)n+1\ ;\notag\\
%&& \lambda_{2n+3,2\nu}^{tX}=-\frac{\mu_1}{2^{2n+2}}\sum_{\mu=n}^{n+1}2^{2\mu}C_{2\mu,2\nu}\ ,\ \nu=0(1)n\ ;
&& \lambda_{2n+3,2\nu}^{tX}=-\frac{\mu_1}{4}\big(C_{2n,2\nu}+4C_{2n+2,2\nu}\big)=
\frac{\mu_1(-1)^{n-\nu}}{4^{(n-\nu+1)}} \binom{n+\nu}{n-\nu+1}  \ ,\ \nu=0(1)n\ ; \notag\\
&& \lambda_{2n+3,2n+2}^{tX}=-\mu_1\ .\notag\\
%\end{eqnarray}
%\begin{eqnarray}
&& \lambda_{2n+4,2\nu}^{tX}=C_{2n+4,2\nu}=\frac{(-1)^{n-\nu}}{4^{(n-\nu+2)}} \binom{n+\nu+2}{n-\nu+2}\ ,\ \nu=0(1)n+2\ ;\notag\\  
&& \lambda_{2n+4,2\nu+1}^{tX}=-\frac{\mu_1}{4}\big(C_{2n+1,2\nu+1}+4C_{2n+3,2\nu+1}\big)\notag\\
&& \ \ \ \ \ \ \ \ \ \ \ \ \ \,  =\frac{\mu_1(-1)^{n-\nu}}{4^{(n-\nu+1)}} \binom{n+\nu+1}{n-\nu+1} \
,\ \nu=0(1)n\quad ; \quad \lambda_{2n+4,2n+3}^{tX}=-\mu_1\ .\notag
\end{eqnarray}
%%%%%%%%%%%%%%%%%%%%%%%%%%%%%%%%%%%%%%%%%%%%%%%
For order $2$  and $n\geq 0$
%%%%%%%%%%%%%%%%%%%%%%%%%%%%%%%%%%%%%%%%%%%%%%%
\begin{eqnarray}
&& \lambda_{k,\nu}^{tX}=C_{k,\nu}\ ,\ \nu=0(1)k\ ,\ k=0(1)2\ .\notag\\ 
&& \lambda_{3,2\nu+1}^{tX}=C_{3,2\nu+1},\nu=0,1 \Leftrightarrow 
\lambda_{3,1}^{tX}=-\frac{1}{2}\ ; \ 
\lambda_{3,0}^{tX}=-\mu_2C_{2,0}=\frac{\mu_2}{4},\ \lambda_{3,2}^{tX}=-\mu_2\ ;\notag\\
&&\lambda_{4,2\nu}^{tX}=C_{4,2\nu}\ ,\ \nu=0(1)2 \Leftrightarrow 
\lambda_{4,0}^{tX}=\frac{1}{4^2} \  , \   \lambda_{4,2}^{tX}= -\frac{3}{4} \ ; \notag\\
%\lambda_{4,1}^{tX}=-\frac{\mu_2}{2^2}\sum_{\mu=0}^{1} 2^{2\mu}C_{2\mu+1,2\nu+1}
&&\lambda_{4,1}^{tX}=-\frac{\mu_2}{4}(1+4C_{3,1})=\frac{\mu_2}{4}
\ ,\ \lambda_{4,3}^{tX}=-\mu_2\ .\notag\\
&& \lambda_{2n+5,2\nu+1}^{tX}=C_{2n+5,2\nu+1}=\frac{(-1)^{n-\nu}}{4^{(n-\nu+2)}}\binom{n+\nu+3}{n-\nu+2}\ ,\ \nu=0(1)n+2\ ; \notag
\end{eqnarray}
\begin{eqnarray}
%&& \lambda_{2n+5,2\nu}^{tX}=-\frac{\mu_2}{2^{2n+4}}\sum_{\mu=n}^{n+2}2^{2\mu}C_{2\mu,2\nu}\ ,
&& \lambda_{2n+5,2\nu}^{tX}=-\frac{\mu_2}{4^{2}}\big(C_{2n,2\nu}+4C_{2n+2,2\nu}
+4^2C_{2n+4,2\nu}\big)\notag\\
&&\ \ \ \ \ \ \ \ \ \ \, =-\frac{\mu_2(-1)^{n-\nu}}{4^{(n-\nu+2)}}\Big\{ \binom{n+\nu}{n-\nu}+\binom{n+\nu+1}{n-\nu+2} \Big\} \ ,
\ \nu=0(1)n\ ;\notag\\
%&& \lambda_{2n+5,2n+2}^{tX}=-\frac{\mu_2}{2^{2n+4}}\sum_{\mu=n+1}^{n+2}2^{2\mu}C_{2\mu,2\nu}\ ,
&& \lambda_{2n+5,2n+2}^{tX}=-\frac{\mu_2}{4}\big(1+4C_{2n+4,2n+2}\big)=\frac{\mu_2}{2}(n+1)\ ,
\ \lambda_{2n+5,2n+4}^{tX}=-\mu_2\ .\notag\\
&& \lambda_{2n+6,2\nu}^{tX}=C_{2n+6,2\nu}=\frac{(-1)^{n-\nu+3}}{4^{(n-\nu+3)}}\binom{n+\nu+3}{n-\nu+3}
\ ,\ \nu=0(1)n+3\ ;\notag\\  
%&& \lambda_{2n+6,2\nu+1}^{tX}=-\frac{\mu_2}{2^{2n+4}}\sum_{\mu=n}^{n+2}2^{2\mu}C_{2\mu+1,2\nu+1}\ ,
&& \lambda_{2n+6,2\nu+1}^{tX}=-\frac{\mu_2}{4^{2}}\big(C_{2n+1,2\nu+1}+4C_{2n+3,2\nu+1}+4^2C_{2n+5,2\nu+1}\big)\ , \notag\\
&&\ \ \ \ \ \ \ \ \ \ \ \ \ \, =-\frac{\mu_2(-1)^{n-\nu}}{4^{(n-\nu+2)}}\Big\{ \binom{n+\nu+1}{n-\nu}+\binom{n+\nu+2}{n-\nu+2} \Big\} \ ,
\ \nu=0(1)n\ ;\notag\\
%&& \lambda_{2n+6,2n+3}^{tX}=-\frac{\mu_2}{2^{2n+4}}\sum_{\mu=n+1}^{n+2}2^{2\mu}C_{2\mu+1,2\nu+1}\ ,
&& \lambda_{2n+6,2n+3}^{tX}=-\frac{\mu_2}{4}\big(1+4C_{2n+5,2n+3}\big)=
\frac{\mu_2}{4}(2n+3)\ ,
\ \lambda_{2n+6,2n+5}^{tX}=-\mu_2\ .\notag
\end{eqnarray}
\end{corollary}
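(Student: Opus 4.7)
The plan is to derive each set of identities by specializing the general results already established. For $r=0,1,2$, Proposition \ref{CCTraChe} (equivalently, Corollary \ref{corollary1}) expresses each perturbed polynomial as an explicit short linear combination of the Chebyshev polynomials $P_j$, so the CC in the canonical basis can be read off by substituting the known expansions (\ref{P2n})--(\ref{C2n1}) of the $P_j$ in powers of $x$, and then collecting coefficients of the same monomial.

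Concretely, for the co-recursive case ($r=0$) I would take (\ref{rrcPer0Tra}), which expresses $P^{t}_{n+1}(\mu_{0};0)(x)$ as $P_{n+1}(x)-\mu_{0}P_{n}(x)$, and split into the two parities $n+1=2n+1$ and $n+1=2n+2$. Since $P_{2n+1}$ contains only odd powers and $P_{2n}$ only even powers, expanding by (\ref{P2n})--(\ref{C2n1}) and matching coefficients of $x^{2\nu+1}$ and $x^{2\nu}$ immediately produces the formulas for $\lambda^{tX}_{2n+1,2\nu+1}$, $\lambda^{tX}_{2n+1,2\nu}$, $\lambda^{tX}_{2n+2,2\nu}$, $\lambda^{tX}_{2n+2,2\nu+1}$. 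The closed binomial expression for $\lambda^{tX}_{2n+2,2\nu}=C_{2n+2,2\nu}$ is just (\ref{C2n}) with $n$ replaced by $n+1$.

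For the order-$1$ case, the same strategy applies to (\ref{rrcPer1Tra1})--(\ref{rrcPer1Tra2}). The initial rows $k=0,1,2$ and the one auxiliary relation give the listed entries by direct substitution. For the generic rows $2n+3$ and $2n+4$, expanding $P_{n+3}(x)-\mu_{1}P_{n+2}(x)-\frac{\mu_{1}}{4}P_{n}(x)$ in the canonical basis and matching powers of $x$ gives $\lambda^{tX}_{2n+3,2\nu}=-\frac{\mu_{1}}{4}(C_{2n,2\nu}+4C_{2n+2,2\nu})$ (and the analogous expression for $\lambda^{tX}_{2n+4,2\nu+1}$). The closed binomial form is obtained by substituting (\ref{C2n})--(\ref{C2n1}) and applying the Pascal-type identity $\binom{p}{q+1}=\binom{p+1}{q+1}-\binom{p}{q}$ recalled before the corollary; this is what converts the two-term combination of binomials into the single binomial $\binom{n+\nu}{n-\nu+1}$ (resp.\ $\binom{n+\nu+1}{n-\nu+1}$). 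The boundary entries $\lambda^{tX}_{2n+3,2n+2}=-\mu_{1}$ and $\lambda^{tX}_{2n+4,2n+3}=-\mu_{1}$ follow from (\ref{CR_Recursion_n2r1}) by reading the coefficient of $P_{k-1}$.

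For the order-$2$ case, I would follow the identical recipe starting from Proposition \ref{CCTraChe} at $r=2$, which gives the three-term expansion appearing in the last displayed line of the corollary; expanding through (\ref{P2n})--(\ref{C2n1}) produces $-\frac{\mu_{2}}{4^{2}}\big(C_{2n,2\nu}+4C_{2n+2,2\nu}+4^{2}C_{2n+4,2\nu}\big)$ and its counterpart for the next row. The main obstacle is purely combinatorial: turning a sum of three explicit binomials, with the prefactor $\frac{(-1)^{n-\nu}}{4^{n-\nu+2}}$, into the stated expression $\binom{n+\nu}{n-\nu}+\binom{n+\nu+1}{n-\nu+2}$, as well as verifying the special values on the sub-diagonals $\lambda^{tX}_{2n+5,2n+2}=\frac{\mu_{2}}{2}(n+1)$ and $\lambda^{tX}_{2n+6,2n+3}=\frac{\mu_{2}}{4}(2n+3)$. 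Both of these reduce to a finite number of applications of the same Pascal identity $\binom{p}{q+1}=\binom{p+1}{q+1}-\binom{p}{q}$ and the telescoping it induces, once the values of $C_{2n+4,2n+2}$ and $C_{2n+5,2n+3}$ given by (\ref{C2n})--(\ref{C2n1}) are inserted. No deeper argument is needed; everything is a routine, if slightly delicate, bookkeeping of binomial coefficients.
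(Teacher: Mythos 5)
Your derivation is correct, and it arrives at every entry of the corollary; the only difference from the paper is organizational. The paper obtains the corollary by simply specializing the already-proved general results, Theorems \ref{CCTraCheCanonical} and \ref{CCTraCheCanonical_binomial}, to $r=0,1,2$, and then cleaning up with the Pascal identity $\binom{n}{m+1}=\binom{n+1}{m+1}-\binom{n}{m}$ quoted just before the statement. You instead go back one step: you start from the short connection relations in the Chebyshev basis (Corollary \ref{corollary1} and Proposition \ref{CCTraChe} for $r=2$), expand each $P_j$ via (\ref{P2n})--(\ref{C2n1}), and match powers of $x$ -- which is exactly the mechanism by which Theorem \ref{CCTraCheCanonical} was proved, so in effect you re-prove the general theorem in the three special cases rather than citing it. Both routes end with the same combinatorial bookkeeping: for instance, in the order-$2$ row one needs $\binom{n+\nu}{n-\nu}-\binom{n+\nu+1}{n-\nu+1}+\binom{n+\nu+2}{n-\nu+2}=\binom{n+\nu}{n-\nu}+\binom{n+\nu+1}{n-\nu+2}$, which is one application of the Pascal identity, as you indicate. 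Your version is more self-contained and makes the low-order formulas verifiable without invoking the general $r$ machinery; the paper's version is shorter because it leverages the theorems already established. Minor points only: the reuse of the symbol $n$ in ``$n+1=2n+1$'' should be replaced by a fresh index, and the boundary entries such as $\lambda^{tX}_{2n+3,2n+2}=-\mu_1$ are best justified by noting $C_{2n+2,2n+2}=1$ and $C_{2n,2n+2}=0$ in your matched-coefficient expression rather than by ``reading the coefficient of $P_{k-1}$''; neither affects correctness.
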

%%%%%%%%%%%%%%%%%%%%%%%%%%%%%%%%%%%%%%%%%%%%%

%%%%%%%%%%%%%%%%%%%%%%%%%%%%%%%%%%%%%%%%%%%%
%%%%%%%%%%%%%%%%%%%%%%%%%%%%%%%%%%%%%%%%%%%%
\subsection{Dilatation case}
%%%%%%%%%%%%%%%%%%%%%%%%%%%%%%%%%%%%%%%%%%%%
%%%%%%%%%%%%%%%%%%%%%%%%%%%%%%%%%%%%%%%%%%%%
%%%%%%%%%%%%%%%%%%%%%%%%%%%%%%%%%%%%%%%%%%%%

Let us present analogous of preceding results for the dilatation case.

%%%%%%%%%%%%%%%%%%%%%%%%%
%%%%%%%%%%%%%%%%%%%%%%%%%
\begin{theorem}\label{CCDilCheCanonical}
%%%%%%%%%%%%%%%%%%%%%%%%%
CC $\lambda_{n,m}^{dX}:=\lambda_{n,m}(P^d\leftarrow X)$ for the $r$th-perturbed {\it by dilatation} $(r\geq 1)$ in terms of the canonical basis. 
%%%%%%%%%%%%%%%%%%%%%%%%%%%%%%%%%%%%%%%%
For $k=0(1)r$, it holds
\begin{eqnarray}
&& \lambda^{dX}_{k,\nu}=C_{k,\nu}\ ,\ \nu=0(1)k\  , \ k=0(1)r\ . \label{condiniciais_dil1} 
\end{eqnarray}

%%%%%%%%%%%%%%%%%%%%%%%%%%%%%%%%%%%%%%%%
If  $k=2k'$, then for $k'=r'+1(1)r-1$, being $r=2r'$ or $r=2r'+1$, it holds
\begin{eqnarray}
&& \lambda^{dX}_{2k',2\nu+1}  = 0\ ,\ \nu=0(1)k'-1\ ,\label{CCDilCheCanonical_condinic11}\\ 
&&  \lambda^{dX}_{2k',2\nu} = C_{2k',2\nu}+\frac{1-\lambda_{r}}{4^{k'}}\sum_{\mu=r-k'}^{k'-1} 4^{\mu}C_{2\mu,2\nu}\  ,\ \nu=0(1)r-k'\ ,\label{CCDilCheCanonical_condinic12}\\
&& \lambda^{dX}_{2k',2\nu} = C_{2k',2\nu}+\frac{1-\lambda_{r}}{4^{k'}}\sum_{\mu=\nu}^{k'-1} 4^{\mu}C_{2\mu,2\nu}\ ,\ \nu=r-k'+1(1)k'-1\ .\label{CCDilCheCanonical_condinic13}
\end{eqnarray}
%In the case $r=2r'$, $k'=r'+1(1)r$; in the case $r=2r'+1$, $k'=r'+1(1)r$.
%%%%%%%%%%%%%%%%%%%%%%%%%%%%%%%%%%%%%%%%

If $k=2k'+1$, then for $k'=r'(1)r-1$, if $r=2r'$; or for $k'=r'+1(1)r-1$, if $r=2r'+1$, it holds
\begin{eqnarray}
&&  \lambda^{dX}_{2k'+1,2\nu}  = 0\ ,\ \nu=0(1)k'\ ,\label{CCDilCheCanonical_condinic21}\\ 
&& \lambda^{dX}_{2k'+1,2\nu+1} = C_{2k'+1,2\nu+1}+\frac{1-\lambda_{r}}{4^{k'}}\!\!\!\! \sum_{\mu=r-k'-1}^{k'-1} \!\!\!\! 4^{\mu}C_{2\mu+1,2\nu+1},  \nu=0(1)r-k'-1,\label{CCDilCheCanonical_condinic22}\\
&&  \lambda^{dX}_{2k'+1,2\nu+1} = C_{2k'+1,2\nu+1}+\frac{1-\lambda_{r}}{4^{k'}}\!\! \sum_{\mu=\nu}^{k'-1}\! 4^{\mu}C_{2\mu+1,2\nu+1}, \nu=r-k'(1)k'-1. \label{CCDilCheCanonical_condinic23}
\end{eqnarray}
%In the case $r=2r'$, $k'=r'(1)r-1$; in the case $r=2r'+1$, $k'=r'+1(1)r-1$.
%%%%%%%%%%%%%%%%%%%%%%%%%%%%%%%%%%%%%%%%
%%%%%%%%%%%%%%%%%%%%%%%%%%%%%%%%%%%%%%%%

For $n\geq 0$, it holds
\begin{eqnarray}
&&\lambda^{dX}_{2(n+r),2\nu+1}  = 0\ ,\ \nu=0(1)n+r-1\ ,\label{CCDilCheCanonical_eq1}\\ 
&&\lambda^{dX}_{2(n+r),2\nu}  =C_{2(n+r),2\nu} +\frac{1-\lambda_r}{4^{(n+r)}}\sum_{\mu=n}^{n+r-1}4^{\mu}C_{2\mu,2\nu}\ ,\ \nu=0(1)n\ ,\label{CCDilCheCanonical_eq2}\\
&&\lambda^{dX}_{2(n+r),2\nu}  =  
C_{2(n+r),2\nu} +\frac{1-\lambda_r}{4^{(n+r)}}
\sum_{\mu=\nu}^{n+r-1}4^{\mu}C_{2\mu,2\nu}\ ,\ \nu=n+1(1)n+r-1\ .\label{CCDilCheCanonical_eq3} \\ \notag \\
%\end{eqnarray}
%\begin{eqnarray}
&&\lambda^{dX}_{2(n+r)+1,2\nu}  = 0\ ,\ \nu=0(1)n+r\  ,\label{CCDilCheCanonical_eq4}\\ 
&&\lambda^{dX}_{2(n+r)+1,2\nu+1}\!  =\!C_{2(n+r)+1,2\nu+1}+\frac{1-\lambda_r}{4^{(n+r)}}\!\!\sum_{\mu=n}^{n+r-1}\!\!4^{\mu}C_{2\mu+1,2\nu+1},\nu=0(1)n,\label{CCDilCheCanonical_eq5}\\
&&\lambda^{dX}_{2(n+r)+1,2\nu+1} \! \!=\! C_{2(n+r)+1,2\nu+1}\!\!+ \!
\frac{1-\lambda_r}{4^{(n+r)}}\!\!\!
\sum_{\mu=\nu}^{n+r-1}\! 4^{\mu}C_{2\mu+1,2\nu+1},\nu=n+1(1)n+r-1.\label{CCDilCheCanonical_eq6} 
\end{eqnarray}
\end{theorem}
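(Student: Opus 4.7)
The plan is to mirror exactly the strategy used for Theorem \ref{CCTraCheCanonical}, exploiting the fact that the dilatation case is more transparent because both $\{P_n\}_{n\geq 0}$ and $\{P^d_n(\lambda_r;r)\}_{n\geq 0}$ remain symmetric. The three ingredients are (i) the CR from Proposition \ref{CCDilChe}, (ii) the canonical expansions (\ref{P2n})--(\ref{C2n1}) of $P_n$ with coefficients $C_{n,m}$, and (iii) Lemma \ref{lemmaCCCRCan} for switching the order of summation in the resulting double sums.

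First, the initial conditions (\ref{condiniciais_dil1}) follow immediately from (\ref{CR_Dilatation_inic_cond_r_Dil}) together with (\ref{P2n})--(\ref{C2n1}), since $P^d_k(\lambda_r;r)\equiv P_k$ for $k=0(1)r$. Second, all ``opposite-parity'' identities, namely (\ref{CCDilCheCanonical_condinic11}), (\ref{CCDilCheCanonical_condinic21}), (\ref{CCDilCheCanonical_eq1}) and (\ref{CCDilCheCanonical_eq4}), are a direct consequence of symmetry: because $\beta^d_n=0$ for all $n\geq 0$ the perturbed sequence is symmetric, so $\lambda^{dX}_{n,m}=0$ whenever $n$ and $m$ have opposite parity, which precisely covers those four groups.

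For the remaining (same-parity) coefficients the plan is to treat two ranges. For $k=r+1(1)2r-1$ I take (\ref{CR_Dilatation_k}), substitute $k=2k'$ or $k=2k'+1$ (splitting according as $r=2r'$ or $r=2r'+1$), and, following the translation proof, perform the change of variable $\mu=k'-i$ (resp.\ $\mu=k'-i$ in the odd case), insert the canonical expansions of $P_{2\mu}$ or $P_{2\mu+1}$, and apply Lemma \ref{lemmaCCCRCan} to split the resulting double sum into two pieces according to whether $\nu$ is less than or at least $r-k'$ (resp.\ $r-k'-1$); identifying coefficients of $x^{2\nu}$ or $x^{2\nu+1}$ with those in the left-hand-side expansion $\sum_\nu \lambda^{dX}_{k,\nu}x^\nu$ yields (\ref{CCDilCheCanonical_condinic12})--(\ref{CCDilCheCanonical_condinic13}) and (\ref{CCDilCheCanonical_condinic22})--(\ref{CCDilCheCanonical_condinic23}). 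For the main range I use (\ref{CR_Dilatation_n2r}) twice, once with $n\leftarrow 2n$ to produce (\ref{CCDilCheCanonical_eq2})--(\ref{CCDilCheCanonical_eq3}) and once with $n\leftarrow 2n+1$ to produce (\ref{CCDilCheCanonical_eq5})--(\ref{CCDilCheCanonical_eq6}); in each case the change of variable $\mu=n+r-i$, followed by Lemma \ref{lemmaCCCRCan}, produces the precise splitting at $\nu=n$ that appears in the statement.

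The main obstacle is not analytical but combinatorial: keeping track of the exact ranges in the four parity subcases and verifying that the cut-off index in Lemma \ref{lemmaCCCRCan} lands at the right place (namely $r-k'$, $r-k'-1$, $n$, or $n+r-1$) so that the summations in (\ref{CCDilCheCanonical_condinic12}), (\ref{CCDilCheCanonical_condinic22}), (\ref{CCDilCheCanonical_eq2}) and (\ref{CCDilCheCanonical_eq5}) genuinely stop at the claimed upper bound $n+r-1$ (rather than $n+r$, as in the translation case). Because (\ref{CR_Dilatation_n2r}) only carries $r$ correction terms (indexed $i=1(1)r$) instead of the $r+1$ in (\ref{CR_Recursion_n2r1}), the top value of $\mu$ decreases by one; once this shift is tracked carefully, the calculation proceeds line-by-line parallel to the translation proof, with no further subtlety.
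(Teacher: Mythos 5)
Your proposal is correct and follows essentially the same route as the paper: initial conditions from (\ref{CR_Dilatation_inic_cond_r_Dil}), the opposite-parity vanishing from symmetry, and then the change of variable in (\ref{CR_Dilatation_k}) and (\ref{CR_Dilatation_n2r}) (for $n\leftarrow 2n$ and $n\leftarrow 2n+1$) combined with the canonical expansions (\ref{P2n})--(\ref{C2n1}) and Lemma \ref{lemmaCCCRCan}, followed by identification of coefficients. Your remark that the upper index drops to $n+r-1$ because the dilatation CR carries only $r$ correction terms is exactly the bookkeeping the paper performs.
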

%%%%%%%%%%%%%%%%%%%%%%%%%
%%%%%%%%%%%%%%%%%%%%%%%%%
\begin{proof}
%%%%%%%%%%%%%%%%%%%%%%%%%
%%%%%%%%%%%%%%%%%%%%%%%%%
This demonstration is similar to the proof of Proposition \ref{CCTraCheCanonical}, but in this case all polynomials involved are symmetric.
From (\ref{CR_Dilatation_inic_cond_r_Dil}),  
(\ref{P2n}) and (\ref{P2n1}) immediately follow the initial conditions (\ref{condiniciais_dil1}).
For deducing the remainder initial conditions (\ref{CCDilCheCanonical_condinic11})-(\ref{CCDilCheCanonical_condinic23}), like before, we must consider two cases corresponding to
$k=2k'$ ($r=2r'$ and $r=2r'+1$) and to $k=2k'+1$ ($r=2r'$ and $r=2r'+1$).
We are going to demonstrate the first case, the other is similar. 
Taking $k=2k'$ in (\ref{CR_Dilatation_k}), we get
\begin{eqnarray}
&& P^{d}_{2k'}(\lambda_{r};r)=P_{2k'}+\frac{1-\lambda_{r}}{4}\sum_{i=1}^{2k'-r}\frac{1}{4^{(i-1)}}P_{2(k'-i)}, k=r+1(1)2r-1 .\label{eqPdP_01} 
\end{eqnarray}
If $r=2r'$, then $2k'=2r'+2(2)2r-2\Leftrightarrow k'=r'+1(1)r-1$. If $r=2r'+1$, then $2k'=2r'+2(2)2r-2\Leftrightarrow k'=r'+1(1)r-1$.
In (\ref{eqPdP_01}), we do the change of variable $\mu=k'-i$, $i=1(1)2k'-r$ and we use (\ref{P2n}); after that we apply Lemma \ref{lemmaCCCRCan}, thus the preceding sum becomes
\begin{eqnarray}
&& \frac{1}{4^{(k'-1)}}\sum_{\mu=r-k'}^{k'-1}4^{\mu}P_{2\mu}(x)=
\frac{1}{4^{(k'-1)}}\sum_{\mu=r-k'}^{k'-1}4^{\mu}\sum_{\nu=0}^{\mu}C_{2\mu,2\nu}x^{2\nu}=\notag\\
&& \frac{1}{4^{(k'-1)}} \Big\{      
\sum_{\nu=0}^{r-k'-1}  \sum_{\mu=r-k'}^{k'-1} \!\!4^{\mu}C_{2\mu,2\nu}x^{2\nu}+ 
\sum_{\nu=r-k'}^{k'-1}    \sum_{\mu=\nu}^{k'-1} 4^{\mu}C_{2\mu,2\nu}x^{2\nu}
\Big\}.\notag
\end{eqnarray}
Writing the first two polynomials of (\ref{eqPdP_01}) in the canonical basis by means of (\ref{PntildeXn}) and (\ref{P2n}),  taking into account the symmetry of $P^{d}_{2k'}$, we obtain
\begin{eqnarray}
&&\sum_{\nu=0}^{k'}\lambda^{dX}_{2k',2\nu}x^{2\nu}  =  \sum_{\nu=0}^{k'}C_{2k',2\nu}x^{2\nu}+\notag\\%\label{eqPtP_1}\\
&&\frac{1-\lambda_{r}}{4^{k'}}\Big\{      
\sum_{\nu=0}^{r-k'-1}  \Big(\sum_{\mu=r-k'}^{k'-1} 4^{\mu}C_{2\mu,2\nu}\Big)x^{2\nu}+
\sum_{\nu=r-k'}^{k'-1}  \Big( \sum_{\mu=\nu}^{k'-1} 4^{\mu}C_{2\mu,2\nu}\Big)x^{2\nu}
\Big\}.\notag
\end{eqnarray}
By identifying the CC in both sides of this equation, we achieve to identities (\ref{CCDilCheCanonical_condinic11})-(\ref{CCDilCheCanonical_condinic13}).
%%%%%%%%%%%%%%%%%%%%%%%%%%%%%%%%%%%%%%%%%%%%%% 
%%%%%%%%%%%%%%%%%%%%%%%%%%%%%%%%%%%%%%%%%%%%%% 
%%%%%%%%%%%%%%%%%%%%%%%%%%%%%%%%%%%%%%%%%%%%%

Now, we apply the same technique for deducing (\ref{CCDilCheCanonical_eq1})-(\ref{CCDilCheCanonical_eq3}).
Taking $n\leftarrow2n$ in (\ref{CR_Dilatation_n2r}), we obtain
\begin{equation}\label{eqPdP_12}
P^{d}_{2(n+r)}(\lambda_r;r)(x)=P_{2(n+r)}(x)+\frac{1-\lambda_r}{4}\sum_{i=1}^{r}\frac{1}{4^{(i-1)}}P_{2(n+r-i)}(x)\ .
\end{equation}
In the preceding sum, we do the change of variable $\mu=n+r-i$, $i=1(1)r$ and we use (\ref{P2n}); after that we apply Lemma \ref{lemmaCCCRCan}, thus we get
\begin{eqnarray}
&&\frac{1}{4^{(n+r-1)}}\sum_{\mu=n}^{n+r-1}4^{\mu}P_{2\mu}(x)=\frac{1}{4^{(n+r-1)}}\sum_{\mu=n}^{n+r-1}4^{\mu}\sum_{\nu=0}^{\mu}C_{2\mu,2\nu}x^{2\nu}=\notag \\
&&\frac{1}{4^{(n+r-1)}}\Big\{\sum_{\nu=0}^{n-1}\Big(\sum_{\mu=n}^{n+r-1}4^{\mu}C_{2\mu,2\nu}\Big)x^{2\nu} + 
\sum_{\nu=n}^{n+r-1}\Big(\sum_{\mu=\nu}^{n+r-1}4^{\mu}C_{2\mu,2\nu}\Big)x^{2\nu} \Big\}
\ .\notag
\end{eqnarray}
Writing the first two polynomials of (\ref{eqPdP_12}) in the canonical basis by means of (\ref{PntildeXn}) and 
(\ref{P2n}), taking into account the symmetry of $P^{d}_{2(n+r)}$, we obtain
\begin{eqnarray}
&& \sum_{\nu=0}^{n+r}\lambda^{dX}_{2(n+r),2\nu}x^{2\nu}  =  \sum_{\nu=0}^{n+r}C_{2(n+r),2\nu}x^{2\nu}+\label{eqPdP_1}\\
&&\frac{1-\lambda_r}{4^{(n+r)}}\Big\{\sum_{\nu=0}^{n-1}\Big(\sum_{\mu=n}^{n+r-1}4^{\mu}C_{2\mu,2\nu}\Big)x^{2\nu}+ \sum_{\nu=n}^{n+r-1}\Big(\sum_{\mu=\nu}^{n+r-1}4^{\mu}C_{2\mu,2\nu}\Big)x^{2\nu} \Big\} \ .\notag
\end{eqnarray}
By identifying the CC in both sides, we achieve to identities (\ref{CCDilCheCanonical_eq1})-(\ref{CCDilCheCanonical_eq3}).
%%%%%%%%%%%%%%%%%%%%%%%%%%%%%%%%%%%%%%%%%%%%%%
%%%%%%%%%%%%%%%%%%%%%%%%%%%%%%%%%%%%%%%%%%%%%%
In order to demonstrate the identities (\ref{CCDilCheCanonical_eq4})-(\ref{CCDilCheCanonical_eq6}), we must take $n\leftarrow2n+1$ in (\ref{CR_Dilatation_n2r}) and apply the same technique  as before. 
%Now, we are going to make a similar proof for demonstrating the identities (\ref{CCTraCheCanonical_eq4})-(\ref{CCTraCheCanonical_eq6}). 
We obtain
\begin{equation}\label{eqPdP_2}
P^{d}_{2(n+r)+1}(\lambda_r;r)(x)=P_{2(n+r)+1}(x)+\frac{1-\lambda_r}{4}\sum_{i=1}^{r}\frac{1}{4^{(i-1)}}P_{2(n+r-i)+1}(x)\ .
\end{equation}
Doing $\mu=n+r-i$, $i=1(1)r$ and using (\ref{P2n1}), the preceding sum can be written as 
\begin{eqnarray}
&&\frac{1}{4^{(n+r-1)}}\sum_{\mu=n}^{n+r-1}4^{\mu}P_{2\mu+1}(x)=\frac{1}{4^{(n+r-1)}}\sum_{\mu=n}^{n+r-1}4^{\mu}\sum_{\nu=0}^{\mu}C_{2\mu+1,2\nu+1}x^{2\nu+1}\ .\notag 
\end{eqnarray}
We write the first two polynomials of (\ref{eqPdP_2}) in the canonical basis by means of (\ref{PntildeXn}) and (\ref{P2n1}), taking into account the symmetry of $P^{d}_{2(n+r)+1}$, then we apply Lemma \ref{lemmaCCCRCan}; thus we get
\begin{eqnarray}
&&\sum_{\nu=0}^{n+r}\lambda^{dX}_{2(n+r)+1,2\nu+1}x^{2\nu+1} = \sum_{\nu=0}^{n+r}C_{2(n+r)+1,2\nu+1}x^{2\nu+1}+\notag\\%\label{eqPtP_2}\\
&&\frac{1-\lambda_r}{4^{(n+r)}}\Big\{\sum_{\nu=0}^{n-1}\Big(\sum_{\mu=n}^{n+r-1}4^{\mu}C_{2\mu+1,2\nu+1}\Big)x^{2\nu+1}+ \sum_{\nu=n}^{n+r-1}\Big(\sum_{\mu=\nu}^{n+r-1}4^{\mu}C_{2\mu+1,2\nu+1}\Big)x^{2\nu+1} \Big\}\ . \notag
\end{eqnarray}
By identifying the CC in both sides of this equation, we achieve to identities (\ref{CCDilCheCanonical_eq4})-(\ref{CCDilCheCanonical_eq6}).
%%%%%%%%%%%%%%%%%%%%%%%%%
\end{proof}
%%%%%%%%%%%%%%%%%%%%%%%%%
%%%%%%%%%%%%%%%%%%%%%%%%%
Due to symmetry $\lambda^{dX}_{n,m}$ vanish when $n$ and $m$ have different parity, otherwise depend on the parameter $\lambda_r$ of perturbation.
In this theorem, replacing $C_{2n,2\nu}$ and $C_{2n+1,2\nu+1}$ by their expressions given by (\ref{C2n}) and (\ref{C2n1}), we derive next result that gives explicit formulas for $\lambda^{dX}_{n,m}$.

%%%%%%%%%%%%%%%%%%%%%%%%%
\begin{theorem}\label{CCDilCheCanonical_binomial}
%%%%%%%%%%%%%%%%%%%%%%%%%
CC $\lambda_{n,m}^{dX}:=\lambda_{n,m}(P^d\leftarrow X)$ for the $r$-perturbed {\it by dilatation} case $(r\geq 1)$ in terms of the canonical basis. 
%%%%%%%%%%%%%%%%%%%%%%%%%%%%%%%%%%%%%%%%

If $k=2k'$, then for $k'=0(1)r'$, being $r=2r'$ or $r=2r'+1$, it holds
\begin{eqnarray}
&&\lambda^{dX}_{2k',2\nu}=\frac{(-1)^{k'-\nu}}{4^{(k'-\nu)}}\binom{k'+\nu}{k'-\nu}\ ,\ \nu=0(1)k'\ ,\label{CCDilCheCanonical_condinic101_b}\\ 
&& \lambda^{dX}_{2k',2\nu+1}=0\ ,\  \nu=0(1)k'-1\ .\label{CCDilCheCanonical_condinic102_b}
\end{eqnarray}

If $k=2k'+1$, then for $k'=0(1)r'$, being $r=2r'$ or $r=2r'+1$, it holds
\begin{eqnarray}
&&\lambda^{dX}_{2k'+1,2\nu+1}=\frac{(-1)^{k'-\nu}}{4^{(k'-\nu)}}\binom{k'+\nu+1}{k'-\nu}\ ,\ \nu=0(1)n'\ ,\label{CCDilCheCanonical_condinic103_b}\\
&& \lambda^{dX}_{2k'+1,2\nu}=0\ ,\ \nu=0(1)k'\ . \label{CCDilCheCanonical_condinic104_b}
\end{eqnarray}

If  $k=2k'$, then for $k'=r'+1(1)r-1$, being $r=2r'$ or $r=2r'+1$,  it holds
\begin{eqnarray}
&& \lambda^{dX}_{2k',2\nu+1}  = 0\ ,\ \nu=0(1)k'-1\ ,\label{CCDilCheCanonical_condinic11_b}\\ 
&&  \lambda^{dX}_{2k',2\nu} = \frac{1}{4^{(k'-\nu)}}\Big\{(-1)^{k'-\nu}\binom{k'+\nu}{k'-\nu}+
(1-\lambda_{r})\sum_{\mu=r-k'}^{k'-1} (-1)^{\mu-\nu}\binom{\mu+\nu}{\mu-\nu}\Big\}, \label{CCDilCheCanonical_condinic12_b}\\
&& \ \ \ \ \ \ \ \ \ \ \ \ \nu=0(1)r-k'-1\ ,\notag \\
&& \lambda^{dX}_{2k',2\nu} = \frac{1}{4^{(k'-\nu)}}\Big\{(-1)^{k'-\nu}\binom{k'+\nu}{k'-\nu}+(1-\lambda_{r})\sum_{\mu=\nu}^{k'-1} (-1)^{\mu-\nu}\binom{\mu+\nu}{\mu-\nu}\Big\}\ , \label{CCDilCheCanonical_condinic13_b}\\
&& \ \ \ \ \ \ \ \ \ \ \ \ \nu=r-k'(1)k'-1\ .\notag
\end{eqnarray}
%In the case $r=2r'$, $k'=r'+1(1)r$; in the case $r=2r'+1$, $k'=r'+1(1)r$.
%%%%%%%%%%%%%%%%%%%%%%%%%%%%%%%%%%%%%%%%

If $k=2k'+1$, then for $k'=r'(1)r-1$, if $r=2r'$; or for $k'=r'+1(1)r-1$, if $r=2r'+1$, it holds
\begin{eqnarray}
&&  \lambda^{dX}_{2k'+1,2\nu}  = 0\ ,\ \nu=0(1)k'\ ,\label{CCDilCheCanonical_condinic21_b}\\ 
&& \lambda^{dX}_{2k'+1,2\nu+1} =\frac{1}{4^{(k'-\nu)}} \Big\{(-1)^{k'-\nu}\binom{k'+\nu+1}{k'-\nu}+(1-\lambda_{r})\!\!\!\sum_{\mu=r-k'-1}^{k'-1}\!\! \!(-1)^{\mu-\nu}\binom{\mu+\nu+1}{\mu-\nu}\Big\}\ ,  \notag\\
&& \ \ \ \ \ \ \ \ \ \ \ \ \ \ \ \ \ \ \nu=0(1)r-k'-2\ , \label{CCDilCheCanonical_condinic22_b}\\
&&  \lambda^{dX}_{2k'+1,2\nu+1} = \frac{1}{4^{(k'-\nu)}}\Big\{(-1)^{k'-\nu}\binom{k'+\nu+1}{k'-\nu}+(1-\lambda_{r}) \sum_{\mu=\nu}^{k'-1} (-1)^{\mu-\nu}\binom{\mu+\nu+1}{\mu-\nu}\Big\}\ , \notag \\
&& \nu=r-k'-1(1)k'-1\ .\label{CCDilCheCanonical_condinic23_b}
\end{eqnarray}
%In the case $r=2r'$, $k'=r'(1)r-1$; in the case $r=2r'+1$, $k'=r'+1(1)r-1$.
%%%%%%%%%%%%%%%%%%%%%%%%%%%%%%%%%%%%%%%%
%%%%%%%%%%%%%%%%%%%%%%%%%%%%%%%%%%%%%%%%
For $n\geq 0$, it holds
\begin{eqnarray}
&&\lambda^{dX}_{2(n+r),2\nu+1}  = 0\ ,\ \nu=0(1)n+r-1\ ,\label{CCDilCheCanonical_eq1_b}\\ 
&&\lambda^{dX}_{2(n+r),2\nu}  =\frac{1}{4^{(n+r-\nu)}}\Big\{(-1)^{n+r-\nu}\binom{n+r+\nu}{n+r-\nu} +(1-\lambda_r)\sum_{\mu=n}^{n+r-1}(-1)^{\mu-\nu}\binom{\mu+\nu}{\mu-\nu}\Big\},\notag\\
&& \ \ \ \ \ \ \ \ \ \ \ \ \ \ \ \ \nu=0(1)n-1\ ,\label{CCDilCheCanonical_eq2_b}\\
&&\lambda^{dX}_{2(n+r),2\nu}  =  
\frac{1}{4^{(n+r-\nu)}}\Big\{ (-1)^{n+r-\nu}\binom{n+r+\nu}{n+r-\nu} +(1-\lambda_r)
\sum_{\mu=\nu}^{n+r-1}(-1)^{\mu-\nu}\binom{\mu+\nu}{\mu-\nu}\Big\}, \notag \\
&& \ \ \ \ \ \ \ \ \ \ \ \ \ \ \ \ \nu=n(1)n+r-1\ .\label{CCDilCheCanonical_eq3_b} \notag \\
%\end{eqnarray} 
%\begin{eqnarray}
&&\lambda^{dX}_{2(n+r)+1,2\nu}  = 0\ ,\ \nu=0(1)n+r\  ,\label{CCDilCheCanonical_eq4_b}\\ 
&&\lambda^{dX}_{2(n+r)+1,2\nu+1}  =\frac{1}{4^{(n+r-\nu)}}\Big\{(-1)^{n+r-\nu}\binom{n+r+\nu+1}{n+r-\nu}  \notag\\
&& \ \ \ \ \ \ \ \ \ \ \ \ \ \ \ \ \ \ \ \ \  +(1-\lambda_r)\!\!\sum_{\mu=n}^{n+r-1}(-1)^{\mu-\nu}\binom{\mu+\nu+1}{\mu-\nu}\Big\}\ ,\  \nu=0(1)n-1,\label{CCDilCheCanonical_eq5_b}\\
%\end{eqnarray} 
%\begin{eqnarray}
&&\lambda^{dX}_{2(n+r)+1,2\nu+1} =\frac{1}{4^{(n+r-\nu)}}\Big\{(-1)^{n+r-\nu}\binom{n+r+\nu+1}{n+r-\nu}\notag\\
&&\ \ \ \ \ \ \ \ \ \ \ \ \ \ \ \ + (1-\lambda_r)\!\!
\sum_{\mu=\nu}^{n+r-1}(-1)^{\mu-\nu}\binom{\mu+\nu+1}{\mu-\nu}\Big\}\ , \  \nu=n(1)n+r-1.\label{CCDilCheCanonical_eq6_b} 
\end{eqnarray}
\end{theorem}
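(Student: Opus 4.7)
The plan is to proceed by direct substitution, mirroring the proof given for the analogous Theorem \ref{CCTraCheCanonical_binomial} in the translation case. The previous Theorem \ref{CCDilCheCanonical} has already expressed every $\lambda^{dX}_{n,m}$ either as $0$ or as a sum involving the Chebyshev-to-canonical CC $C_{2\mu,2\nu}$ and $C_{2\mu+1,2\nu+1}$. Hence no new recursive argument is needed: it suffices to insert the closed forms (\ref{C2n}) and (\ref{C2n1}) into each of the statements of Theorem \ref{CCDilCheCanonical}, collect the resulting powers of $4$, and read off the binomial expressions.

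First I would dispose of the initial conditions (\ref{CCDilCheCanonical_condinic101_b})-(\ref{CCDilCheCanonical_condinic104_b}). By (\ref{condiniciais_dil1}) we have $\lambda^{dX}_{k,\nu}=C_{k,\nu}$ for $0\le k\le r$, and substituting (\ref{C2n})-(\ref{C2n1}) yields the stated binomial formulas together with the parity-enforced zeros.

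Next I would treat the intermediate block (\ref{CCDilCheCanonical_condinic11_b})-(\ref{CCDilCheCanonical_condinic23_b}) coming from (\ref{CCDilCheCanonical_condinic11})-(\ref{CCDilCheCanonical_condinic23}). The vanishing identities (\ref{CCDilCheCanonical_condinic11_b}) and (\ref{CCDilCheCanonical_condinic21_b}) are simply rewrites of (\ref{CCDilCheCanonical_condinic11}) and (\ref{CCDilCheCanonical_condinic21}). For the non-trivial ones, substituting $C_{2\mu,2\nu}=(-1)^{\mu-\nu}\,4^{-(\mu-\nu)}\binom{\mu+\nu}{\mu-\nu}$ in (\ref{CCDilCheCanonical_condinic12})-(\ref{CCDilCheCanonical_condinic13}) gives
$$ \lambda^{dX}_{2k',2\nu}=C_{2k',2\nu}+\frac{1-\lambda_r}{4^{k'}}\sum_{\mu}4^{\mu}(-1)^{\mu-\nu}4^{-(\mu-\nu)}\binom{\mu+\nu}{\mu-\nu}, $$
and the factor $4^{\mu}\cdot 4^{-(\mu-\nu)}\cdot 4^{-k'}=4^{\nu-k'}$ pulls out as the common prefactor $4^{-(k'-\nu)}$ appearing in (\ref{CCDilCheCanonical_condinic12_b})-(\ref{CCDilCheCanonical_condinic13_b}). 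The odd-index counterparts (\ref{CCDilCheCanonical_condinic22_b})-(\ref{CCDilCheCanonical_condinic23_b}) follow identically by substituting (\ref{C2n1}) into (\ref{CCDilCheCanonical_condinic22})-(\ref{CCDilCheCanonical_condinic23}).

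Finally, the general identities (\ref{CCDilCheCanonical_eq1_b})-(\ref{CCDilCheCanonical_eq6_b}) are obtained in exactly the same way from (\ref{CCDilCheCanonical_eq1})-(\ref{CCDilCheCanonical_eq6}): the cancellation $4^{-(n+r)}\cdot 4^{\mu}\cdot 4^{-(\mu-\nu)}=4^{-(n+r-\nu)}$ produces the outer prefactor in each case, and the binomial terms arise directly from (\ref{C2n})-(\ref{C2n1}). The main obstacle in writing this up is purely notational: one must carefully align the summation ranges, which split into two pieces according to whether $\nu<n$ or $\nu\ge n$ (respectively whether $\nu<r-k'$ or $\nu\ge r-k'$), and respect the parity splitting through $r=2r'$ or $r=2r'+1$. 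Beyond this bookkeeping no further idea is required, so the proof can be closed with the same concise comment used for Theorem \ref{CCTraCheCanonical_binomial}.
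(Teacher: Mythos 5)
Your proposal is correct and follows essentially the same route as the paper: the paper's proof also consists of substituting the explicit expressions (\ref{C2n}) and (\ref{C2n1}) into the identities of Theorem \ref{CCDilCheCanonical} and simplifying the powers of $4$ (your cancellation $4^{-k'}\cdot 4^{\mu}\cdot 4^{-(\mu-\nu)}=4^{-(k'-\nu)}$, and likewise with $n+r$ in place of $k'$, is exactly the "simple simplifications" invoked there). Your citation of (\ref{condiniciais_dil1}) for the initial block is in fact the appropriate reference, so no gap remains.
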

%%%%%%%%%%%%%%%%%%%%%%%%%
%%%%%%%%%%%%%%%%%%%%%%%%%
\begin{proof}
%%%%%%%%%%%%%%%%%%%%%%%%%
Identities  (\ref{CCDilCheCanonical_condinic101_b})-(\ref{CCDilCheCanonical_condinic104_b}) follow from (\ref{condiniciais_tra1}), (\ref{C2n}) and (\ref{C2n1}). 
Identities (\ref{CCDilCheCanonical_condinic11_b})-(\ref{CCDilCheCanonical_eq6_b}) are derived  from
(\ref{CCDilCheCanonical_condinic11})-(\ref{CCDilCheCanonical_eq6}) replacing $C_{2n,2\nu}$ and $C_{2n+1,2\nu+1}$ by their expressions given by (\ref{C2n}) and (\ref{C2n1}) and doing some simple simplifications.
%%%%%%%%%%%%%%%%%%%%%%%%%
\end{proof}
%%%%%%%%%%%%%%%%%%%%%%%%%

%The following corollary gives CC for perturbations of first orders.

From last two theorems, we easily obtain $\lambda_{n,m}^{dX}$ for perturbations of first orders.
% in terms on the connection coefficients, $C_{n,m}$, of Chebyshev (\ref{C2n})-(\ref{C2n1}) and in terms of binomial coefficients. In simplifications, we use systematically the combinatorial identity \cite[p.11]{Riordan1}
%$$\binom{n}{m+1}=\binom{n+1}{m+1}-\binom{n}{m}.$$

%%%%%%%%%%%%%%%%%%%%%%%%% 
\begin{corollary}
CC $\lambda_{n,m}^{dX}:=\lambda_{n,m}(P^d\leftarrow X)$ for perturbed  by dilatation. Recall that $\lambda_{n,n}^{dX}=1$, $\forall n\geq 0$.
\noindent For order $1$ and $n\geq 0$
\begin{eqnarray}
&& \lambda_{k,\nu}^{dX}=C_{k,\nu}\ ,\ \nu=0(1)k\ ,\ k=0,1\ .\notag\\ 
&& \lambda_{2n+2,2\nu+1}^{dX}=0\  ,\  \nu=0(1)n\ .\notag\\
&&\lambda_{2n+2,2\nu}^{dX}=C_{2n+2,2\nu}+\frac{1-\lambda_1}{4}C_{2n,2\nu}\notag\\
&&\ \ \ \ \ \ \ \ \ \ \ =\frac{(-1)^{n-\nu+1}}{4^{(n-\nu+1)}}\Big\{\binom{n+\nu+1}{n-\nu+1}-(1-\lambda_1)\binom{n+\nu}{n-\nu}\Big\}
\ , \ \nu=0(1)n\ ;\notag\\
&& \lambda_{2n+3,2\nu}^{dX}=0\  ,\  \nu=0(1)n+1\ .\notag\\
&&\lambda_{2n+3,2\nu+1}^{dX}=C_{2n+3,2\nu+1}+\frac{1-\lambda_1}{4}C_{2n+1,2\nu+1}\notag\\
&& \ \ \ \ \ \ \ \ \ \ \ =\frac{(-1)^{n-\nu+1}}{4^{(n-\nu+1)}}\Big\{\binom{n+\nu+2}{n-\nu+1}-(1-\lambda_1)\binom{n+\nu+1}{n-\nu}\Big\}
\ ,\ \nu=0(1)n\ .\notag
\end{eqnarray}
%%%%%%%%%%%%%%%%%%%%%%%%%
For order $2$ and $n\geq 0$
\begin{eqnarray}
&& \lambda_{k,\nu}^{dX}=C_{k,\nu}\ ,\ \nu=0(1)k\ ,\ k=0(1)2\ .\notag\\ 
&& \lambda_{3,2\nu}^{dX}=0\ ,\ \nu=0,1\ ;\ \lambda_{3,1}^{dX}=C_{3,1}+\frac{1-\lambda_2}{4}=-1+\frac{1-\lambda_2}{4}\ .\notag\\
&& \lambda_{2n+4,2\nu+1}^{dX}=0\  ,\  \nu=0(1)n+1\ .\notag\\
%\end{eqnarray}
%\begin{eqnarray}
&&\lambda_{2n+4,2\nu}^{dX}=C_{2n+4,2\nu}+\frac{1-\lambda_2}{4^{2}}\Big\{C_{2n,2\nu}+4C_{2n+2,2\nu}\Big\}\notag\\
&& \ \ \ \ \ \ \ \ \ \ \ =\frac{(-1)^{n-\nu}}{4^{(n-\nu+2)}}\Big\{\binom{n+\nu+2}{n-\nu+2}-(1-\lambda_2)\binom{n+\nu}{n-\nu+1}\Big\}
\ ,\ \nu=0(1)n\ ; \notag\\
&& \lambda_{2n+4,2n+2}^{dX}=C_{2n+4,2n+2}+\frac{1-\lambda_2}{4}=
\frac{1}{4}\big(-(2n+3)+(1-\lambda_2)\big)=-\frac{2n+2+\lambda_2}{4}\ . \notag\\
%\end{eqnarray}
%\begin{eqnarray}
&& \lambda_{2n+5,2\nu}^{dX}=0\  ,\  \nu=0(1)n+2\ .\notag\\
&&\lambda_{2n+5,2\nu+1}^{dX}=C_{2n+5,2\nu+1}+\frac{1-\lambda_2}{4^{2}}\Big\{C_{2n+1,2\nu+1}+4C_{2n+3,2\nu+1}\Big\}\notag\\
&& \ \ \ \ \ \ \ \ \ \ \ \ \ =\frac{(-1)^{n-\nu}}{4^{(n-\nu+2)}}\Big\{\binom{n+\nu+3}{n-\nu+2}-(1-\lambda_2)\binom{n+\nu+1}{n-\nu+1}\Big\}\ , \ \nu=0(1)n\ ;\notag \\
%\end{eqnarray}
%\begin{eqnarray}
&& \lambda_{2n+5,2n+3}^{dX}=C_{2n+5,2n+3}+\frac{1-\lambda_2}{4}=
-\frac{n+2}{2}+\frac{1-\lambda_2}{4}\ .\notag
\end{eqnarray}
%%%%%%%%%%%%%%%%%%%%%%%%%%%%%
\end{corollary}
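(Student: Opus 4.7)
The plan is to derive this corollary as a direct specialization of Theorem \ref{CCDilCheCanonical} (and its binomial restatement Theorem \ref{CCDilCheCanonical_binomial}) to the values $r=1$ and $r=2$. There is nothing essentially new to prove: since both the original and the perturbed sequences are symmetric, the CC of opposite parity vanish automatically by (\ref{CCDilCheCanonical_condinic11}), (\ref{CCDilCheCanonical_condinic21}), (\ref{CCDilCheCanonical_eq1}) and (\ref{CCDilCheCanonical_eq4}), and the normalization $\lambda_{n,n}^{dX}=1$ is recalled from (\ref{linkcoeffnms}).

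For $r=1$, I would first record $\lambda^{dX}_{k,\nu}=C_{k,\nu}$, $k=0,1$, from the initial-condition block (\ref{condiniciais_dil1}). The intermediate range of indices $k=r+1(1)2r-1$ is empty when $r=1$, so the only remaining CC are covered by the generic formulas (\ref{CCDilCheCanonical_eq2})--(\ref{CCDilCheCanonical_eq3}) and (\ref{CCDilCheCanonical_eq5})--(\ref{CCDilCheCanonical_eq6}). With $r=1$, the sums $\sum_{\mu=n}^{n+r-1}$ and $\sum_{\mu=\nu}^{n+r-1}$ each collapse to a single term, yielding $\lambda_{2n+2,2\nu}^{dX}=C_{2n+2,2\nu}+\frac{1-\lambda_1}{4}C_{2n,2\nu}$ (and the analogous odd identity). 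Substituting the explicit values of $C_{2n,2\nu}$ and $C_{2n+1,2\nu+1}$ from (\ref{C2n}) and (\ref{C2n1}), and factoring out the common $4^{-(n-\nu+1)}$, produces the binomial form displayed in the statement.

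For $r=2$, the same scheme applies, but now the intermediate range $k=r+1(1)2r-1$ contains the single value $k=3$, so I would first treat $\lambda_{3,1}^{dX}$ using (\ref{CCDilCheCanonical_condinic22}) with $k'=1$; here the inner sum has a single term $\mu=0$, giving $\lambda_{3,1}^{dX}=C_{3,1}+\frac{1-\lambda_2}{4}C_{1,1}=C_{3,1}+\frac{1-\lambda_2}{4}$. For the generic indices I would use (\ref{CCDilCheCanonical_eq2})--(\ref{CCDilCheCanonical_eq6}) with $r=2$, where the sums $\sum_{\mu=n}^{n+1}$ expand to two explicit terms; the boundary case $\nu=n+1$ of the mixed range produces the special value $\lambda_{2n+4,2n+2}^{dX}=C_{2n+4,2n+2}+\frac{1-\lambda_2}{4}$ separately listed in the corollary. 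The remaining task is again to substitute (\ref{C2n})--(\ref{C2n1}) and combine the two Chebyshev CC into a single binomial via Pascal's rule $\binom{n+\nu+2}{n-\nu+2}=\binom{n+\nu+1}{n-\nu+2}+\binom{n+\nu+1}{n-\nu+1}$ and its variants.

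I do not expect any genuine obstacle here: the entire content of the corollary is algebraic bookkeeping on top of Theorem \ref{CCDilCheCanonical}. The only delicate point is keeping track of the boundary indices where the two ranges $\nu=0(1)n-1$ and $\nu=n(1)n+r-1$ of Theorem \ref{CCDilCheCanonical} meet (these become $\nu=n$ for $r=1$ and $\nu=n,n+1$ for $r=2$), since those extremal CC must be written in a slightly different closed form; once these are isolated, Pascal's rule delivers the compact binomial expression in each case.
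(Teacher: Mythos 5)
Your proposal is correct and follows essentially the same route as the paper, which obtains this corollary by direct specialization of Theorems \ref{CCDilCheCanonical} and \ref{CCDilCheCanonical_binomial} to $r=1$ and $r=2$, collapsing the sums to one or two terms and simplifying with the identity $\binom{n}{m+1}=\binom{n+1}{m+1}-\binom{n}{m}$ (your Pascal's rule). Your handling of the parity-vanishing coefficients, the empty intermediate range for $r=1$, the single intermediate value $k=3$ for $r=2$, and the boundary term $\nu=n+1$ giving $\lambda^{dX}_{2n+4,2n+2}$ matches the paper's intended bookkeeping.
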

%%%%%%%%%%%%%%%%%%%%%%%%%%%%%

%%%%%%%%%%%%%%%%%%%%%%%%%%%%%%%%%%%%%%%%%%
%%%%%%%%%%%%%%%%%%%%%%%%%%%%%%%%%%%%%%%%%%
%%%%%%%%%%%%%%%%%%%%%%%%%%%%%%%%%%%%%%%%%%
%%%%%%%%%%%%%%%%%%%%%%%%%%%%%%%%%%%%%%%%%%
\section{Some properties of zeros and intersection points}\label{ZerosIntPoints}
%\section{Some properties of zeros and intersection points for some
%perturbed of the Chebyshev polynomials of second kind}\label{ZerosIntPoints}
%%%%%%%%%%%%%%%%%%%%%%%%%%%%%%%%%%%%%%%%%%
%%%%%%%%%%%%%%%%%%%%%%%%%%%%%%%%%%%%%%%%%%
%%%%%%%%%%%%%%%%%%%%%%%%%%%%%%%%%%%%%%%%%%
%%%%%%%%%%%%%%%%%%%%%%%%%%%%%%%%%%%%%%%%%%

In this section, we are going to deduce some results concerning zeros and intersection points of perturbed Chebyshev polynomials valid for any order $r$ of perturbation. Our goal is to explain the main properties observed in the graphical representations presented as illustration %in Section \ref{Gra_Repre} 
at the end of this work. 
Let us note the zeros of $P^{t}_{n}(\mu_r;r)(x)$ and $P^{d}_{n}(\lambda_r;r)(x)$,
ordered by increasing size, by 
$\{{\xi^{t}}_k^{(n)}(\mu_r;r)\}_{k=1(1)n}$ and $\{{\xi^{d}}_k^{(n)}(\lambda_r;r)\}_{k=1(1)n}$, or
%$$\{{\xi^{t}}_k^{(n)}(\mu_r;r)\}_{k=1(1)n}\quad ,\quad \{{\xi^{d}}_k^{(n)}(\lambda_r;r)\}_{k=1(1)n}\ ,$$
or simply by $\{{\xi^{t}}_k^{(n)}\}_{k=1(1)n}$ and $\{{\xi^{d}}_k^{(n)}\}_{k=1(1)n}$.
%We note by $\{{\xi^{t}}_i^{(k)}\}_{i=1(1)k}$ and $\{{\xi^{d}}_i^{(k)}\}_{i=1(1)k}$ the zeros of $P^{t}_{k}(\mu_r;r)(x)$ and $P^{d}_{k}(\lambda_r;r)(x)$ ordered by increasing size.

%%%%%%%%%%%%%%%%%%%%%%%%%%%%%%%%%%%%%%%%%%%%%
\subsection[Hadamard--Gershg\"orin location]{Hadamard--Gershg\"orin location of zeros}
%%%%%%%%%%%%%%%%%%%%%%%%%%%%%%%%%%%%%%%%%%%%%

Let us consider the symmetric tridiagonal Jacobi matrix associated with the MOPS $\{P_n(x)\}_{n\geq 0}$ given by (\ref{icrecOrto})-(\ref{recOrto}), 
$$J=
\left(
\begin{array}{ccccc}
\beta_0 & \alpha_1 & & &   \\
\alpha_1 & \beta_1 & \alpha_2 & &   \\
  & \alpha_2 & \beta_2 & \alpha_ 3 &  \\
 & & \ddots & \ddots & \ddots    \\
\end{array}
\right),
$$
such that, $\alpha_{n+1}^2=\gamma_{n+1}$, $n\geq 0$.
Denoting by $J_n$, $n\geq1$, the $n\times n$ matrix constituted by the first $n$ rows and $n$ columns of $J$, we have that, $P_n(x)=\det(xI_n-J_n)$, $n\geq 1$, where $I_n$ notes the identity matrix of order $n$. Thus zeros of $P_n$ are eigenvalues of $J_n$, $n\geq 1$ 
\cite[p.30]{Chihara}. By definition the  Gershg\"orin discs of any matrix $A=(a_{ij})_{1\leq i,j\leq n}$ are  
${\mathcal D}^{(n)}_i=\{z\in {\mathbb C}:|z-a_{ii}|\leq \sum_{j=1}^n|a_{ij}|\}$, $1\leq i\leq n$.
The Gerschgorin discs of $J_n$ are 
%${\mathcal D}^{(1)}_1=\{\beta_0\}$; 
%${\mathcal D}^{(n)}_1=\{z\in {\mathbb C}:|z-\beta_{0}|\leq |\alpha_{1}|\}$,
%${\mathcal D}^{(n)}_i=\{z\in {\mathbb C}:|z-\beta_{i-1}|\leq |\alpha_{i-1}|+|\alpha_{i}|\}$, $2\leq i\leq n-1$;
%${\mathcal D}^{(n)}_n=\{z\in {\mathbb C}:|z-\beta_{n-1}|\leq |\alpha_{n-1}|\}$, for $n\geq 2$.
\begin{eqnarray}
&& {\mathcal D}^{(1)}_1=\{\beta_0\}\ . \notag\\  
&& {\mathcal D}^{(n)}_1=\{z\in {\mathbb C}:|z-\beta_{0}|\leq |\alpha_{1}|\} \ , n\geq 2\ ; \notag\\  
&& {\mathcal D}^{(n)}_i=\{z\in {\mathbb C}:|z-\beta_{i-1}|\leq |\alpha_{i-1}|+|\alpha_{i}|\}\ , \ 2\leq i\leq n-1\ , n\geq 2\ ; \notag\\  
&& {\mathcal D}^{(n)}_n=\{z\in {\mathbb C}:|z-\beta_{n-1}|\leq |\alpha_{n-1}|\}\ , \ n\geq 2\ . \notag
\end{eqnarray}
The location of Hadamard--Gershg\"orin assures that all eigenvalues of $J_n$ (zeros of $P_n(x)$)  are in 
$${\mathcal D}^{(n)}=\cup_{i=1}^{n} {\mathcal D}^{(n)}_{i}\ , \ n\geq 1\ ,$$
and if there are $m$ discs disjoints from the others, then their union contains exactly $m$ eigenvalues \cite{Golub_1996}. 

Gershg\"orin intervals of the Jacobi matrix, ${\mathcal J}$, of Chebyshev polynomials of second kind and their union
%$\{P_n(x)\}_{n\geq 0}$ 
are 
%${\mathcal D}^{(1)}_1={\mathcal D}^{(1)}=\{0\}$; 
%${\mathcal D}^{(2)}_1={\mathcal D}^{(2)}_2={\mathcal D}^{(2)}=[ -\frac{1}{2},\frac{1}{2} ]$; 
%${\mathcal D}^{(n)}_1=[ -\frac{1}{2},\frac{1}{2} ]$, 
%${\mathcal D}^{(n)}_i=[ -1,1 ]$, $2\leq i\leq n-1$, 
%${\mathcal D}^{(n)}_n=[ -\frac{1}{2},\frac{1}{2} ]$, ${\mathcal D}^{(n)}=[-1,1]$, $n\geq 3$.
\begin{eqnarray}
&& {\mathcal D}^{(1)}_1={\mathcal D}^{(1)}=\{0\}\  . \quad
 {\mathcal D}^{(2)}_1={\mathcal D}^{(2)}_2={\mathcal D}^{(2)}=[ -\frac{1}{2},\frac{1}{2} ]\ . \notag\\
&& {\mathcal D}^{(n)}_1=[ -\frac{1}{2},\frac{1}{2} ]\ ; \
{\mathcal D}^{(n)}_i=[ -1,1 ]\ , \ 2\leq i\leq n-1\ ; \
{\mathcal D}^{(n)}_n=[ -\frac{1}{2},\frac{1}{2} ]\ ,\ n\geq 3\ .\notag\\
&& {\mathcal D}^{(n)}=[-1,1]\ ,\ n\geq 3\ . \notag
\end{eqnarray}

Let us denote the Jacobi matrices associated  with
$\{P^{t}_n(\mu_r;r)(x)\}_{n\geq 0}$ and \newline $\{P^{d}_n(\lambda_r;r)(x)\}_{n\geq 0}$ by 
$J^{t}(\mu_r;r)$ and $J^{d}(\lambda_r;r)$, the corresponding Gershg\"orin discs by
${\mathcal D}^{t(n)}_i(\mu_r;r)$ and ${\mathcal D}^{d(n)}_i(\lambda_r;r)$, $0\leq i \leq n$, and their union by 
$${\mathcal D}^{t(n)}(\mu_r;r)=\cup_{i=1}^{n} {\mathcal D}^{t(n)}_i(\mu_r;r)\quad,\quad{\mathcal D}^{d(n)}(\lambda_r;r)=\cup_{i=1}^{n}{\mathcal D}^{d(n)}_i(\lambda_r;r)\ .$$ 
From (\ref{rcoefTT2}), (\ref{rrc_tra}) and (\ref{rrc_dil}), we have
$$J^{t}(\mu_r;r)=
\left(
\begin{array}{ccccccccc}
0 & \frac{1}{2} & & & & & & &\\
\frac{1}{2} & 0 & \frac{1}{2} & & & & & &\\
 & \ddots & \ddots & \ddots & & & & &\\
  &  & \frac{1}{2}  & \mu_r &  \frac{1}{2} & & & & \\
%  &  & & \frac{1}{2}  & 0 & \frac{1}{2}  &  & &\\
 & & &  & \ddots & \ddots & \ddots  &  &\\
  & & &  &   & \frac{1}{2}   & 0  &  \frac{1}{2} & \\
  %& & &  &   &   & \frac{1}{2}   &  0 \\
  & & &  &   &  & \ddots  &  \ddots &  \ddots \\
\end{array}
\right)\ ,
\begin{array}{c}
\\ \\ \\
 \leftarrow {\rm line} 
 \ r+1\\ \\ \\ \\ \\
\end{array}
$$
$$J^{d}(\lambda_r;r)=
\left(
\begin{array}{ccccccccc}
0 & \frac{1}{2} & & & & & & & \\
\frac{1}{2} & 0 & \frac{1}{2} & & & & & & \\
 & \ddots & \ddots & \ddots & & & & & \\
  &  & \frac{1}{2}  & 0 &  \frac{\sqrt{\lambda_r}}{2} & & & & \\
  &  & &  \frac{\sqrt{\lambda_r}}{2} & 0 & \frac{1}{2}  &  & & \\
 & & &  & \ddots & \ddots & \ddots  & & \\
  & & &  &   & \frac{1}{2}   & 0  &  \frac{1}{2}  & \\
    & & &  &   &    &  \ddots  &  \ddots &  \ddots \\
\end{array}
\right).
\begin{array}{l}
\\ \\
 \leftarrow {\rm line} 
 \ r\\  \leftarrow {\rm line} 
 \ r+1
\\ \\ \\
\end{array}
$$
If $\mu_r, \lambda_r\in{\mathbb R}$ and $\lambda_r>0$, we are in the positive definite case, these matrices are real (and symmetric), then their eigenvalues are real and distinct and Gershg\"orin discs are intervals \cite{Golub_1996}.
%Otherwise, perturbed polynomials can have complex zeros. 
%Let us avoid this last situation that could be treated in another work.
%In this work, we avoid this last situation. 
Perturbation by translation modify only the row of order $r+1$ of ${\mathcal J}$, then Gershg\"orin dics are
\begin{eqnarray}
&& {\mathcal D}^{t(1)}_{1}(\mu_0;0)=\{\mu_0\}\ ,\
{\mathcal D}^{t(n)}_{1}(\mu_0;0)=[\mu_0-\frac{1}{2}, \mu_0+\frac{1}{2}]\ ,\ n\geq 2 \ ;\notag\\
&&{\mathcal D}^{t(r+1)}_{r+1}(\mu_r;r)=[\mu_r-\frac{1}{2}, \mu_r+\frac{1}{2}]\ ,\ 
{\mathcal D}^{t(n)}_{r+1}(\mu_r;r)=[\mu_r-1, \mu_r+1],\ n\geq r+2,\ r\geq 1;\notag\\
&& {\mathcal D}^{t(n)}_{i}(\mu_r;r)={\mathcal D}^{(n)}_{i}\ , \ i\neq r+1\ ,\ 1\leq i\leq n\ ,\ n\geq 1\ ,\ r\geq 0\ ;  \notag
\end{eqnarray} 
and their union is the following, for $r=0$, 
%%%%%%%%%%%%%%%%%%%%%%%%%%%%%%%%%%%%%%%%
\begin{eqnarray}
&&  {\mathcal D}^{t(1)}(\mu_0;0)=\{\mu_0\}\ ;\ {\mathcal D}^{t(2)}(\mu_0;0)=[-\frac{1}{2},\frac{1}{2}]\cup[\mu_0-\frac{1}{2},\mu_0+\frac{1}{2}]\ ; \notag\\
&& {\mathcal D}^{t(n)}(\mu_0;0)=[-1,1]\cup[\mu_0-\frac{1}{2},\mu_0+\frac{1}{2}]\ ,\ n\geq 3 \ ;  \notag
\end{eqnarray} 
for $r=1$,
\begin{eqnarray}
&&{\mathcal D}^{t(1)}(\mu_1;1)=\{0\}\ ;\ 
{\mathcal D}^{t(2)}(\mu_1;1)=[-\frac{1}{2},\frac{1}{2}]\cup[\mu_1-\frac{1}{2},\mu_1+\frac{1}{2}]\ ; \notag\\
&&{\mathcal D}^{t(3)}(\mu_1;1)=[-\frac{1}{2},\frac{1}{2}]\cup[\mu_1-1,\mu_1+1]\ ;\notag \\ 
&&{\mathcal D}^{t(n)}(\mu_1;1)=[-1,1]\cup[\mu_1-1,\mu_1+1]\ ,\ n\geq 4\ ; \notag
\end{eqnarray} 
for $r=2$, 
\begin{eqnarray}
&&{\mathcal D}^{t(1)}(\mu_2;2)=\{0\}\ ;\ 
{\mathcal D}^{t(2)}(\mu_2;2)=[-\frac{1}{2},\frac{1}{2}]\ ; \
{\mathcal D}^{t(3)}(\mu_2;2)=[-1,1]\cup[\mu_2-\frac{1}{2},\mu_2+\frac{1}{2}];\notag \\ 
&&{\mathcal D}^{t(n)}(\mu_2;2)=[-1,1]\cup[\mu_2-1,\mu_2+1]\ ,\ n\geq 4\ ; \notag
\end{eqnarray} 
and for $r\geq 3$, 
\begin{eqnarray}
&&{\mathcal D}^{t(1)}(\mu_r;r)=\{0\}\ ;\ 
{\mathcal D}^{t(2)}(\mu_r;r)=[-\frac{1}{2},\frac{1}{2}]\ ; \
{\mathcal D}^{t(k)}(\mu_r;r)=[-1,1]\ ,\ k=3(1)r\ ;\notag\\
&& {\mathcal D}^{t(r+1)}(\mu_r;r)=[-1,1]\cup[\mu_r-\frac{1}{2},\mu_r+\frac{1}{2}];\notag \\ 
&&{\mathcal D}^{t(n)}(\mu_2;2)=[-1,1]\cup[\mu_r-1,\mu_r+1]\ ,\ n\geq r+2\ . \notag
\end{eqnarray} 
%%%%%%%%%%%%%%%%%%%%%%%%%%%%%%%%%%%%%%%%
Perturbation by dilatation affects  rows
of orders $r$ and $r+1$  of ${\mathcal J}$, then Gershg\"orin discs are
\begin{eqnarray}
&& {\mathcal D}^{d(1)}_{1}(\lambda_r;r)=\{0\}\ ,\ r\geq 1\ ;\
{\mathcal D}^{d(n)}_{1}(\lambda_1;1)=[-\frac{\sqrt{\lambda_1}}{2}, \frac{\sqrt{\lambda_1}}{2}]\ ,\ n\geq 2\ ;\notag\\
&& {\mathcal D}^{d(n)}_{r}(\lambda_{r};r)=[-\frac{1+\sqrt{\lambda_r}}{2},\frac{1+\sqrt{\lambda_r}}{2}]\ ,\  n\geq r+1\ ,\ r\geq 2\ ;  \notag\\
&&{\mathcal D}^{d(r+1)}_{r+1}(\lambda_{r};r)=[-\frac{\sqrt{\lambda_{r}}}{2}, \frac{\sqrt{\lambda_{r}}}{2}]\ ,
 \ r\geq 1\ ;\notag\\
&& {\mathcal D}^{d(n)}_{r+1}(\lambda_{r};r)=[-\frac{1+\sqrt{\lambda_r}}{2},\frac{1+\sqrt{\lambda_r}}{2}]\ ,\  n\geq r+2\ ,\ r\geq 1\ ;  \notag\\
&& {\mathcal D}^{d(n)}_{i}(\lambda_r;r)={\mathcal D}^{(n)}_{i}\ ,\ i\neq r\ ,\ i\neq r+1\ ,\ 1\leq i\leq n\ ,\ n\geq 1\ ,\ r\geq 1\ ;  \notag
\end{eqnarray} 
and their union is the following, for r=1, 
\begin{eqnarray}
&& {\mathcal D}^{d(1)}(\lambda_1;1)=\{0\}\ ;\
{\mathcal D}^{d(2)}(\lambda_1;1)=[-\frac{\sqrt{\lambda_1}}{2}, \frac{\sqrt{\lambda_1}}{2}]\ ;\notag\\
&& {\mathcal D}^{d(3)}(\lambda_{1};1)=[-\frac{1}{2},\frac{1}{2}]\cup[-\frac{1+\sqrt{\lambda_1}}{2},\frac{1+\sqrt{\lambda_1}}{2}]=[-\frac{1+\sqrt{\lambda_1}}{2},\frac{1+\sqrt{\lambda_1}}{2}]\ ;  \notag\\
&&{\mathcal D}^{d(n)}(\lambda_{1};1)=[-1,1]\cup[-\frac{1+\sqrt{\lambda_1}}{2},\frac{1+\sqrt{\lambda_1}}{2}]\ ,\  n\geq 4\ ;  \notag
\end{eqnarray} 
for $r=2$, 
\begin{eqnarray}
&& {\mathcal D}^{d(1)}(\lambda_2;2)=\{0\}\ ;\
{\mathcal D}^{d(2)}(\lambda_2;2)=[-\frac{1}{2},\frac{1}{2}]\ ;\notag\\
&&{\mathcal D}^{d(3)}(\lambda_2;2)={\mathcal D}^{d(4)}(\lambda_2;2)=[-\frac{1}{2},\frac{1}{2}]\cup[-\frac{1+\sqrt{\lambda_2}}{2},\frac{1+\sqrt{\lambda_2}}{2}]=[-\frac{1+\sqrt{\lambda_2}}{2},\frac{1+\sqrt{\lambda_2}}{2}]\ ;  \notag\\
&&{\mathcal D}^{d(n)}(\lambda_{2};2)=[-1,1]\cup[-\frac{1+\sqrt{\lambda_2}}{2},\frac{1+\sqrt{\lambda_2}}{2}]\ ,\  n\geq 5\ ;  \notag
\end{eqnarray} 
and for $r\geq 3$, 
\begin{eqnarray}
&& {\mathcal D}^{d(1)}(\lambda_r;r)=\{0\}\ ;\
{\mathcal D}^{d(2)}(\lambda_r;r)=[-\frac{1}{2},\frac{1}{2}]\ ;\ {\mathcal D}^{d(k)}(\lambda_r;r)=[-1,1],\ k=3(1)r;  \notag\\
&&{\mathcal D}^{d(n)}(\lambda_{r};r)=[-1,1]\cup[-\frac{1+\sqrt{\lambda_r}}{2},\frac{1+\sqrt{\lambda_r}}{2}]\ ,\  n\geq r+1\ .  \notag
\end{eqnarray} 
Depending on the values of parameters of perturbation ($\mu_r$ and $\lambda_r$), we can obtain some more information about the location of zeros, provided by next two propositions.
%the union of the discs can be reduced to one interval. This is the aim of next two propositions. 
%%%%%%%%%%%%%%%%%%%%%%%%%%%%%%%%%%%%%%%%
\begin{proposition} For the $r$th-perturbed {\it by translation} case $(r\geq 0)$.
If $\mu_r\in{\mathbb R}$, $\mu_r\neq 0$, $r\geq 0$, it holds
\begin{enumerate}
%%%%%%%
\item 
For  $r=0$, it holds:
%%%%%%%%%%%
\begin{enumerate}
%%%%
\item For $n=1$,  ${\mathcal D}^{t(1)}(\mu_0;0)=\{\mu_0\}$.
%%%%
\item For $n=2$, it holds the same as \it {2.(a) (see next)}, but with $\mu_0$ instead of $\mu_1$.

%%%%
\item For $n\geq 3$, it holds the same as \it{3.(a) (see next)}, but with $\mu_0$ instead of $\mu_r$.

\end{enumerate}
%%%%%%%%%%%
%%%%%%%
\item 
For  $r=1$, it holds:
%%%%%%%%%%%
\begin{enumerate}
%%%%
\item For $n=2$, it holds:

- If $-1\leq\mu_1<0$, then ${\mathcal D}^{t(2)}(\mu_{1};1)=[\mu_1-\frac{1}{2},\frac{1}{2}]$.

- If $0<\mu_1\leq 1$, then ${\mathcal D}^{t(2)}(\mu_{1};1)=[-\frac{1}{2},\mu_1+\frac{1}{2}]$.

- If $\mu_1> 1$ or $\mu_1<-1$, then $[-\frac{1}{2},\frac{1}{2}]\cap [\mu_1-\frac{1}{2},\mu_1+\frac{1}{2}]=\emptyset$,  there is one zero of $P^{t}_{2}(\mu_1;1)(x)$ in $[\mu_1-\frac{1}{2},\mu_1+\frac{1}{2}]$, the other one is in $[-\frac{1}{2},\frac{1}{2}]$.
%%%%
\item 

For $n=3$, it holds:

- If $-\frac{3}{2}\leq\mu_1\leq -\frac{1}{2}$, then ${\mathcal D}^{t(3)}(\mu_{1};1)=[\mu_1-1,\frac{1}{2}]$.

- If $-\frac{1}{2}\leq\mu_1\leq\frac{1}{2}$, then ${\mathcal D}^{t(3)}(\mu_{1};1)=[\mu_1-1,\mu_1+1]$.

- If $\frac{1}{2}\leq\mu_r\leq \frac{3}{2}$, then ${\mathcal D}^{t(3)}(\mu_{1};1)=[-\frac{1}{2},\mu_1+1]$.

- If $\mu_1> \frac{3}{2}$ or $\mu_1<-\frac{3}{2}$, then $[-\frac{1}{2},\frac{1}{2}]\cap [\mu_1-1,\mu_1+1]=\emptyset$,  there is a unique zero of $P^{t}_{3}(\mu_1;1)(x)$ in $[\mu_1-1,\mu_1+1]$, the others ones are in $[-\frac{1}{2},\frac{1}{2}]$.

%%%%
\item For $n\geq 4$, it holds the same as \it{3.(b) (see next)}, but with $\mu_1$ instead of $\mu_r$.

\end{enumerate}
%%%%%%%%%%%
\item 
For  $r\geq 2$, it holds:
%%%%%%%%%%%%%%%%
\begin{enumerate}
%%%%
\item 
For $n=r+1$, it holds: 

- If $-\frac{3}{2}\leq\mu_r\leq -\frac{1}{2}$, then ${\mathcal D}^{t(n)}(\mu_{r};r)=[\mu_r-\frac{1}{2},1]$.

- If $-\frac{1}{2}\leq\mu_r\leq \frac{1}{2}$, then ${\mathcal D}^{t(n)}(\mu_{r};r)=[-1,1]$.

- If $\frac{1}{2}\leq\mu_r\leq \frac{3}{2}$, then ${\mathcal D}^{t(n)}(\mu_{r};r)=[-1,\mu_r+\frac{1}{2}]$.

- If $\mu_r> \frac{3}{2}$ or $\mu_r<-\frac{3}{2}$, then $[-1,1]\cap [\mu_r-\frac{1}{2},\mu_r+\frac{1}{2}]=\emptyset$,  there is a unique zero of $P^{t}_{n}(\mu_r;r)(x)$ in $[\mu_r-\frac{1}{2},\mu_r+\frac{1}{2}]$, the others ones are in $[-1,1]$.

%%%%
\item 
For for $n\geq r+2$, it holds: 

- If $-2\leq\mu_r<0$, then ${\mathcal D}^{t(n)}(\mu_{r};r)=[\mu_r-1,1]$.

- If $0<\mu_r\leq 2$, then ${\mathcal D}^{t(n)}(\mu_{r};r)=[-1,\mu_r+1]$.

- If $\mu_r> 2$ or $\mu_r<-2$, then $[-1,1]\cap [\mu_r-1,\mu_r+1]=\emptyset$, there is a unique zero of $P^{t}_n(\mu_r;r)(x)$ in $[\mu_r-1,\mu_r+1]$, the others ones are in $[-1,1]$.
\end{enumerate}
\end{enumerate}
%%%%%%%%%%%%%%%%
\end{proposition}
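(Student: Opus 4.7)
The plan is to reduce the whole proposition to a very short geometric discussion of when two intervals on the real line overlap, are nested, or are disjoint, and then to invoke in the disjoint case the already-stated refinement of Hadamard--Gershg\"orin saying that an isolated disc contains exactly as many eigenvalues as its multiplicity in the union. All the heavy lifting, namely the identification of every ${\mathcal D}^{t(n)}_{i}(\mu_r;r)$ in terms of $\mu_r$, has already been done in the paragraphs immediately preceding the statement, so the proof amounts to reading off the two intervals that enter ${\mathcal D}^{t(n)}(\mu_r;r)$ and sliding $\mu_r$ along the real axis.

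First I would fix notation: write $C_n$ for the ``Chebyshev part'' of the union, meaning $C_n=\{0\}$, $[-\tfrac12,\tfrac12]$ or $[-1,1]$ according to the value of $n$ read off from the tables preceding the proposition, and $P_n(\mu_r;r)=[\mu_r-a_n,\mu_r+a_n]$ for the ``perturbed part'', where $a_n=\tfrac12$ if $n=r+1$ and $a_n=1$ if $n\geq r+2$. Then ${\mathcal D}^{t(n)}(\mu_r;r)=C_n\cup P_n(\mu_r;r)$. The thresholds that appear in the statement ($\pm 1$, $\pm\tfrac32$, $\pm 2$) are precisely the critical values of $\mu_r$ at which $P_n(\mu_r;r)$ becomes tangent to, or detaches from, $C_n$: solving $\mu_r+a_n=-c_n$ and $\mu_r-a_n=c_n$ for the relevant half-width $c_n$ of $C_n$ gives them without further computation.

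Second, for each of the sub-cases listed, I would perform the elementary case split on $\mu_r$ relative to these thresholds. When $|\mu_r|$ is below the smaller threshold, $P_n(\mu_r;r)\subset C_n$ (or $C_n\subset P_n(\mu_r;r)$ in the $n=r+1$, $r\geq 2$ sub-case, where $a_n<c_n$ is reversed) so the union collapses to the larger interval; when $\mu_r$ is between the two thresholds the two intervals overlap and the union is a single interval whose endpoints are taken from the outer endpoints of $C_n$ and $P_n(\mu_r;r)$; when $|\mu_r|$ exceeds the larger threshold the intersection is empty. This gives each of the three descriptive lines of each sub-case by direct inspection.

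Finally, in the disjoint sub-case I would invoke the refined Hadamard--Gershg\"orin statement recalled just before the proposition: in the translation case only the $(r+1)$-st disc of $J^{t}(\mu_r;r)$ has its centre shifted to $\mu_r$, while all other discs coincide with discs of the unperturbed Jacobi matrix and therefore lie in $C_n$; hence if $P_n(\mu_r;r)\cap C_n=\emptyset$ the disc ${\mathcal D}^{t(n)}_{r+1}(\mu_r;r)$ is isolated and contains exactly one eigenvalue of $J_n^{t}(\mu_r;r)$, i.e.\ exactly one zero of $P^{t}_n(\mu_r;r)$, while the remaining $n-1$ zeros lie in $C_n$. I do not anticipate a real obstacle: the only mildly delicate point is to keep straight the bookkeeping for small $n$ (i.e.\ $n=1,2$ and $n=r+1$), where $C_n$ is not $[-1,1]$ but $\{0\}$ or $[-\tfrac12,\tfrac12]$, which is handled by using the correct row of the tabulation of ${\mathcal D}^{(n)}_i$ for the unperturbed Chebyshev matrix and reading off the appropriate $a_n$ and $c_n$.
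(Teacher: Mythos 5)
Your proposal matches the paper's own (implicit) argument: the proposition is presented as a direct consequence of the Gershg\"orin discs and unions tabulated immediately before it, namely an elementary case analysis of how the perturbed interval $[\mu_r-a_n,\mu_r+a_n]$ sits relative to the unperturbed part $C_n$, together with the isolated-disc eigenvalue count for the disjoint case, exactly as you describe. One bookkeeping slip in your parenthetical: the sub-case where the containment is reversed (unperturbed part inside the perturbed interval) is $r=1$, $n=3$, where $a_n=1>c_n=\tfrac12$, not $n=r+1$ with $r\geq2$, where $a_n=\tfrac12<c_n=1$ and the union is $[-1,1]$ for $|\mu_r|\leq\tfrac12$ as the statement asserts --- but since your conclusion in that regime is only that ``the union collapses to the larger interval'', the outcome is unaffected once the radii are read off correctly.
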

%%%%%%%%%%%%%%%%
%%%%%%%%%%%%%%%%
\begin{proposition} For the $r$th-perturbed {\it by dilatation} case $(r\geq 1)$.
If $\lambda_r\in{\mathbb R}$, $\lambda_r>0$, $\lambda_r\neq 1$, $r\geq 1$, it holds
\begin{enumerate}
%%%%%%%
\item 
For $r=1$, it holds:

If $\lambda_1>0$, then ${\mathcal D}^{d(3)}(\lambda_1;1)=[-\frac{1+\sqrt{\lambda_1}}{2},\frac{1+\sqrt{\lambda_1}}{2}]$.

If $0<\lambda_1<1$, then ${\mathcal D}^{d(n)}(\lambda_1;1)=[-1,1]$, $n\geq 4$.

If $\lambda_1>1$, then ${\mathcal D}^{d(n)}(\lambda_1;1)=[-\frac{1+\sqrt{\lambda_1}}{1},\frac{1+\sqrt{\lambda_1}}{1}]$, $n\geq 4$.
%%%%%%%
\item 
For $r=2$, it holds:

If $\lambda_2>0$, then ${\mathcal D}^{d(3)}(\lambda_2;2)={\mathcal D}^{d(4)}(\lambda_2;2)=[-\frac{1+\sqrt{\lambda_2}}{2},\frac{1+\sqrt{\lambda_2}}{2}]$.

If $0<\lambda_2<1$, then ${\mathcal D}^{d(n)}(\lambda_2;2)=[-1,1]$, $n\geq 5$.

If $\lambda_2>1$, then ${\mathcal D}^{d(n)}(\lambda_2;2)=[-\frac{1+\sqrt{\lambda_2}}{2},\frac{1+\sqrt{\lambda_2}}{2}]$, $n\geq 5$.

%%%%%%%
\item 
For $r\geq 3$,  it holds:
  
If $0<\lambda_r< 1$, then ${\mathcal D}^{d(n)}(\lambda_{r};r)=[-1,1]$, $n\geq r+1$. 

If $\lambda_r>1$, then  ${\mathcal D}^{d(n)}(\lambda_{r};r)=[-\frac{1+\sqrt{\lambda_r}}{2},\frac{1+\sqrt{\lambda_r}}{2}]$, $n\geq r+1$.
\end{enumerate}
\end{proposition}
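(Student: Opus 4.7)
The proof is almost immediate from the disc computations already performed just before the proposition is stated. My plan is simply to carry out the interval union case-by-case, comparing the \emph{perturbed} disc
\[
I(\lambda_r) := \Bigl[-\tfrac{1+\sqrt{\lambda_r}}{2},\tfrac{1+\sqrt{\lambda_r}}{2}\Bigr]
\]
with the unperturbed intervals $[-\frac{1}{2},\frac{1}{2}]$ and $[-1,1]$ that already make up ${\mathcal D}^{(n)}$. The only numerical facts I would need are the following monotone comparisons, which I would record as a preliminary remark: since $\lambda_r>0$, we have $\sqrt{\lambda_r}>0$, hence $\frac{1+\sqrt{\lambda_r}}{2}>\frac{1}{2}$ always; moreover $\frac{1+\sqrt{\lambda_r}}{2}<1 \iff \sqrt{\lambda_r}<1 \iff 0<\lambda_r<1$, whereas $\frac{1+\sqrt{\lambda_r}}{2}>1 \iff \lambda_r>1$. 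All three intervals are symmetric about $0$, so the unions collapse to the one with the largest right endpoint.

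Next I would go through the three regimes in the proposition in turn, using the explicit expressions for ${\mathcal D}^{d(n)}(\lambda_r;r)$ already displayed in the text just above the proposition. For $r=1$ and $n=3$ the text already writes
\[
{\mathcal D}^{d(3)}(\lambda_1;1)=\bigl[-\tfrac{1}{2},\tfrac{1}{2}\bigr]\cup I(\lambda_1),
\]
and since $\frac{1+\sqrt{\lambda_1}}{2}>\frac{1}{2}$ for every $\lambda_1>0$ the union is $I(\lambda_1)$, giving the first assertion. For $r=1$ and $n\geq 4$, one has ${\mathcal D}^{d(n)}(\lambda_1;1)=[-1,1]\cup I(\lambda_1)$, and the preliminary comparison yields $[-1,1]$ when $0<\lambda_1<1$ and $I(\lambda_1)$ when $\lambda_1>1$. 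The case $r=2$ is identical in flavour: for $n\in\{3,4\}$ the union $[-\frac{1}{2},\frac{1}{2}]\cup I(\lambda_2)$ is already collapsed in the text to $I(\lambda_2)$, and for $n\geq 5$ one repeats the $[-1,1]$ versus $I(\lambda_2)$ comparison.

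For $r\geq 3$ and $n\geq r+1$ the expression given above the proposition is again
\[
{\mathcal D}^{d(n)}(\lambda_r;r)=[-1,1]\cup I(\lambda_r),
\]
so the same dichotomy applies: the union equals $[-1,1]$ when $0<\lambda_r<1$ and $I(\lambda_r)$ when $\lambda_r>1$. This handles all assertions in the statement.

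There is really no obstacle: once the individual discs ${\mathcal D}^{d(n)}_i(\lambda_r;r)$ have been read off from the Jacobi matrix $J^{d}(\lambda_r;r)$ (which was already done in the paragraphs preceding the proposition), the proof is a bookkeeping exercise on the position of $\sqrt{\lambda_r}$ relative to $1$. The only place where a little care is needed is noting that the hypothesis $\lambda_r\neq 1$ is what rules out the degenerate case in which the perturbed interval coincides with $[-1,1]$ (so that strict inclusions become equalities); the argument otherwise does not use $\lambda_r\neq 1$ at all.
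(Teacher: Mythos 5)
Your proposal is correct and follows essentially the same route as the paper, which obtains the proposition directly from the explicit Gershg\"orin interval unions ${\mathcal D}^{d(n)}(\lambda_r;r)$ computed just beforehand, collapsing each union by comparing the right endpoints $\frac{1}{2}$, $1$ and $\frac{1+\sqrt{\lambda_r}}{2}$ according to whether $\lambda_r<1$ or $\lambda_r>1$. Note that your comparison also shows the expression $\bigl[-\frac{1+\sqrt{\lambda_1}}{1},\frac{1+\sqrt{\lambda_1}}{1}\bigr]$ in the statement for $r=1$, $\lambda_1>1$ is a typographical slip for $\bigl[-\frac{1+\sqrt{\lambda_1}}{2},\frac{1+\sqrt{\lambda_1}}{2}\bigr]$, which is what your argument (and the paper's preceding computation) actually yields.
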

%%%%%%%%%%%%%%%%%%%%%%%%%%%%%%%%%%%%%%%%
From these two propositions and (\ref{TnVnWn}), we can deduce the following Hadamard--Gershg\"orin location of zeros for the others families of Chebyshev 
\begin{eqnarray} 
T_1(x),\ T_2(x)\ : && {\cal D}^{d(1)}(2;1)=\{0\}\ ,\ {\cal D}^{d(2)}(2;1)=[-\frac{\sqrt{2}}{2},\frac{\sqrt{2}}{2}]
%\approx [-0.707, 0.707]
\ ,\notag\\
T_n(x)\ , \ n\geq 3\ : && {\cal D}^{d(n)}(2;1)=[-\frac{1+\sqrt{2}}{2},\frac{1+\sqrt{2}}{2}]
%\approx [-1.027, 1.027]
\ , \ n\geq 3\ . \notag\\
V_1(x)\ , \ V_2(x)\ :  &&  {\cal D}^{t(2)}\left(\frac{1}{2};0\right)=\left\{ \frac{1}{2}\right\}\ ,\ {\cal D}^{t(2)}\left(\frac{1}{2};0\right)=[-\frac{1}{2},1]\ ,\notag\\
V_n(x)\ , \ n\geq 3\ :  &&{\cal D}^{t(n)}\left(\frac{1}{2};0\right)=[-1,1]\ ,\ n\geq 3 \ . \notag\\
W_1(x)\ , \ W_2(x)\ :  &&  {\cal D}^{t(2)}\left(-\frac{1}{2};0\right)=\left\{- \frac{1}{2}\right\}\ ,\ {\cal D}^{t(2)}\left(-\frac{1}{2};0\right)=[-1,\frac{1}{2}]\ ,\notag\\
W_n(x)\ , \ n\geq 3\ :  && {\cal D}^{t(n)}\left(-\frac{1}{2};0\right)=[-1,1]\ ,\ n\geq 3 \ . \notag
\end{eqnarray} 
Notice that from (\ref{zerosTnPn})-(\ref{zerosVnWn}), we know that the sets of zeros of $\{T_n\}_{n\geq 0}$, $\{V_n\}_{n\geq 0}$ and $\{W_n\}_{n\geq 0}$ are contained in $[-1,1]$.

\begin{remark}
For degree $n\leq 3$, we could investigate the location of zeros of first polynomials from their explicit expressions that depend on the parameters of perturbation.
\end{remark}

%%%%%%%%%%%%%%%%%%%%%%%%%%%%%%%%%%%%%%%%%%%%%
%%%%%%%%%%%%%%%%%%%%%%%%%%%%%%%%%%%%%%%%%%%%%
\subsection[Zeros at the origin]{Zeros at the origin}
%%%%%%%%%%%%%%%%%%%%%%%%%%%%%%%%%%%%%%%%%%%%%
%%%%%%%%%%%%%%%%%%%%%%%%%%%%%%%%%%%%%%%%%%%%%

From the CC in terms of the canonical basis given by Theorems \ref{CCTraCheCanonical_binomial} and \ref{CCDilCheCanonical_binomial} and Vi\`ete's formulas (\ref{CC_Can_Zeros_Pn}), we can derive some information about zeros 
of perturbed Chebyshev polynomials 
at the origin.

%%%%%%%%%%%%%%%%%%%%%%%%%
\begin{proposition} \label{corollaryCC_Tra_Can_nn1n0}
%%%%%%%%%%%%%%%%%%%%%%%%%%%%%%%%%%%%%%%%
For the $r$th-perturbed {\it by translation} case $(r\geq 0)$, it holds
% and their relationship with zeros of $P^{t}_{n}(\mu_r;r)(x)$ noted by $\{{\xi^{t}}_i^{(n)}\}_{i=0(1)n}$. 
\begin{eqnarray} 
&& \lambda_{k,k-1}^{tX}=0=\sum_{i=1}^{k} {\xi^{t}}_i^{(k)}\ ,\ k=0(1)r\ ,\label{CCTraCheCanonical_Cor_n0}\\
%\end{eqnarray}
%\begin{eqnarray}
&&\lambda_{n,n-1}^{tX}=-\mu_r\Leftrightarrow \sum_{i=1}^{n} {\xi^{t}}_i^{(n)}=\mu_r\ ,\ n\geq r+1\ . \label{CCTraCheCanonical_Cor_n1}\\
&&\lambda^{tX}_{2n,0}=\frac{(-1)^{n}}{2^{2n}}\Leftrightarrow\prod_{k=1}^{2n} {\xi^{t}}_k^{(2n)}=\frac{(-1)^{n}}{2^{2n}} \Rightarrow
P^{t}_{2n}(\mu_r;r)(0)\neq0 \ , \ n\geq 0   \ .\label{CCTraCheCanonical_Cor_n0_1} 
\end{eqnarray}
If $r=2r'+1$, then 
\begin{eqnarray}
&& \lambda^{tX}_{2n+1,0}=0\Leftrightarrow\prod_{k=1}^{2n+1} {\xi^{t}}_k^{(2n+1)}=0\Leftrightarrow
P^{t}_{2n+1}(\mu_r;r)(0)=0\ ,\ n\geq 0\ . \label{CCTraCheCanonical_Cor_n0_2}
\end{eqnarray}
If $r=2r'$, then  
\begin{eqnarray}
&& \lambda^{tX}_{2k'+1,0}=0\Leftrightarrow\prod_{k=1}^{2k'+1} {\xi^{t}}_k^{(2k'+1)}=0
\Leftrightarrow
P^{t}_{2k'+1}(\mu_r;r)(0)=0,\ k'=0(1)r'-1, \label{CCTraCheCanonical_Cor_n0_2_1}\\
&& \lambda^{tX}_{2n+1,0} \! =\!\mu_r\frac{(-1)^{n+1}}{2^{2n}}\!\!\Leftrightarrow\!\!
\prod_{k=1}^{2n+1} {\xi^{t}}_k^{(2n+1)}\!=\!\mu_r\frac{(-1)^{n}}{2^{2n}}\!\!\Rightarrow\!\!
P^{t}_{2n+1}(\mu_r;r)(0)\neq0,n\geq r'. \label{CCTraCheCanonical_Cor_n31}
\end{eqnarray}
%%%%%%%%%%%%%%%%%%%%%%%%%%%%%%%%%%%%%%%%
\end{proposition}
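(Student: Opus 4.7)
The plan is to combine the connection relations (CRs) of Proposition \ref{CCTraChe} with Viète's formulas \eqref{CC_Can_Zeros_Pn} and the elementary fact that $P_{m}(0)=C_{m,0}$ equals $(-1)^{m/2}/2^{m}$ when $m$ is even and vanishes when $m$ is odd. The key structural observation is that in every CR, the perturbation has the form $-\mu_{r}\sum_{i}(1/4^{i})P_{n-2i-1}(x)$, so it consists exclusively of Chebyshev polynomials whose degrees have parity opposite to $n$. This parity shift drives every statement in the proposition.

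For \eqref{CCTraCheCanonical_Cor_n0} I use that $P^{t}_{k}\equiv P_{k}$ for $k=0(1)r$ by \eqref{CR_Recursion_inic_cond_r_Tra}, hence $\lambda^{tX}_{k,k-1}=C_{k,k-1}=0$ via \eqref{CCTT2_0n1}, and Viète's formula delivers $\sum\xi^{t}_{i}=0$. For \eqref{CCTraCheCanonical_Cor_n1} I extract the coefficient of $x^{n-1}$ from the appropriate CR, either \eqref{CR_Recursion_k} or \eqref{CR_Recursion_n2r1}: since $C_{n,n-1}=0$, only the $i=0$ term $-\mu_{r}P_{n-1}(x)$ of the perturbation sum contributes at this degree, and it contributes exactly $-\mu_{r}$ because $P_{n-1}$ is monic; Viète then yields $\sum\xi^{t}_{i}=\mu_{r}$. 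For \eqref{CCTraCheCanonical_Cor_n0_1} I evaluate the CR at $x=0$ for even degree $2n$: by the parity observation above, every $P_{m}$ in the perturbation sum has odd index and so vanishes, giving $P^{t}_{2n}(\mu_{r};r)(0)=P_{2n}(0)=(-1)^{n}/2^{2n}\neq 0$, which identifies $\lambda^{tX}_{2n,0}$ and, via Viète, the product of zeros.

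The hard part will be \eqref{CCTraCheCanonical_Cor_n0_2}, \eqref{CCTraCheCanonical_Cor_n0_2_1} and \eqref{CCTraCheCanonical_Cor_n31}. Evaluating at $x=0$ in the odd-degree case leaves only the even-index Chebyshev polynomials from the perturbation sum, producing
\[
P^{t}_{2n+1}(\mu_{r};r)(0)\;=\;-\mu_{r}\sum_{i=0}^{N}\frac{1}{4^{i}}P_{2(n-i)}(0)\;=\;-\frac{\mu_{r}(-1)^{n}}{4^{n}}\sum_{i=0}^{N}(-1)^{i},
\]
where $N=2n-r$ under \eqref{CR_Recursion_k} in the regime $r+1\leq 2n+1\leq 2r$, and $N=r$ under \eqref{CR_Recursion_n2r1} in the regime $2n+1\geq 2r+1$. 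The decisive step is then a parity count of $N+1$: when $r=2r'+1$, both regimes yield an even $N+1$, the alternating sum vanishes, and $P^{t}_{2n+1}(\mu_{r};r)(0)=0$; combined with the trivial cases $2n+1\leq r$ this proves \eqref{CCTraCheCanonical_Cor_n0_2}. When $r=2r'$, both regimes yield an odd $N+1$, the alternating sum equals $1$, and $P^{t}_{2n+1}(\mu_{r};r)(0)=\mu_{r}(-1)^{n+1}/2^{2n}\neq 0$ for $n\geq r'$, proving \eqref{CCTraCheCanonical_Cor_n31}, while the range $0\leq k'\leq r'-1$ satisfies $2k'+1<r$ and hence $P^{t}_{2k'+1}\equiv P_{2k'+1}$ vanishes at $0$, proving \eqref{CCTraCheCanonical_Cor_n0_2_1}. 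In every case, a final application of \eqref{CC_Can_Zeros_Pn} promotes the value at the origin to the stated product-of-zeros identity.
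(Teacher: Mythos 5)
Your proposal is correct: all coefficient extractions check out (the $i=0$ term alone gives $\lambda^{tX}_{n,n-1}=-\mu_r$ since $C_{n,n-1}=0$; the evaluation at $x=0$ correctly reduces the odd-degree case to $-\mu_r\frac{(-1)^n}{4^n}\sum_{i=0}^{N}(-1)^i$ with $N=2n-r$ in the range $r+1\leq 2n+1\leq 2r$ and $N=r$ for $2n+1\geq 2r+1$), and the parity count of $N+1$ together with Vi\`ete's formulas \eqref{CC_Can_Zeros_Pn} yields exactly the stated identities, including the trivial ranges $k\leq r$. Your route differs from the paper's in its starting point: the paper reads the required coefficients off the already-established canonical-basis formulas of Theorem \ref{CCTraCheCanonical_binomial} (specializing $\nu=0$ and $\nu$ at the top of each range), whereas you work directly from the connection relations \eqref{CR_Recursion_k} and \eqref{CR_Recursion_n2r1} of Proposition \ref{CCTraChe}, using only that the constant term is the value at the origin, that $P_m(0)$ vanishes for odd $m$ and equals $(-1)^{m/2}/2^m$ for even $m$, and that $C_{n,n-1}=0$. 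The decisive combinatorial step is the same in both treatments --- an alternating sum whose value ($0$ or $1$) is governed by the parity of the number of terms, hence by the parity of $r$ --- but your derivation is more elementary and self-contained, not requiring Theorems \ref{CCTraCheCanonical} and \ref{CCTraCheCanonical_binomial} at all, while the paper's version makes the proposition an immediate specialization of machinery it has already built and which it needs elsewhere.
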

%%%%%%%%%%%%%%%%%%%%%%%%%
%%%%%%%%%%%%%%%%%%%%%%%%%
\begin{proof}
We calculate $\lambda_{n,0}^{tX}$ and $\lambda_{n,n-1}^{tX}$ using mainly Theorem \ref{CCTraCheCanonical_binomial} and we establish the relationship with the sum and the product of zeros $\{\xi_k^{t(n)}\}_{k=1(1)n}$ by means of Vi\`ete formulas (\ref{CC_Can_Zeros_Pn}).
From the product, we conclude about $P^{t}_{n}(\mu_r;r)(0)$. %Let us prove the identities.

From (\ref{condiniciais_tra1}) and (\ref{CCTT2_0n1}), we get (\ref{CCTraCheCanonical_Cor_n0}). From (\ref{condiniciais_tra1}) and (\ref{CCTT2_0n1}), we obtain (\ref{CCTraCheCanonical_Cor_n0_2}), for $n=0(1)r'$; and (\ref{CCTraCheCanonical_Cor_n0_2_1}). These results stem from the fact that first $r+1$ perturbed polynomials coincide with Chebyshev polynomials, as stated by (\ref{CR_Recursion_inic_cond_r_Tra}).

Taking $\nu=k'-1$ in (\ref{CCTraCheCanonical_condinic13_a}), we get $\lambda^{tX}_{2k',2k'-1}  =-\mu_r$.
Doing $\nu=k'$ in (\ref{CCTraCheCanonical_condinic23_a}), we have $\lambda^{tX}_{2k'+1,2k'} = -\mu_r$. 
Taking $\nu=n+r$ in (\ref{CCTraCheCanonical_n3_a}) and in (\ref{CCTraCheCanonical_n6_a}), we obtain
$\lambda^{tX}_{2(n+r)+1,2(n+r)}  =  -\mu_r$ and  $\lambda^{tX}_{2(n+r+1),2(n+r)+1}= -\mu_r$. Thus (\ref{CCTraCheCanonical_Cor_n1}) is proved.

Taking $\nu=0$ in (\ref{CCTraCheCanonical_condinic101_b}), (\ref{CCTraCheCanonical_condinic11_a}) and  
(\ref{CCTraCheCanonical_n4_a}), we obtain (\ref{CCTraCheCanonical_Cor_n0_1}) for 
$n=0(1)r'$, for $n=r'+1(1)r$ and for $n\geq r+1$, respectively, e.g., for $n\geq 0$.

Doing $\nu=0$ in (\ref{CCTraCheCanonical_condinic22_a}), we get
$$\lambda^{tX}_{2k'+1,0} =-\frac{\mu_r}{2^{2k'}} \sum_{\mu=r-k'}^{k'}(-1)^\mu=
-\frac{(-1)^{r-k'}\mu_r}{2^{2k'}} \sum_{\mu=0}^{2k'-r}(-1)^\mu.$$

If $r=2r'$, then $\sum_{\mu=0}^{2(k'-r')}(-1)^\mu=1$ and we obtain (\ref{CCTraCheCanonical_Cor_n31}), for $n=r'(1)r-1$.
If $r=2r'+1$, then $\sum_{\mu=0}^{2(k'-r')+1}(-1)^\mu=0$ and we obtain (\ref{CCTraCheCanonical_Cor_n0_2}), for $n=r'+1(1)r-1$.
At last, taking $\nu=0$ in (\ref{CCTraCheCanonical_n2_a}), we have
$$ \lambda^{tX}_{2(n+r)+1,0}  = -\frac{\mu_r}{2^{2(n+r)}}\sum_{\mu=n}^{n+r}(-1)^{\mu}=  
-\frac{(-1)^n\mu_r}{2^{2(n+r)}}\sum_{\mu=0}^{r}(-1)^{\mu}\ .$$
If $r=2r'$, then $\sum_{\mu=0}^{2r'}(-1)^\mu=1$ and we obtain (\ref{CCTraCheCanonical_Cor_n31}), for $n\geq r$. If $r=2r'+1$, then $\sum_{\mu=0}^{2r'+1}(-1)^\mu=0$ and we obtain (\ref{CCTraCheCanonical_Cor_n0_2}), for $n\geq r$. We remark that in (\ref{CCTraCheCanonical_Cor_n0_2}), $P^{t}_{2k'+1}(\mu_r;r)(x)$ is not symmetric.  
%%%%%%%%%%%%%%%%%%%%%%
\end{proof}
%%%%%%%%%%%%%%%%%%%%%%%%%%%%%%%%%%%%%%%%%%%%
%%%%%%%%%%%%%%%%%%%%%%%%%
%%%%%%%%%%%%%%%%%%%%%%%%%
\begin{proposition} \label{corollaryCC_Dil_Can_nn1n0}
%%%%%%%%%%%%%%%%%%%%%%%%%%%%%%%%%%%%%%%%
For the $r$th-perturbed {\it by dilatation} case $(r\geq 1)$, it holds 
%and their relationship with zeros of $P^{d}_{n}(\lambda_r;r)(x)$ noted by $\{{\xi^{d}}_i^{(n)}\}_{i=0(1)n}$.  
\begin{eqnarray} 
&&P^{d}_{2n+1}(\lambda_r;r)(0)=0 \Leftrightarrow  \prod_{i=1}^{2n+1} {\xi^{d}}_i^{(2n+1)}=0 \Leftrightarrow
\lambda^{dX}_{2n+1,0}=0
\ ,\ n\geq 0 \ . \label{Dil_P2n2n1_0}
\end{eqnarray}
\begin{eqnarray}
&&\lambda_{n,n-1}^{dX} = 0 = \sum_{i=1}^{n} {\xi^{d}}_i^{(n)}=0\ ,\ n\geq 0\ . \label{CCDilCheCanonical_Cor_n0}
%&&\lambda^{dX}_{2(n+r)+1,0} =  0 =  \prod_{i=1}^{2(n+r)+1} {\xi^{d}}_i^{(2(n+r)+1)}\ ,\ n\geq 0.\label{CCDilCheCanonical_Cor_n5}
\end{eqnarray}
If $r=2r'$, then 
\begin{eqnarray} 
&& \lambda^{dX}_{2n,0} = \frac{(-1)^{n}}{2^{2n}}=\prod_{i=1}^{2n} {\xi^{d}}_i^{(2n)}\ ,\ n\geq 0\ .\label{CCDilCheCanonical_Cor_n41}
\end{eqnarray}
If $r=2r'+1$, then  
\begin{eqnarray}
&&\lambda^{dX}_{2k',0}=\frac{(-1)^{k'}}{2^{2k'}}= \prod_{i=1}^{2k'} {\xi^{d}}_i^{(2k')}
\ ,\ k'=0(1)r'\ ,\label{CCDilCheCanonical_condinic_Cor_101_b} \\
&&\lambda^{dX}_{2k',0}  = \frac{(-1)^{k'}}{2^{2k'}}\lambda_r =\prod_{i=1}^{2k'} {\xi^{d}}_i^{(2k')}\ ,\ k'=r'+1(1)r-1\ ,\label{CCDilCheCanonical_Cor_n12}\\
%\end{eqnarray}
%\begin{eqnarray}
&&\lambda^{dX}_{2(n+r),0}  =  -\frac{(-1)^{n}}{2^{2(n+r)}}\lambda_r=\prod_{i=1}^{2n} {\xi^{d}}_i^{(2n)}\ ,\ n\geq 0\ .\label{CCDilCheCanonical_Cor_n42}
\end{eqnarray}
For $r\geq1$,
\begin{eqnarray}
&& P^{d}_{2n}(\lambda_r;r)(0)\neq0\ ,\ n\geq 0\ . \label{CCDilCheCanonical_Cor_n10}
\end{eqnarray}
%%%%%%%%%%%%%%%%%%%%%%%%%%%%%%%%%%%%%%%%
\end{proposition}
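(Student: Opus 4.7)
The plan is to mirror the proof of Proposition \ref{corollaryCC_Tra_Can_nn1n0}: specialize the explicit formulas of Theorem \ref{CCDilCheCanonical_binomial} at $\nu=0$ and at $\nu=n-1$ to read off $\lambda^{dX}_{n,0}$ and $\lambda^{dX}_{n,n-1}$, and then invoke Vi\`ete's formulas (\ref{CC_Can_Zeros_Pn}) to convert these into the product and the sum of zeros, noting that $P^{d}_n(\lambda_r;r)(0)$ is exactly the constant term $\lambda^{dX}_{n,0}$.

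The first block, (\ref{Dil_P2n2n1_0}) and (\ref{CCDilCheCanonical_Cor_n0}), follows from symmetry: since the dilatation perturbation preserves symmetry, $P^{d}_{2n+1}(\lambda_r;r)$ is odd, hence vanishes at the origin and factors through $x$, so the product of its zeros is zero; and the $(n-1)$-th coefficient of a polynomial whose nonzero coefficients have a single parity is automatically zero. Algebraically this is reflected in the vanishing of $\lambda^{dX}_{2n+1,2\nu}$ and $\lambda^{dX}_{2n,2\nu+1}$ in Theorem \ref{CCDilCheCanonical_binomial}.

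The core computation concerns $\lambda^{dX}_{2n,0}$, which I will handle by partitioning into the three regimes corresponding to the three blocks of Theorem \ref{CCDilCheCanonical_binomial}: the initial triangle $k'=0(1)r'$, where the CC coincides with $C_{2k',0}$; the intermediate triangle $k'=r'+1(1)r-1$, governed by (\ref{CCDilCheCanonical_condinic12_b}); and the generic band $n\geq r$, governed by (\ref{CCDilCheCanonical_eq2_b}). At $\nu=0$ the binomial factor $\binom{\mu+\nu}{\mu-\nu}$ collapses to $\binom{\mu}{\mu}=1$, so after isolating the leading $(-1)^{k'}$ or $(-1)^{n+r}$ Chebyshev term, what remains is an alternating sum $\sum_{\mu}(-1)^{\mu}$ over an arithmetic range, multiplied by $(1-\lambda_r)$. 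The value of this alternating sum depends only on the parity of the number of summands, which in turn is controlled by the parity of $r$: for $r=2r'$ each range has an even number of terms and the sum vanishes, killing the $(1-\lambda_r)$ correction and leaving the pure Chebyshev value $(-1)^n/2^{2n}$ claimed in (\ref{CCDilCheCanonical_Cor_n41}); for $r=2r'+1$ each range has an odd number of terms and the alternating sum collapses to a single sign, which combines with the leading term via identities of the form $1-(1-\lambda_r)=\lambda_r$ or $-1+(1-\lambda_r)=-\lambda_r$ to produce the factors of $\lambda_r$ in (\ref{CCDilCheCanonical_condinic_Cor_101_b})--(\ref{CCDilCheCanonical_Cor_n42}).

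The non-vanishing (\ref{CCDilCheCanonical_Cor_n10}) is then immediate, since $P^{d}_{2n}(\lambda_r;r)(0)=\lambda^{dX}_{2n,0}$ equals either $(-1)^n/2^{2n}$ or $\pm(-1)^n\lambda_r/2^{2n}$, both nonzero under the standing assumption $\lambda_r\neq 0$. The main bookkeeping obstacle will be tracking the signs $(-1)^{k'}$, $(-1)^{r-k'}$, $(-1)^{n+r}$ consistently as the parity of $r$ switches, and verifying that the three index regimes tile the set of even indices $2n\geq 0$ without overlap or gap, so that a single global formula per parity class really does emerge.
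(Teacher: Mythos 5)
Your proposal is correct and follows essentially the same route as the paper: symmetry of $\{P^{d}_{n}(\lambda_r;r)\}_{n\geq0}$ disposes of (\ref{Dil_P2n2n1_0})--(\ref{CCDilCheCanonical_Cor_n0}), and the remaining identities come from evaluating Theorem \ref{CCDilCheCanonical_binomial} at $\nu=0$, where the alternating sums $\sum_\mu(-1)^\mu$ vanish or collapse according to the parity of $r$, exactly as in the translation-case Proposition \ref{corollaryCC_Tra_Can_nn1n0}, with Vi\`ete's formulas (\ref{CC_Can_Zeros_Pn}) and $\lambda_r\neq0$ finishing the argument. No essential difference from the paper's proof, which itself defers to the same parity computation.
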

%%%%%%%%%%%%%%%%%%%%%%%%%
%%%%%%%%%%%%%%%%%%%%%%%%%
\begin{proof} 
The symmetry of the sequence $\{P^{d}_{n}(\lambda_r;r)(x)\}_{n\geq 0}$ implies that (\ref{Dil_P2n2n1_0}) and (\ref{CCDilCheCanonical_Cor_n0}) are verified. 
We calculate $\lambda_{n,0}^{dX}$ and $\lambda_{n,n-1}^{dX}$ using mainly Theorem \ref{CCDilCheCanonical_binomial}. The rest of the proof is similar with the preceding one.
\end{proof}

%%%%%%%%%%%%%%%%%%%%%%%%%%%%%%%%%%%%%%%%%%%%%
%%%%%%%%%%%%%%%%%%%%%%%%%%%%%%%%%%%%%%%%%%%%%
%%%%%%%%%%%%%%%%%%%%%%%%%%%%%%%%%%%%%%%%%%%%%
\subsection[Location of extremal zeros]{Location of extremal zeros of perturbed Chebyshev polynomials}
%%%%%%%%%%%%%%%%%%%%%%%%%%%%%%%%%%%%%%%%%%%%%
%%%%%%%%%%%%%%%%%%%%%%%%%%%%%%%%%%%%%%%%%%%%%
%%%%%%%%%%%%%%%%%%%%%%%%%%%%%%%%%%%%%%%%%%%%%

%From the CR of Propositions \ref{CCTraChe} and \ref{CCDilChe}, we can obtain 
%some information about the location of 
%the smallest and the greatest zeros (the extremal zeros) of perturbed Chebyshev polynomials %of arbitrary order $r$  
%with respect to the extremal zeros of Chebyshev polynomials of second kind with the same degree. Results depend on the signs of $\mu_r$ and $1-\lambda_r$.
%and on the parity of the degrees of polynomials.
%The demonstrations are made from the CR of Propositions \ref{CCTraChe} and \ref{CCDilChe}.

%The reader can recall in \cite{Chihara}, the fundamental results about zeros of orthogonal polynomials that will be crucial inhere.
%, namely about interlacing and monotonicity properties, and the true interval of orthogonality..
%We begin by recalling some fundamental results about zeros of orthogonal polynomials that are crucial inhere.
We notice that for any monic polynomials, we have
\begin{equation}\label{signspolyinf}  
\lim_{x\rightarrow +\infty}P_n(x)=+\infty\ ,\ \lim_{x\rightarrow-\infty}P_{2n}(x)=+\infty\ ,\ 
 \lim_{x\rightarrow-\infty}P_{2n+1}(x)=-\infty.
\end{equation}
From symmetry follows that $P_{2n+1}(0)=0$ and real zeros are symmetric with respect to the origin. Regular orthogonality ensures that two polynomials of consecutive degrees can not have a common zero  \cite{Chihara}. 
%that is, $\nexists\ \xi:P_n(\xi)=P_{n+1}(\xi)=0$, as it is evident from the recurrence relation (\ref{icrecOrto})-(\ref{recOrto}), because in that case, we should have $P_0(\xi)=0$, but $P_0(x)=1$. 
In the positive definite case, all zeros are distinct real numbers and an {\em interlacing property} holds between zeros of $P_n$ and $P_{n+1}$; also there are some {\em monotonicity properties} to consider \cite{Chihara}. 
%In fact, denoting by $\xi_k^{(n)}$, $k=1(1)n$, the zeros of $P_n(x)$ ordered by increasing size, we have $\xi_k^{(n+1)}<\xi_k^{(n)}<\xi_{k+1}^{(n+1)}$, for $k=1(1)n$ and $n\geq 1$. 
%Furthermore, there are some {\it monotonicity properties}
%, for each $k\geq1$, $\{\xi_k^{(n)}\}_{n=k}^{\infty}$ is a decreasing sequence and $\{\xi_{n-k+1}^{(n)}\}_{n=k}^{\infty}$ is an increasing sequence. In particular, the limits 
%$\lim_{n\rightarrow+\infty}\xi_i^{(n)}=\rho_i$ and $\lim_{n\rightarrow+\infty}\xi_{n-j+1}^{(n)}=\eta_j$ all exits, being $[\rho_1,\eta_1]$ the smallest closed interval that contains all zeros of all $P_n(x)$ \cite{Chihara}. 
%If there exist an integral representation of the form with positive weight, its supporting set should be $[\rho_1,\eta_1]$  \cite{Chihara}.  
{\em Semi-classical} character, in particular, {\em classical} character, by means of the structure relation 
$$\Phi(x)P'_{n+1}(x)=\frac{1}{2}\left(C_{n+1}(x)-C_0(x)\right)P_{n+1}(x)+\gamma_{n+1}D_{n+1}P_n(x)\ ,$$
 $D_{n+1}(x)\neq 0$, $n\geq0$ \cite[p.123]{MARO 91}, guarantees that zeros are simple \cite[pp.235-236]{MARO 90}.
 %, because $\nexists\ \xi:P_{n+1}(\xi)=P'_{n+1}(\xi)=0$, because in that case, we should have $P_n(\xi)=0$. 
Chebyshev forms are classical and they admit integral representations
% (\ref{intrepTT12})-(\ref{intrepTT34}) 
 with positive weights in $[-1,1]$ \cite{Mason_2003}, thus they are positive definite and then zeros of Chebyshev polynomials satisfy all cited properties in $[-1,1]$. 
 %Also, it is known that perturbed Chebyshev polynomials are sequences {\it of second degree} consequently they are {\it semi-classical} \cite{MARO 95_1}. 

Next two propositions provide some information about the location of 
the smallest and the greatest zeros (the extremal zeros) of perturbed Chebyshev polynomials %of arbitrary order $r$  
with respect to the extremal zeros of Chebyshev polynomials of second kind with the same degree. Results are obtained from the CR of Propositions \ref{CCTraChe} and \ref{CCDilChe} and depend on the signs of $\mu_r$ and $1-\lambda_r$.
Let us note the zeros of $P^{t}_{k}(\mu_r;r)(x)$ and $P^{d}_{k}(\lambda_r;r)(x)$ by $\{{\xi^{t}}_i^{(k)}\}_{i=1(1)k}$ and $\{{\xi^{d}}_i^{(k)}\}_{i=1(1)k}$, ordered by increasing size.
%%%%%%%%%%%%%%%%%%%%%%%%%
\begin{proposition}\label{Pro_Tra_Zeros}  For the $r$th-perturbed {\it by translation} case $(r\geq 0)$.
%If  $\{\xi_i^{(k)}\}_{i=1(1)k}$ and $\{{\xi^{t}}_i^{(k)}\}_{i=1(1)k}$ are the zeros of $P_k(x)$ and $P^{t}_{k}(\mu_r;r)(x)$ respectively, then 
If $\mu_r\in \mathbb{R}$, for $k\geq r+1$, it holds
%%%%%%%%%%%%%%%%%%%%%%%
\begin{enumerate}
%%%%%%%%%%%%%%%%%%%%%%%
\item  
$sgn\big[P^{t}_{k}(\mu_r;r)(\xi_{k}^{(k)})\big]=-sgn(\mu_r)$. 
%%%%%%%%%%%%%%%%%%%%%%%
\item  
$sgn\big[P^{t}_{k}(\mu_r;r)(\xi_{1}^{(k)})\big]=(-1)^k sgn(\mu_r)$. 
%%%%%%%%%%%%%%%%%%%%%%%
\item
If $\mu_r>0$, then:
\begin{enumerate}
%%%%
\item 
 $\exists\ i$, $1\leq i\leq k$: ${\xi^{t}}_i^{(k)}>\xi_{k}^{(k)}$; 
$P^{t}_{k}(\mu_r;r)({\xi^{t}}_{i}^{(k)})=0$.
%%%%
\item 
The number of zeros of $P^{t}_{k}(\mu_r;r)(x)$ greater than $\xi_{k}^{(k)}$ is odd.
%%%%
\item
$\forall  x\leq \xi_{1}^{(k)}:  (-1)^kP^{t}_{k}(\mu_r;r)(x)>0.$ 
%%%%
\item
There are no zeros of  $P^{t}_{k}(\mu_r;r)(x)$ less than $\xi_{1}^{(k)}$.
\end{enumerate}
%%%%%%%%%%%%%%%%%%%%%%%
\item
If $\mu_r<0$, then:
\begin{enumerate}
%%%%
\item
$\forall  y\geq \xi_{k}^{(k)}:  P^{t}_{k}(\mu_r;r)(y)>0.$ 
%%%%
\item
There are no zeros of  $P^{t}_{k}(\mu_r;r)(x)$ greater than $\xi_{k}^{(k)}$.
%%%%
\item
$\exists\ i$, $1\leq i\leq k$: ${\xi^{t}}_i^{(k)}<\xi_{1}^{(k)}$; 
$P^{t}_{k}(\mu_r;r)({\xi^{t}}_i^{(k)})=0$.
%%%%
\item 
The number of zeros of $P^{t}_{k}(\mu_r;r)(x)$ less than $\xi_{1}^{(k)}$ is odd.
\end{enumerate}
%%%%%%%%%%%%%%%%%%%%%%%
\end{enumerate}
\end{proposition}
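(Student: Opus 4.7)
The plan is to start from the connection relations of Proposition \ref{CCTraChe}, which for $k\ge r+1$ may be written uniformly as
\[
P^{t}_{k}(\mu_r;r)(x)=P_k(x)-\mu_r\,\Sigma_k(x),
\]
where $\Sigma_k(x)=\sum_{i}4^{-i}P_{k-2i-1}(x)$ is a positive linear combination of Chebyshev polynomials whose indices jump by two and all share the parity opposite to that of $k$. Evaluating at any zero of $P_k$ kills the leading term and reduces each sign claim to reading off the sign of $\Sigma_k$ there, since every coefficient $4^{-i}$ is strictly positive.

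For item 1, I would evaluate at the largest Chebyshev zero $\xi_k^{(k)}=\cos(\pi/(k+1))$ using the trigonometric form $P_m(\cos\theta)=\sin((m+1)\theta)/(2^m\sin\theta)$. Each summand of $\Sigma_k(\xi_k^{(k)})$ equals $\sin((k-2i)\pi/(k+1))/(2^{k-2i-1}\sin(\pi/(k+1)))$ with argument lying in $(0,\pi)$, hence strictly positive; therefore $\Sigma_k(\xi_k^{(k)})>0$ and $\operatorname{sgn}[P^{t}_{k}(\mu_r;r)(\xi_k^{(k)})]=-\operatorname{sgn}(\mu_r)$. Item 2 then follows from the symmetry $\xi_1^{(k)}=-\xi_k^{(k)}$ together with the fact that every polynomial in $\Sigma_k$ has parity $(-1)^{k-1}$, so $\Sigma_k(\xi_1^{(k)})=(-1)^{k-1}\Sigma_k(\xi_k^{(k)})$ and $\operatorname{sgn}[P^{t}_{k}(\mu_r;r)(\xi_1^{(k)})]=(-1)^k\operatorname{sgn}(\mu_r)$.

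For items 3(c)--(d) and 4(a)--(b), the key observation is that the sign analysis of $\Sigma_k$ extends to the whole half-lines $(-\infty,\xi_1^{(k)}]$ and $[\xi_k^{(k)},+\infty)$. By the standard interlacing of zeros across degree, $\xi_1^{(m)}>\xi_1^{(k)}$ and $\xi_m^{(m)}<\xi_k^{(k)}$ whenever $m<k$, so each $P_{k-2i-1}$ has constant sign on each of these half-lines: sign $(-1)^{k-1}$ on the left one and sign $+1$ on the right one. Hence $\Sigma_k$ itself has those constant signs there. Combining this with the signs of $P_k$ (namely $(-1)^k$ strictly on $(-\infty,\xi_1^{(k)})$ and $+1$ strictly on $(\xi_k^{(k)},+\infty)$), the two terms in $P_k(x)-\mu_r\Sigma_k(x)$ become strictly concordant in sign on the relevant half-line once $\mu_r$ has the appropriate sign, so $P^{t}_{k}(\mu_r;r)$ cannot vanish there and carries exactly the sign announced.

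Items 3(a)--(b) and 4(c)--(d) will then follow from the computed sign at the extremal Chebyshev zero (items 1 and 2) combined with the asymptotic behavior $P^{t}_{k}(\mu_r;r)(x)\sim x^k$: a sign disagreement between the extremal Chebyshev zero and the relevant infinity forces, via the intermediate value theorem, at least one zero of $P^{t}_{k}(\mu_r;r)$ on the corresponding half-line, and because perturbation by translation preserves positive definiteness (all $k$ zeros of $P^{t}_{k}(\mu_r;r)$ are real and simple), every sign change is exactly one zero, yielding an odd count. I expect the main obstacle to be the careful bookkeeping of the interlacing inequalities underlying the constant-sign property of $\Sigma_k$ on the two half-lines; once isolated as a single lemma, the same lemma propagates uniformly to all four sub-items.
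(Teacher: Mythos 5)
Your proposal is correct and follows essentially the same route as the paper: write $P^{t}_{k}(\mu_r;r)=P_k-\mu_r\Sigma_k$ via the connection relations of Proposition \ref{CCTraChe}, evaluate at the extremal zeros $\xi_1^{(k)},\xi_k^{(k)}$ and on the two half-lines using the fact that all zeros of the lower-degree Chebyshev summands lie in $]\xi_1^{(k)},\xi_k^{(k)}[$, and conclude with the behavior at infinity and the intermediate value theorem. The only differences are cosmetic (you justify the positivity of the summands at $\xi_k^{(k)}$ by the trigonometric closed form and get item 2 by symmetry/parity, whereas the paper reads the signs directly from the location of the zeros), so no gap.
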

%%%%%%%%%%%%%%%%%%%%%%%%%
\begin{proof}
Let us deal with the CR (\ref{CR_Recursion_k}). Similar reasonings can be applied to the CR 
(\ref{CR_Recursion_n2r1}) leading to exactly the same conclusions. We remark that in (\ref{CR_Recursion_k}) the polynomials under sum have degrees smaller than the degree $k$ of $P_k(x)$ and of $P^{t}_{k}(\mu_r;r)(x)$ and have opposite parity with respect to it; moreover all their zeros belong to $]\xi_{1}^{(k)},\xi_{k}^{(k)}[$. 
Next, we will evaluate (\ref{CR_Recursion_k}) at the first, $\xi_{1}^{(k)}$, and the last, $\xi_{k}^{(k)}$, zeros of $P_k(x)$, and at any $x<\xi_{1}^{(k)}$ and any $y>\xi_{k}^{(k)}$, and
%at any $x$ such that $x\leq \xi_{1}^{(k)}$, and at any $y$ such that $y\geq \xi_{k}^{(k)}$ 
we note the signs of polynomials at those points according with (\ref{signspolyinf}). 
%In the sequel, we will note $P^{t}_{k}(\mu_r;r)(x)$ by $P^{t}_{k}(x)$.
\begin{eqnarray}
P^{t}_{k}(\mu_r;r)(\xi_{k}^{(k)}) & = & \underbrace{P_k(\xi_{k}^{(k)})}_{=\ 0}-\mu_r\sum_{i=0}^{k-r-1}\frac{1}{4^i}\underbrace{P_{k-2i-1}(\xi_{k}^{(k)})}_{>\ 0}\ , \label{zeros_tra_kk}\\
P^{t}_{k}(\mu_r;r)(y) & = & \underbrace{P_k(y)}_{>\ 0}-\mu_r\sum_{i=0}^{k-r-1}\frac{1}{4^i}\underbrace{P_{k-2i-1}(y)}_{>\ 0}\ , \ \forall y > \xi_{k}^{(k)}\ , \label{zeros_tra_y}\\
%\end{eqnarray}
%\begin{eqnarray}
P^{t}_{2k}(\mu_r;r)(\xi_{1}^{(2k)}) & = & \underbrace{P_{2k}(\xi_{1}^{(2k)})}_{=\ 0}-\mu_r\sum_{i=0}^{2k-r-1}\frac{1}{4^i}\underbrace{P_{2k-2i-1}(\xi_{1}^{(2k)})}_{<\  0}\ , \label{zeros_tra_2k1}\\
P^{t}_{2k+1}(\mu_r;r)(\xi_{1}^{(2k+1)}) & = & \underbrace{P_{2k+1}(\xi_{1}^{(2k+1)})}_{=\ 0}-\mu_r\sum_{i=0}^{2k-r}\frac{1}{4^i}\underbrace{P_{2k-2i}(\xi_{1}^{(2k+1)})}_{>\  0}\ . \label{zeros_tra_2k11}\\
%\end{eqnarray}
%\begin{eqnarray}
P^{t}_{2k}(\mu_r;r)(x) & = & \underbrace{P_{2k}(x)}_{>\ 0}-\mu_r\sum_{i=0}^{2k-r-1}\frac{1}{4^i}\underbrace{P_{2k-2i-1}(x)}_{<\  0},  \ \forall x < \xi_{1}^{(2k)},\label{zeros_tra_2kx}\\
P^{t}_{2k+1}(\mu_r;r)(x) & = & \underbrace{P_{2k+1}(x)}_{<\ 0}-\mu_r\sum_{i=0}^{2k-r}\frac{1}{4^i}\underbrace{P_{2k-2i}(x)}_{>\  0}, \ \forall x < \xi_{1}^{(2k+1)}.\label{zeros_tra_2k1x}
\end{eqnarray}
From the above considerations, we can easily deduce the following conclusions.
{\it Item 1} follows from (\ref{zeros_tra_kk}), and {\it item 2} follows from (\ref{zeros_tra_2k1})-(\ref{zeros_tra_2k11}).
%If $\mu_r>0$, then (\ref{zeros_tra_kk}) $\Longrightarrow P^{t}_k(\mu_r;r)(\xi^{(k)}_k)<0 \Longrightarrow$ {\it 3.(a)} $\Longrightarrow$ the number of real zeros of $P^{t}_k(\mu_r;r)(x)$ on the right of $\xi^{(k)}_k$ must be odd by (\ref{signspolyinf}). To prove the unicity, we derive both members of (\ref{CR_Recursion_k})
%If $\mu_r>0$, then (\ref{zeros_tra_2k1})-(\ref{zeros_tra_2k1x}) $\Longrightarrow$ {\it 3.(c)}  $\Longrightarrow$ {\it 3.(d)}.
If $\mu_r>0$, then:  (\ref{zeros_tra_kk}) $\Longrightarrow P^{t}_k(\mu_r;r)(\xi^{(k)}_k)<0 \Longrightarrow$ {\it 3.(a)} $\Longrightarrow$  {\it 3.(b)}, by (\ref{signspolyinf});  (\ref{zeros_tra_2k1})-(\ref{zeros_tra_2k1x}) $\Longrightarrow$ {\it 3.(c)}  $\Longrightarrow$ {\it 3.(d)}.
If $\mu_r<0$, then: 
 (\ref{zeros_tra_kk}) $\Longrightarrow$ $P^{t}_k(\mu_r;r)(\xi^{(k)}_k)>0$ with
 (\ref{zeros_tra_y}) $\Longrightarrow$  {\it 4.(a)} $\Longrightarrow$  {\it 4.(b)}; 
 (\ref{zeros_tra_2k1}) $\Longrightarrow$ $P^{t}_{2k}(\mu_r;r)(\xi^{(2k)}_1)<0$ and 
(\ref{zeros_tra_2k11}) $\Longrightarrow$ $P^{t}_{2k+1}(\mu_r;r)(\xi^{(2k+1)}_1)>0$, then, by 
(\ref{signspolyinf}), we have {\it 4.(c)};  {\it 4.(c)} $\Longrightarrow$ {\it 4.(d)}, again by
(\ref{signspolyinf}). 
%%%%%%%%%%%%%%%%%%%%%%%%%
\end{proof}
%%%%%%%%%%%%%%%%%%%%%%%%%
%%%%%%%%%%%%%%%%%%%%%%%%%
\begin{proposition}\label{Pro_Dil_Zeros} For the $r$th-perturbed {\it by dilatation} case $(r\geq 1)$.
%%%%%%%%%%%%%%%%%%%%%%%%%
%If  $\{\xi_i^{(k)}\}_{i=1(1)k}$ and $\{{\xi^{d}}_i^{(k)}\}_{i=1(1)k}$ are the zeros of $P_k(x)$ and $P^{d}_{k}(\lambda_r;r)(x)$, $k\geq r+1$, respectively, then for $k\geq r+1$:
If $\lambda_r\in\mathbb{R}$, for $k\geq r+1$, it holds
%%%%%%%%%%%%%%%%%%%%%%%%%
\begin{enumerate}
%%%%%%%%%%%%%%%%%%
\item
$sgn\big[P^{d}_{k}(\lambda_r;r)(\xi_{k}^{(k)})\big]=sgn(1-\lambda_r)$.
%%%%%%%%%%%%%%%%%%
\item
$sgn\big[P^{d}_{k}(\lambda_r;r)(\xi_{1}^{(k)})\big]=(-1)^k sgn(1-\lambda_r)$.
%%%%%%%%%%%%%%%%%%
\item
If $\lambda_r<1$, then:
%%%%%%%%%%
\begin{enumerate}
%%%%%%%%%%
\item
$\forall y\geq \xi_{k}^{(k)}:\ P^{d}_{k}(\lambda_r;r)(y)>0$.
%%%%%%%%%%
\item
There are no real zeros of $P^{d}_{k}(\lambda_r;r)(x)$ greater than $\xi_{k}^{(k)}$.
%%%%%%%%%%
\item
$\forall x\leq \xi_{1}^{(k)}:\ (-1)^kP^{d}_{k}(\lambda_r;r)(x)>0$.
%%%%%%%%%%
\item
There are no real zeros of $P^{d}_{k}(\lambda_r;r)(x)$ less than  $\xi_{1}^{(k)}$.
\item
All real zeros of $P^{d}_{k}(\lambda_r;r)(x)$ are in $]\xi_{1}^{(k)},\xi_{k}^{(k)}[$.
%%%%%%%%%%
\end{enumerate}
%%%%%%%%%%
\item
If $\lambda_r>1$, then:
%%%%%%%%%%
\begin{enumerate}
%%%%%%%%%%
\item
$\exists\ i$, $1\leq i\leq k$ : ${\xi^{d}}_i^{(k)}>\xi_{k}^{(k)}$;
$P^{d}_{k}(\lambda_r;r)({\xi^{d}}_{i}^{(k)})=0$ and 
$P^{d}_{k}(\lambda_r;r)(-{\xi^{d}}_{i}^{(k)})=0$, 
$-{\xi^{d}}_i^{(k)}<-\xi_{k}^{(k)}=\xi_{1}^{(k)}$. 
%There is at least one zero of $P^{d}_{k}(\lambda_r;r)(x)$ on the left of the first zero of $P_k(x)$ and on the right of the last zero of $P_k(x)$.

%%%%%%%%%%
\item The numbers of real zeros of $P^{d}_{k}(\lambda_r;r)(x)$ less than $\xi_{1}^{(k)}$ and greater than $\xi_{k}^{(k)}$ are odd (they are necessarily equal).
%%%%%%%%%%
\end{enumerate}
%%%%%%%%%%
\end{enumerate}
%%%%%%%%%%
\end{proposition}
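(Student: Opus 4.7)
The plan is to mimic the proof of Proposition \ref{Pro_Tra_Zeros} by substituting the dilatation CR of Proposition \ref{CCDilChe}. Both relations (\ref{CR_Dilatation_k}) and (\ref{CR_Dilatation_n2r}) share the uniform structure
$$P^d_k(\lambda_r;r)(x) = P_k(x) + \tfrac{1-\lambda_r}{4}\sum_{i=1}^{N}\tfrac{1}{4^{i-1}}P_{k-2i}(x),$$
with $N = k-r$ for $r+1 \le k \le 2r-1$ and $N = r$ for $k \ge 2r$. The decisive simplification relative to the translation case is that every $P_{k-2i}$ appearing in the sum has the same parity as $P_k$, and $P^d_k(\lambda_r;r)$ is itself symmetric (dilatation preserves symmetry, as recalled after (\ref{rcoeffdil})).

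For items 1 and 2, I evaluate the CR at the extremal Chebyshev zeros. Since the largest zero of $P_n$, equal to $\cos(\pi/(n+1))$ by (\ref{zerosTnPn}), is strictly increasing in $n$, the value $\xi_k^{(k)}$ strictly exceeds the largest zero of every $P_{k-2i}$ with $i\ge 1$, so that $P_{k-2i}(\xi_k^{(k)})>0$ (a monic polynomial is positive beyond its largest real zero). Combined with $P_k(\xi_k^{(k)})=0$ and the positive weights $1/4^{i-1}$, this gives item 1. For item 2, evaluate at $\xi_1^{(k)}=-\xi_k^{(k)}$: a monic polynomial of degree $k-2i$, of the same parity as $k$, evaluated below all its zeros, has sign $(-1)^{k-2i}=(-1)^k$, so the whole sum carries sign $(-1)^k\operatorname{sgn}(1-\lambda_r)$.

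For item 3 ($\lambda_r<1$), the same sign analysis extended to any $y\ge \xi_k^{(k)}$ yields $P_k(y)\ge 0$ and $P_{k-2i}(y)>0$, whence $P^d_k(\lambda_r;r)(y)>0$, giving 3(a) and 3(b); the symmetric estimate for $x\le \xi_1^{(k)}$ gives 3(c) and 3(d), and 3(e) follows from 3(b)--3(d) together with the symmetry of $P^d_k(\lambda_r;r)$. For item 4 ($\lambda_r>1$), item 1 gives $P^d_k(\lambda_r;r)(\xi_k^{(k)})<0$ while the monic leading term forces $P^d_k(\lambda_r;r)(y)\to+\infty$ as $y\to+\infty$; the intermediate value theorem produces a real zero in $(\xi_k^{(k)},+\infty)$, and symmetry transports it to $(-\infty,\xi_1^{(k)})$, establishing 4(a). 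For 4(b), the number of sign changes on each ray is odd and real zeros are simple (semi-classical character of perturbed Chebyshev sequences, invoked at the start of Subsection 4.3), so the number of real zeros on each ray is odd, and symmetry equalizes the two counts.

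The main obstacle is purely notational: keeping track of the two separate CR (\ref{CR_Dilatation_k}) and (\ref{CR_Dilatation_n2r}) and checking that the analysis applies uniformly. Since both have identical structural form --- a positive-weight linear combination of same-parity $P_{k-2i}$ multiplied by $(1-\lambda_r)/4$ --- the argument proceeds without modification in both ranges of $k$, and no further case analysis is needed.
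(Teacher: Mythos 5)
Your proof is correct and takes essentially the same route as the paper: it evaluates the dilatation connection relations (\ref{CR_Dilatation_k}) and (\ref{CR_Dilatation_n2r}) at the extremal zeros $\xi_1^{(k)}$, $\xi_k^{(k)}$ and on the outer rays, using that the same-parity summands $P_{k-2i}$ have all their zeros strictly inside $]\xi_{1}^{(k)},\xi_{k}^{(k)}[$ and that the perturbed sequence is symmetric, which is exactly the paper's adaptation of the translation-case sign argument (Proposition \ref{Pro_Tra_Zeros}) together with symmetry for items 3 and 4.
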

%%%%%%%%%%%%%%%%%%%%%%%%%
%%%%%%%%%%%%%%%%%%%%%%%%%
\begin{proof} 
As perturbed polynomials $P^{d}_{k}(\lambda_r;r)(x)$ are symmetric, their zeros are symmetric with respect to the origin. This fact is important in {\it item 4} and allows to obtain {\it 3.(d)} from {\it 3.(b)}.
We deal with the RC (\ref{CR_Dilatation_k}), the same can be done with (\ref{CR_Dilatation_n2r}). We remark that in (\ref{CR_Dilatation_k})
the polynomials under sum have degrees smaller than the degree $k$ of $P_k(x)$ 
%and $P^{d}_{k}(\lambda_r;r)(x)$
and have the same parity of it; moreover all their zeros belong to $]\xi_{1}^{(k)},\xi_{k}^{(k)}[$. 
We will evaluate (\ref{CR_Dilatation_k}) at the first and the last zeros of $P_k(x)$, and at certain $x$ and $y$, and we follow the same method of the proof of preceding proposition. 
\end{proof}
%%%%%%%%%%%%%%%%%%%%%%%%%%%%%%%%%%%
About similar properties of extremal zeros for perturbed orthogonal polynomials with a different type of perturbation of the second recurrence coefficient $\tilde\gamma_{r}=\gamma_r+\lambda_r$ see \cite{Leopold_1998}.

%%%%%%%%%%%%%%%%%%%%%%%%%%%%%%%%%%%%%%%%%%%%%%
%%%%%%%%%%%%%%%%%%%%%%%%%%%%%%%%%%%%%%%%%%%%%%
%interception points
%%%%%%%%%%%%%%%%%%%%%%%%%%%%%%%%%%%%%%%%%%%%%%
%%%%%%%%%%%%%%%%%%%%%%%%%

%%%%%%%%%%%%%%%%%%%%%%%%%%%%%%%%%%%%%%%%%%%%%
\subsection[Zeros and interception points]{Zeros and interception points of perturbed Chebyshev polynomials}
%%%%%%%%%%%%%%%%%%%%%%%%%%%%%%%%%%%%%%%%%%%%%

We point out some results about zeros and interception points of perturbed polynomials with different parameters of perturbation and same degree; for that we need to use the explicit formulas of zeros of Chebyshev polynomials of second kind (\ref{zerosTnPn}).

\begin{proposition}\label{Pro_Tra_Ipt} For the $r$th-perturbed {\it by translation} case $(r\geq 0)$.

\noindent 
\begin{enumerate}
%%%%%%%%%%%
\item It holds,
\begin{eqnarray}
&& P^{t}_{n}(\mu_r;r)\equiv P_n\  ,\ 0\leq n\leq r\ ,\notag\\
&& P^{t}_{n}(\mu_r;r)(x)=P_n(x)-\mu_rP_r(x)P_{n-r-1}(x)\ ,\ n\geq r+1\ .\label{rrPtnPn}
\end{eqnarray}
%%%%%%%%%%%
\item
For $n\geq r+1$, the polynomials
\begin{equation}\label{int_points_tra}
P^{t}_{n}(\mu_r;r)\ , \  P^{t}_{n}(\mu'_r;r)\ ,\  P_n\ ,\  \mu_r\neq \mu'_r,\ \mu_r\neq 0,\ \mu_r'\neq 0\ ,
\end{equation} 
%  $P^{t}_{n}(\mu_r;r)(x)$, $P^{t}_{n}(\mu'_r;r)(x)$ and $P_n(x)$
intersect each other at the zeros of $P_r$ and at the zeros of $P_{n-r-1}$. 
%%%%%%%%%%%
\item 
$\xi$ is a double interception point of (\ref{int_points_tra}) if and only if $\xi$ is a common zero of $P_r$ and $P_{n-r-1}$.
%%%%%%%%%%%
%\item   
%If $\xi$ is a double interception point of (\ref{int_points_tra}), then $\xi$ is a zero of  $P_r(x)$.
%%%%%%%%%%%
\item
For $n=i(r+1)+r$, $i\geq 1$, $r\geq 1$; all zeros of $P_r$ are double interception points of (\ref{int_points_tra}) - in fact they are double common zeros (see item 8) - and the number of distinct interception points is $n-r-1$. 
%%%%%%%%%%%
\item
If $r$ is even, then: if $n$ is even, then the origin is a simple interception point of (\ref{int_points_tra}); if $n$ is odd, then the origin is not an interception point of (\ref{int_points_tra}).

If $r$ is odd, then: if $n$ is even, then the origin is a simple interception point of (\ref{int_points_tra}); if $n$ is odd, then the origin is a double interception point of (\ref{int_points_tra}).
%%%%%%%%%%%
\item
$\xi$ is a common zero of (\ref{int_points_tra}) if and only if 
$\xi$ is a common zero of $P_n$ and $P_{r}$ or 
$\xi$ is a common zero of $P_n$ and $P_{n-r-1}$.
%%%%%%%%%%%
\item 
$\xi$ is a double common zero of (\ref{int_points_tra}) if and only if $\xi$ is a common zero of 
$P_{r}$, $P_{n}$ and $P_{n-r-1}$.
%%%%%%%%%%%
\item 
For $n=i(r+1)+r$, $i\geq 1$, $r\geq 1$, all zeros of $P_r$ are double common zeros of (\ref{int_points_tra}).
%%%%%%%%%%%
\item
If $n$ and $r$ are odd, then the origin is a double common zero of (\ref{int_points_tra}). If $n$ is odd and $r$ is even, or if $n$ is even, then the origin is not a common zero of (\ref{int_points_tra}).
\end{enumerate}
\end{proposition}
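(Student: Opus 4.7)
The plan is to first establish item 1 as the master identity $P^{t}_n(\mu_r;r) = P_n - \mu_r P_r P_{n-r-1}$, after which items 2--9 reduce to elementary facts about zeros of $P_r$, $P_{n-r-1}$ and $P_n$ via the explicit formulas $\xi^{(k)}_j = \cos(j\pi/(k+1))$ from (\ref{zerosTnPn}). For item 1 I would argue by induction on $n$. The cases $0 \le n \le r$ are exactly (\ref{CR_Recursion_inic_cond_r_Tra}). For $n = r+1$, the perturbed recurrence gives $P^t_{r+1} = (x-\mu_r)P_r - \tfrac14 P_{r-1} = P_{r+1} - \mu_r P_r$, which matches the formula since $P_0 \equiv 1$. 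For $n \ge r+2$ the recurrence coefficients of $\{P^t_k\}$ coincide with those of $\{P_k\}$ (all $\beta_k = 0$ and all $\gamma_k = 1/4$), so substituting the inductive hypothesis into $P^t_n = xP^t_{n-1} - \tfrac14 P^t_{n-2}$ telescopes into
\[
P^t_n \;=\; \bigl(xP_{n-1} - \tfrac14 P_{n-2}\bigr) - \mu_r P_r \bigl(xP_{n-r-2} - \tfrac14 P_{n-r-3}\bigr) \;=\; P_n - \mu_r P_r P_{n-r-1}.
\]
As an alternative one could combine Proposition \ref{CCTraChe} with the Chebyshev product identity $P_r P_m = \sum_{i=0}^{\min(r,m)} 4^{-i} P_{r+m-2i}$, itself a consequence of $2\sin((r+1)t)\sin((m+1)t) = \cos((r-m)t) - \cos((r+m+2)t)$.

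For items 2, 3, 6 and 7 I would take pairwise differences, which by item 1 collapse to
\[
P^t_n(\mu_r;r) - P^t_n(\mu'_r;r) = (\mu'_r - \mu_r)\,P_r P_{n-r-1}, \qquad P^t_n(\mu_r;r) - P_n = -\mu_r P_r P_{n-r-1}.
\]
Hence every pair from (\ref{int_points_tra}) meets precisely on the zero set of $P_r P_{n-r-1}$, which is item 2. Because $P_r$ and $P_{n-r-1}$ are classical and have only simple zeros, a zero of their product is double iff it is shared by the two factors, giving item 3. Items 6 and 7 then follow on noting that a common zero of the three polynomials must be a zero both of $P_n$ and of $P_r P_{n-r-1}$, combined with item 3 for the ``double'' refinement.

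Items 4, 5, 8 and 9 become combinatorial consequences of the explicit zeros together with the fact that $P_k(0)=0$ iff $k$ is odd. When $n = i(r+1)+r$ the identities $n - r = i(r+1)$ and $n+1 = (i+1)(r+1)$ show that each zero $\cos(j\pi/(r+1))$ of $P_r$ reappears as the zero $\cos(ji\pi/(i(r+1)))$ of $P_{n-r-1}$ and as the zero $\cos(j(i+1)\pi/((i+1)(r+1)))$ of $P_n$, which proves item 8; item 4 then follows by item 7, and counting distinct zeros of $P_r P_{n-r-1}$ reduces to the $n-r-1$ zeros of $P_{n-r-1}$ (all $r$ zeros of $P_r$ are among them). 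Items 5 and 9 are settled by parity analysis of $r$, $n-r-1$ and $n$. The only real obstacle is item 1; once that factorization is established, the rest follows by direct inspection of zeros.
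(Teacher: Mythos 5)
Your argument is correct, and items 2--9 follow essentially the same route as the paper: pairwise differences collapsing to constant multiples of $P_rP_{n-r-1}$, simplicity of Chebyshev zeros to characterize double interception points, the explicit zeros (\ref{zerosTnPn}) for the cases $n=i(r+1)+r$, and parity of $r$, $n$, $n-r-1$ at the origin. The genuine difference is item 1: the paper does not prove the factorization $P^{t}_{n}(\mu_r;r)=P_n-\mu_rP_rP_{n-r-1}$ from scratch, but quotes the general identity $P^{t}_{n}(\mu_r;r)=P_n-\mu_rP_rP^{(r+1)}_{n-r-1}$ from Marcell\'an--Dehesa--Ronveaux and then uses the self-associated character of the second-kind sequence ($P^{(r+1)}_{n-r-1}\equiv P_{n-r-1}$), whereas you prove it directly by induction on the three-term recurrence, using that $\beta^{t}_{k}=0$, $\gamma^{t}_{k}=\tfrac14$ for $k\neq r$. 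Your induction is sound provided you invoke the paper's convention $P_{m}\equiv 0$ for $m<0$ (needed at the base $n=r+1$ when $r=0$ and at the step $n=r+2$, where $P_{n-r-3}=P_{-1}$ appears); with that convention the telescoping is exactly right. Your alternative derivation of item 1 from Proposition \ref{CCTraChe} together with the linearization formula $P_rP_m=\sum_{i=0}^{\min(r,m)}4^{-i}P_{r+m-2i}$ is also valid, and is in fact the reverse of the observation the paper makes in its final Remark, where (\ref{CR_Recursion_n2r1}) and (\ref{rrPtnPn}) are compared to extract that formula. What your route buys is self-containedness (no appeal to the cited perturbation identity or to associated sequences); what the paper's buys is brevity and the explicit link to the general co-recursive theory. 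Your treatment of items 5 and 9 is only sketched as ``parity analysis,'' but the case check is immediate from $P_k(0)=0$ iff $k$ odd together with your items 3, 6 and 7, which is exactly the level of detail the paper itself gives.
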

%%%%%%%%%%%%%%%%%%%%%%%%%
\begin{proof}
\begin{enumerate}
%%%%%%%%%%%
\item
From $P^{t}_{n}(\mu_r;r)(x)=P_n(x)-\mu_rP_r(x)P_{n-r-1}^{(r+1)}(x)$ \cite[p.205]{Marcellan_1990} and (\ref{rcoefTT2}), we obtain immediately the relation (\ref{rrPtnPn}), taking into account that $\{P_n\}_{n\geq0}$ is self-associated, thus $P_{n-r-1}^{(r+1)}\equiv P_{n-r-1}$.
%%%%%%%%%%%
\item
From (\ref{rrPtnPn}), we consider
%\begin{eqnarray}
%A^{t}(x) & = & P^{t}_{n}(\mu_r;r)(x)-P_n(x)\ ,\notag\\
%B^{t}(x) & = & P^{t}_{n}(\mu'_r;r)(x)-P_n(x)\ ,\notag\\
%C^{t}(x) & = & P^{t}_{n}(\mu_r;r)(x)-P^{t}_{n}(\mu'_r;r)(x)\ . \notag
%\end{eqnarray}
$A^{t}(x)=P^{t}_{n}(\mu_r;r)(x)-P_n(x)$,  $B^{t}(x)=P^{t}_{n}(\mu'_r;r)(x)-P_n(x)$ and $C^{t}(x)=P^{t}_{n}(\mu_r;r)(x)-P^{t}_{n}(\mu'_r;r)(x)$. $\exists\ \xi$:
$A^{t}(\xi)=0 \wedge B^{t}(\xi)=0\wedge C^{t}(\xi)=0 \Leftrightarrow P_r(\xi)P_{n-r-1}(\xi)=0 \Leftrightarrow 
P_r(\xi)=0\vee P_{n-r-1}(\xi)=0$.
% and write (\ref{rrPtnPn}) for $\mu_r$ and $\mu'_r$ and subtract both obtained relations. 
%Do the same for (\ref{rrPdnPn}) and $\lambda_r$ and $\lambda'_r$. 
%%%%%%%%%%%
\item $\xi$ is a double interception point of (\ref{int_points_tra}) if and only if 
$A^{t}(\xi)=0 \wedge (A^{t})'(\xi)=0$, $B^{t}(\xi)=0 \wedge (B^{t})'(\xi)=0$ and $C^{t}(\xi)=0 \wedge (C^{t})'(\xi)=0$. These conditions are equivalent to 
\begin{eqnarray}
P_r(\xi)=0 & \vee & P_{n-r-1}(\xi)=0 \label{traeq1}\\
P'_r(\xi)P_{n-r-1}(\xi) & = & -P_r(\xi)P'_{n-r-1}(\xi) \label{traeq2}.
\end{eqnarray}
If $\xi$ is not be a zero of $P_r$, $P_r(\xi)\neq0$, then by (\ref{traeq1}), we must have $P_{n-r-1}(\xi)=0$;  in that case we know that $P'_{n-r-1}(\xi)\neq0$, because zeros of Chebyshev polynomials are simple. Then,  (\ref{traeq2}) would be equivalent to a contradiction. The same reasoning can be applied if if we suppose that $\xi$ is not be a zero of $P_{n-r-1}$. In conclusion, we must have $P_r(\xi)=0  \wedge P_{n-r-1}(\xi)=0$. 
%%%%%%%%%%%
\item All zeros of $P_{r}(x)$ are also zeros of $P_{n-r-1}(x)$ if and only if $\ \exists i\geq 1$: $n=i(r+1)+r$, $r\geq 1$. In fact, from (\ref{zerosTnPn}), the zeros of $P_r(x)$ and of $P_{n-r-1}(x)$ are respectively 
$\xi_{k}^{(r)}=\cos(k\frac{\pi}{r+1})$, $k=1(1)r$ and $\xi_{k}^{(n-r-1)}=\cos(k\frac{\pi}{n-r})$, $k=1(1)n-r$, for $n\geq r+1$. Thus, it must $\exists i\geq 1$: $\frac{\pi}{r+1}=i \frac{\pi}{n-r}\Leftrightarrow n-r=i(r+1)$. The number of distinct interception points is equal to the number of distinct zeros of $P_r(x)P_{n-r-1}(x)$, e.g., $\deg P_r(x)+\deg P_{n-r-1}(x) -r=n-r-1$. Remark that, it must be $n-r-1\geq r$. 
%%%%%%%%%%%
\item
Writing (\ref{rrPtnPn}) for $r\rightarrow 2r'$, $n\rightarrow 2n'$, we obtain 
$P^{t}_{2n'}(\mu_{2r'};2r')(x)-P_{2n'}(x)=-\mu_{2r'}P_{2r'}(x)P_{2n'-2r'-1}(x)$. Now, from symmetry, we know that $P_{2r'}(x)$ is even, so the origin is not a zero, and $P_{2n'-2r'-1}(x)$ is odd, so the origin is a zero and it is simple, then we get the conclusion. The other cases are similar.
%%%%%%%%%%%
\item Follows from (\ref{rrPtnPn}).
%%%%%%%%%%%
\item 
A double common zero is a special double interception point. Then the result follows from items 3 and 6.
%From item 3 and (\ref{rrPtnPn}), we have $P_n(\xi)=0$.
%%%%%%%%%%%
\item We would like to apply item 7 concerning all zeros of $P_{r}$. All zeros of $P_{r}$ are also zeros of $P_{n}$ if and only if $\ \exists j\geq 2$: $n=j(r+1)-1$, $r\geq 1$ if and only if all zeros of $P_{r}$ are also zeros of $P_{n-r-1}$, because $n=j(r+1)-1=(j-1)(r+1)+r$ and we can take $i=j-1$ in such a way item 4 is also verified. %Third part follows from (\ref{rrPtnPn}).
%%%%%%%%%%%
%\item Start by applying the same reasoning of item 3. 
%%%%%%%%%%%
%\item Follows from the simultaneous verification of items 3 and 8.
%%%%%%%%%%%
\item
From symmetry, the origin is a zero of $P_m(x)$ if and only if $m$ is odd. The parity of $P_{n-r-1}$ is determined by the parity of $n$ and $r$. If $n$ and $r$ are odd, then $n-r-1$ is odd, and the origin is a common zero of $P_r$, $P_n$ and $P_{n-r-1}$;
then we get the first part of the result from item 7.  If $n$ is odd and $r$ is even, then $n-r-1$ is even and, from item 6, we get the conclusion. If $n$ is even, we apply again item 6.
\end{enumerate} 
\end{proof}
%%%%%%%%%%%%%%%%%%%%%%%%%
%%%%%%%%%%%%%%%%%%%%%%%%%
\begin{proposition}\label{Pro_Dil_Ipt} For the $r$th-perturbed {\it by dilatation} case $(r\geq 1)$.

\noindent 
\begin{enumerate}
%%%%%%%%%%%
\item It holds,
\begin{eqnarray}
&& P^{d}_{n}(\lambda_r;r)\equiv P_n\ ,\ 0\leq n\leq r\ ,\notag\\
&& P^{d}_{n}(\lambda_r;r)(x)=P_n(x)+\frac{1-\lambda_r}{4}P_{r-1}(x)P_{n-r-1}(x)\ ,\ n\geq r+1\ . \label{rrPdnPn}
\end{eqnarray}
%%%%%%%%%%%
\item
For $n\geq r+1$, the polynomials
\begin{equation}\label{int_points_dil}
P^{d}_{n}(\lambda_r;r)\ , \ P^{d}_{n}(\lambda'_r;r)\ ,\  P_n\ ,\ \lambda_r\neq \lambda'_r,\ 
\lambda_r\neq 1,\ \lambda'_r\neq 1\ ,
\end{equation} 
% $P^{d}_{n}(\lambda_r;r)(x)$, $P^{d}_{n}(\lambda'_r;r)(x)$ and $P_n(x)$ 
intersect each other at the zeros of $P_{r-1}$ and at the zeros of $P_{n-r-1}$. 
%%%%%%%%%%%
\item 
$\xi$ is a double interception point of (\ref{int_points_dil}) if and only if $\xi$ is a common zero of $P_{r-1}$ and $P_{n-r-1}$.
%%%%%%%%%%%
\item For $n=r(i+1)$, $i\geq 1$, $r\geq 1$; all zeros of $P_{r-1}$ are double interception points of (\ref{int_points_dil}) and the number of distinct interception points is $n-r-1$.
%%%%%%%%%%%
\item
If $r$ is even, then: if $n$ is even, then the origin is a double interception point of (\ref{int_points_dil}); if $n$ is odd, then the origin is a simple interception point of (\ref{int_points_dil}).

If $r$ is odd, then: if $n$ is even, then the origin is is not an interception point of (\ref{int_points_dil}); if $n$ is odd, then the origin is a simple interception point of (\ref{int_points_dil}).
%%%%%%%%%%%
\item
$\xi$ is a common zero of (\ref{int_points_dil}) if and only if 
$\xi$ is a common zero of $P_n$ and $P_{r-1}$ or 
$\xi$ is a common zero of $P_n$ and $P_{n-r-1}$.
%%%%%%%%%%%
\item 
For $n=jr-1$,  $j\geq 2$, $r\geq 1$, all zeros of $P_{r-1}$ are common zeros of (\ref{int_points_dil}).
%%%%%%%%%%%
\item 
$\xi$ is a double common zero of (\ref{int_points_dil}) if and only if $\xi$ is a common zero of 
$P_{r-1}$, $P_{n}$ and $P_{n-r-1}$.
%%%%%%%%%%%
\item 
All zeros of $P_{r-1}$ can not be simultaneously double common zeros of (\ref{int_points_dil}).
%\item 
%All zeros of $P_{r-1}(x)$ and all zeros of $P_{n-r-1}(x)$ are double common zeros of (\ref{int_points_dil}) if and only if $\exists\ i\geq2,\ j\geq2:\ J=\frac{1+rj}{j-1}\in {\mathbb N}$, $I=\frac{1}{r}\left(J+1\right)\in {\mathbb N}$ and $n=I$, $r\geq 1$.
%%%%%%%%%%%
\item
If $n$ is odd, then the origin is a simple common zero of (\ref{int_points_dil}). If $n$ is even, then the origin is not a common zero of (\ref{int_points_dil}).
\end{enumerate}
\end{proposition}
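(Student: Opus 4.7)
The plan is to mirror the structure of the proof of Proposition \ref{Pro_Tra_Ipt}, adapting each step to the dilatation setting. First I would establish item 1 by invoking the general perturbation formula of \cite{Marcellan_1990} for a modification of the recurrence coefficient $\gamma_{r}\mapsto\lambda_r\gamma_r$, namely $P^{d}_{n}(\lambda_r;r)(x)=P_n(x)+(1-\lambda_r)\gamma_{r}P_{r-1}(x)P^{(r+1)}_{n-r-1}(x)$; substituting $\gamma_r=1/4$ from (\ref{rcoefTT2}) and using the self-associated property $P^{(r+1)}_{n-r-1}\equiv P_{n-r-1}$ recorded after (\ref{TnVnWn}) yields (\ref{rrPdnPn}). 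The first $r$ relations follow because the perturbation does not touch coefficients of order below $r$.

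For items 2 and 3, I would introduce the three differences $A^{d}:=P^{d}_{n}(\lambda_r;r)-P_n$, $B^{d}:=P^{d}_{n}(\lambda'_r;r)-P_n$ and $C^{d}:=P^{d}_{n}(\lambda_r;r)-P^{d}_{n}(\lambda'_r;r)$; by (\ref{rrPdnPn}) and the hypotheses $\lambda_r\neq\lambda'_r$, $\lambda_r,\lambda'_r\neq1$, each of them is a nonzero scalar multiple of $P_{r-1}(x)P_{n-r-1}(x)$, so the simultaneous vanishing condition at $\xi$ reduces to $P_{r-1}(\xi)P_{n-r-1}(\xi)=0$, proving item 2. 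For doubleness, the argument of the translation proof applies verbatim: if only one factor vanishes at $\xi$, then $(P_{r-1}P_{n-r-1})'(\xi)$ is a single nonzero term because Chebyshev zeros are simple, contradicting $(A^{d})'(\xi)=0$; hence both factors vanish at $\xi$.

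For items 4--7, I would exploit (\ref{zerosTnPn}): the zeros of $P_{r-1}$ are $\cos(k\pi/r)$, $k=1(1)r-1$, and those of $P_{n-r-1}$ are $\cos(k\pi/(n-r))$, $k=1(1)n-r-1$. Every zero of $P_{r-1}$ is therefore a zero of $P_{n-r-1}$ iff $(n-r)/r\in\mathbb{N}$, i.e.\ $n=r(i+1)$, and the number of distinct roots of $P_{r-1}P_{n-r-1}$ equals $(r-1)+(n-r-1)-(r-1)=n-r-1$, giving item 4. Item 5 reduces to a case split on the parities of $r-1$ and $n-r-1$, using that $P_m(0)=0$ iff $m$ is odd. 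Item 6 follows by adding the constraint $P_n(\xi)=0$ to the disjunction from item 2, and for item 7 the condition "every zero of $P_{r-1}$ is a zero of $P_n$" is equivalent to $(n+1)/r\in\mathbb{N}$, i.e.\ $n=jr-1$ with $j\geq 2$ (since $n\geq r+1$).

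Items 8 and 10 combine what precedes: a double common zero must annihilate all three of $P_{r-1}$, $P_n$, $P_{n-r-1}$, and for item 10 the origin analysis follows by noting that $P_n(0)=0$ iff $n$ is odd and then examining the parities of $r-1$ and $n-r-1$. The one genuinely new observation is item 9: if all zeros of $P_{r-1}$ were simultaneously double common zeros, then items 4 and 7 would force both $n=r(i+1)$ and $n=jr-1$ for some integers $i\geq 1$, $j\geq 2$, hence $r(j-i-1)=1$; for $r\geq 2$ this equation has no integer solution, and for $r=1$ the polynomial $P_{r-1}=P_0\equiv 1$ has no zeros, so the statement holds vacuously. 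This short arithmetic incompatibility is the only step that does not come by straight translation of the co-translation argument, and I expect it to be the one point where careful accounting is needed.
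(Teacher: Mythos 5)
Your proposal is correct and follows essentially the same route as the paper: the Marcell\'an--Dehesa--Ronveaux perturbation formula combined with the self-associated property for (\ref{rrPdnPn}), the three differences proportional to $P_{r-1}P_{n-r-1}$ together with the simplicity of Chebyshev zeros for items 2--3, the explicit cosine expressions for the zeros and the parity of $P_m(0)$ for items 4--7 and 10, and the arithmetic incompatibility $n=r(i+1)=jr-1$ for item 9. Your explicit treatment of the case $r=1$ in item 9 (where $P_{r-1}=P_0$ has no zeros) is in fact slightly more careful than the paper's argument, which concludes from $j=i+1+\frac{1}{r}$ that $j\notin\mathbb{N}$ and thus implicitly assumes $r\geq 2$.
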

%%%%%%%%%%%%%%%%%%%%%%%%%
\begin{proof}
It is analogous to the proof of the preceding proposition. Nevertheless, we point out some details in some items.
\begin{enumerate}
%%%%%%%%%%%
\item
From 
$P^{d}_{n}(\lambda_r;r)(x)=P_n(x)+(1-\lambda_r)\gamma_rP_{r-1}(x)P_{n-r-1}^{(r+1)}(x)$
\cite[p.206]{Marcellan_1990} and (\ref{rcoefTT2}), we get the recurrence relation (\ref{rrPdnPn}),  because $P_{n-r-1}^{(r+1)}\equiv P_{n-r-1}$.
%%%%%%%%%%%
%\item For the intersection with $P_n(x)$, just consider $P^{t}_{n}(\mu_r;r)(x)-P_n(x)$. For the other intersections, consider $P^{t}_{n}(\mu_r;r)(x)-P^{t}_{n}(\mu'_r;r)(x)$.
% and write (\ref{rrPtnPn}) for $\mu_r$ and $\mu'_r$ and subtract both obtained relations. 
%Do the same for (\ref{rrPdnPn}) and $\lambda_r$ and $\lambda'_r$. 
%%%%%%%%%%%
\item [4.] All zeros of $P_{r-1}$ are also zeros of $P_{n-r-1}$ if and only if $\exists$ $i\geq1$: $\frac{\pi}{r}=i \frac{\pi}{n-r}\Leftrightarrow n-r=ir \Leftrightarrow n=r(i+1)$. Now, apply item 3.
The number of distinct interception points is $\deg P_{r-1}(x)+\deg P_{n-r-1}(x)-(r-1)=n-r-1$. Remark that it must be $n-r-1\geq r-1$.
%%%%%%%%%%%
%\item Writing (\ref{rrPtnPn}) for $r\rightarrow 2r'$, $n\rightarrow 2n'$, we obtain 
%$P^{t}_{2n'}(\mu_{2r'};2r')(x)-P_{2n'}(x)=-\mu_{2r'}P_{2r'}(x)P_{2n'-2r'-1}(x)$. Now, from symmetry, we know that $P_{2r'}(x)$ is even, so the origin is not a zero, and $P_{2n'-2r'-1}(x)$ is odd, so the origin is a zero and it is simple, then we get the conclusion. The other cases are similar.
%%%%%%%%%%%
%\item Is is analogous to the preceding item.
%%%%%%%%%%%
\item [9.] 
In fact, from item 8, all zeros of $P_{r-1}$ are common zeros of $P_{n}$ and $P_{n-r-1}$ if and only if $\exists i, j \in {\mathbb N}$, $i\geq1$, $j\geq2$: $n=r(i+1)=jr-1\Leftrightarrow
j=i+1+\frac{1}{r}$, from items 4 and 7. But in that case $j\notin {\mathbb N}$.
%%%%%%%%%%%
\item [10.]
Follows from the symmetry of all polynomials involved. If $n$ is odd, then polynomials (\ref{int_points_dil}) are all odd, and we get the first part of the result.  We are going to show that the origin can not be a double common zero of (\ref{int_points_dil}). If $r$ is even, then $r-1$ is odd,  but $n-r-1$ is even, so the condition of item 8 fails. If $r$ is odd, then $r-1$ is even, then that condition fails again. If $n$ is even, the origin is not a zero of $P_n$.
\end{enumerate} 
\end{proof}
%%%%%%%%%%%%%%%%%%%%%%%%%
\begin{remark}
Comparing 
%(\ref{CR_Recursion_k}) and 
(\ref{CR_Recursion_n2r1}) with (\ref{rrPtnPn}), or (\ref{CR_Dilatation_k}) 
%and (\ref{CR_Dilatation_n2r}) 
with (\ref{rrPdnPn}), we immediately obtain the following linearization formula for the Chebyshev polynomials of second kind
\begin{eqnarray}
%P_{r-1}(x)P_{k-r-1}(x) & = & \sum_{i=1}^{k-r}\frac{1}{4^{i-1}}P_{k-2i}(x)\ ,\ k=r+1(1)2r-1\ ,\ r\geq 1\ , 
%\notag\\
P_{r}(x)P_{n+r}(x)& = & \sum_{i=0}^{r}\frac{1}{4^{i}}P_{n+2(r-i)}(x)\ ,\ n\geq 0\ ,\ r\geq 0\ . 
\notag 
\end{eqnarray}
Here $r$ is just a degree and lost the meaning of order of perturbation. This 
formula is a particular case of a linearization formula for Gegenbauer polynomials given in \cite{Dougall_1919,Tcheutia_2014}.
\end{remark}

%%%%%%%%%%%%%%%%%%%%%%%%%%%%%%%%%%%%%%%%%%
\section{Graphical representations}\label{Gra_Repre}
%%%%%%%%%%%%%%%%%%%%%%%%%%%%%%%%%%%%%%%%%%

In this section, we present some graphical representations with comments in order to illustrate  results for zeros and interception points given in the preceding section.
Figures \ref{Fig_TT2_Per_Tra_0_n56}, \ref{Fig_TT2_Per_Tra_1_n56}, \ref{Fig_PerDil_1} and 
\ref{Fig_PerDil_2} concern Propositions \ref{Pro_Tra_Zeros} and \ref{Pro_Dil_Zeros}.
Figures \ref{Figure_TT2_Tra_r_5}, \ref{Figure_TT2_Dil_r_6}, \ref{Figure_TT2_Tra_r_5_n_17} and \ref{Figure_TT2_Dil_r_6_n_18} refer to Propositions \ref{Pro_Tra_Ipt} and \ref{Pro_Dil_Ipt}. In all figures, we can observe properties satisfied by perturbed polynomials at the origin given in Propositions \ref{corollaryCC_Tra_Can_nn1n0} and \ref{corollaryCC_Dil_Can_nn1n0}.

%We point out that CC in terms of  canonical basis and some properties of zeros (see Propositions \ref{corollaryCC_Tra_Can_nn1n0} and \ref{corollaryCC_Dil_Can_nn1n0}) and of  interception points depend on the parity of the order $r$ of perturbation.

\vspace{0.75cm}

\noindent {\bf \large{Acknowledgements}}

\vspace{0.25cm}

I am very grateful to Pascal Maroni for several discussions during the development of this work.

\vspace{0.25cm}

The author was partially supported by CMUP (UID/MAT/00144/2013), which is funded by FCT (Portugal) with national (MEC) and European structural funds through the programs FEDER, under the partnership agreement PT2020.

%%%%%%%%%%%%%%%%%%%%%%%%%%%%%%%%%%%%%%%%%%
%\newpage

%%%%%%%%%%%%%%%%%%%%%%%%%%%%%%%%%%%%%%%%%%
%\newpage

%\vspace{0.25cm}

%%%%%%%%%%%%%%%%%%%%%%%%%%%%%%%%%%%%%%%%%%%%%
%%%%%%%%%%%%%%%%%%%%%%%%%%%%%%%%%%%%%%%%%%%%%

%%%%%%%%%%%%%%%%%%%%%%%%%%%%%%%%%%%%%%%%%%%%%%%
%\newpage

\begin{figure}[htbp]
\begin{center}
\includegraphics[keepaspectratio]{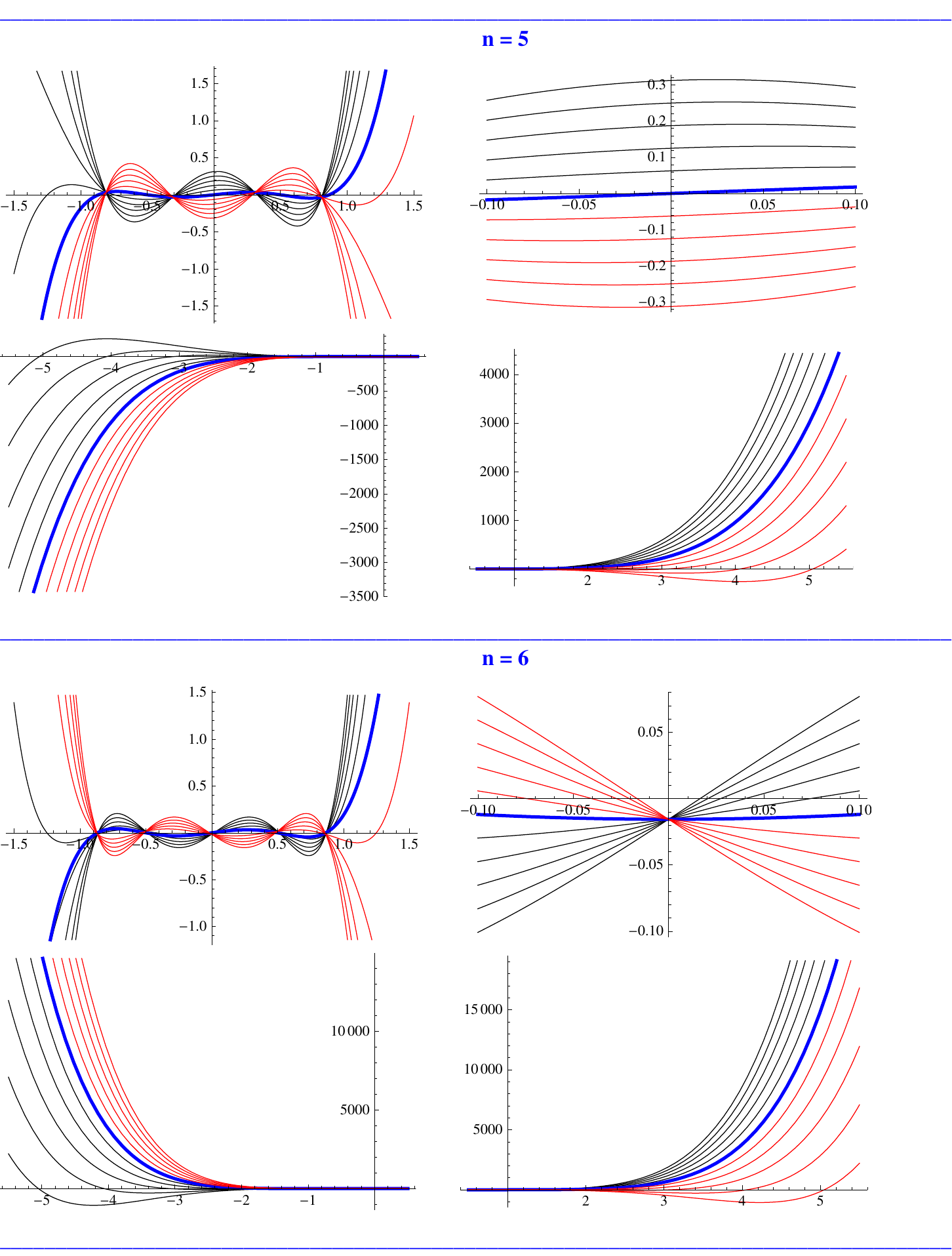}
\caption{Some perturbed of {\bf order 0 by translation} with negative parameters $\mu_0=-5(1)-1$ (in black), 
positive parameters $\mu_0=1(1)5$ (in red) of Chebyshev polynomials of second kind (in bleu) of degrees $n=$ $5$, $6$.}\label{Fig_TT2_Per_Tra_0_n56}
\end{center}
\end{figure}

\begin{figure}[htbp]
\begin{center}
\includegraphics[keepaspectratio]{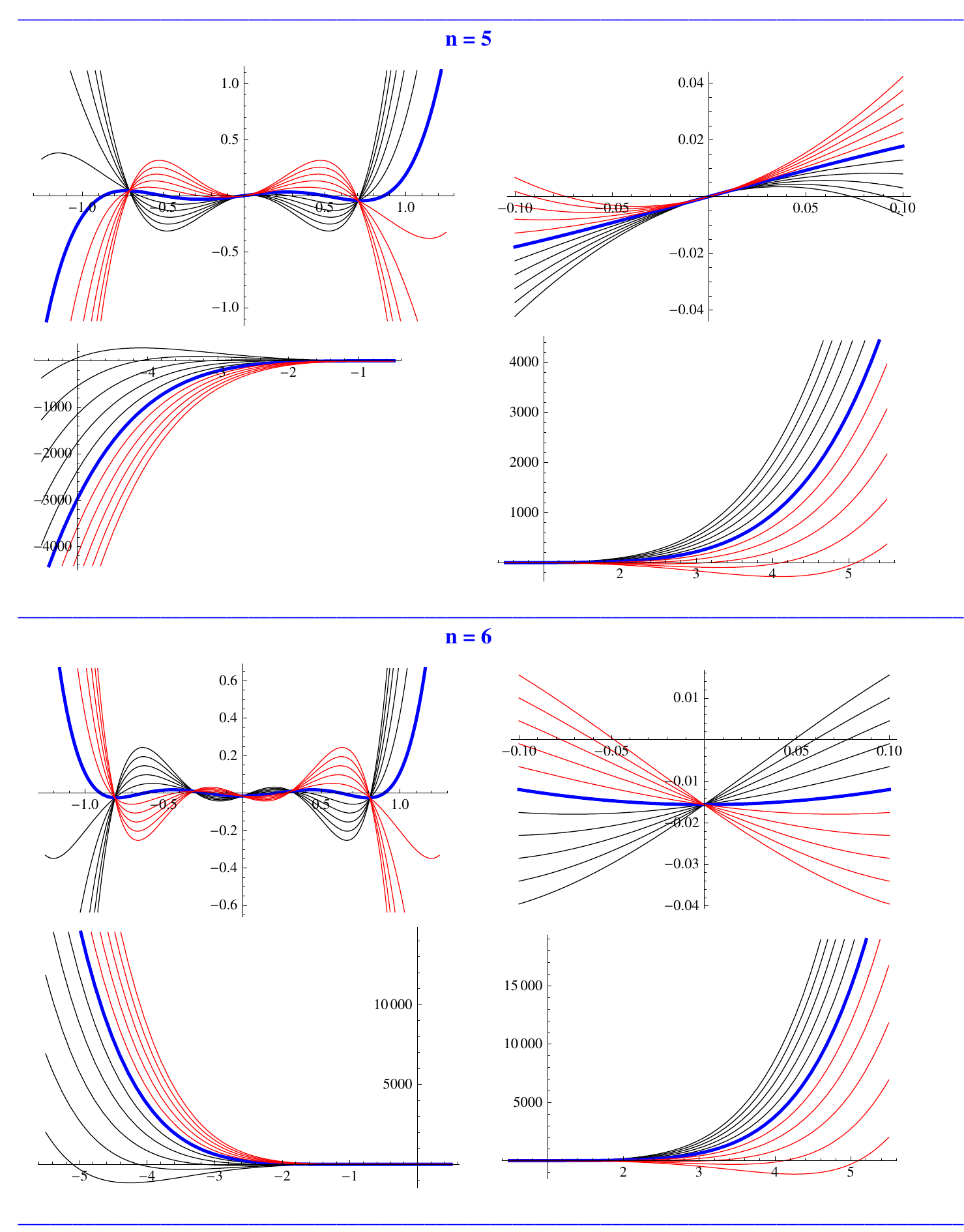}
\caption{Some perturbed of {\bf order 1 by translation} with negative parameters $\mu_0=-5(1)-1$ (in black), 
positive parameters $\mu_0=1(1)5$ (in red) of Chebyshev polynomials of second kind (in bleu) of degrees $n=$ $5$, $6$.}\label{Fig_TT2_Per_Tra_1_n56}
\end{center}
\end{figure}

%%%%%%%%%%%%%%%%%%%%%%%%%%%%%%%%%%%%%%%%%%%%%%
%%%%%%%%%%%%%%%%%%%%%%%%%%%%%%%%%%%%%%%%%%%%%%
%%%%%%%%%%%%%%%%%%%%%%%%%%%%%%%%%%%%%%%%%%%%%%

%\newpage

\begin{figure}[htbp]
\caption{Some perturbed of {\bf order 1 by dilatation} with parameters $\lambda_1=-5(1)-1\ {\bf < 1}$ (in black) and $\lambda_1=3(1)7 \ {\bf > 1}$ (in red) of Chebyshev polynomials of second kind (in bleu) of degrees $n=$ $5$, $7$, $6$ and $8$.}
\label{Fig_PerDil_1}
\begin{center}
\includegraphics[keepaspectratio]{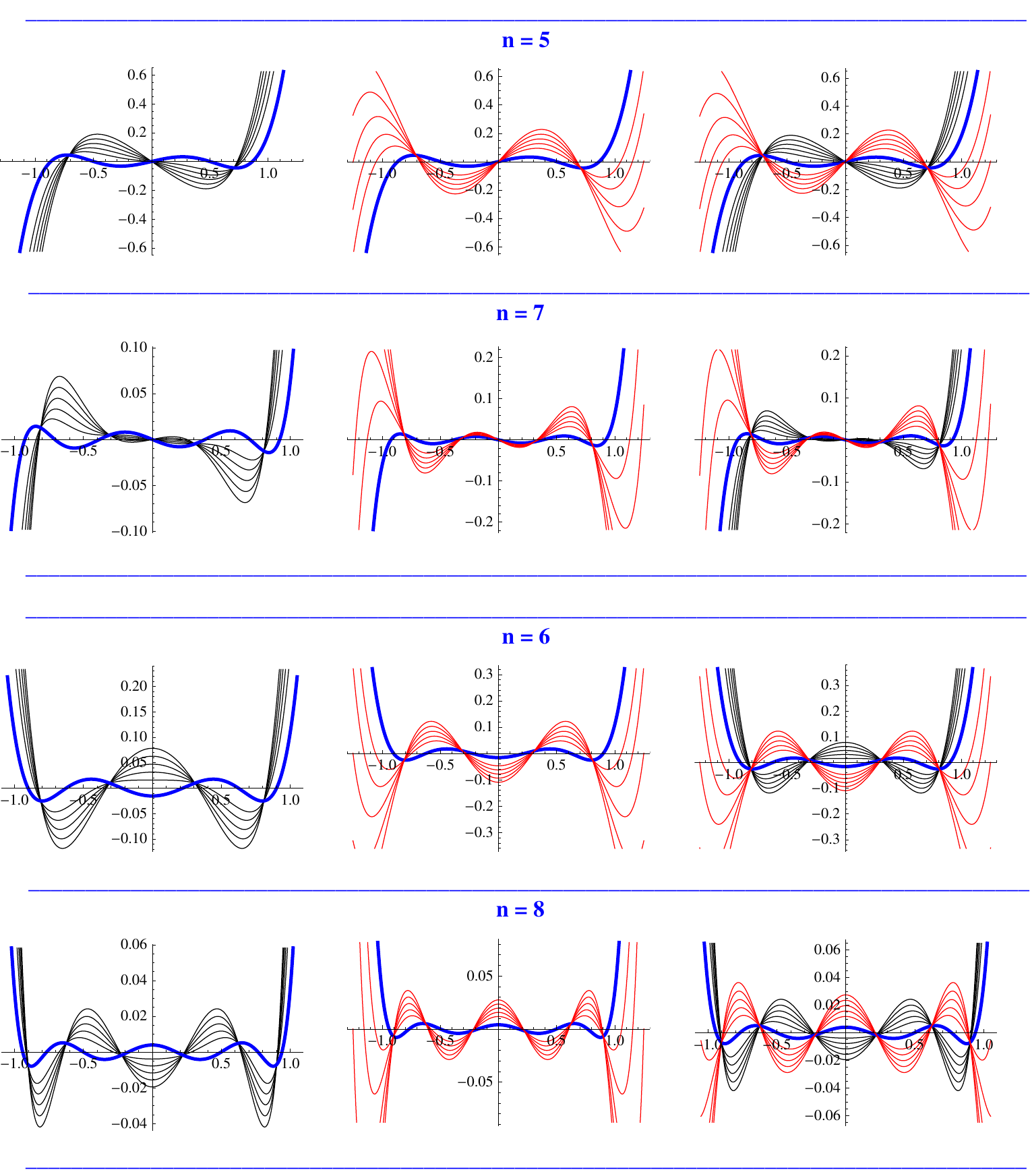}
\end{center}
\end{figure}
\begin{figure}[htbp]
\caption{Some perturbed of {\bf order 2 by dilatation} with parameters $\lambda_2=-5(1)-1\ {\bf < 1}$ (in black) and $\lambda_2=3(1)7 \  {\bf > 1}$  (in red) of Chebyshev polynomials of second kind (in bleu) of degrees $n=$ $5$, $7$, $6$, $8$.}
\begin{center}
\includegraphics[keepaspectratio]{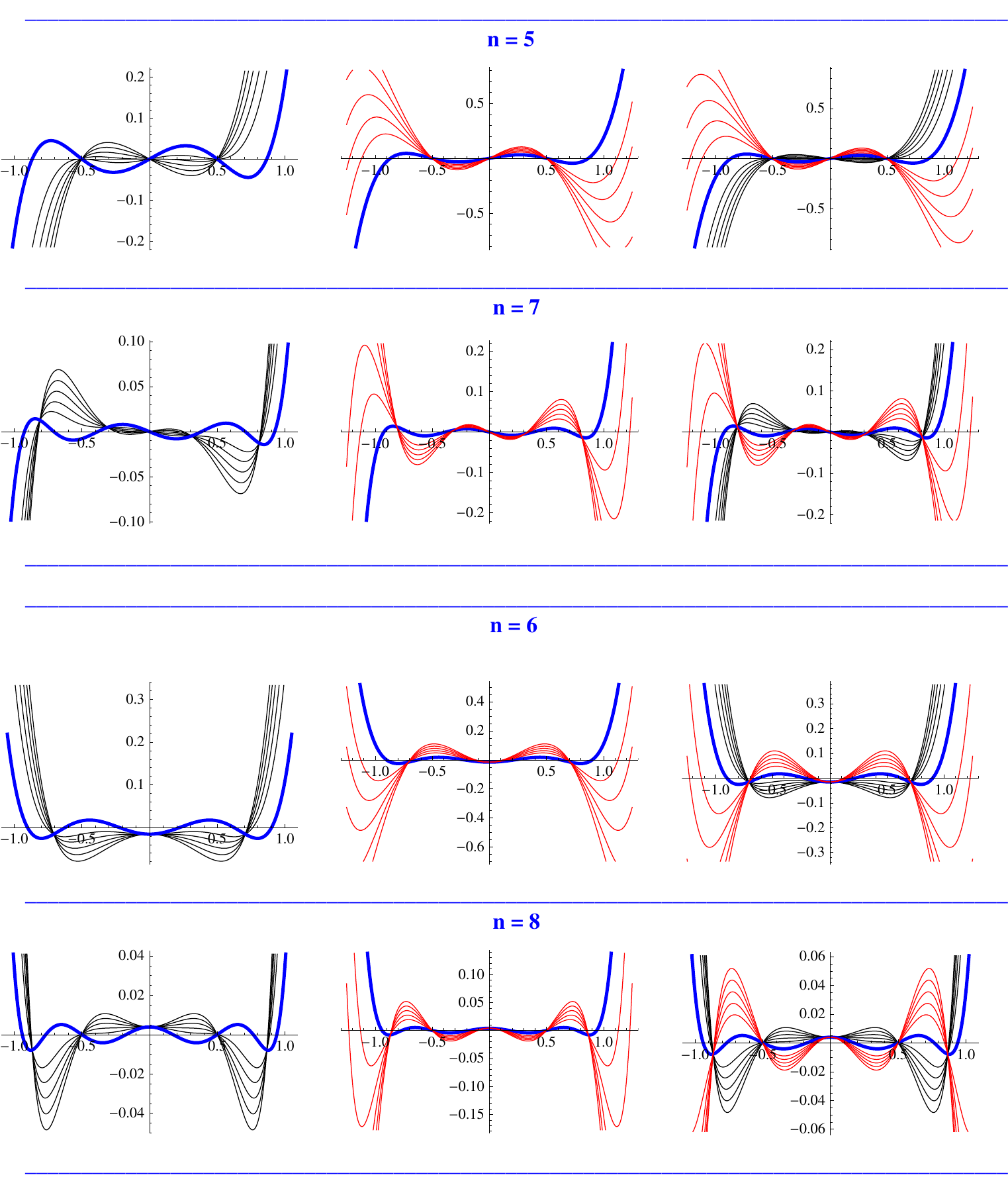}
\label{Fig_PerDil_2}
\end{center}
\end{figure}

%%%%%%%%%%%%%%%%%%%%%%%%%%%%%%%%%%%%%%%%%%%%%%
%%%%%%%%%%%%%%%%%%%%%%%%%%%%%%%%%%%%%%%%%%%%%%
%%%%%%%%%%%%%%%%%%%%%%%%%%%%%%%%%%%%%%%%%%%%%%
%\newpage

\begin{figure}[htbp]
\caption{Some perturbed of {\bf order 5 by translation} with parameters $\mu'_5=-5(1)-1\ {\bf < 0}$ (in black) and $\mu_5=1(1)5 \  {\bf > 0}$  (in red) of Chebyshev polynomials of second kind (in bleu).}\label{Figure_TT2_Tra_r_5}
\begin{center}
\begin{tabular}{| c | c |}\hline
\includegraphics[width=6.75cm]{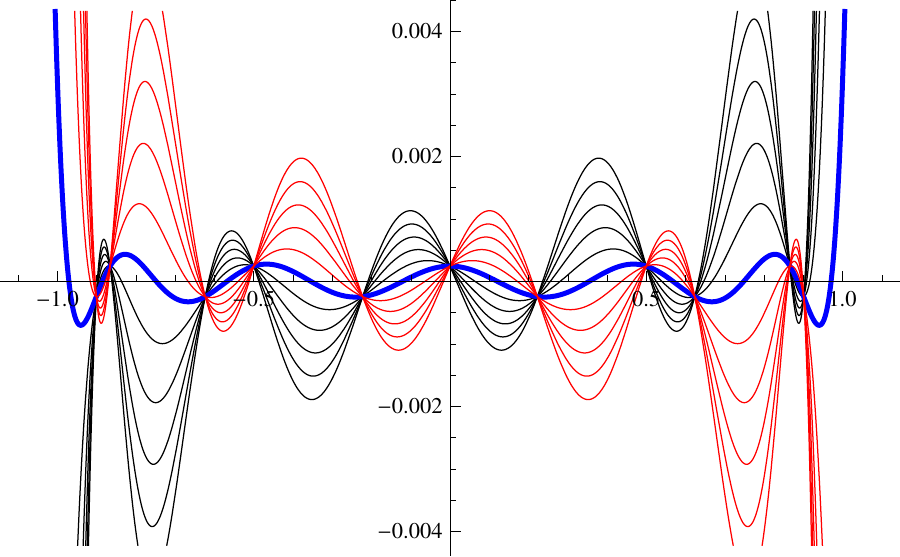} &
\includegraphics[width=6.75cm]{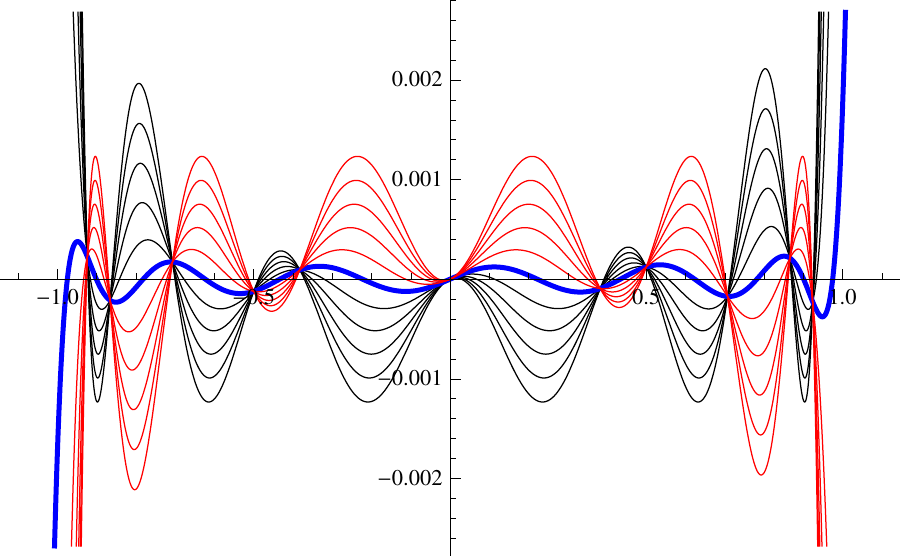}\\
 $n=12$ & $n=13$\\ 
there are no common zeros &  0 is a double common zero \\
all interception points are simple & other interception points are simple  \\ \hline
\includegraphics[width=6.75cm]{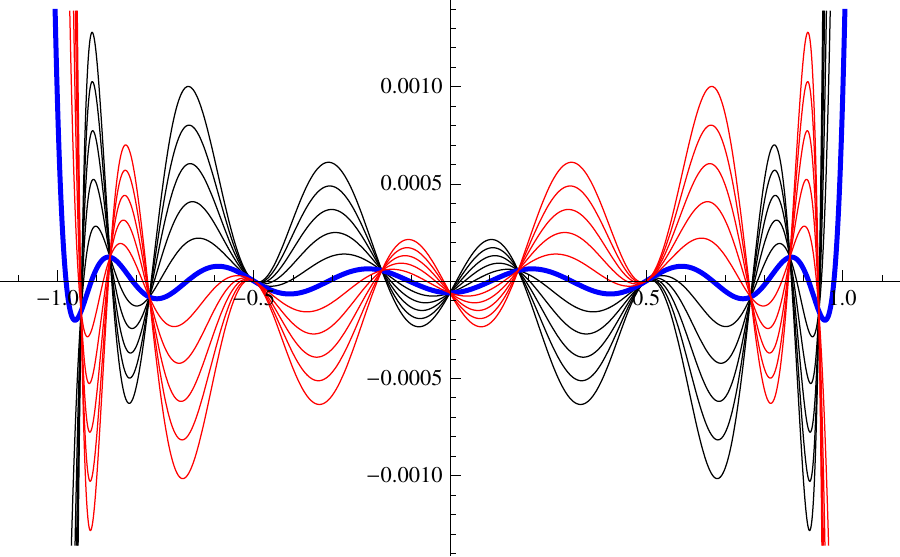} &
\includegraphics[width=6.75cm]{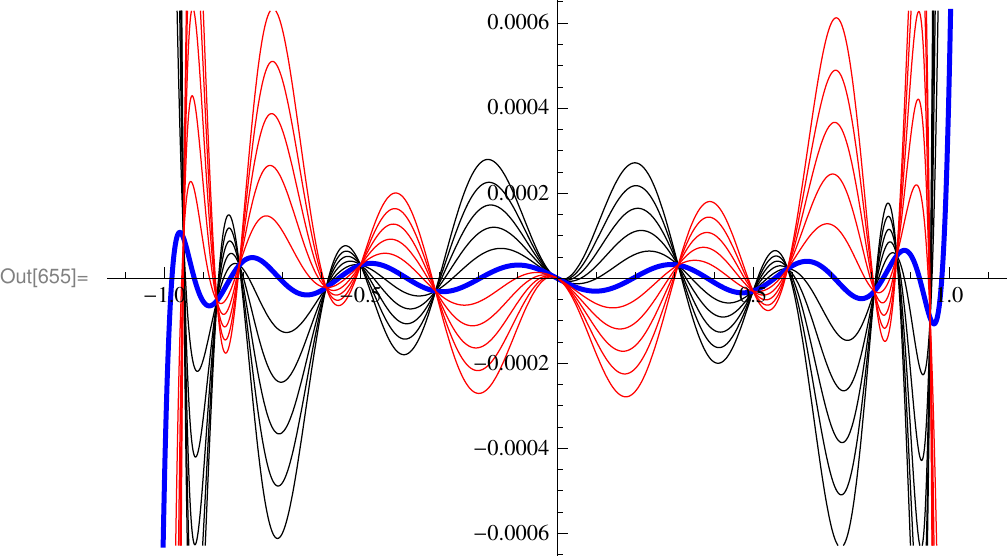}\\
 $n=14$ & $n=15$\\ 
-0.5 and 0.5 are double common zeros  & 0 is a double common zero \\
other interception points are simple & other interception points are simple \\ \hline
\includegraphics[width=6.75cm]{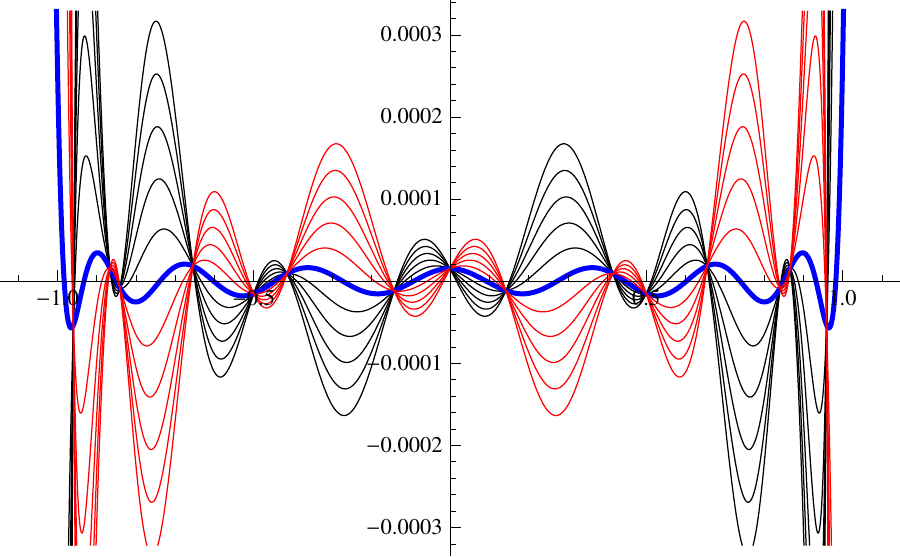} &
\includegraphics[width=6.75cm]{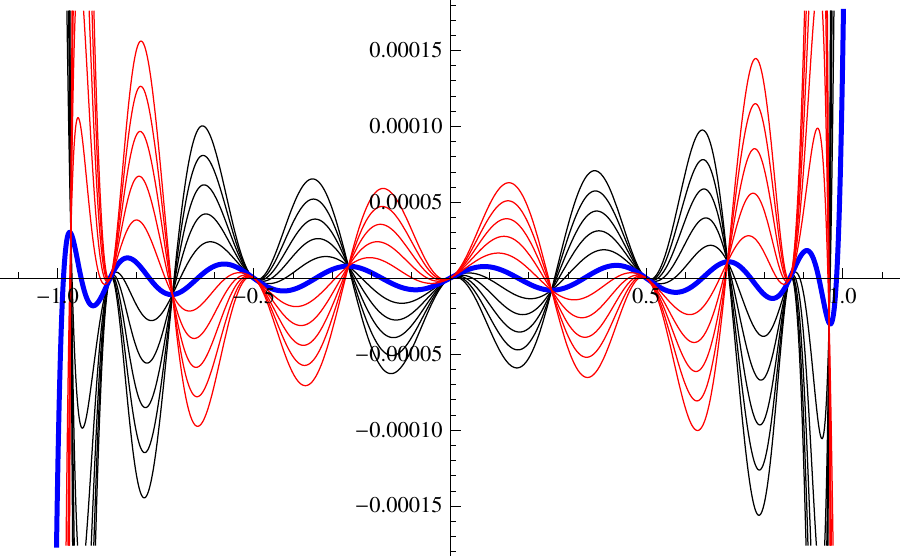}\\
 $n=16$ & $n=17$ \\
 there are no common zeros &  5 double common zeros\\
 all interception points are simple &  all zeros of $P_5(x)$ are d. common zeros \\\hline
\end{tabular}

\hspace{0.5cm}
The zeros of $P_5(x)$ are $-\frac{\sqrt{3}}{2}\approx-0.87,\ -\frac{1}{2},\ 0, \ \frac{1}{2} ,\ \frac{\sqrt{3}}{2}\approx 0.87.$
%$P_5(x)=0  \Longleftrightarrow x\approx-0.87\ V\  x=-0.5\ V\  x=0 \ V\  x=0.5 \ V\ x\approx 0.87$
\end{center}
\end{figure}

%\newpage

%%%%%%%%%%%%%%%%%%%%%%%%%%%%%%%%%%%%%%%%%%%%%%
%%%%%%%%%%%%%%%%%%%%%%%%%%%%%%%%%%%%%%%%%%%%%%
%%%%%%%%%%%%%%%%%%%%%%%%%%%%%%%%%%%%%%%%%%%%%%

\begin{figure}[htbp] 
\caption{Some perturbed of {\bf order 6 by dilatation} with parameters $\lambda'_6=-5(1)-1\ {\bf < 1}$ (in black) and $\lambda_6=3(1)7 \  {\bf > 1}$  (in red) of Chebyshev polynomials of second kind (in bleu).}\label{Figure_TT2_Dil_r_6}
\begin{center}
\begin{tabular}{| c | c |}\hline
\includegraphics[width=6.75cm]{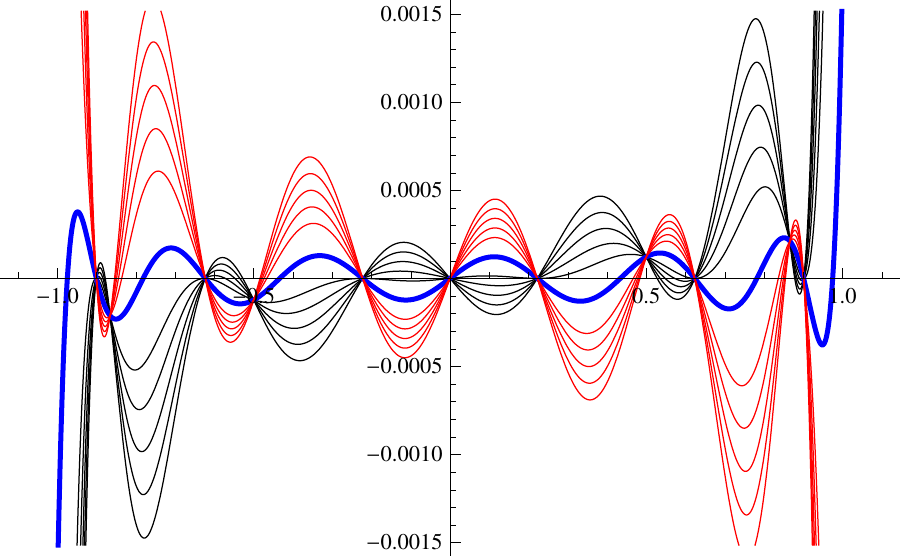} &
\includegraphics[width=6.75cm]{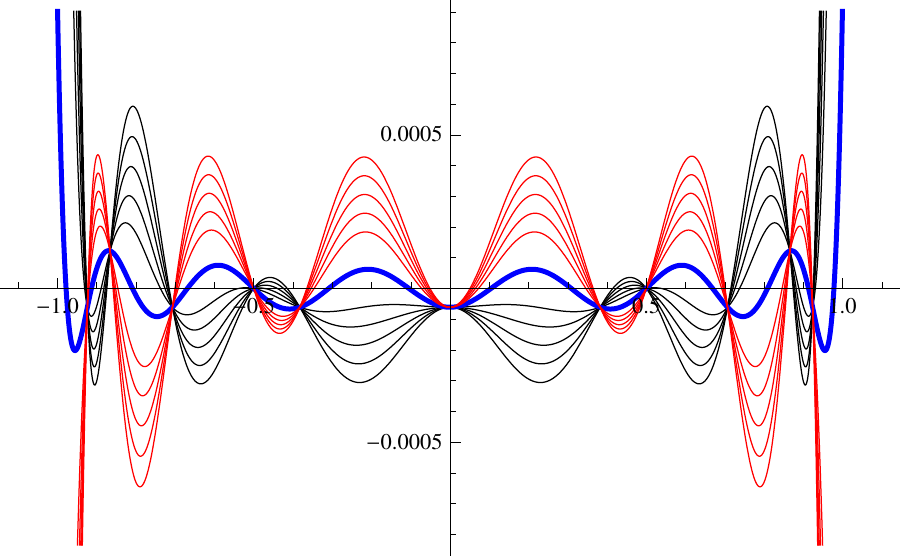}\\
 $n=13$ & $n=14$ ; 2 simple common zeros\\ 
5 simple common zeros &  0 is a double interception point \\
all interception points are simple & other interception points are simple  \\ \hline
\includegraphics[width=6.75cm]{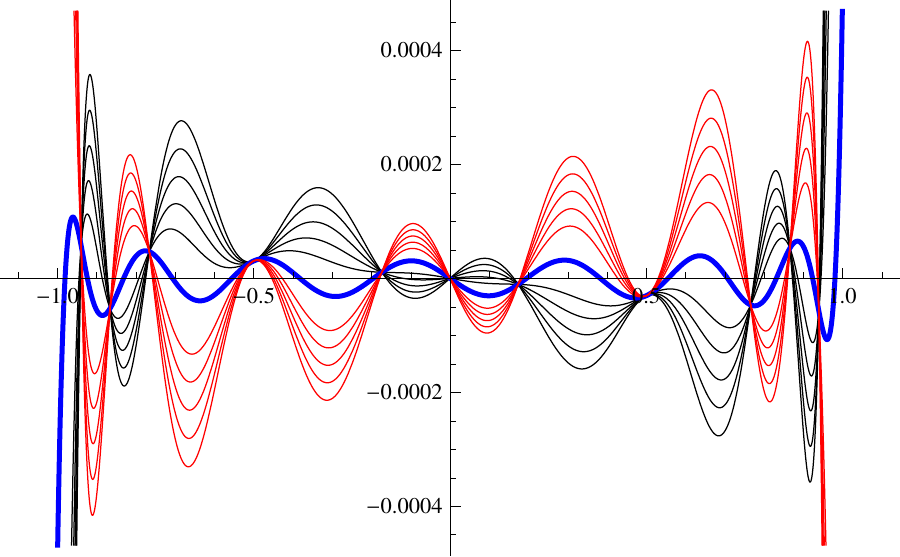} &
\includegraphics[width=6.75cm]{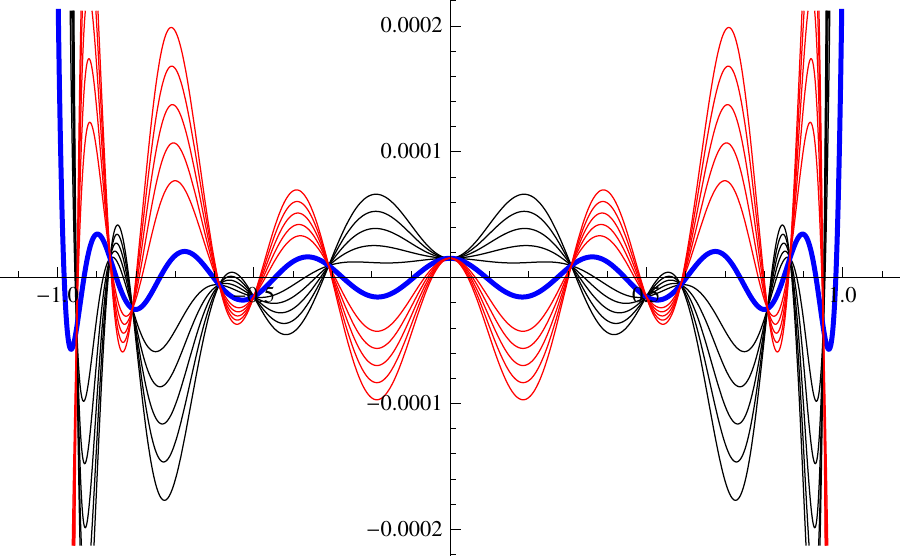}\\
 $n=15$; 0 is a simple common zero & $n=16$; no common zeros\\ 
-0.5 and 0.5 are double inter. p.  & 0 is a double interception point \\
other interception points are simple & other interception points are simple \\ \hline
\includegraphics[width=6.75cm]{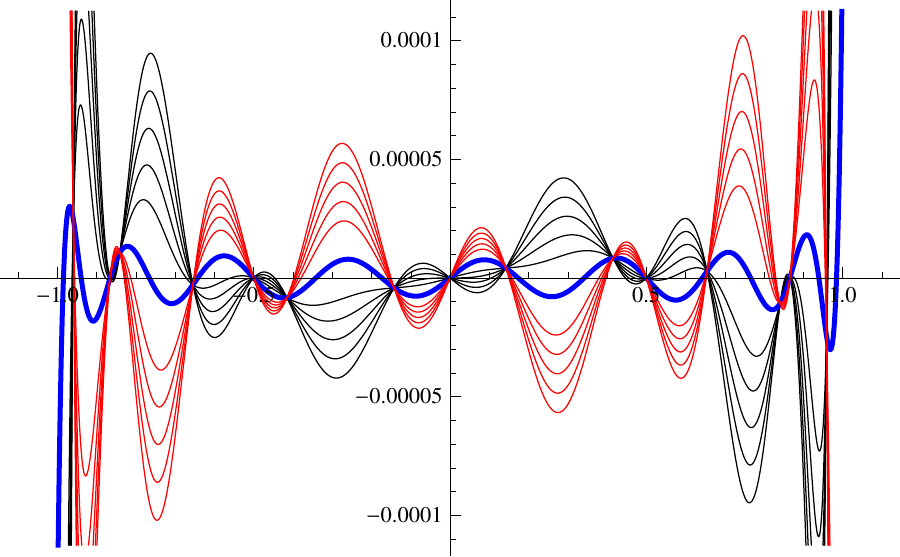} &
\includegraphics[width=6.75cm]{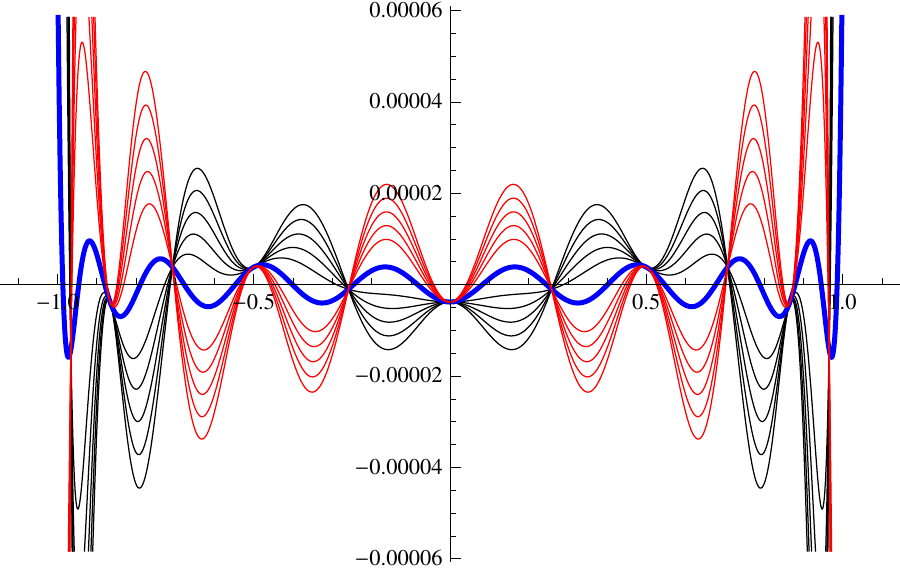}\\
 $n=17$; 5 simple common zeros & $n=18$; no common zeros \\
 all interception points are simple &  all zeros of $P_5(x)$ are double inter. p. \\\hline
\end{tabular}

\hspace{0.5cm}
The zeros of $P_5(x)$ are $-\frac{\sqrt{3}}{2}\approx-0.87,\ -\frac{1}{2},\ 0, \ \frac{1}{2} ,\ \frac{\sqrt{3}}{2}\approx 0.87.$
%$P_5(x)=0  \Longleftrightarrow x\approx-0.87\ V\  x=-0.5\ V\  x=0 \ V\  x=0.5 \ V\ x\approx 0.87$
\end{center}
\end{figure}

%\newpage

%%%%%%%%%%%%%%%%%%%%%%%%%%%%%%%%%%%%%%%%%%%%%%
%%%%%%%%%%%%%%%%%%%%%%%%%%%%%%%%%%%%%%%%%%%%%%
%%%%%%%%%%%%%%%%%%%%%%%%%%%%%%%%%%%%%%%%%%%%%%

\begin{figure}[htbp]
\caption{Some perturbed of {\bf order 5 by translation} with negative parameters 
$\mu'_5=-5(1)-1\ {\bf < 0}$ (in black) and positive parameters, $\mu_5=1(1)5{\bf > 0}$  (in red) of Chebyshev polynomials of second kind (in bleu) of degree $n=17$. 
 All zeros of $P_5(x)$ ($-\frac{\sqrt{3}}{2}\approx-0.87,\ -\frac{1}{2},\ 0, \ \frac{1}{2} ,\ \frac{\sqrt{3}}{2}\approx 0.87$) are double common zeros 
% interception points 
 of $P^{t}_{17}(\mu'_5;5)(x)$, 
$P^{t}_{17}(\mu_5;5)(x)$ and $P_{17}(x)$. There are 11 distinct interception points.
There is a zero of $P^{t}_{17}(\mu'_5;5)(x)$ on the left of -1. There is a zero of $P^{t}_{17}(\mu_5;5)(x)$ on the right of 1. Other zeros are in $[-1,1]$. All zeros are real and simple. Polynomials are not symmetric.}\label{Figure_TT2_Tra_r_5_n_17}
\begin{center}
\begin{tabular}{|c|c|}\hline
\multicolumn{2}{| c |}{\includegraphics[width=13.0cm]{TT2_Tra_r_5_n_17} }\\ \hline
\includegraphics[width=6.5cm]{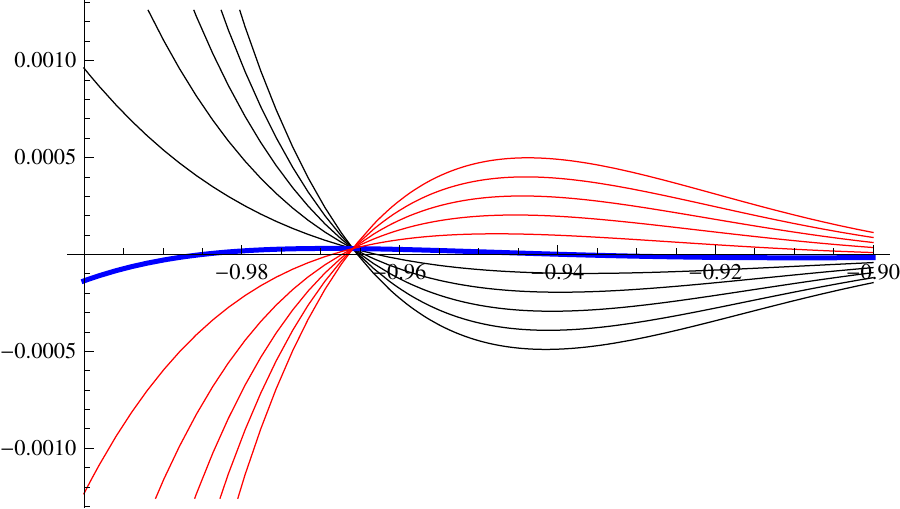} & 
\includegraphics[width=6.5cm]{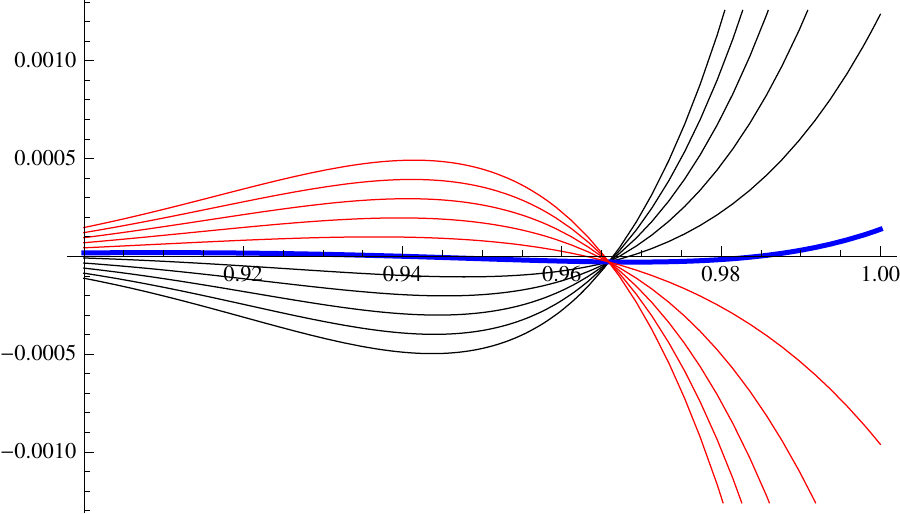} \\ \hline
\includegraphics[width=6.5cm]{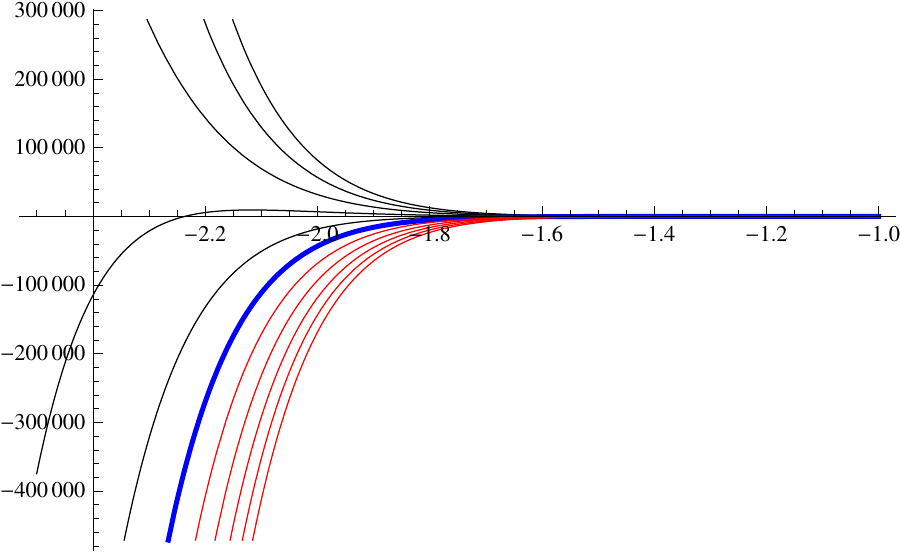} & 
\includegraphics[width=6.5cm]{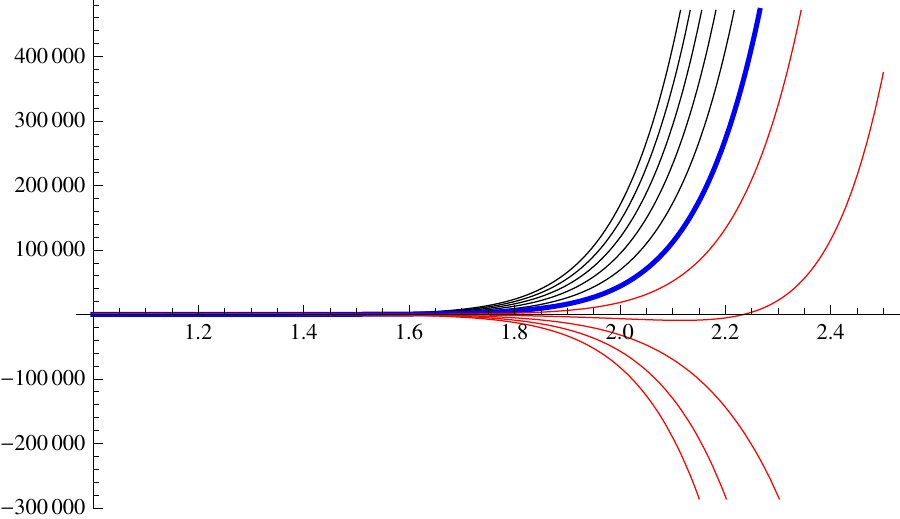} \\ \hline
\end{tabular}
\end{center}
\end{figure}

%\newpage

%%%%%%%%%%%%%%%%%%%%%%%%%%%%%%%%%%%%%%%%%%%%%%
%%%%%%%%%%%%%%%%%%%%%%%%%%%%%%%%%%%%%%%%%%%%%%
%%%%%%%%%%%%%%%%%%%%%%%%%%%%%%%%%%%%%%%%%%%%%%

\begin{figure}[htbp]
\caption{Some perturbed of {\bf order 6 by dilatation} with parameters 
$\lambda'_6=-5(1)-1\ {\bf < 1}$ (in black) and $\lambda_6=3(1)7 \  {\bf > 1}$  (in red) of Chebyshev polynomials of second kind (in bleu) of degree $n=18$.  All zeros of $P_5(x)$ ($-\frac{\sqrt{3}}{2}\approx-0.87,\ -\frac{1}{2},\ 0, \ \frac{1}{2} ,\ \frac{\sqrt{3}}{2}\approx 0.87$) are double interception points 
of $P^{d}_{18}(\lambda'_6;6)(x)$, 
$P^{d}_{18}(\lambda_6;6)(x)$ and $P_{18}(x)$. There are no common zeros. There are 11 distinct interception points. $P^{d}_{18}(\lambda'_6;6)(x)$ has 6 real zeros in 
$[-1,1]$ and it has 6 pairs of complex conjugate zeros. $P^{d}_{18}(\lambda_6;6)(x)$ has 16 real zeros in  $[-1,1]$, it has a zero on the left of -1 and a zero on the right of 1. Polynomials and zeros are symmetric.}\label{Figure_TT2_Dil_r_6_n_18}
\begin{center}
\begin{tabular}{|c|c|}\hline
\multicolumn{2}{| c |}{\includegraphics[width=13.0cm]{TT2_Dil_r_6_n_18}}\\ \hline
\includegraphics[width=6.5cm]{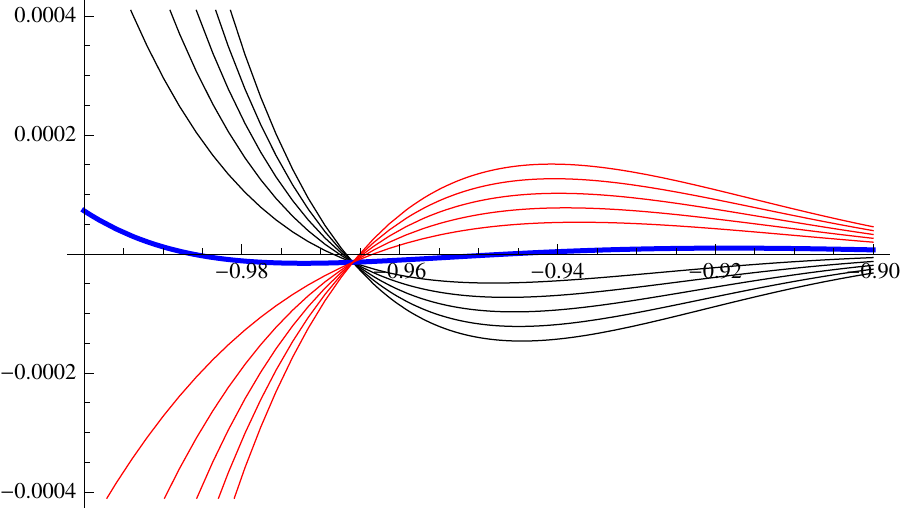} & 
\includegraphics[width=6.5cm]{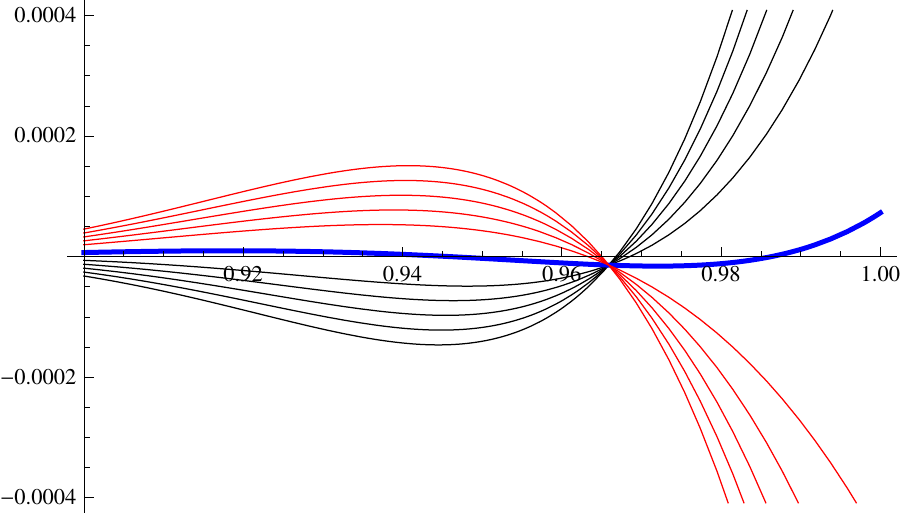} \\ \hline
\includegraphics[width=6.5cm]{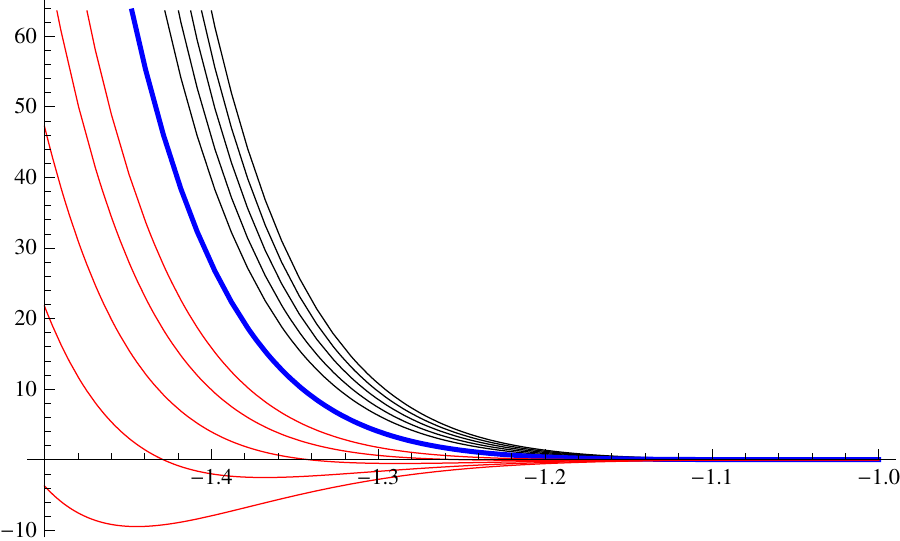} & 
\includegraphics[width=6.5cm]{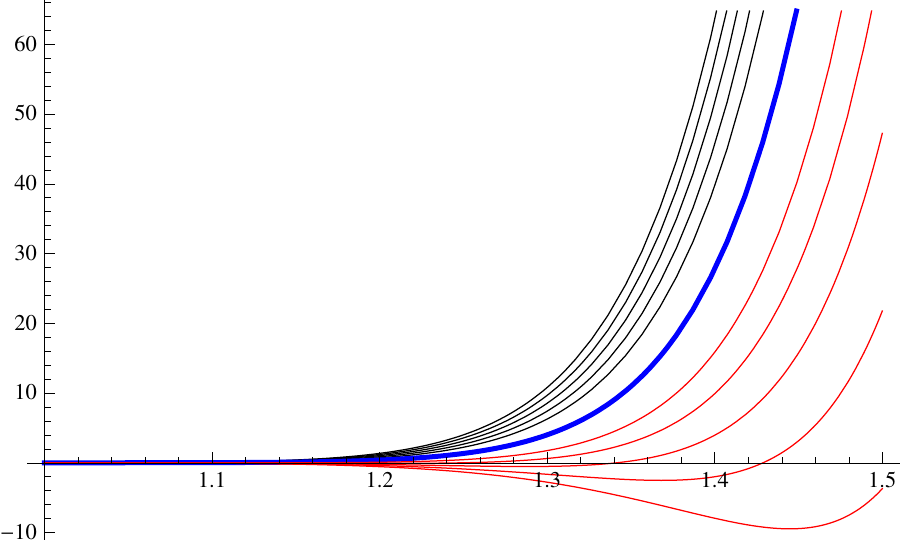} \\ \hline
\end{tabular}
\end{center}
\end{figure}

%\newpage

\end{document}